\documentclass[10pt]{article}

%%%%%%
\usepackage{url}
\usepackage{mathtools}
\usepackage{amssymb}
\usepackage{amsthm}
\usepackage{empheq}
\usepackage{latexsym}
\usepackage{enumitem}
\usepackage{eurosym}
\usepackage{dsfont}
\usepackage{appendix}
\usepackage{color} 
\usepackage[unicode]{hyperref}
\usepackage{frcursive}
\usepackage[utf8]{inputenc}
\usepackage[T1]{fontenc}
\usepackage{geometry}
\usepackage{multirow}
\usepackage{todonotes}
\usepackage{lmodern}
\usepackage{anyfontsize}
\usepackage{stmaryrd}
\usepackage{natbib}
\usepackage{cleveref}

%%%%%%
\bibliographystyle{abbrvnat}
\setcitestyle{numbers,open={[},close={]}}

%%%%%%

%%%%%%
%\pgfplotsset{compat=1.16}

%%%%%%
\definecolor{red}{rgb}{0.7,0.15,0.15}
\definecolor{green}{rgb}{0,0.5,0}
\definecolor{blue}{rgb}{0,0,0.7}
\hypersetup{colorlinks, linkcolor={red},citecolor={green}, urlcolor={blue}}
			
\makeatletter \@addtoreset{equation}{section}

\newtheorem{theorem}{Theorem}[section]
\newtheorem{assumption}[theorem]{Assumption}
\newtheorem{corollary}[theorem]{Corollary}

\newtheorem{lemma}[theorem]{Lemma}
\newtheorem{proposition}[theorem]{Proposition}

\newtheorem{remark}[theorem]{Remark}

%%%%%%
\setlength\parindent{0pt}
\geometry{hmargin=1.8cm,vmargin=2.4cm}
\DeclareUnicodeCharacter{014D}{\=o}
\setcounter{secnumdepth}{4}

%%%%%%

\def \E{\mathbb{E}}
\def \F{\mathbb{F}}

\def \H{\mathbb{H}}

\def \N{\mathbb{N}}

\def \P{\mathbb{P}}

\def \R{\mathbb{R}}
\def \S{\mathbb{S}}

%%%%%%%%%%

%%%%%%

\def\Ec{{\cal E}}
\def\Fc{{\cal F}}

\def\Lc{{\cal L}}

\def\Pc{{\cal P}}

\def\Rc{{\cal R}}
\def\Sc{{\cal S}}

\def\Wc{{\cal W}}

%%%%%%

\def\Tb{{\bar T}}

%%%%%%

\def\x{\times}

\def\Om{\Omega}

\def\Omb{\overline{\Om}}

\def\0{\mathbf{0}}
\def \Ec{\mathcal{E}}
\def \ab{\mathbf{a}}
\def \bb{\mathbf{b}}

\def \xb{\mathbf{x}}

\def \yb{\mathbf{y}}

\def\normeL2#1{\left\|{#1}\right\|_{L^2}}

\def \Lim{\displaystyle\lim}
\def \Liminf{\displaystyle\liminf}

\def\esup{{\rm ess \, sup}}

\def \ab {\boldsymbol{a}}
\def \bb {\boldsymbol{b}}
\def \fb {\boldsymbol{f}}

\def \xbb {\boldsymbol{x}}
\def \yb {\boldsymbol{y}}

\def \Xbb{\mathbf{X}}

\def \Ybb{\mathbf{Y}}

\def \Tb{\overline{T}}

\setlength\parindent{0pt}

\def \wr{\mathrm{w}}

\def \1{\mathds{1}}

\DeclareMathOperator*{\esssup}{ess\,sup}

 \title{Non--regular McKean--Vlasov equations and calibration problem in local stochastic volatility models\footnote{The author thanks Daniel Lacker and Nizar Touzi for their helpful suggestions and interesting discussions.}}

\author{
 %Bruno {\sc Bouchard} \footnote{CEREMADE, Universit\'e Paris Dauphine, bouchard@ceremade.dauphine.fr.
 %\and 
 %Dylan {\sc Possama\"{i}} \footnote{Columbia University, Industrial Engineering \& Operations Research, 500 W 120th Street New York, NY, 10027, dp2917@columbia.edu}
 %\and Xiaolu {\sc Tan}\footnote{Universit\'e Paris--Dauphine, PSL Research University, CNRS, CEREMADE, 75016 Paris, France, tan@ce-remade.dauphine.fr}
%Mao Fabrice Djete\thanks{Ecole Polytechnique Paris, Centre de Math\'ematiques Appliqu\'ees,mao-fabrice.djete@polytechnique.edu. This work benefits from the financial support of the Chairs {\it Financial Risk} and {\it Finance and Sustainable Development}}.
    Mao Fabrice {\sc Djete}\footnote{\'Ecole Polytechnique Paris, Centre de Math\'ematiques Appliqu\'ees, mao-fabrice.djete@polytechnique.edu. This work benefits from the financial support of the Chairs {\it Financial Risk} and {\it Finance and Sustainable Development}
    } 
    }
             \date{\today}

\begin{document}

\maketitle
 
\begin{abstract}
    In order to deal with the question of the existence of a calibrated local stochastic volatility model in finance, we investigate a class of McKean--Vlasov equations where a minimal continuity assumption is imposed on the coefficients. Namely, the drift coefficient and, in particular, the volatility coefficient are not necessarily continuous in the measure variable for the Wasserstein topology. In this paper, we provide an existence result and show an approximation by $N$--particle system or propagation of chaos for this type of McKean--Vlasov equations. As a direct result, we are able to deduce the existence of a calibrated local stochastic volatility model for an appropriate choice of stochastic volatility parameters. The associated propagation of chaos result is also proved.
\end{abstract}
%\newpage
\section{Introduction}

Motivated by the local stochastic volatility (LSV) model in finance that we will detail in \Cref{sec:application:vol}, we are interested in this paper in the question of the existence of a process $S$ satisfying
\begin{align} \label{eq:general_LSV}
    \mathrm{d}S_t
    =
    \frac{ S_t\sigma_D(t,S_t)\sigma_t}{\sqrt{\E[\sigma_t^2|S_t]}} \mathrm{d}W_t\;\;\;\mbox{or equivalently}\;\;\;\mathrm{d}X_t=-\frac{1}{2}\frac{\sigma^2_t \sigma_D(t,e^{X_t})^2}{\E[\sigma_t^2|X_t]}\mathrm{d}t
    +
    \frac{\sigma_t \sigma_D(t,e^{X_t})}{\sqrt{\E[\sigma_t^2|X_t]}} \mathrm{d}W_t\;\;\mbox{for}\;\;X_t=\log(S_t)
\end{align}
where $(W_t)_{t\ge 0}$ is an $\R$--valued $\F$--Brownian motion and $(\sigma_t)_{t \ge 0}$ is an $\R$--valued $\F$--predictable process on a filtered probability space $(\Om,\F,\P)$. This question is commonly accepted as a very difficult problem. Indeed, because of the conditional expectation of $\sigma^2_t$ given $S_t$ (or $X_t$),  the process $S$ (or $X$) is a McKean--Vlasov process with a dependence on the distribution which is non--regular (i.e., non--Lipschitz/non--continue) for the Wasserstein distance (see the discussions in \citeauthor*{guyon2013nonlinear} \cite[Chapter 11, Section 11.3]{guyon2013nonlinear} and in \Cref{remark:explanation_first}). For making this question more tractable, the process $(\sigma_t)_{t \ge 0}$ is usually taken as $\sigma_t^2=\overline{v}(t,X_t,Y_t)$ where $Y$ is an Itô process driven by another Brownian motion $B$ potentially correlated to $W$. Even with this simplification, the literature on this subject has remained limited so far. Notice that, if the couple $(X_t,Y_t)$ has a density $p(t,x,y)$ w.r.t. the Lebesgue measure, the expression $\E[\overline{v}(t,X_t,Y_t)|X_t]$ can be rewritten $\E[\overline{v}(t,X_t,Y_t)|X_t]
    =
    \frac{\int_{\R} \overline{v}(t,X_t,y) p(t,X_t,y)\;\mathrm{d}y}{\int_{\R}\; p(t,X_t,y)\;\mathrm{d}y}$.
%\begin{align*}
    %\E[\overline{v}(t,X_t,Y_t)|X_t]
    %=
    %\frac{\int_{\R} \overline{v}(t,X_t,y) p(t,X_t,y)\;\mathrm{d}y}{\int_{\R} \1\; p(t,X_t,y)\;\mathrm{d}y}.
    %=
    %\Phi \bigg( \int_{\R} \overline{h}(t,X_t,y)\; p(t,X_t,y)\;\mathrm{d}y \bigg)
%\end{align*}
%where $\overline{h}(t,x,y):=(\overline{v}(t,x,y),\1)$ and $\Phi(z,u)=z/u$.
This observation leads us to treat the variables $\int_{\R} \overline{v}(t,x,y) p(t,x,y)\;\mathrm{d}y$ and $\int_{\R} \; p(t,x,y)\;\mathrm{d}y$ separately. This allows us to prove some general results, one of which is the following. Let $\theta  \in (-1,1)$ s.t. $\mathrm{d}\langle W,B \rangle_t=\theta \mathrm{d}t$. Let us assume that $v$ and $\sigma_D$ are Borel maps bounded above and below by positive constants, and $\R_+ \x \R^2 \ni (t,x,y) \mapsto v(t,x,y)\sigma_D(t,e^x)^2 \in \R$ is Lipschitz in $(x,y)$  uniformly in $t$. Also, there are bounded Borel maps $\R_+ \x \R \ni (t,y) \mapsto \lambda(t,y) \in \R$ and $\R_+ \x \R \ni (t,y) \mapsto \beta(t,y) \in \R$ s.t. $\beta$ is Lipschitz in $y$ uniformly in $t$ and $\beta^2$ bounded below by a positive constant.
\begin{theorem}
    Under some conditions on the initial density $p_0:\R^2 \to \R_+$ $($see {\rm \Cref{ass:density_initial}}$)$, for any $T>0$, there exists an $\R^2$--valued $\F$--adapted continuous process $(X,Y)$ verifying: $\Lc(X_0,Y_0)(\mathrm{d}{x},\mathrm{d}{y})=p_0(x,y)\;\mathrm{d}{x}\;\mathrm{d}{y}$, for each $t \le T$,
    \begin{align} \label{eq:LSV_intro_1}
        \mathrm{d}X_t=-\frac{1}{2}\sigma^2_D(t,e^{X_t})\frac{c + p_X(t,X_t)\;v(t,X_t,Y_t) }{c+p_X(t,X_t)\;\E[v(t,X_t,Y_t)|X_t]}\mathrm{d}t
        +
        \sigma_D(t,e^{X_t})\sqrt{\frac{c + p_X(t,X_t)\;v(t,X_t,Y_t) }{c+p_X(t,X_t)\;\E[v(t,X_t,Y_t)|X_t]}} \mathrm{d}W_t
    \end{align}
    and
    \begin{align} \label{eq:LSV_intro_2}
        \mathrm{d}Y_t=\lambda(t,Y_t)\mathrm{d}t+\beta(t,Y_t) \mathrm{d}B_t
    \end{align}
    where $c$ is a positive constant and $p_X(t,\cdot)$ is the density of $\Lc(X_t).$
\end{theorem}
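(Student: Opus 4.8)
The plan is to derive this statement as a particular case of the general non--regular McKean--Vlasov existence and propagation of chaos result established in the body of the paper. The starting point is the structural reduction already announced above. Whenever the joint law $\mu_t=\Lc(X_t,Y_t)$ has a density $p(t,\cdot,\cdot)$, set
\[
q(t,\mu_t)(x):=\int_{\R}p(t,x,y)\,\mathrm{d}y=p_X(t,x),\qquad m(t,\mu_t)(x):=\int_{\R}v(t,x,y)\,p(t,x,y)\,\mathrm{d}y,
\]
so that $p_X(t,x)\,\E[v(t,X_t,Y_t)\mid X_t=x]=m(t,\mu_t)(x)$ and the $X$--dynamics \eqref{eq:LSV_intro_1} become $\mathrm{d}X_t=b(t,X_t,Y_t,\mu_t)\,\mathrm{d}t+\sqrt{a(t,X_t,Y_t,\mu_t)}\,\mathrm{d}W_t$ with
\[
a(t,x,y,\mu)=\sigma_D^2(t,e^x)\,\frac{c+q(t,\mu)(x)\,v(t,x,y)}{c+m(t,\mu)(x)},\qquad b(t,x,y,\mu)=-\tfrac12\,a(t,x,y,\mu).
\]
This presents the coefficient as an explicit function of $(t,x,y)$ and of the two functionals $q$ and $m$, each \emph{linear} in the density, while the $Y$--dynamics \eqref{eq:LSV_intro_2} carry no measure dependence. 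Hence \eqref{eq:LSV_intro_1}--\eqref{eq:LSV_intro_2} is an instance of the abstract system the paper treats.

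Next I would check the hypotheses of the general theorem. For ellipticity and boundedness: writing $0<\underline v\le v\le\overline v$, $0<\underline\sigma\le\sigma_D\le\overline\sigma$, and using $c>0$, $q\ge 0$ together with $\underline v\,q\le m\le\overline v\,q$, the ratio $\tfrac{c+qv}{c+m}$ stays in the fixed interval $[\underline v/\overline v,\ \overline v/\underline v]$, so $a$ is bounded and bounded below by positive constants uniformly in $(t,x,y,\mu)$, and $b=-\tfrac12a$ is bounded. For the spatial regularity: $(x,y)\mapsto v(t,x,y)\sigma_D(t,e^x)^2$ is Lipschitz uniformly in $t$ by assumption, which is exactly what the abstract theorem requires of the numerator of $a$; the $Y$--coefficients $\lambda,\beta$ are bounded, $\beta$ is Lipschitz in $y$ uniformly in $t$, and $\beta^2$ is non--degenerate, which is the standard well--posedness framework for that component. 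The correlation $\theta\in[-1,1]$ together with the sign condition $-\theta\beta\ge 0$ is precisely the technical hypothesis under which the general result is proved, and I would invoke it as stated.

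Before the abstract theorem applies one must give meaning to $p_X(t,X_t)$, i.e. ensure that $\Lc(X_t)$ is absolutely continuous with a density of the required integrability for $t\in(0,T]$. This is where \Cref{ass:density_initial} on the initial density $p_0$ enters: combined with the uniform ellipticity of the $X$--diffusion obtained above, parabolic/Krylov--type estimates for the (frozen--coefficient) Fokker--Planck equation produce the existence and the needed bounds on $p_X(t,\cdot)$ all along the McKean--Vlasov iteration, so that $q$ and $m$ are well defined at each step. Granted this, the existence of $(X,Y)$ follows from the general existence theorem, and the $N$--particle approximation from its propagation of chaos companion.

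The real difficulty, and the reason a textbook Schauder/tightness argument does not apply directly, is that $\mu\mapsto q(t,\mu)(\cdot)$ and $\mu\mapsto m(t,\mu)(\cdot)$, hence $(b,a)$, are \emph{not} continuous for the Wasserstein topology: an arbitrarily small $W_1$--perturbation of $\mu$ may change its density drastically or destroy absolute continuity altogether. The approach of the paper is therefore to regularize at the level of the $N$--particle system, establish tightness of the particle empirical flow, and then identify every limit point with a solution of \eqref{eq:LSV_intro_1}--\eqref{eq:LSV_intro_2}. I expect the hard step to be exactly the passage to the limit in the conditional expectation $\E[v(t,X_t,Y_t)\mid X_t]$ --- equivalently in the quotient $m/q$ --- along the approximating sequence; the $c$--regularization, which keeps $c+q\ge c$ and $c+m\ge c$, is what makes this quotient stable under the weak limits in play and is ultimately what converts the non--regular problem into a tractable one.
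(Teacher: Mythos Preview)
Your reduction is exactly the one the paper carries out: rewrite the LSV coefficients as
\[
(\sigma^\circ)^2(t,x,y,e_0,e_1,e_2)=\sigma_D^2(t,e^x)\,\frac{c+e_1 v(t,x,y)}{c+e_2},\qquad b^\circ=-\tfrac12(\sigma^\circ)^2,
\]
with $e_1=p_{\mathbf 1}(t,x)$ and $e_2=(pv)_{\mathbf 1}(t,x)$, check boundedness and ellipticity via the two--sided bound $m_v/M_v\le (c+e_1v)/(c+e_2)\le M_v/m_v$ valid on the admissible set $\Ec_v^T$, verify the Lipschitz/gradient bounds on $\sigma^\circ$ using $c>0$ and the Lipschitz hypothesis on $(x,y)\mapsto v(t,x,y)\sigma_D^2(t,e^x)$, and then invoke the general existence theorem. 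Your identification $(pv)_{\mathbf 1}(t,X_t)=p_{\mathbf 1}(t,X_t)\,\E[v(t,X_t,Y_t)\mid X_t]$ is also the final step in the paper's proof. So the first three paragraphs of your proposal match the paper's argument essentially line for line.

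Where you go astray is the last paragraph. You conjecture that the general theorem is proved by regularizing an $N$--particle system, extracting tight subsequences of the empirical flow, and identifying limit points. That is not what the paper does. The general existence theorem is proved by a \emph{Schauder fixed point} on a compact convex set of densities: one derives explicit $\H^{q,1}$ and H\"older estimates for the density $p$ of a linear Fokker--Planck equation in terms of the H\"older and $L^q$ norms of $(a,\nabla a,b)$, chooses constants $L_j,L_j^\infty$ so that the a priori bounds close (if the input density satisfies them, so does the output density), and applies Schauder to the map $\varrho\mapsto p$. The particle approximation is a separate, subsequent result that \emph{uses} the existence theorem, not the other way around. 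So your remark that ``a textbook Schauder/tightness argument does not apply directly'' is misplaced --- Schauder is precisely the tool, once one works in the correct density space with the parabolic Sobolev estimates. This mischaracterization is tangential to the proof of the LSV statement itself, but you should correct it.
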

This result guarantees the existence of a calibrated local stochastic volatility model for the choice of stochastic volatility parameters $\sigma^2_t=\overline{v}(t,X_t,Y_t)=c + p_X(t,X_t)\;v(t,X_t,Y_t)$. Although the existence of a calibrated LSV model is commonly accepted in the mathematical finance community, there are not many papers that have rigorously investigated this issue. We can evoke the paper of \citeauthor*{AbergelTachet} \cite{AbergelTachet}. In \cite{AbergelTachet}, by considering a discretized version (in time and space) of the Fokker--Planck equation associated to \eqref{eq:general_LSV}, the authors use a fixed point argument for establishing an existence result in short--time. Let us cite also \citeauthor{jourdainZhou} \cite{jourdainZhou} who investigate this question in the case where $Y$ only takes a finite number of values. These discretizations are important in the proof of \cite{AbergelTachet} and \cite{jourdainZhou}. In a setting allowing continuous space and time, when all the coefficients involved are homogeneous in time $t$ (in particular $\overline{v}(t,s,y)=\overline{v}(y)$), \citeauthor*{LackerZhang19} \cite{LackerZhang19} prove the existence of a stationary solution of \eqref{eq:system_intro}. The homogeneous assumption is crucial for establishing their result. Moreover, they need the two Brownian motions $W$ and $B$ to be independent. Our theorem seems to be among the most general results regarding the existence of a LSV model calibrating European prices. We are able to consider continuous time and space, to allow the Brownian motions $W$ and $B$ to be correlated, we can have non--homogeneous coefficients and, it is possible to consider any maturity $T$ and start from any initial data (as long as it is smooth enough). In addition to this new existence result, in this paper (see \Cref{sec:apprx_particle}), we provide the approximation of $(X,Y)$ solution of \eqref{eq:LSV_intro_1} + \eqref{eq:LSV_intro_2} by a particle system. To the best of our knowledge, this is the first result of this kind for a system of type \eqref{eq:LSV_intro_1} + \eqref{eq:LSV_intro_2}. This approximation by particle system leads naturally to a numerical scheme allowing to compute a solution $(X,Y)$.
%It should be emphasized that in all these results, a non--degenerate assumption over the coefficient coefficient is always required.

\medskip
The approach used to deal with the LSV model is sufficiently general that it allows us to consider more general framework (see \Cref{thm:main}). A typical example that can be considered is the question of the existence of a couple $\Xbb:=(X^1,X^2)$ verifying:
\begin{align} \label{eq:system_intro}
\begin{cases}
    \mathrm{d}X^{1}_t
    =
    b \big(t,\Xbb_t,p(t,\Xbb_t), (p{h})_{\mathbf{1}}(t,X^1_t)\big) \mathrm{d}t
    +
    {\sigma} \big(t,\Xbb_t,p(t,\Xbb_t), (p{h})_{\mathbf{1}}(t,X^1_t) \big) \mathrm{d}W_t
    \\
    \\
    \mathrm{d}X^{2}_t=\lambda(t,X^2_t)\mathrm{d}t +
    \beta(t,X^{2}_t)\mathrm{d}B_t~~\mbox{with}~~(p{h})_{\mathbf{1}}(t,X^1_t):=\int_{\R} {h}(t,X^1_t,x_2)p(t,X^1_t,x_2)\;\mathrm{d}x_2
\end{cases}
\end{align}
with $p(t,x_1,x_2)$ the density of $\Lc(\Xbb_t)$ w.r.t. the Lebesgue measure. We refer to \Cref{paragr:example} at \Cref{eq:example1} for an application of our main result (\Cref{thm:main}) to the system \eqref{eq:system_intro}.
These types of equations fall into the category of McKean--Vlasov equations. The applied mathematics researchers were studied equations of this type due to its involvement in fields like Physics, Biology, finance, $\cdots$. %As we already mentioned, one of the main reasons of our interest in this equation is LVS model in finance. This will be investigated in \Cref{sec:application:vol}.

%\paragraph*{Discussion about the equation}
\medskip
For the study of moderately interacting particle systems, in the absence of $(ph)_{\mathbf{1}}(t,X^1_t)$, \eqref{eq:system_intro} has been investigated.  \citeauthor*{Oelschlager1985} \cite{Oelschlager1985} shows existence and uniqueness of this kind of equation when the diffusion coefficient $\sigma$ is constant. \citeauthor*{JOURDAIN1998727} \cite{JOURDAIN1998727} extend the work of \cite{Oelschlager1985} in the case where the drift and diffusion coefficients depend on the densities of the marginal law of the solution $\Xbb_t$. In the framework considered by \cite{JOURDAIN1998727} but without drift coefficients, \citeauthor*{mireilleJabir} \cite{mireilleJabir} weakened the assumptions necessary for the result of \cite{JOURDAIN1998727}.  
In the presence of $(ph)_{\mathbf{1}}(t,X^1_t)$, a common situation is the consideration of the conditional distribution of $X^2_t$ given $X^1_t$ as it is the case in the LSV model previously mentioned. Our existence result of a calibrated LSV model is actually a direct application of the existence of a couple $(X^1,X^2)$ satisfying \eqref{eq:system_intro}. %When equation ?? has the shape of ??, which is our main interest,

\medskip
In this article, we will establish the existence of solution of \eqref{eq:system_intro} in a more general setting under suitable assumptions over the coefficients. We will show then how this result can be used in establishing an existence result for the calibration of local stochastic volatility model. Our approach relies on the use of a fixed point theorem after proving some Sobolev estimates for the density $p$ (see \Cref{sec:leadingproof} for the idea leading the proof). We would like to mention that most of the techniques used in this article are inspired by \citeauthor*{FK-PL-equations} \cite[Chapter 6]{FK-PL-equations}, where the authors present interesting techniques for establishing estimates of the gradient of a solution of the Fokker--Planck equation under mild assumptions.

\medskip
After introducing some notations just below, in \Cref{sec:set-resutls}, we present the framework considered in this paper and state the main results. Namely, an existence result, an approximation result by particle system and the application to the local stochastic volatility model. Most of the technical proofs are completed in \Cref{sec:proofs}.

\medskip

%\paragraph*{Main results/contributions}

%\paragraph*{Outline of the paper}

%summarizes some simple moment bounds that will recurthroughout the paper. Section 4 is devoted to the main line of the proof of the convergencetheorem. Section 5 proves the converse result, showing that all weak MFE arise as limits of aclass n-player equilibria. Finally, Section 6 connects our notion of MFE with those of previouswork on common noise models

\vspace{1.5mm}

\medskip
{\bf \large Notations}.
	$(i)$
	Given a metric space $(E,\Delta)$, $p \ge 1,$ we denote by $\Pc(E)$ the collection of all Borel probability measures on $E$,
	and by $\Pc_p(E)$ the subset of Borel probability measures $\mu$ 
	such that $\int_E \Delta(e, e_0)^p  \mu({\mathrm{d}}e) < \infty$ for some $e_0 \in E$.
	We equip $\Pc_p(E)$ with the Wasserstein metric $\Wc_p$ defined by
	\[
		\Wc_p(\mu , \mu') 
		~:=~
		\bigg(
			\inf_{\lambda \in \Lambda(\mu, \mu')}  \int_{E \x E} \Delta(e, e')^p ~\lambda( \mathrm{d}e, \mathrm{d}e') 
		\bigg)^{1/p},
	\]
	where $\Lambda(\mu, \mu')$ denote the collection of all probability measures $\lambda$ on $E \x E$ 
	such that $\lambda( \mathrm{d}e, E) = \mu$ and $\lambda(E,  \mathrm{d}e') = \mu'( \mathrm{d}e')$. Equipped with $\Wc_p,$ $\Pc_p(E)$ is a Polish space (see \cite[Theorem 6.18]{villani2008optimal}). For any  $\mu \in \Pc(E)$ and $\mu$--integrable function $\varphi,$ we define $\langle \varphi, \mu \rangle=\langle \mu, \varphi \rangle:=\int_E \varphi(e) \mu(\mathrm{d}e)$.

\medskip
	\noindent $(ii)$	
	%For any $(E,\Delta)$ and $(E',\Delta')$ two Polish spaces, we use $C_b(E,E')$ to denote the set of continuous functions $f$ from $E$ into $E'$ such that $\sup_{e \in E} \Delta'(f(e),e'_0) < \infty$ for some $e'_0 \in E'$.
	Let $\N^*$ denote the set of positive integers. Given non--negative integers $m$ and $n$, we denote by $\S^{m \x n}$ the collection of all $m \x n$--dimensional matrices with real entries, equipped with the standard Euclidean norm, which we denote by $|\cdot|$ regardless of the dimensions. 
	We also denote $\S^n:=\S^{n \times n}$, and denote by $0_{m \times n}$ the element in $\S^{m \times n}$ whose entries are all $0$, and by $\mathrm{I}_n$ the identity matrix in $\S^n$. Let $d \in \N^*,$ for any open set $E \subset  \R^d$, ${\rm diam}(E)$ denotes the diameter of $E$ i.e. $\sup_{\xbb,\yb \in E} |\xbb-\yb|$. Given an open set $\Om_T=(0,T) \x \Om$ in $\R \x \R^d$, where $\Om \subset \R^d$ is an open set and $T>0$, $C^\infty_c(\Om_T)$ denotes the class of infinitely differentiable functions with compact support in $\Om_T$ and  $C^{1,2}(\Om_T)$ denotes the class of functions on $\Om_T$ with continuous derivatives up to the second order in $\xbb$ and a continuous derivative in $t$. When the derivatives have continuous extensions to the closure of $\Om_T$ i.e. $\Omb_T$, we write $C^\infty_c(\Omb_T)$ and $C^{1,2}(\Omb_T)$. The notation $\partial_t$ indicates the continuous derivative in $t$, and the symbols $\nabla^2$ and $\nabla$ are the second and first order derivatives in $\xbb$. 
	%The class of functions whose derivatives up to order $k$ have continuous extensions to the closure of $\Om$ is denoted by the symbol $C^k(\Omb)$.
	%the class of all smooth functions with compact support lying in an open set $\Omega \subset \R^d$ is denoted by $C^\infty_c(\Om)$; the class $C^k_c(\Om)$ of functions with $k$ continuous derivatives are defined similarly; $C(\Om)$ and $C_b(\Om)$ are the classes of continuous and bounded continuous functions. The class of functions whose derivatives up to order $k$ have continuous extensions to the closure of $\Om$ is denoted by the symbol $C^k(\Omb)$. We denote by $\nabla f$ and $\nabla^2f$ the gradient and Hessian of $f$. When $f \in C^k((0,T) \x U)$ for $T \in (0,\infty]$ and an open set $U \subset \R^\ell$ with $\ell \ge 1$, $(0,T)$ will act as time and $U$ as space. The notation $\partial_t f$ will be the derivative in time and $\nabla$  
	Given an open set $U \subset \R^\ell$ for $\ell \ge 1$, for any Banach space $(E,||_E)$, the space $C^{0,\delta}(U,E)$ consists of H\"older continuous of order $\delta \in (0,1)$ functions $f$ on $U$ with value in $E$ with finite norm
    $$
        \|f\|_{C^{0,\delta}(U,E)}
        :=
        \sup_{\xbb \in U} |f(\xbb)|_E
        +
        \sup_{\xbb,\yb \in U,\;\xbb \neq \yb} \frac{|f(\xbb)-f(\yb)|_E}{|\xbb-\yb|^\delta}.
    $$
	The support of a function $f$, i.e., the closure of the set $\{ f \neq 0\}$, is denoted by {\rm supp} $f$.

\section{Setup and Main results} \label{sec:set-resutls}

The general assumptions used throughout this paper are now formulated. {\color{black}We set $\theta \in (-1,1)$ and the probability space $(\Om,\F:=(\Fc_t)_{t \ge 0},\Fc,\P)$ supporting $W$ and $B$ two $\R$--valued $\F$--Brownian motions satisfying $\mathrm{d}\langle W,B \rangle_t=\theta \mathrm{d}t$ .} We are given the numbers $q:=3+1/4$ and $\frac{1}{2} > \eta > \frac{1}{q}$ with $(1-2\eta) q > 2$. We refer to \Cref{sec_functions} for the definition of $L^1_{\ell oc}$, $L^q$ and the Sobolev spaces $H^{q,1}$ and $\H^{q,1}$. We consider the following Borel measurable functions
	\[
		\big[b, \sigma\big]:L^1_{\ell oc}(\R_+;L^1(\R^2;\R)) \x \R_+ \x \R^2 \longrightarrow \R \x \R
		\;\mbox{and}\;\big[\lambda,\beta \big]:\R_+ \x \R \longrightarrow \R \x \R.
	\]
	%Also, we take Borel bounded map $h:[0,T] \x \R \to \R^k$. 
	%The main assumptions used in the whole article are now formulated.
%The maps $b$ and $\sigma$ are progressively measurable in the sense that:  
For any $T \in \R_+$ and Borel map $f:\R_+ \x \R \to \R$, if we define $f^T(s,\xbb):=f(s \wedge T,\xbb)$ for all $(s,\xbb) \in \R_+ \x \R^2$, %we have for $f \in L^1_{\ell oc}(\R_+;L^1(\R^2))$, $[b,\sigma](f)(s,x)=[b,\sigma](f^t)(s,x)$ for any $(s,x) \in [0,t] \x \R^2$. Also, for each $T>0$, 
when $f \in L^1([0,T] \x \R^2)$, we will consider its extension $f^T$ over $ L^1_{\ell oc}(\R_+;L^1(\R^2;\R))$ and write $[b,\sigma](f)$ instead of $[b,\sigma](f^T)$.

%and $\frac{1}{2} > \overline{\beta} > \overline{\kappa} > \frac{1}{q}$ with $(1-2\overline{\beta})q > 1$.

    \begin{assumption} \label{assum:main1} 
        Let $T>0$. There exist $m$ and $M$ positive numbers satisfying: for any $f\in L^1([0,T] \x \R^2))$ s.t. $f \ge 0$, for each $t \in [0,T]$ and $\xbb=(x_1,x_2) \in \R^2,$ if we introduce the map $\bb:=(b^1,b^2)$ and the symmetric matrix $\ab:=(a^{i,j})_{1 \le i ,j \le 2}$ by: $b^1(f)(t,\xbb):=b(f)(t,\xbb),\;\;b^2(f)(t,\xbb):=\lambda(t,x_2)$,
\begin{align*}
    a^{1,1}(f)(t,\xbb)
    :=
    \frac{1}{2}\sigma(f) (t,\xbb)^2,\;a^{2,1}(f)(t,\xbb)
    :=
    a^{1,2}(f)(t,\xbb)
    :=
    \frac{1}{2}{\sigma} (f)(t,\xbb){\beta}(t,x_2)\theta\;\;\mbox{{\rm and}}\;\;a^{2,2}(f)(t,\xbb)
    :=
    \frac{1}{2}{\beta} (t,x_2)^2,
\end{align*}
%where
%\begin{align*}
    %\overline{\sigma} (t,x)
    %:=
    %\overline{\sigma} \big(t,x^1,x^2, f(h^1)(t,x_1), f(h^2)(t,x_1) \big)
%\end{align*}
%and
one has that:

\medskip
{\small $\bullet$ $\mathbf{\underline{Growth\;assumption}}$} For any $(t,\xbb) \in [0,T] \x \R^2$, $|\bb(f)(t,\xbb)| \le M$ and  $m\;\mathrm{I}_2 \le \ab(f)(t,\xbb) \le M\; \mathrm{I}_2.$ In addition, for any open set $E\subset \R^2$ verifying ${\rm diam}(E) \le 1$ %and each $T >0$
\begin{align*}
    \|\nabla \ab(f)\|_{L^{q}([0,T] \x E)}^q \le M \bigg[ 1 + \|f\|_{\H^{q,1}([0,T] \x \R^2)}^q + \||\partial_{x_1} f|_{\mathbf{1}}\|_{L^q([0,T] \x \R)}^q +  \| f_{\mathbf{1}}\|_{L^q([0,T] \x \R)}^q \bigg]
\end{align*}
and
\begin{align*}
    \sup_{t \in [0,T]}\|\ab(f)(t,\cdot)\|_{C^{0,1-2{\eta}-2/q}(E)} \le  M \bigg[1+ \sup_{t \in [0,T]}\| f(t,\cdot) \|_{C^{0,1-2{\eta}-2/q}(\R^2)} + \sup_{t \in [0,T]}\|f_{\mathbf{1}}(t,\cdot)\|_{C^{0,1-2{\eta}-1/q}(\R)}
    \bigg]
\end{align*}
where
\begin{align*}
    |f|_{\mathbf{1}}(t,x_1)
    =
    f_{\mathbf{1}}(t,x_1)
    :=
    \int_{\R}f(t,x_1,x_2)\; \mathrm{d}x_2\;\;\mbox{and}\;\;|\partial_{x_1} f|_{\mathbf{1}}(t,x_1)
    :=
    \int_{\R}|\partial_{x_1}f(t,x_1,x_2)|\; \mathrm{d}x_2;
\end{align*}

{\small$\bullet$ $\mathbf{\underline{Continuity\;assumption}}$} The map $[0,T] \x \R \ni (t,x_2) \to \beta(t,x_2) \in \R$ is Lipschitz in $x_2$ uniformly in $t$. Besides, whenever $\lim_{n \to \infty} \| f^n-f\|_{L^1([0,T] \x \R^2)}=0,$ for any $\varphi \in C_c^\infty([0,T] \x \R^2)$
\begin{align*}
    \lim_{n \to \infty} \int_{[0,T] \x \R^2} b^i(f^n)(t,\xbb) \varphi(t,\xbb)\;\mathrm{d}\xbb\;\mathrm{d}t
    =
    \int_{[0,T] \x \R^2} b^i(f)(t,\xbb) \varphi(t,\xbb)\;\mathrm{d}\xbb\;\mathrm{d}t\;\;\mbox{for any}\;1 \le i \le 2
\end{align*}
and
\begin{align*} 
    \lim_{n \to \infty} \int_{[0,T] \x \R^2} a^{i,j}(f^n)(t,\xbb) \varphi(t,\xbb)\;\mathrm{d}\xbb\;\mathrm{d}t
    =
    \int_{[0,T] \x \R^2} a^{i,j}(f)(t,\xbb) \varphi(t,\xbb)\;\mathrm{d}\xbb\;\mathrm{d}t\;\;\mbox{for any}\;1 \le i,j \le 2.
\end{align*}

\end{assumption}

\begin{remark} \label{remark:assump}
    It is worth mentioning that, the constants $m$ and $M$ may depend on $T$ but are independent of the choice of the map $f$. Also notice that when $f$ does not belong to the Sobolev space or to the H\"older space, the preceding inequalities are trivially true since the upper bound is equal to infinity. The value of $q$ i.e. $3+1/4$ follows from the use of some Sobolev embedding theorems in the proofs.
\end{remark}

Let $p_0:\R^2 \to \R_+$ be a density i.e., $\int_{\R^2} p_0(x_1,x_2)\;\mathrm{d}x_1\;\mathrm{d}x_2=1$ s.t. (again see \Cref{sec_functions} for the definition of the Sobolev spaces $H^{q,1}$ and $\H^{q,1}$)
\begin{align} \label{ass:density_initial}
    p_0 \in H^{q,1}(\R^2),\;\;(p_0)_{\mathbf{1}} \in H^{q,1}(\R)\;\;\mbox{and, there is}\;\;\alpha>0,\;\int_{\R} e^{\alpha\;|x_2|^2}\bigg|\int_{\R}p_0(x_1\;x_2)\;\mathrm{d}x_1\bigg|^q\;\mathrm{d}x_2<\infty.
\end{align}
%with $(1-2\beta) q > 2$ and $\frac{1}{2} > \overline{\beta} > \overline{\kappa} > \frac{1}{q}$ with $(1-2\overline{\beta})q > 1$.
\begin{theorem} \label{thm:main}
    Let $\kappa $ s.t.  $\frac{1}{2} > \eta > \kappa > \frac{1}{q}$ and $T>0$. Under {\rm \Cref{assum:main1}}, there exists an $\R^2$--valued $\F$--adapted continuous process $\Xbb:=(X^1,X^2)$ satisfying: $\Lc(\Xbb_0)(\mathrm{d}\xbb)=p_0(\xbb)\;\mathrm{d}\xbb,$ 
\begin{align} \label{eq:thm:states_1}
    \mathrm{d}X^{1}_t
    =
    b(p) (t,\Xbb_t) \mathrm{d}t
    +
    {\sigma} (p) (t,\Xbb_t) \mathrm{d}W_t\;\;\mbox{and}\;\;\mathrm{d}X^{2}_t
    =
    \lambda (t,X^2_t) \mathrm{d}t
    +
    {\beta} (t,X^2_t) \mathrm{d}B_t,\;\;t \le T
\end{align}
where $\Lc(\Xbb_t)(\mathrm{d}\xb)=p(t,x)\;\mathrm{d}\xb.$
    In addition, the density $p$ satisfies
\begin{align*}
    \|p_{\mathbf{1}} \|_{L^{q}([0,T] \x \R)}^q + \||\partial_{x_1}p|_{\mathbf{1}} \|_{L^{q}([0,T] \x \R)}^q + \|\;p\; \|_{\H^{q,1}([0,T] \x \R^2)}^q <\infty
\end{align*}
and
\begin{align*}
    \| \;p\; \|_{C^{0,\kappa-1/q}\big([0,T],\;C^{0,1-2\eta-2/q}(\R^2)\big)}
        +)(d
        \|\;p_{\mathbf{1}}\;\|_{C^{0,{\kappa}-1/q}\big([0,T],\;C^{0,1-2{\eta}-1/q}(\R)\big)} <\infty.
\end{align*}
\end{theorem}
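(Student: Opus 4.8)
The plan is to recast the system \eqref{eq:thm:states_1} as a fixed point problem for the associated Fokker--Planck equation and to apply Schauder's fixed point theorem; the guiding heuristic is the one described in \Cref{sec:leadingproof}.

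\textbf{Step 1: the map on a convex compact set.} I would fix $T$ and $\kappa$ and, for suitably large constants, introduce the convex set $\Kc\subset L^1([0,T]\x\R^2)$ of nonnegative $f$ with $\int_{\R^2}f(t,\cdot)\,\mathrm{d}x=1$ for a.e.\ $t$, satisfying: the Sobolev bounds on $\|f\|_{\H^{q,1}([0,T]\x\R^2)}$, $\|f_{\mathbf 1}\|_{L^q([0,T]\x\R)}$ and $\||\partial_{x_1}f|_{\mathbf 1}\|_{L^q([0,T]\x\R)}$; the H\"older bounds on $\|f\|_{C^{0,\kappa-1/q}([0,T],C^{0,1-2\beta-2/q}(\R^2))}$ and $\|f_{\mathbf 1}\|_{C^{0,\kappa-1/q}([0,T],C^{0,1-2\beta-1/q}(\R))}$; and a uniform-in-$t$ Gaussian tail bound on the $x_2$--marginal of $f$, of the type appearing in \eqref{ass:density_initial}. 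The H\"older and Sobolev bounds yield equicontinuity on bounded spatial regions and the tail bound yields tightness, so $\Kc$ is compact in $L^1([0,T]\x\R^2)$. For $f\in\Kc$, the coefficients $\bb(f),a(f)$ defined in \Cref{assum:main1} are bounded, $a(f)$ is uniformly elliptic, and $a(f)(t,\cdot)$ is H\"older in $x$ uniformly in $t$; hence the Cauchy problem
\begin{align*}
    \partial_t u=\sum_{1\le i,j\le 2}\partial^2_{x_ix_j}\big(a^{i,j}(f)\,u\big)-\sum_{1\le i\le2}\partial_{x_i}\big(\bb^i(f)\,u\big),\qquad u(0,\cdot)=p_0,
\end{align*}
has a unique weak solution $u=:\Phi(f)$, which is a probability density for every $t$.

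\textbf{Step 2: a priori estimates, $\Phi(\Kc)\subseteq\Kc$.} This is the crux. First, a maximum principle / De Giorgi--Nash--Moser argument, depending only on $m,M,T,\|p_0\|_\infty$, bounds $\|u\|_{L^\infty([0,T]\x\R^2)}$ uniformly over $f\in\Kc$. Next, the gradient estimates for Fokker--Planck equations of \cite[Chapter~6]{FK-PL-equations} control $\|u\|_{\H^{q,1}}$ in terms of $\|p_0\|_{H^{q,1}}$, the ellipticity/boundedness constants, $\|u\|_\infty$, and $\|\nabla a(f)\|_{L^q}$ over unit cubes --- the latter being controlled, by the Sobolev bound in \Cref{assum:main1}, by $\|f\|_{\H^{q,1}}$, $\|f_{\mathbf 1}\|_{L^q}$ and $\||\partial_{x_1}f|_{\mathbf 1}\|_{L^q}$. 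Integrating the Fokker--Planck equation in $x_2$ shows that $u_{\mathbf 1}$ solves a one--dimensional Fokker--Planck-type equation with bounded, uniformly elliptic coefficients, which yields the bounds on $\|u_{\mathbf 1}\|_{L^q}$ and $\||\partial_{x_1}u|_{\mathbf 1}\|_{L^q}$. The H\"older bounds on $u$ and $u_{\mathbf 1}$ then come from parabolic Sobolev embeddings of $\H^{q,1}$ into $C^{0,\kappa-1/q}([0,T],C^{0,1-2\beta-2/q}(\R^2))$ (and its one--dimensional analogue); this is exactly where the value $q=3+1/4$ and the constraints $\tfrac12>\beta>\kappa>\tfrac1q$, $(1-2\beta)q>2$ enter. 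Finally, since the $X^2$--equation has bounded coefficients, Aronson-type heat kernel bounds propagate the Gaussian tail of the $x_2$--marginal from $p_0$. I would then fix the constants defining $\Kc$ in the right order --- the data-only sup-bound first, then the Sobolev bounds, then the H\"older bounds --- tracking in each estimate its dependence on the constants of \Cref{assum:main1}, so that the whole system closes. \emph{Making this bootstrap genuinely self-improving rather than circular --- recall that the bound on $\nabla a(f)$ itself involves $\|f\|_{\H^{q,1}}$ --- is the main obstacle, and it is exactly what dictates the quantitative form of \Cref{assum:main1}.}

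\textbf{Step 3: continuity of $\Phi$ and conclusion.} If $f^n\to f$ in $L^1([0,T]\x\R^2)$ with $f^n,f\in\Kc$, then by the Continuity Assumption in \Cref{assum:main1}, $\bb^i(f^n)\to\bb^i(f)$ and $a^{i,j}(f^n)\to a^{i,j}(f)$ in the sense of testing against $C_c^\infty$; by Step~2 the $u^n=\Phi(f^n)$ are bounded in $\H^{q,1}$, hence precompact in $L^1$, and any limit point is a weak solution of the limiting Fokker--Planck equation, which is unique (continuous elliptic coefficients), so $u^n\to\Phi(f)$ in $L^1$. Schauder's theorem then yields a fixed point $p=\Phi(p)\in\Kc$, which by construction has the asserted Sobolev and H\"older regularity. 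Since $a(p)$ is continuous and uniformly elliptic and $\bb(p),\lambda,\beta$ are bounded (with $\beta$ Lipschitz), the martingale problem for the generator with diffusion $a(p)$ and drift $\bb(p)$ is well posed, giving a (weak) solution $\Xbb=(X^1,X^2)$ of \eqref{eq:thm:states_1} driven by Brownian motions with correlation $\theta$ (alternatively, a superposition principle builds the process directly from $p$); by uniqueness of the Fokker--Planck solution its time-marginals are $\Lc(\Xbb_t)=p(t,\cdot)\,\mathrm{d}x$. This completes the proof.
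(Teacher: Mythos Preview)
Your overall architecture is correct and matches the paper's: Schauder fixed point on a convex compact set of densities, fed by a priori Sobolev and H\"older estimates for the associated Fokker--Planck equation. Two steps, however, are not actually carried out.

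First --- and this is the heart of the matter --- you correctly flag the circularity in Step~2 (the gradient estimate on $u=\Phi(f)$ depends on $\|\nabla a(f)\|_{L^q}$, which depends on $\|f\|_{\H^{q,1}}$) but do not supply a mechanism to break it. The paper's device is specific: introduce a weight $\wr\in C_0^\infty(T_0)$ with $T_0>T$ and $\wr(T_0)=0$, and prove (\Cref{prop:inegal_general}) that $\|\wr\,\nabla p\|_{L^{q}}\le K\,\|\wr'\,p\|_{L^{q}}$ with $K=2N\big/\big(1-\ell(\pi,q',\overline R)\,\widehat C_\infty\big)$. The whole point is that the dependence on $\widehat C_\infty$ --- and hence on $\|\nabla a(f)\|_{L^q}$ --- enters only through this \emph{denominator}; by picking $\overline R$ small one pins $K\le 2N/\varepsilon$ uniformly over the invariant set. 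With $K$ thus frozen, a recursion in the Lebesgue exponent (\Cref{prop:recursive}, \Cref{prop:recurcive_estimates}), started from Krylov's $L^{(d+1)'}$ bound (\Cref{prop:density-integrability}) rather than a De Giorgi--Nash--Moser $L^\infty$ bound, climbs to $L^q$, each step trading $\wr$ for $\wr'$. Only after this does one choose $L_j,L_j^\infty$ so that the map sends the set into itself (\Cref{prop:choice_constant}). Without this or an equivalent trick, your ``fix the constants in the right order'' remains circular.

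Second, your marginal argument is not quite what is needed. Integrating in $x_2$ yields $\partial_t u_{\mathbf 1}=\partial_{x_1}^2(a^{1,1}u)_{\mathbf 1}-\partial_{x_1}(b^1u)_{\mathbf 1}$, which controls $\partial_{x_1}u_{\mathbf 1}$, not $|\partial_{x_1}u|_{\mathbf 1}=\int_\R|\partial_{x_1}u(\cdot,\cdot,x_2)|\,\mathrm dx_2$; it is the latter that appears in the growth assumption and must be bounded on $\Kc$. The paper handles this by a separate duality estimate (second inequality of \Cref{prop:inegal_general} and \Cref{prop:estimates_marg}), using the one--dimensional Morrey embedding in $x_2$ from \Cref{lemm:FP-estimates}. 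The Gaussian tail on the $x_2$--marginal of $p_0$ is used only here, and only for the autonomous law of $X^2$ via \Cref{prop:appendix:integrability}; it need not (and in the paper does not) appear as a constraint on $f\in\Kc$.
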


\begin{remark} \label{remark:explanation_first}
    $(i)$ We would like to point out that finding a process $\Xbb$ satisfying the system mentioned in {\rm \Cref{thm:main}} is not an easy task. Indeed, in this system of equations, the coefficients $($especially the diffusion coefficient $\sigma$ $)$ depend on the marginal distribution of the process i.e. $\Lc(\Xbb_t)$. Given the assumption considered, this dependence can be particularly discontinuous for the Wasserstein distance, see the examples of $b$ and $\sigma$ given in the section just below.  Consequently, the classical results of existence for McKean--Vlasov processes fail in this situation. Besides this discontinuity, the involvement of the distribution generates another technical difficulty. As we will see in the examples, $b$ and $\sigma$ can depend on $(pv)_{\mathbf{1}}(t,x_1)$ where $v:[0,T] \x \R^2 \to \R^k$ is a bounded Borel map. This type of dependence through $p$ creates a {\rm non--local+ \rm local} aspect which makes things even more difficult. It is {\rm non--local}  through the second variable as there is an integration in the second variable $x_2$ in $(pv)_{\mathbf{1}}(t,x_1)$, but it is {\rm local} in the sense that the first variable $x_1$ is fixed in the density $p$ and in the map $v$ $($in the definition of $(ph)_{\mathbf{1}}(t,x_1)$ $)$. It is worth emphasizing that the role of $x_1$ and $x_2$ has its importance in the sense that if the {\rm non--local} part was $x_1$ $($ and not $x_2$ $)$ and the {\rm local} part was $x_2$ $($and not $x_1$ $)$, the difficulty would have been quite different. This case turns out to be less complex and has been investigated in the literature $($see  for instance {\rm \cite{mireilleJabir}}$)$. 
    
    \medskip
    $(ii)$ Let us mention that the shape of the coefficients makes it possible to take into consideration the conditional distribution of $X^2$ given $X^1$ in a non--linear and specific way as we will see in the application to the local stochastic volatility model. Also, the two Brownian motions $W$ and $B$ can be correlated .i.e. $\mathrm{d} \langle W,B \rangle_t=\theta \mathrm{d}t$.

    \medskip
    $(iii)$ Our result contains some known results in the literature. %$($see for instance ...$)$. 
    But, the considered framework and assumptions are more general compared to the literature. Especially, the dependence of the coefficients on the marginal distribution are weaker than {\rm \cite{Oelschlager1985}, \cite{JOURDAIN1998727} } and {\rm \cite{mireilleJabir}}. In addition, regarding the application to the LSV model, compared to {\rm \cite{LackerZhang19}}, we provide a non--stationary solution which is more in the spirit of the model and we allow the Brownian motions to be correlated. Also, unlike {\rm \cite{jourdainZhou}} which consider a volatility process with a finite number of values, we are able to take into account a continuum of values namely $\R$. However, we are unable to have a more general dependency for the coefficients $\lambda$ and $\beta$. Also, we cannot provide a uniqueness result for this system.
\end{remark}

\subsection{Example of functions satisfying \Cref{assum:main1}} \label{paragr:example} 

Here, we give some examples of map $(b,\lambda,\sigma,\beta)$ for the application of \Cref{thm:main}. The maps $\lambda$ and $\beta$ need to be bounded, and $\beta$ must be Lipschitz in $x_2$ uniformly in $t$. Let $(h,v):\R_+\x \R^2 \to \R^k \x \R^k$. The maps $h$ and $v$ satisfy: for each $T>0$, $\;\mbox{for all } (t,x_1,x_2) \in [0,T] \x \R^2,$
$$
     \kappa^i_m \le h^i(t,x_1,x_2) \le \kappa^i_M\;\;\mbox{and}\;\;c^i_m \le v^i(t,x_1,x_2) \le c^i_M,\;\mbox{for each }1 \le i \le k,
$$
where $\{\kappa^i_m,\kappa^i_M,c^i_m,c^i_M ,\;1 \le i \le k\}$ are some real numbers.
Let us introduce
\begin{align} \label{eq:set_h}
    \Ec_h^T
    :=
    \big\{ (t,x_1,x_2,e_0,e_1,(e^1_2,\cdots,e^k_2)) \in [0,T] \x \R^2 \x \R_+ \x \R_+ \x \R^k:\;\; e_1 \kappa^i_m \le e^i_2 \le e_1 \kappa^i_M\;\mbox{for each }1 \le i \le k
    \big\}
\end{align}
and
\begin{align} \label{eq:set_v}
    \Ec_v^T
    :=
    \big\{ (t,x_1,x_2,e_0,e_1,(e^1_2,\cdots,e^k_2)) \in [0,T] \x \R^2 \x \R_+ \x \R_+ \x \R^k:\;\; e_1 c^i_m \le e^i_2 \le e_1 c^i_M\;\mbox{for each }1 \le i \le k
    \big\}.
\end{align}
We take the maps $[{b}^\circ,\sigma^\circ]:\R_+\x \R^2 \x \R_+ \x \R_+ \x \R^{k} \to \R^2$ satisfying:  $\Ec_h^T \ni (t,x_1,x_2,e_0,e_1,e_2) \to {b}^\circ(t,x_1,x_2,e_0,e_1,e_2) \in \R$ and $\Ec_v^T \ni (t,x_1,x_2,e_0,e_1,e_2) \to \sigma^\circ(t,x_1,x_2,e_0,e_1,e_2) \in \R$ are bounded and continuous in $(e_0,e_1,e_2)$ uniformly in $(t,x_1,x_2)$. In addition, the matrix $\ab$ defines as $a^{1,1}:=\frac{1}{2} (\sigma^\circ)^2$, $a^{2,2}:=\frac{1}{2} \beta^2$, $a^{1,2}=a^{2,1}:=\frac{1}{2}\theta \beta \sigma^\circ$ verifies $0<\inf_{\Ec^T_v} \inf_{\boldsymbol{z} \neq 0} \frac{\boldsymbol{z}^\top\;a\;\boldsymbol{z}}{|\boldsymbol{z}|^2}$.
%\begin{align*}
    %0 < m \mathrm{I}_2 \le a(t,x,e_0,e_1,e_2)^2,\;\;\mbox{for all }(t,x,e_0,e_1,e_2) \in \Ec_v^T.
%\end{align*}

\medskip
$\bullet$ A typical example is the following
\begin{align} \label{eq:example1}
    b(f)(t,\xbb):={b}^\circ\big(t,\xbb,f(t,\xbb),f_{\mathbf{1}}(t,x_1),(fh)_{\mathbf{1}}(t,x_1) \big)\;\mbox{and}\;\sigma(f)(t,\xbb):={\sigma}^\circ\big(t,\xbb,f(t,\xbb),f_{\mathbf{1}}(t,x_1),(fv)_{\mathbf{1}}(t,x_1) \big),
\end{align}
$ |\partial_{x_1}v^i(t,\xbb)| \le c^i_M,\;\mbox{for each }1 \le i \le k$, and for each $T>0$,  $\Ec_v^T \ni (t,x_1,x_2,e_0,e_1,e_2) \to \sigma^\circ(t,x_1,x_2,e_0,e_1,e_2) \in \R$ is Lipschitz in $(x_1,x_2,e_0,e_1,e_2)$ uniformly in $t$ i.e. $\esup_{\Ec_v^T}|\nabla \sigma^\circ|<\infty$.
%\begin{align*}
    %|\nabla \sigma^\circ(t,x,e_0,e_1,e_2)| \le {\rm M}^G,\;\;\mbox{for all }(t,x,e_0,e_1,e_2) \in \Ec_v^T.
%\end{align*} 
We refer to \Cref{sub_sec_checkExample} for the checking of the \Cref{assum:main1}.

\medskip
$\bullet$ Let us give another example. We take 
\begin{align*}
    b(f)(t,\xbb):={b}^\circ\Big(t,\xbb,\;f(t,\xbb)\;,\int_{\R^2} f(t,\xbb')\;\mathrm{d}\xbb'\;,\; \int_{\R^2} h(t,\xbb')\;f(t,\xbb')\;\mathrm{d}\xbb'\Big)
\end{align*}
and
\begin{align*}
    \sigma(f)(t,\xbb):={\sigma}^\circ\Big(t,\xbb,\;f(t,\xbb)\;,\int_{\R^2} f(t,\xbb')\;\mathrm{d}\xbb'\;,\; \int_{\R^2} v(t,\xbb')\;f(t,\xbb')\;\mathrm{d}\xbb'\Big),
\end{align*}
and  for each $T>0$,  $\Ec_v^T \ni (t,x_1,x_2,e_0,e_1,e_2) \to \sigma^\circ(t,x_1,x_2,e_0,e_1,e_2) \in \R$ is Lipschitz in $(x_1,x_2,e_0)$ uniformly in $(t,e_1,e_2)$  i.e. $\esup_{\Ec_v^T}|\partial_{x_1} \sigma^\circ| + |\partial_{x_2} \sigma^\circ| + |\partial_{e_0} \sigma^\circ| < \infty$.
%\begin{align*}
    %|\partial_{x_1} \sigma^\circ(t,x,e_0,e_1,e_2)| + |\partial_{x_2} \sigma^\circ(t,x,e_0,e_1,e_2)| + |\partial_{e_0} \sigma^\circ(t,x,e_0,e_1,e_2)| \le {\rm M}^G,\;\;\mbox{for all }(t,x,e_0,e_1,e_2) \in \Ec_v^T.
%\end{align*} 

\subsection{Approximation by particle system} \label{sec:apprx_particle}

In this section, we provide a method to approximate a solution of the system \eqref{eq:thm:states_1} by using interacting processes. 

\medskip
We say $f \in L^1_{prob}(\R_+ \x \R^2)$ if $f \ge 0$, and for each $t \in \R_+,$ $\int_{\R^2} f(t,\xbb)\;\mathrm{d}\xbb=1$. Let $e \ge 1$.  In addition, we assume that
\begin{assumption}
\label{assm_particles}
     For any $T>0$, $f$, $f' \in L^1_{prob}(\R_+ \x \R^2)$ and $(t,\xbb,\xbb') \in [0,T] \x \R^2 \x \R^2$, we have
     \begin{align*}
         |[b,\sigma](f)(t,\xbb)-[b,\sigma](f')(t,\xbb')| 
         \le M \Big[|\xbb-\xbb'| + |f(t,\xbb)-f'(t,\xbb')| + |f_{\mathbf{1}}(t,x_1)-f'_{\mathbf{1}}(t,x'_1)| + \|f-f'\|_{L^1([0,T] \x \R^2)} \Big].
     \end{align*}
     %where $\mu_t(\mathrm{d}x):=f(t,x)\;\mathrm{d}x$ and $\mu'_t(\mathrm{d}x):=f'(t,x)\;\mathrm{d}x$.
\end{assumption}

Let $\delta>0$ and $G_\delta:\R \to \R_+$ be a kernel satisfying: $\int_{\R} G_\delta(x) \mathrm{d}x=1$, for any $\alpha \ge1$,
\begin{align*}
    \Lim_{\delta \to 0}G_\delta * \varphi=\varphi,\;\mbox{a.e.},\;\;\|G_\delta * \varphi\|_{L^\alpha(\R)} \le C\|\varphi\|_{L^\alpha(\R)}\;\mbox{and}\;\|\nabla( G_\delta * \varphi)\|_{L^\alpha(\R)} \le C\|\nabla \varphi\|_{L^\alpha(\R)},\;\mbox{for all }\varphi \in C^\infty_c(\R)
\end{align*}
where $C$ is independent of $\delta$ and $``$ $*$ $"$ denotes the convolution product. We set $\widehat{G}_\delta(\xbb):=G_\delta(x_1)G_\delta(x_2)$.
%where $\mu_t(\mathrm{d}x):=f(t,x)\mathrm{d}x$ for any $f$ s.t. $f(t,\cdot)$ is a density. 
For any process $\nu:=(\nu_t)_{t \in \R_+} \subset \Pc(\R^2),$ we introduce the map 
$$
    %\nu(\delta)x_1):=\int_{\R^2} h(t,x_2') G_\delta(x_1-x_1')\nu(\mathrm{d}x_1',\mathrm{d}x_2')\;\;\mbox{and}\;\;
    \widehat{G}_\delta *\nu(t,\xbb):=\int_{\R^2} \widehat{G}_\delta(x_1-x_1',x_2-x_2')\nu_t(\mathrm{d}x_1',\mathrm{d}x_2').
$$
Since \Cref{assm_particles} holds, it is easy to check that (see \Cref{sec_approx}), for each $T>0$, $C([0,T];\Pc_e(\R^2)) \x [0,T] \x \R^2 \ni (\nu,t,\xbb) \to [b,\sigma](\widehat{G}_\delta *\nu)(t,\xbb) \in \R^2$ is Lipschitz in $(\nu,\xbb)$ uniformly in $t$ where $C([0,T];\Pc_e(\R^2))$ is the set of continuous functions over $[0,T]$ with values in $\Pc_e(\R^2)$ . Let $\Xbb^\delta:=(X^{1,\delta},X^{2,\delta})$ be an $\R^2$--valued $\F$--adapted continuous process satisfying $p(0,\cdot)=p_0(\cdot)$, for $t \in [0,T]$,
\begin{align*}
    \mathrm{d}X^{1,\delta}_t
    =
    {b}(\mathbf{G}_\delta *\mu^{\delta}) \big(t,\Xbb^\delta_t \big) \mathrm{d}t
    +
    {\sigma}(\mathbf{G}_\delta *\mu^{\delta}) \big(t,\Xbb^\delta_t \big)W_t\;\;\mbox{and}\;\;\mathrm{d}X^{2,\delta}_t=\lambda(t,X^{2,\delta}_t) \mathrm{d}t
    +
    \beta(t,X^{2,\delta}_t)\mathrm{d}B_t
\end{align*}
where $\mu^\delta_t(\mathrm{d}\xbb)=\Lc(\Xbb^\delta_t)(\mathrm{d}\xbb)=p^\delta(t,\xbb)\mathrm{d}\xbb.$ We consider $(W^i,B^i)_{i \ge 1}$ a sequence of independent random variables  s.t for each $i$, $W^i$ and $B^i$ are two $\R$--valued Brownian motions s.t. $\mathrm{d} \langle W^i,B^i \rangle_t=\theta \mathrm{d}t$. In addition to the assumptions of \Cref{thm:main}, the initial density $p_0$ is s.t. $\int_{\R^2} |\xbb|^r p_0(\mathrm{d}\xbb) < \infty$ for $r > e \ge 1$. For each $\delta >0$, let $(\Xbb^{N,1},\cdots,\Xbb^{N,N})$ be the unique solution of: $(\Xbb^{N,1}_0,\cdots,\Xbb^{N,N}_0)$ is i.i.d. with $\Lc(\Xbb^{N,i}_0)(\mathrm{d}\xbb)=p_0(\xbb) \mathrm{d}\xbb$,
\begin{align*}
    \mathrm{d}\Xbb^{N,i,1}_t
    =
    b\big(\widehat{G}_\delta *\mu^{N,\delta} \big) \big(t,\Xbb^{N,i}_t \big) \mathrm{d}t
    +
    \sigma \big(\widehat{G}_\delta *\mu^{N,\delta} \big) \big(t,\Xbb^{N,i}_t \big) \mathrm{d}W^i_t\;\;\mbox{and}\;\;\mathrm{d}\Xbb^{N,i,2}_t
    =
    \lambda \big(t,\Xbb^{N,i,2}_t \big) \mathrm{d}t
    +
    \beta \big(t,\Xbb^{N,i,2}_t \big) \mathrm{d}B^i_t
\end{align*}
where $\mu^{N,\delta}_t:=\frac{1}{N}\sum_{i=1}^N \delta_{\Xbb^{N,i}_t}$. Let us fixed $T >0.$
%\begin{align*}
    %\mu^N_t:=\mu^{N,\delta}_t:=\sum_{i=1}^N \delta_{\Xbb^{i,N}_t}
%\end{align*}
\begin{proposition} \label{prop:numerical_scheme}
    \begin{enumerate}
        \item The sequence $(p^\delta)_{\delta >0}$ is relatively compact in $C([0,T] \x \R^2)$ for the uniform topology and each limit point $p$ is s.t. $p(t,\xbb)\mathrm{d}\xbb=\Lc(\Xbb_t)(\mathrm{d}\xbb)$ where $\Xbb$ is a solution of the system \eqref{eq:thm:states_1}.
        
        \item For each $\delta >0$, the sequence $((\mu^{N,\delta}_t)_{t \in [0,T]})_{N \ge 1}$ converges towards $(\mu^\delta_t)_{t \in [0,T]}$ in $\Wc_e$. In addition, for any $k \ge 1$ and any bounded measurable map $\phi: [0,T] \x \R^{2k} \to \R$
\begin{align*}
    \Lim_{N \to \infty}\E \bigg[\int_0^T \phi(t,\Xbb^{N,1}_t,\cdots,\Xbb^{N,k}_t)\;\mathrm{d}t \bigg]
    =
    \int_{[0,T] \x \R^{2k}} \phi(t,\xbb^1,\cdots,\xbb^k)\;p^\delta(t,\xbb^1)\cdots p^\delta(t,\xbb^k) \;\mathrm{d}\xbb^1\cdots\mathrm{d}\xbb^k\;\mathrm{d}t.
\end{align*}
    \end{enumerate}

%$\;\mbox{where}\;\Lc(\Xbb_t)=p(t,x)\mathrm{d}x.$

    %s.t. the sequence $(\mu^{N,\delta})_{N \ge 1,\delta >0}$ satisfies
%\begin{align*}
    %\Limsup_{\delta \to 0}\Limsup_{N \to \infty} \Wc_e \Big(\Lc\big((\mu^{N,\delta}_t)_{t \in [0,T]} \big)\;,\;\delta_{(\Lc(\Xbb_t))_{t \in [0,T]}} \Big)
%\end{align*}

%\medskip 
%Also, 

\end{proposition}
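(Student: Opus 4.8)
The plan is to chain together the stability/compactness machinery underlying \Cref{thm:main} with a classical propagation-of-chaos argument for the mollified, genuinely Lipschitz coefficients. For item 1, I would first observe that for each fixed $\delta>0$ the coefficients $(\nu,t,x)\mapsto[b,\sigma](\mathbf{G}_\delta*\nu)(t,x)$ are Lipschitz in $(\nu,x)$ uniformly in $t$ (this is the remark preceding the statement, using \Cref{assm_particles} and the boundedness/Lipschitz properties of $G_\delta$), so the mollified McKean--Vlasov system admits a solution $\Xbb^\delta$ with density $p^\delta$. The key point is that $\mathbf{G}_\delta*\mu^\delta$ is a nonnegative element of $L^1_{\ell oc}(\R_+;L^1(\R^2))$ that is a probability density in $x$ for each $t$, and — crucially — the Sobolev/H\"older a priori bounds in \Cref{assum:main1} are uniform in the map $f$ and hence, when applied to $f=\mathbf{G}_\delta*\mu^\delta$, uniform in $\delta$ (the convolution estimates $\|\mathbf{G}_\delta*\varphi\|_{L^\alpha}\le C\|\varphi\|_{L^\alpha}$, $\|\nabla(\mathbf{G}_\delta*\varphi)\|_{L^\alpha}\le C\|\nabla\varphi\|_{L^\alpha}$ with $C$ independent of $\delta$ ensure no constant blows up). Therefore $(p^\delta)_{\delta>0}$ satisfies the same estimates as in the conclusion of \Cref{thm:main}, uniformly in $\delta$; by the compact embeddings of the relevant H\"older spaces (Arzel\`a--Ascoli in $C([0,T],C^{0,1-2\beta-2/q}(\R^2))$, bootstrapped against the spatial H\"older regularity with exponent strictly less than $1-2\beta-2/q$), the family $(p^\delta)$ is relatively compact in $C([0,T]\x\R^2)$ for the uniform topology. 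For any limit point $p$ along a subsequence $\delta_n\to0$, I would pass to the limit in the Fokker--Planck (martingale-problem) formulation: tightness of $\Lc(\Xbb^{\delta_n})$ on path space follows from the uniform boundedness of the coefficients, and along a further subsequence $\Xbb^{\delta_n}\Rightarrow\Xbb$; the uniform convergence $p^{\delta_n}\to p$ plus $\mathbf{G}_{\delta_n}*\mu^{\delta_n}\to p$ in $L^1([0,T]\x\R^2)$ (uniform convergence on compacts plus the uniform tail control coming from the Gaussian-type moment bound in \eqref{ass:density_initial} propagated in time) lets me invoke the Continuity assumption in \Cref{assum:main1} to identify the limiting coefficients as $[b,\sigma](p)$, so $\Xbb$ solves \eqref{eq:thm:states_1} with $\Lc(\Xbb_t)=p(t,x)\,\mathrm{d}x$.

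For item 2, $\delta>0$ is frozen, so I am in the standard Sznitman-type situation with Lipschitz-in-measure coefficients. I would couple the particle system $(\Xbb^{N,i})_{i\le N}$ with an i.i.d. system $(\Xbb^{\delta,i})_{i\le N}$ of copies of $\Xbb^\delta$ driven by the same $(W^i,B^i)$ and with the same initial data. Writing the difference, using Burkholder--Davis--Gundy and the Lipschitz bound
\[
  \big|[b,\sigma](\mathbf{G}_\delta*\mu^N)(t,x)-[b,\sigma](\mathbf{G}_\delta*\mu^{\delta})(t,x')\big|
  \le L_\delta\big(|x-x'|+\Wc_e(\mu^N_t,\mu^\delta_t)\big),
\]
together with $\Wc_e(\mu^N_t,\mu^\delta_t)\le\Wc_e(\mu^N_t,\bar\mu^N_t)+\Wc_e(\bar\mu^N_t,\mu^\delta_t)$ where $\bar\mu^N_t:=\frac1N\sum_i\delta_{\Xbb^{\delta,i}_t}$, a Gronwall argument gives
\[
  \E\Big[\sup_{t\le T}\big|\Xbb^{N,i}_t-\Xbb^{\delta,i}_t\big|^e\Big]
  \le C_{\delta,T}\,\E\Big[\sup_{t\le T}\Wc_e(\bar\mu^N_t,\mu^\delta_t)^e\Big]\xrightarrow[N\to\infty]{}0,
\]
the right-hand side vanishing by the law of large numbers for empirical measures of i.i.d. $\Pc_e$-valued samples (using the $r$-th moment bound $r>e$ on $p_0$, propagated to all $t\le T$ by boundedness of the coefficients, to get the required uniform integrability / rate, e.g. via \cite{villani2008optimal}-type estimates). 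This yields $\Wc_e(\mu^{N,\delta}_t,\mu^\delta_t)\to0$, and for the last convergence I would note that for bounded measurable $\phi$,
\[
  \E\Big[\int_0^T\phi(t,\Xbb^{N,1}_t,\dots,\Xbb^{N,k}_t)\,\mathrm{d}t\Big]
  -\E\Big[\int_0^T\phi(t,\Xbb^{\delta,1}_t,\dots,\Xbb^{\delta,k}_t)\,\mathrm{d}t\Big]\to0
\]
by the $L^e$-coupling above applied to the first $k$ coordinates (first for continuous bounded $\phi$, then for general bounded measurable $\phi$ by a density argument exploiting that $\Xbb^\delta_t$ has a density), while the i.i.d. term equals exactly $\int\phi\,p^\delta(t,x^1)\cdots p^\delta(t,x^k)\,\mathrm{d}x\,\mathrm{d}t$.

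The main obstacle I anticipate is item 1: making the a priori estimates of \Cref{assum:main1} genuinely uniform in $\delta$ when applied to $f=\mathbf{G}_\delta*\mu^\delta$ — in particular controlling the terms $\|f_{\mathbf 1}\|_{L^q}$, $\||\partial_{x_1}f|_{\mathbf 1}\|_{L^q}$ and the H\"older seminorms of $f$ and $f_{\mathbf 1}$ by the same quantities for $p^\delta$ itself, which requires that the one-dimensional marginal $(\mathbf{G}_\delta*\mu^\delta)_{\mathbf 1}=G_\delta*(p^\delta)_{\mathbf 1}$ inherit the convolution bounds and that the fixed-point/Sobolev estimate from \Cref{sec:leadingproof} closes with $\delta$-independent constants. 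Once these bounds are in hand the relative compactness and the limit identification go through as above; the propagation of chaos in item 2 is then comparatively routine because $\delta$ is fixed and all relevant maps are globally Lipschitz.
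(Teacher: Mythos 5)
Your overall route is the same as the paper's: the paper proves \Cref{prop:numerical_scheme} by combining \Cref{prop:conver_regularize} (uniform-in-$\delta$ estimates from \Cref{prop:regularized_existence}, obtained exactly as you describe from the $\delta$-independent constant $C$ in the convolution bounds, followed by compactness and passage to the limit in the Fokker--Planck equation using the continuity assumption) with \Cref{prop:regularize:particle} (a Sznitman-type coupling with an i.i.d.\ system driven by the same Brownian motions, which the paper delegates to a quoted proposition and which your Gronwall argument reproduces). Item 1 of your proposal and the first half of item 2 are therefore essentially the paper's proof, and the concern you flag about making the a priori bounds uniform in $\delta$ is resolved exactly as you anticipate, by noting that $(\mathbf{G}_\delta*\mu^\delta)_{\mathbf 1}=G_\delta*(p^\delta)_{\mathbf 1}$ and that all convolution estimates hold with a constant independent of $\delta$, so that $L_j$ and $L_j^\infty$ do not depend on $\delta$.

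There is, however, a genuine gap in your final step of item 2, the convergence of $\E\big[\int_0^T\phi(t,\Xbb^{N,1}_t,\dots,\Xbb^{N,k}_t)\,\mathrm{d}t\big]$ for \emph{bounded measurable} $\phi$. The $L^e$-coupling gives convergence only against (uniformly) continuous test functions; for a general bounded measurable $\phi$, closeness of $\Xbb^{N,i}$ to $\Xbb^{\delta,i}$ in $L^e$ controls nothing, and your proposed density argument invokes the density of the \emph{limit} $\Xbb^\delta_t$, which is not enough: to pass from continuous to measurable $\phi$ you must control the error $\E[\int(\phi-\tilde\phi)(t,\Xbb^{N,\cdot}_t)\,\mathrm{d}t]$ for the \emph{prelimit} laws uniformly in $N$, which requires that the joint laws of $(\Xbb^{N,1}_t,\dots,\Xbb^{N,k}_t)$ have densities $f^N$ bounded in some $L^p$ with $p>1$ uniformly in $N$. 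The paper obtains precisely this by applying the Krylov estimate of \Cref{prop:density-integrability} to the nondegenerate $2k$-dimensional diffusion formed by the first $k$ particles, yielding $\sup_N\|f^N\|_{L^{(2k+1)'}([0,T]\times\R^{2k})}<\infty$; weak-$*$ compactness of $(f^N)$ then identifies the limit as the product density $p^\delta\otimes\cdots\otimes p^\delta$, and a tightness/truncation step upgrades the convergence to bounded measurable $\phi$. Your argument would close once you replace "$\Xbb^\delta_t$ has a density" by this uniform-in-$N$ integrability bound on the particle system's joint densities.
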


\begin{remark}
    The system \eqref{eq:thm:states_1} as a McKean--Vlasov process i.e. with coefficients depending on the distribution can be challenging to simulate numerically. It is usually done through a propagation of chaos result i.e. approximation by particle system. %$($ see ?? $)$. 
    Unlike classical McKean--Vlasov process, for the same reasons explained for the existence result $($see {\rm \Cref{remark:explanation_first}}$)$, the system \eqref{eq:thm:states_1} contains a non--regularity that makes a propagation of chaos result difficult to obtain.
%Due to its non--regular aspect, providing a numerical scheme to simulate a solution of the system \eqref{eq:thm:states_1} can be challenging. 
    {\rm \Cref{prop:numerical_scheme}} shows a propagation of chaos result which naturally leads to a numerical scheme to solve the system \eqref{eq:thm:states_1}. However, we leave the more in--depth analysis of the numerical simulation of the system \eqref{eq:thm:states_1} for future research because it requires further study and this is not the main objective of this paper.
\end{remark}

\subsection{Local stochastic volatility model} \label{sec:application:vol} 

As mentioned in the introduction, one of the famous applications of the SDE like \Cref{eq:system_intro} or \Cref{eq:thm:states_1} is in the calibration of local stochastic volatility models (see \citeauthor*{Lipton2002} \cite{Lipton2002}, \citeauthor*{Piterbarg2006} \cite{Piterbarg2006}, \citeauthor*{guyon2013nonlinear} \cite{guyon2013nonlinear}, \citeauthor*{YuZili15} \cite{YuZili15}, \citeauthor*{Saporito2019TheCO} \cite{Saporito2019TheCO}, \citeauthor*{Bayer2022} \cite{Bayer2022} ). Let us here describe this model more precisely. Let $(\sigma_t)_{t \in [0,T]}$ be an $\R$--valued $\F$--predictable process. The process $(\sigma_t)_{t \in [0,T]}$ plays the role of stochastic volatility process. We consider $(S_t)_{t \in [0,T]}$ an $\R$--valued $\F$--adapted which is the spot price following the dynamics (under risk neutral measure with no interest rate and no dividends for simplification)  
\begin{align*}
    \mathrm{d}S_t
    =
    S_t \Sigma(t,S_t) \sigma_t \mathrm{d}W_t.
\end{align*}
If we define the map $\overline{\sigma}$ and the process $\overline{S}$ by
\begin{align*}
    \overline{\sigma}(t,x)
    :=
    \Sigma(t,x) \sqrt{\E[\sigma_t^2|S_t=x]}\;\;\mbox{and}\;\;\mathrm{d}\overline{S}_t
    =
    \overline{S}_t \overline{\sigma}(t,\overline{S}_t) \mathrm{d}W_t\;\;\mbox{with}\;\;\overline{S}_0=S_0,
\end{align*}
by Markovian projection, we know that $\Lc(S_t) = \Lc(\overline{S}_t)$ for each $t \in [0,T]$. With this manipulation, the marginal distribution of $S$ matches the marginal distribution of $\overline{S}$. The dynamics of $\overline{S}$ has the particularity to follow the dynamics of a local volatility model. These two spot prices leads to, for instance, the same European option prices. Let $C(t,K):=C(t,K;S_0)$ be the observed call option price of maturity $t,$ strike $K$ with $S_0$ the initial price of the underlying asset. If we assume that we have access to the observed prices $\{C(t,K),\;K > 0, t >0\},$ by using Dupire's formula (see \citeauthor*{Dupire1994} \cite{Dupire1994}), we know that the local volatility model $\overline{S}$ calibrates the observed call option prices if
\begin{align*}
    \overline{\sigma}(t,K)^2
    =
    \frac{2 \partial_t C(t,K)}{K^2 \partial^2_K C(t,K)}=:\sigma_D(t,K)^2.
\end{align*}
Consequently, in order to make $S$ fits the observed call option prices the map $\Sigma(t,x)$ must verify
$$
    \Sigma(t,x)
    =
    \frac{\sigma_D(t,x)}{\sqrt{\E[\sigma_t^2|S_t=x]}}.
$$
This means $S$ has to satisfy
\begin{align} \label{eq:calibrated_sde}
    \mathrm{d}S_t
    =
    \frac{ S_t\sigma_D(t,S_t)\sigma_t}{\sqrt{\E[\sigma_t^2|S_t]}} \mathrm{d}W_t\;\;\;\mbox{or equivalently}\;\;\;\mathrm{d}X_t=-\frac{1}{2}\frac{\sigma^2_t \sigma_D(t,e^{X_t})^2}{\E[\sigma_t^2|X_t]}\mathrm{d}t
    +
    \frac{\sigma_t \sigma_D(t,e^{X_t})}{\sqrt{\E[\sigma_t^2|X_t]}} \mathrm{d}W_t\;\;\mbox{for}\;\;X_t=\log(S_t).
\end{align}

The existence of a local stochastic volatility model calibrating the European option prices is then equivalent to find $S$ or $X$ solution of \eqref{eq:calibrated_sde}. This problem is considered by the mathematical finance community as a very difficult problem given the implication of the conditional expectation of $\sigma^2_t$ given $S_t$ or $X_t$ which generates a non regularity in the Wasserstein distance (see the discussion in \Cref{remark:explanation_first}) . With an appropriate choice of the stochastic volatility process $(\sigma_t)_{t \in [0,T]},$ when $\sigma_D$ satisfies some conditions, \Cref{thm:main} allows to prove the existence of $X$ (or  $S$).

\medskip
We set $\sigma_t$ by 
$$
    \sigma^2_t:=c + p_X(t,X_t)v(t,X_t,Y_t)
$$
where $c >0,$ $p_X$ is the density of $\Lc(X_t),$ and $\mathrm{d}Y_t=\lambda(t,Y_t)\mathrm{d}t+\beta(t,Y_t) \mathrm{d}B_t$ with $B$ is a Brownian motion verifying $\mathrm{d} \langle W,B \rangle_t=\theta \mathrm{d}t.$ %We recall the assumptions that we mentioned in the introduction. 
We have the next result.
\begin{proposition}
   Let $T>0$ and $\theta \in (-1,1)$. We assume that the maps $v$ and $\sigma_D$ are bounded above and below by positive constants, $\R_+ \x \R^2 \ni (t,x,y) \mapsto v(t,x,y)\sigma_D(t,e^x)^2 \in \R$ is Lipschitz in $(x,y)$ uniformly in $t$, and the maps $\lambda$ and $\beta$ are bounded with $\beta$  Lipschitz in $y$ uniformly in $t$ and $\beta^2$ bounded below by a positive constant. %Besides, $\beta$ is non--degenerate and for simplicity, the map $\R_+ \x \R \ni (t,y) \mapsto -\theta \beta(t,y) \in \R$ is non--negative.
   Then, there exists $(X,Y)$ an $\R^2$--valued $\F$--adapted continuous process verifying: $\Lc(X_0,Y_0)(\mathrm{d}x,\mathrm{d}y)=p_0(x,y)\mathrm{d}x \mathrm{d}y$, for each $t \le T$,
    \begin{align} \label{eq:LVS1}
        \mathrm{d}X_t=-\frac{1}{2}\sigma^2_D(t,e^{X_t})\frac{c + p_X(t,X_t)\;v(t,X_t,Y_t) }{c+p_X(t,X_t)\;\E[v(t,X_t,Y_t)|X_t]}\mathrm{d}t
        +
        \sigma_D(t,e^{X_t})\sqrt{\frac{c + p_X(t,X_t)\;v(t,X_t,Y_t) }{c+p_X(t,X_t)\;\E[v(t,X_t,Y_t)|X_t]}} \mathrm{d}W_t
    \end{align}
    and
    \begin{align} \label{eq:LVS2}
        \mathrm{d}Y_t=\lambda(t,Y_t)\mathrm{d}t+\beta(t,Y_t) \mathrm{d}B_t
    \end{align}
    where $p_X(t,\cdot)$ is the density of $\Lc(X_t).$
\end{proposition}

\begin{remark}
    As we will see in proof $($just below$)$, this proposition is essentially an application of {\rm \Cref{thm:main}}. Due to the assumptions we must verify for the application of {\rm \Cref{thm:main}} $($see {\rm \Cref{assum:main1}} $)$, we are unable to take $c=0$. %Apart from this limitation, this proposition is among the most general results regarding the existence of a local stochastic volatility model calibrating Europpean prices. 
    %Notice that, the condition: the map $\R_+ \x \R \ni (t,y) \mapsto -\theta \beta(t,y) \in \R$ is non--negative, can be replaced by a general non--degenerative condition $($see {\rm \Cref{assum:main1}}$)$. %(or any Markovian derivatives) 
\end{remark}

\begin{proof}
    Let us introduce the maps %$h^1(t,x_1):=1,$ $h^2(t,x_1):=v(t,x_1),$ $h(t,x_1):=(h^1(t,x_1),h^2(t,x_1))$,
    \begin{align*}
        \sigma^\circ(t,x_1,x_2,e_0,e_1,e_2)^2
        :=
        \sigma^2_D(t,e^{x_1}) \frac{c + e_1 v(t,x_1,x_2)}{c + e_2}\;\;\mbox{and}\;\;b^\circ(t,x_1,x_2,e_0,e_1,e_2)
        :=
        -\frac{1}{2}\sigma^\circ(t,x_1,x_2,e_0,e_1,e_2)^2.
    \end{align*}
    Since we assume that $v$ and $\sigma_D$ are bounded above and below by positive constants, and $\R_+ \x \R^2 \ni (t,x,y) \mapsto v(t,x,y)\sigma_D(t,e^x)^2 \in \R$ is Lipschitz in $(x,y)$ uniformly in $t$, we can check that the maps $\R_+ \x \R^2 \ni (t,x,y) \mapsto v(t,x,y) \in \R$ and $\R_+ \x \R \ni (t,x) \mapsto \sigma_D(t,e^x) \in \R$ are Lipschitz uniformly in $t$. Therefore, differentiable almost everywhere with bounded weak derivative.
    Here, we have
    \begin{align*}
        \Ec_v^T
        =
        \big\{ (t,\xbb,e_0,e_1,e_2) \in [0,T] \x \R^2 \x \R_+ \x \R_+ \x \R:\;\; e_1 m_v \le e_2 \le e_1 M_v
    \big\}
    \end{align*}
     where $0<m_v \le v \le M_v$ and $|\nabla v| \le M_v$.
    To prove the proposition, it is enough to show that the map $(b^\circ,\lambda,\sigma^\circ,\beta)$ satisfies assumptions of the first example of \Cref{paragr:example}. For any $e_1 \ge 0$,
    \begin{align*}
        m_D \frac{c+e_1m_v}{c+e_1M_v}
        \le \sigma^\circ(t,x_1,x_2,e_0,e_1, e_2)^2 
        \le M_D \frac{c+e_1M_v}{c+e_1 m_v},\;\;\mbox{for all}\;(t,\xbb,e_0,e_1,e_2) \in \Ec_h^T
    \end{align*}
    where $0 < m_D \le \sigma^2_D \le M_D$. We can verify that the map $\R_+ \ni z \mapsto \frac{c+zm_v}{c+zM_v} \in \R$ is decreasing and the map $\R_+ \ni z \mapsto \frac{c+zM_v}{c+zm_v} \in \R$ is increasing. Therefore, for $(t,x_1,x_2,e_0,e_1,e_2) \in \Ec_h^T$,
    \begin{align}  \label{eq:first_minors}
        m_D \frac{m_v}{M_v}
        \le \sigma^\circ(t,x_1,x_2,e_0,e_1,e_2)^2 
        \le M_D \frac{M_v}{m_v}.
    \end{align}
    Since $\beta^2$ is non--degenerate and $\theta \in (-1,1)$, we verify that $0<\inf_{\Ec^T_v} \inf_{\boldsymbol{z} \neq 0} \frac{\boldsymbol{z}^\top\;\ab\;\boldsymbol{z}}{|\boldsymbol{z}|^2}$ where
    $a^{1,1}:=\frac{1}{2} (\sigma^\circ)^2$, $a^{2,2}:=\frac{1}{2} \beta^2$, $a^{1,2}=a^{2,1}:=\frac{1}{2}\theta \beta \sigma^\circ$. Indeed, by \eqref{eq:first_minors}, $a^{1,1} > 0$, and by the non--degeneracy of $\beta^2$ with $\theta \in (-1,1)$, $a^{1,1}a^{2,2}-a^{1,2}a^{2,1} >0$. Therefore $\ab$ is non--degenerate (on $\Ec^T_v$) since all its principal minors are positive. 

    \medskip
    As $c>0$ and the map $\R^2 \ni (x_1,x_2) \mapsto v(t,x_1,x_2)\sigma^2_D(t,e^{x_1}) \in \R$ is Lipschitz uniformly in $t$, the map $\Ec_v^T \ni (t,x_1,x_2,e_0,e_1,e_2) \mapsto \sigma^\circ(t,(x_1,x_2,e_0,e_1,e_2)) \in \R$ is Lipschitz in $(x_1,x_2,e_0,e_1,e_2)$  uniformly in $t.$ Using the fact that the maps $\R^2 \ni (x_1,x_2) \mapsto v(t,x_1,x_2)\sigma^2_D(t,e^{x_1}) \in \R$, $\R_+ \x \R^2 \ni (t,x,y) \mapsto v(t,x,y) \in \R$ and $\R_+ \x \R \ni (t,x) \mapsto \sigma_D(t,e^x) \in \R$ are Lipschitz (so differentiable a.e. with bounded derivatives), after computations, we get
    \begin{align*}
        \partial_{x_1} \sigma_\circ^2
        =
        \partial_{x_1} \sigma^2_D \frac{c+e_1 v}{c+e_2},\;\partial_{x_2}\sigma_\circ^2=\frac{e_1 \sigma^2_D}{c+e_2}\partial_{x_2}v,\;\partial_{e_0} \sigma_\circ^2=0,\;\partial_{e_1} \sigma_\circ^2=\frac{v \sigma_D^2}{c+e_2}\;\mbox{and}\;\partial_{e_2} \sigma_\circ^2 = - \sigma_D^2\frac{c+e_1v}{(c+e_2)^2}.
    \end{align*}
    Therefore, with $c>0$ and $m_D \frac{m_v}{M_v}
        \le \inf_{(t,x_1,x_2,e_0,e_1, e_2) \in \Ec^T_v}\sigma^\circ(t,x_1,x_2,e_0,e_1, e_2)^2 $, we verify that
    \begin{align*}
       \sup_{(t,x_1,x_2,e_0,e_1, e_2) \in \Ec^T_v}|\nabla \sigma^\circ(t,x_1,x_2,e_0,e_1, e_2)| < \infty.
    \end{align*}

    We can conclude that the map $(b^\circ,\lambda,\sigma^\circ,\beta)$ satisfies the assumptions of the first example of \Cref{paragr:example}. By applying \Cref{thm:main}, there exists $(X,Y)$ satisfying:
    $\Lc(X_0,Y_0)=p_0(x,y)\mathrm{d}x \mathrm{d}y$,
    \begin{align*}
        \mathrm{d}X_t=-\frac{1}{2}\sigma^2_D(t,e^{X_t})\frac{c + p_{\mathbf{1}}(t,X_t)\;v(t,X_t,Y_t) }{c+(pv)_{\mathbf{1}}(t,Y_t)}\mathrm{d}t
        +
        \sigma_D(t,e^{X_t})\sqrt{\frac{c + p_{\mathbf{1}}(t,X_t)\;v(t,X_t,Y_t) }{c+(pv)_{\mathbf{1}}(t,Y_t)}} \mathrm{d}W_t
    \end{align*}
    and
    \begin{align*}
        \mathrm{d}Y_t=\lambda(t,Y_t)\mathrm{d}t+\beta(t,Y_t) \mathrm{d}B_t
    \end{align*}
    where $p$ is the density of $\Lc(X_t,Y_t)$. Notice that $(pv)_{\mathbf{1}}(t,X_t)=p_{\mathbf{1}}(t,X_t)\E[v(t,X_t,Y_t)|X_t]$ with $p_X(t,X_t)=p_{\mathbf{1}}(t,X_t)$.
    This is enough to deduce the proposition.
\end{proof}

\begin{remark}
    By putting \eqref{eq:LVS1} $+$ \eqref{eq:LVS2} into the framework of {\rm\Cref{thm:main}} as we have done in the previous proof, we can check that the couple $(X,Y)$ falls into the context of {\rm \Cref{prop:numerical_scheme}}. We can therefore provide an approximation by particle system of a solution $(X,Y)$ of \eqref{eq:LVS1} $+$ \eqref{eq:LVS2}. 
\end{remark}

\section{Proof of main results} \label{sec:proofs}

\subsection{Main idea leading the proofs} \label{sec:leadingproof}

For ease of reading, we provide in this part the idea leading the proof. The proof of \Cref{thm:main} is essentially an application of a fixed point Theorem namely Schauder fixed point Theorem. This fixed point Theorem is applied on a map defined over an appropriate set of probability density $p$. In order to obtain the appropriate set of probability density, the key step is to provide an explicit estimate of the gradients of $p$ and $p_{\mathbf{1}}$ where $p$ is the density of $\Lc(\Xbb_t)$ with $\Xbb$ an SDE process. These estimates must be given explicitly according to the regularity of the coefficients of the SDE $\Xbb$. For sake of simplification, we present the main idea in less general framework and only for the gradient of $p$. We use Einstein notation and refer to \Cref{sec_functions} for the definition of some functional spaces.  Let $T_0=1$ and ${\boldsymbol{a}}:=(a^{i,j})_{1 \le i,j \le 2}:[0,T_0] \x \R^2 \to \S^2$ be a Borel map s.t. $0 < m \mathrm{I}_2 \le a \le  M \mathrm{I}_2$. Let $\Xbb=(X^1,X^2)$ be a process satisfying
\begin{align*}
    \mathrm{d}\Xbb_t
    =
    \sqrt{2}~ \ab(t,\Xbb_t)^{1/2} \binom{\mathrm{d}W^1_t}{\mathrm{d}W^2_t}\;\mbox{where}\;W^1\mbox{ and }W^2\mbox{ are independent Brownian motions.}
\end{align*}
The measure $\Lc(\Xbb_t)(\mathrm{d}\xbb)\mathrm{d}t$ has a density $p(t,\xbb)$ w.r.t. the Lebesgue measure on $[0,T] \x \R^2$ (see for instance \Cref{prop:density-integrability}). Before giving the detailed proofs in the next sections, let us give some heuristic arguments on the estimate of gradient of $p$ i.e. $\nabla p$. First, by Itô's formula (see also \Cref{lemm:FP-estimates})
\begin{align*}
    \int_{[0,T_0] \x \R^2} \wr(t)\big[ \partial_t \phi + a^{i,j} \partial_{x_i} \partial_{x_j} \phi \big](t,\xbb) p(t,\xbb)\;\mathrm{d}\xbb\;\mathrm{d}t \le \|\wr' \phi\; p\|_{L^1([0,T_0] \x \R^2)},\;\mbox{for any }\wr \in C^\infty([0,T_0])\;\mbox{with }\wr(T_0)=0,
\end{align*} $\phi \in C^{1,2}([0,T_0] \x U_R)$ with $\phi(0,\cdot)=0$ and $U_R$ is a ball of diameter $R$. 

\medskip
For any $R >0$, let $\fb:=(f^i)_{1 \le i \le 2} \subset C^{\infty}_c((0,T_0) \x U_R)$. Let $\phi:[0,T_0] \x \R^2 \to \R$ be a map verifying: $\phi(0,\cdot)=0$, $\phi_{[0,T_0]\x \partial U_R}=0$ and 
$$
    \partial_t \phi + a^{i,j} \partial_{x_i} \partial_{x_j} \phi=\partial_{x_i}f^i\;\;\mbox{on}\;\;(0,T_0) \x U_R.
$$
As we will show in \Cref{prop:estimates_PPDE}, there exists $N$ a constant and $\ell$ a map depending only on $m$, $M$, $T_0$, the dimension $d=2$ and $\alpha \in (1,\infty) \setminus \{2\}$ (we write $\alpha':=\alpha/(\alpha-1)$ see \Cref{sec_functions} just below) s.t. for any $\pi \in (0,1)$,
\begin{align*}
        &\| \phi \|_{\H^{\alpha,1}(U_{R,T_0})} 
        \le 
        N \| \partial_{x_i} f^i \|_{\H^{\alpha,-1}(U_{R,T_0})} + \ell (\pi,\alpha,R) \Big\{ \sup_{t \in (0,T_0)} \|\ab(t,\cdot) \|_{C^{0,\pi}(U_R)} + \|\nabla \ab\|_{L^{\alpha \vee \alpha'}(U_{R,T_0})} \Big\} \|  \phi \|_{\H^{\alpha,1}(U_{R,T_0})}
    \end{align*}
    where $\R_+ \ni r \to \ell (\pi,\alpha,r) \in \R_+$ is continuous and increasing  with $\ell (\pi,\alpha,0)=0$.  Let us take $0<\overline{R} \le 1$ s.t.
    \begin{align*}
        1-\ell (\pi,\alpha,\overline{R}) \sup_{E\in \Sc}\Big\{ \sup_{t \in (0,T_0)} \|\ab(t,\cdot) \|_{C^{0,\pi}(E)} + \|\nabla \ab\|_{L^{\alpha \vee \alpha'}([0,T_0] \x E)} \Big\} >0\;\;\;\mbox{where}\;\;\Sc:=\{E \subset \R^2\mbox{ open set s.t. {\rm diam}}(E) \le 1\}.
    \end{align*}
    In \Cref{prop_estimate FP}, we then show that: for any $\fb:=(f^i)_{1 \le i \le 2} \subset C^{\infty}_c((0,T_0) \x U_R)$ and $R \le \overline{R}$,
    \begin{align*}
        &\bigg| \int_{U_{R,T_0}}  \partial_{x_i}f^i(t,\xbb)\wr(t) p(t,\xbb)\;\mathrm{d}\xbb\; \mathrm{d}t \bigg|
        \\
        &\le \frac{{N} \|\wr'\;p\|_{L^{\alpha'}(U_{R,T_0})} }{1-\ell (\pi,\alpha,\overline{R}) \sup_{E\in \Sc}\Big\{ \sup_{t \in (0,T_0)} \|\ab(t,\cdot) \|_{C^{0,\pi}(E)} + \|\nabla \ab\|_{L^{\alpha \vee \alpha'}([0,T_0] \x E)} \Big\}}\| \partial_{x_i}f^i\|_{\H^{\alpha,-1}( U_{R,T_0})}. %\Big(\int_{U_{R,T_0}} |w'(t)p(t,x)|^{\alpha'} \mathrm{d}x \mathrm{d}t \Big)^{1/\alpha'}
    \end{align*}
    Since $\| \partial_{x_i}f^i\|_{\H^{\alpha,-1}( U_{R,T_0})} \le \| \fb\|_{L^{\alpha}( U_{R,T_0})}$, this estimate allows us to deduce by a duality argument in \Cref{prop:inegal_general},
    \begin{align*}
        \|\wr \nabla p\|_{L^{\alpha'}([0,T_0] \x \R^2)} 
        &\le 
        \frac{{N}2^{2/{\alpha'}}  }{1-\ell (\pi,\alpha,\overline{R}) \sup_{E\in \Sc}\Big\{ \sup_{t \in (0,T_0)} \| \ab(t,\cdot) \|_{C^{0,\pi}(E)} + \|\nabla \ab\|_{L^{\alpha \vee \alpha'}([0,T_0] \x E)} \Big\}} \|\wr'\;p\|_{L^{\alpha'}([0,T_0] \x \R^2)}
        \\
        &=:A(\pi,\alpha,\alpha')\|\wr'\;p\|_{L^{\alpha'}([0,T_0] \x \R^2)}.
    \end{align*}
    When $\alpha'=q=3+1/4$, by using Sobolev embedding Theorem, the result of Krylov recall in \Cref{prop:density-integrability} and some recursive properties, we find in \Cref{prop:recurcive_estimates} a constant $H:=H\big(A(\pi,q,q'),m,M,|\wr|_{\infty},\cdots,|\wr^{(2)}|_\infty)$ depending only on $A(\pi,q,q' \big)$, $m$, $M$, and the supremum of $\wr$ and its first $2$--derivatives s.t. the constant $H$ is a locally bounded function of the indicated quantities and
    \begin{align*}
        \|\wr\;p\|_{L^{q}([0,T_0] \x \R^2)} \le  H\big(A(\pi,q,q'),m,M,|\wr|_{\infty},\cdots,|\wr^{(2)}|_\infty).
    \end{align*}
    As $p$ satisfies a parabolic equation, if the initial density $p(0,\cdot)$ is regular enough, we can deduce H\"older estimate of $p$ using the $L^q$ estimates of $\nabla p$ (see \Cref{prop:estimates_holder} or \Cref{prop:holder_estimates_p}).
    Therefore, using the detail of the estimates we have obtained, we can choose $L$ and $\overline{R}$ (see \Cref{para_choice_constant}) s.t. 
    $$
        \sup_{E\in \Sc}\Big\{ \sup_{t \in (0,T_0)} \|\ab(t,\cdot) \|_{C^{0,\pi}(E)} + \|\nabla \ab\|_{L^{q}([0,T_0] \x E)} \Big\} \le L\;\mbox{and}\;1-\ell(\pi,q',\overline{R}) L >0
        %\;\mbox{leading to}\;\|\wr \nabla p\|_{L^{q}([0,T_0] \x \R^2)} \le L.
    $$
    leading to
    \begin{align*}
        \sup_{t \in (0,T_0)}\|\wr(t) p(t,\cdot)\|_{C^{0,\pi}(\R^2)} + \|\wr \nabla p\|_{L^{q}([0,T_0] \x \R^2)} \le L.
    \end{align*}
    Consequently, if $\ab$ depends on $p$, we see that we can realize a fixed point. By using similar techniques, we obtain in \Cref{prop:holder_estimates_p1} and \Cref{prop:estimates_marg}, similar estimates for $p_{\mathbf{1}}$ using the regularity of the second component of the SDE i.e. $X^2$. In the next sections, we will provide the details of the arguments mentioned in a more general framework.  In particular, we will see how to handle the map $\wr$ appearing in the estimates.

\subsection{Background materials} \label{sec_bcgrd_materials}

\medskip
    \paragraph*{Reminder on functional spaces} \label{sec_functions} We start by recalling some functional spaces and their properties. Let $d \in \N^*.$  The open ball of $\R^d$ with radius $r/2$ (thus of diameter $r$) centered at $a$ is denoted by $U(a, r)$ or $U_r(a)$. When the center is not important, we will simply write $U_r$. As already mentioned in the previous section, we use Einstein notation i.e. in expressions like $a^{ij}x_iy_i$ and $b_i x_i$ the standard summation rule with respect to repeated indices will be meant.

\medskip
    Given an open set $\Om \subset \R^d$ and a Banach space $(E,||_{\rm E})$, for a nonnegative measure $\mu$ and $q \in [1,\infty)$, the symbols $L^q(\mu;E)$ or $L^q(\Om,\mu;E)$ denote the space of equivalence classes of $\mu$--measurable functions $f$ such that the function $|f|^q_E$ is $\mu$--integrable. This space is equipped with the standard norm
    \begin{align*}
        \| f \|_q
        :=
        \| f \|_{L^q(\Om,\mu;E)}
        :=
        \Big( \int_{\Om} |f(\xbb)|^q_E\; \mu(\mathrm{d}\xbb) \Big)^{1/q}.
    \end{align*}
    We say $f \in L^q_{\ell oc}(\Om,\mu;E)$ if for any compact $Q \subset \Om,$ we have $|f|^q_E\1_{Q}$ is $\mu$--integrable. When $E=\R^\ell$ for $\ell \ge 1$ or when $E$ is obvious, we will simply write $L^q(\mu)$ or $L^q(\Om,\mu)$. Also, the notation $L^q(\Om)$ or $L^q_{\ell oc}(\Om)$ always refers to the situation where $\mu$ is the classical Lebesgue measure. %sometimes we write $L^q(\Om,\mathrm{d}x)$ or $L^q_{\ell oc}(\Om,\mathrm{d}x)$ in order to stress this.
    As usual, for $q \in [1,\infty]$ we set $q'$ the conjugate of $q$ by
    $$
        q'
        :=
        \frac{q}{q-1}.
    $$
    We denote by $W^{q,1}(\Om)$ or $H^{q,1}(\Om)$  the Sobolev class of all functions $f \in L^q(\Om)$ whose generalized partial derivatives $\partial_{x_i} f$ are in $L^q(\Om)$. A generalized (or Sobolev) derivative is defined by the equality (the integration by parts formula)
    \begin{align*}
        \int_{\Om} f(\xbb) \partial_{x_i} \varphi(\xbb) \mathrm{d}\xbb
        =
        -\int_{\Om} \partial_{x_i}f(\xbb) \varphi(\xbb) \mathrm{d}\xbb,\;\;\mbox{for all}\;\varphi \in C_c^\infty(\Om).
    \end{align*}
This space is equipped with the Sobolev norm
\begin{align*}
    \|f\|_{q,1}
    :=
    \|f\|_{H^{q,1}(\Om)}
    :=
    \Big(\|f\|^q_q
    +
    \sum_{i=1}^d\|\partial_{x_i}f\|^q_q \Big)^{1/q}.
\end{align*}

\medskip
    The class $W^{q,1}_0(\Om)$ is defined as the closure of $C^\infty_c(\Om)$ in $W^{q,1}(\Om).$
    Let $U_R$ be an open ball of diameter $R$. The dual of the space $W^{q',1}_0(U_R)$ is denoted $W^{q,-1}(U_R)$ or $H^{q,-1}(U_R).$  %It is known (see, e.g., Adams, Fournier [3, Chapter III, Theorem 3.12]) that every $u \in W^{q,-1}$ can be written as: there exists $f^i \in L^q(U_R),\;i=1,\cdots,d,$ s.t.
    %$$
        %u(v)
        %=
        %\sum_{i=1}^d \int_{U_R} f^i(x) \partial_{x_i} v(x) \mathrm{d}x,\;\;\mbox{for every\;}v \in W^{q',1}_0(U_R)
    %$$
    %where $u(v)$ denotes the application of the linear map $u$ on $v.$ Notice that $\|u\|_{W^{q,-1}(U_R)} \le \|f\|_{L^q(U_R)}.$ 
    When $u:U_R \to \R$ is a map, we will say $u \in W^{q,-1}(U_R)$ whenever the linear map $W^{q',1}_0(U_R) \ni v \to \langle u ,v \rangle \in \R$ is well defined and continuous. We refer to \cite[Chapter 3]{adams2003sobolev} for an overview on the dual of Sobolev spaces.  %We will simply write $u=\sum_{i=1}^d \partial_{x_i} f^i=\partial_{x_i} f^i$. When $f^i \in C^1(U_R),$ we will identify $u$ with the map $\partial_{x_i} f^i.$
    %\begin{align*}
        %u
        %=
        %\partial_{x_i} f^i,\;\;f^i \in L^q(U_R),\;i=1,\cdots,d,
    %\end{align*}
    %and
    %$$
        %\|u\|_{W^{q,-1}(\Om)} \le \|f\|_{L^q(\Om)}.
    %$$
    
    \medskip
    Let $J \subset [0,T]$ be an interval (open or closed) and $U$ be an open set in $\R^d$. Let $\H^{q,1}(J \x U)$ denote the space of all measurable functions $u$ on the set $J \x U$ such that $u (t,\cdot) \in W^{q,1}(U)$ for almost all $t$ and the norm
    $$
        \| u \|_{\H^{q,1}(J \x U)} 
        :=
        \Big( \int_J \|u(t,\cdot)\|_{H^{q,1}(U)}^q \mathrm{d}t \Big)^{1/q}
    $$
    is finite. The space $\H^{q,1}_0(J \x U)$ is defined similarly, but with $H^{q,1}_0(U)$ in place of $H^{q,1}(U)$, and $\H^{q',-1}(J \x U)$ denotes its dual. 
    %The spaces $\H^{q,1}_0(J \x U)$ and $\H^{q',-1}(J \x U)$ are defined similarly, but with $H^{q,1}_0(U)$ in place of $H^{q,1}(U)$ for $\H^{q,1}_0(J \x U)$, and $H^{q',-1}(U)$ in place of $H^{q,1}(U)$ for the dual space $\H^{q',-1}(J \x U)$. 

\medskip
\paragraph*{Estimates for parabolic equation} Now, we provide local estimate for a parabolic equation. This estimate is important to give estimates of solution of Fokker--Planck equation as we will see in \Cref{sec_estimates_FP_general}. Let $T_0>0,$ $d=2,$ $U_R \subset \R^d$ be an open ball of diameter $R>0,$ and we denote $U_{R,T_0}:=(0,T_0) \x U_R.$ Also, let us take Borel maps $\bb:=(b^{i})_{1 \le i \le d}:[0,T_0] \x \R^d \to \R^d$, $\ab:=(a^{i,j})_{1 \le i,j \le d}:[0,T_0] \x \R^d \to \S^{d \x d}$, $g:[0,T_0] \x \R^d \to \R$ and a differentiable map in space $\fb:=(f^i)_{1 \le i \le d}:[0,T_0] \x \R^d \to \R^d$. The maps $g$ and $\fb$ have compact supports on $[0,T_0] \x \R^d$ included in $U_{R,T_0}$. We consider $\phi \in C^{1,2}(U_{R,T_0})$ satisfying: $\phi(0,\cdot)=0,$ $\phi_{[0,T_0] \x \partial U_R}=0,$ and 
%\begin{align*}
    %\int_{U_{R,T}} \Big[\partial_t\phi(t,x) \varphi(t,x) + \sum_{i,j=1}^d a^{ij}(t,x) \partial_{x_i} \partial_{x_j} \phi (t,x) \varphi(t,x) \Big] \;\mathrm{d}x\; \mathrm{d}t
    %=
    %\int_{U_{R,T}} \partial_{x_i} f^i(t,x) \varphi(t,x) \;\mathrm{d}x\; \mathrm{d}t
%\end{align*}
\begin{align*}
    %\partial_t \phi(t,x) + \mathrm{Tr} \big[a(t,x) \nabla^2 \phi(t,x)\big]=
    \partial_t \phi(t,\xbb) + \sum_{i=1}^d b^i(t, \xbb) \partial_{x_i} \phi(t,\xbb) + \sum_{i,j=1}^d a^{ij}(t,\xbb) \partial_{x_i} \partial_{x_j} \phi (t,\xbb) = \sum_{i=1}^d \partial_{x_i} f^i(t,\xbb) + g(t,\xbb)\;\;\mbox{for all}\;(t,\xbb) \in U_{R,T_0}.
\end{align*}
To simplify the notation, we will use the standard summation rule with respect to repeated indices and write
\begin{align*}
    \partial_t \phi(t,\xbb) + b^i(t,\xbb) \partial_{x_i} \phi(t,\xbb) + a^{ij}(t,\xbb) \partial_{x_i} \partial_{x_j} \phi (t,\xbb) = \partial_{x_i} f^i(t,\xbb)+g(t,\xbb)\;\;\mbox{for all}\;(t,\xbb) \in U_{R,T_0}.
\end{align*}
For any $r \ge 1,$ recall that $r'$ is the conjugate of $r$ i.e. $r'=r/(r-1).$  We assume that $\ab$ satisfies $0 < m\;\mathrm{I}_2 \le \ab \le M\;\mathrm{I}_2$. Although presented very differently, the next proposition is inspired by \cite[Lemma 6.2.5]{FK-PL-equations}. 
\begin{proposition} \label{prop:estimates_PPDE}
    Let $\alpha>1$ and $s>(2-s) \vee 1,$ there exists $N>0$ depending only on $T_0$, $\alpha$, $m,$ $M$ and the dimension $d$ s.t. for any $\pi \in (0,1)$
    \begin{align*}
        &\| \phi \|_{\H^{\alpha,1}(U_{R,T_0})} 
        \le 
        N \| \partial_{x_i} f^i + g \|_{\H^{\alpha,-1}(U_{R,T_0})} + \ell (\pi,\alpha,R) \Big\{ \esssup_{t \in (0,T_0)} \|\ab(t,\cdot) \|_{C^{0,\pi}(U_R)} + \Gamma(\alpha,\ab,\bb,U_R,T_0) \Big\} \|  \phi \|_{\H^{\alpha,1}(U_{R,T_0})}
    \end{align*}
    where $\ell: (0,1) \x (1,\infty) \x [0,\infty) \to [0,\infty)$ is a continuous function in the third variable depending only on $(T_0,\alpha,m,M,d,\pi)$ with $\ell(\cdot,\cdot,0)=0$ and
    \begin{align*}
        \Gamma(\alpha,\ab,\bb,U_R,T_0)
        :=
        \|(\nabla \ab, \bb) \|_{L^{\alpha}(U_{R,T_0})} \1_{\alpha > 2} + \|(\nabla \ab,\bb) \|_{L^{2s/(2-s)}(U_{R,T_0})} \1_{\alpha = 2} 
        +
        \|(\nabla \ab,\bb) \|_{L^{\alpha'}(U_{R,T_0})} \1_{\alpha < 2}.
    \end{align*}
\end{proposition}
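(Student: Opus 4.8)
\textbf{Proof strategy for Proposition~\ref{prop:estimates_PPDE}.} The plan is to reduce the variable-coefficient parabolic equation to a constant-coefficient one by freezing the coefficients at a reference point and treating the remainder as a perturbation. First I would invoke the classical $L^\alpha$-maximal-regularity theory for the constant-coefficient parabolic operator $\partial_t + \bar a^{ij}\partial_{x_i}\partial_{x_j}$ (with $\bar a$ a constant matrix satisfying $m\,\mathrm{I}_2 \le \bar a \le M\,\mathrm{I}_2$), which yields a bound of the form $\|\phi\|_{\H^{\alpha,1}(U_{R,T_0})} \le N\|\partial_t\phi + \bar a^{ij}\partial_{x_i}\partial_{x_j}\phi\|_{\H^{\alpha,-1}(U_{R,T_0})}$ with $N$ depending only on $\alpha$, ellipticity constants, $T_0$, and $d$; crucially $N$ is independent of $R$ because we work on $W_0^{\alpha,1}$-type spaces with zero lateral and initial data, so scaling does not degrade the constant. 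One must be a little careful here: to get an $R$-independent constant in the negative-order norm I would either rescale to a unit ball and track how the $\H^{\alpha,-1}$ norm transforms, or appeal directly to the Calderón--Zygmund estimate on $W_0^{1,\alpha}$ in space combined with a singular-integral bound in time (the parabolic Mikhlin multiplier theorem), which is genuinely dilation-invariant.

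Next I would write $a^{ij}(t,x) = \bar a^{ij} + (a^{ij}(t,x) - \bar a^{ij})$ and move the difference to the right-hand side:
\begin{align*}
    \partial_t \phi + \bar a^{ij}\partial_{x_i}\partial_{x_j}\phi
    =
    \partial_{x_i} f^i + g - b^i \partial_{x_i}\phi - (a^{ij} - \bar a^{ij})\partial_{x_i}\partial_{x_j}\phi.
\end{align*}
The point of choosing $\bar a^{ij}$ to be, say, the value of $a$ at the center of $U_R$ (or a time-space average) is that on a ball of diameter $R$ the oscillation $|a(t,x) - \bar a^{ij}|$ is controlled by $R^\pi \sup_t \|a(t,\cdot)\|_{C^{0,\pi}(U_R)}$. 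I would then estimate the negative-order norm of each term on the right: $\|\partial_{x_i} f^i + g\|_{\H^{\alpha,-1}}$ is already in the desired form; the first-order term $b^i\partial_{x_i}\phi$ is handled by writing $b^i\partial_{x_i}\phi = \partial_{x_i}(b^i\phi) - (\partial_{x_i}b^i)\phi$ and using Hölder/Sobolev embedding so that $\|b\|_{L^\gamma}$ for the appropriate exponent $\gamma$ (this is exactly where the case split $\alpha>2$, $\alpha=2$, $\alpha<2$ in $\Gamma$ comes from, via the embedding $W^{1,\alpha}\hookrightarrow L^{\alpha^*}$ and duality) times $\|\phi\|_{\H^{\alpha,1}}$ appears, with an $R$-dependent Sobolev constant that vanishes as $R\to 0$; and the second-order term $(a^{ij}-\bar a^{ij})\partial_{x_i}\partial_{x_j}\phi$ I would again integrate by parts, $(a^{ij}-\bar a^{ij})\partial_{x_i}\partial_{x_j}\phi = \partial_{x_j}\big((a^{ij}-\bar a^{ij})\partial_{x_i}\phi\big) - (\partial_{x_j}a^{ij})\partial_{x_i}\phi$, so that its $\H^{\alpha,-1}$ norm is bounded by $\big(\sup_t\|a(t,\cdot)-\bar a\|_{L^\infty(U_R)} + \|\nabla a\|_{L^\gamma(U_{R,T_0})}\,(\text{Sobolev const})\big)\|\phi\|_{\H^{\alpha,1}}$, and the $L^\infty$ oscillation term is $\lesssim R^\pi \sup_t\|a(t,\cdot)\|_{C^{0,\pi}(U_R)}$.

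Collecting these, all the perturbative contributions carry a prefactor that I can package into a single function $\ell(\pi,\alpha,R)$ which is continuous and increasing in $R$ with $\ell(\pi,\alpha,0)=0$ — the vanishing at $R=0$ comes jointly from the $R^\pi$ factor on the Hölder-oscillation piece and from the Sobolev--Poincaré constants on the balls $U_R$ shrinking with $R$ (here the zero lateral boundary condition $\phi_{|[0,T_0]\times\partial U_R}=0$ is what makes Poincaré available). Multiplying through by $N$ and writing $\Gamma(\alpha,a,b,U_R,T_0)$ for the relevant $\|\nabla a,b\|_{L^\gamma}$ quantity with the case split dictated by whether the Sobolev embedding for $W^{1,\alpha}$ is sub-, critically, or super-critical, I obtain precisely the asserted inequality. \textbf{The main obstacle} I anticipate is bookkeeping the $R$-dependence cleanly: one must ensure the constant $N$ from the frozen-coefficient maximal regularity estimate is genuinely $R$-independent (achieved by the scaling/dilation-invariance argument above), and then verify that every $R$-dependent factor that remains can be absorbed into a single monotone $\ell(\pi,\alpha,R)$ vanishing at $0$ — in particular making sure the Sobolev constants on $U_R$ and the oscillation estimate combine with the right sign and the right power of $R$. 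The borderline case $\alpha=2$, where $W^{1,2}(\R^2)$ fails to embed into $L^\infty$, is the delicate one and forces the $L^{2s/(2-s)}$ exponent with $s>(2-s)\vee 1$; I would handle it by the standard trick of embedding into $L^r$ for every finite $r$ with a constant blowing up controllably, then optimizing.
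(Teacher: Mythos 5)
Your overall strategy — freeze the leading coefficients, apply maximal $L^\alpha$-regularity to the frozen operator, and absorb the remainder after estimating it in $\H^{\alpha,-1}$ with an $R$-dependent prefactor — is exactly the paper's route. But two of your concrete steps would fail under the standing hypotheses. First, you freeze $a$ to a \emph{constant} matrix $\bar a$ (value at the center of $U_R$, or a time--space average) and claim the oscillation $|a(t,x)-\bar a|$ is controlled by $R^\pi\sup_t\|a(t,\cdot)\|_{C^{0,\pi}(U_R)}$. That bound only controls the \emph{spatial} oscillation at each fixed $t$; here $a$ is merely Borel measurable in $t$ (the paper even remarks later that $a$ cannot be approximated in $C([0,T_0];C^{0,\pi})$ for this reason), so $a$ could be spatially constant yet jump in time, making $\sup_{t,x}|a(t,x)-\bar a|$ of order one no matter how small $R$ is — the perturbation is then not absorbable. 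The correct freezing is in space only, $a_0(t):=a(t,x_0)$, and the maximal-regularity input must be the version for leading coefficients that are measurable in $t$ and constant in $x$ (the paper's Proposition~\ref{prop_app:estimates}, from Krylov's VMO estimates); a constant-coefficient Mikhlin/Calder\'on--Zygmund argument does not cover that case.

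Second, you propose to handle the drift term by writing $b^i\partial_{x_i}\phi=\partial_{x_i}(b^i\phi)-(\partial_{x_i}b^i)\phi$. But $b$ is only a bounded Borel map; $\nabla b$ need not exist, and indeed $\Gamma$ in the statement involves $\|b\|_{L^\gamma}$, not $\|\nabla b\|$. The term must instead be estimated directly in $H^{\alpha,-1}$ by duality: pair $b^i\partial_{x_i}\phi$ against $v\in W^{\alpha',1}_0(U_R)$, use H\"older to get $\|b\|_{L^\gamma}\|\nabla\phi\|_{L^\alpha}\|v\|_{L^{\rho}}$, and then Sobolev-embed $v$ on the small ball (this is the paper's appeal to \cite[Lemma 1.1.7]{FK-PL-equations}, and is precisely where the case split $\alpha>2$, $\alpha=2$, $\alpha<2$ and the exponent $2s/(2-s)$ arise, together with the positive power of ${\rm diam}(U_R)$ that feeds into $\ell$). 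Your identification of where the case split comes from and why $\ell(\pi,\alpha,0)=0$ is otherwise correct.
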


\begin{proof}
    Let $\xbb_0 \in U_R.$ We set $a_0(t):=a(t,\xbb_0).$ We rewrite $\partial_t \phi + b^i \partial_{x_i} \phi + a^{ij} \partial_{x_i} \partial_{x_j} \phi =\partial_{x_i}f^i + g$ as
    \begin{align*}
        \partial_t \phi(t,\xbb) + a^{ij}_0(t) \partial_{x_i} \partial_{x_j} \phi(t,\xbb)= \partial_{x_i}f^i(t,\xbb)+g(t,\xbb)-b^i \partial_{x_i} \phi - \partial_{x_i} \big( (a^{ij}-a_0^{ij}) \partial_{x_j} \phi \big)(t,\xbb) + \partial_{x_i} a^{ij}(t,\xbb) \partial_{x_j} \phi(t,\xbb).
    \end{align*}
    
    Since $\phi_{[0,T_0] \x \partial U_R}=0,$ for $t \in (0,T_0)$, we can extend $\phi(t,\cdot)$ by $0$ over $\R^d \setminus U_{R}$ while keeping the same regularity. By \Cref{prop_app:estimates}, there exists $N=N(T_0,\alpha,d,m,M)>0$ satisfying
    \begin{align*}
        \| \phi \|_{\H^{\alpha,1}(U_{R,T_0})}
        =
        \| \phi \|_{\H^{\alpha,1}([0,T_0]\x \R^2)}
        &\le N  \| \partial_{x_i} f^i + g - b^i \partial_{x_i} \phi - \partial_{x_i} \big( (a^{ij}-a_0^{ij}) \partial_{x_j} \phi \big) + \partial_{x_i} a^{ij} \partial_{x_j} \phi \|_{\H^{\alpha,-1}([0,T_0] \x \R^2)}
        \\
        &=N  \| \partial_{x_i} f^i + g - b^i \partial_{x_i} \phi - \partial_{x_i} \big( (a^{ij}-a_0^{ij}) \partial_{x_j} \phi \big) + \partial_{x_i} a^{ij} \partial_{x_j} \phi \|_{\H^{\alpha,-1}(U_{R,T_0})}.
    \end{align*}
    First, one has
    \begin{align*}
         \|\partial_{x_i} \big( (a^{ij}-a_0^{ij}) \partial_{x_j} \phi \big) \|_{\H^{\alpha,-1}([0,T_0] \x \R^2)}
         &=
         \|\partial_{x_i} \big( (a^{ij}-a_0^{ij}) \partial_{x_j} \phi \big) \|_{\H^{\alpha,-1}(U_{R,T_0})}
         \\
         &\le 
         \| (a^{ij}-a_0^{ij}) \partial_{x_j} \phi \|_{L^\alpha(U_{R,T_0})}
         \\
         &\le 
         \esssup_{(t,\xbb) \in (0,T_0)\x U_R}|a^{ij}(t,\xbb)-a_0^{ij}(t)|\|  \nabla \phi \|_{L^\alpha(U_{R,T_0})}.
    \end{align*}
    For $\pi \in (0,1),$ $t \in [0,T_0]$ and $\xbb \in U_R$ s.t. $\xbb \neq \xbb_0$, one has
    \begin{align*}
        |a^{ij}(t,\xbb)-a_0^{ij}(t)|=
        |\xbb-\xbb_0|^{\pi} \frac{|a^{ij}(t,\xbb)-a_0^{ij}(t,\xbb_0)|}{|\xbb-\xbb_0|^{\pi}}
        &\le |\xbb-\xbb_0|^{\pi} \|\ab(t,\cdot) \|_{C^{0,\pi}(U_R)}
        \\
        &\le {\rm diam}(U_R)^{\pi} \|\ab(t,\cdot) \|_{C^{0,\pi}(U_R)},
    \end{align*}
    where we recall that ${\rm diam}(U_R)$ denotes the diameter of $U_R.$
    Therefore
    \begin{align*}
        \|\partial_{x_i} \big( (a^{ij}-a_0^{ij}) \partial_{x_j} \phi \big) \|_{\H^{\alpha,-1}(U_{R,T_0})} \le {\rm diam}(U_R)^{\pi} \sup_{t \in (0,T_0)} \|\ab(t,\cdot) \|_{C^{0,\pi}(U_R)} \|  \phi \|_{\H^{\alpha,1}(U_{R,T})}.
    \end{align*}
    Secondly, by \cite[Lemma 1.1.7.]{FK-PL-equations},% [Lemma 1.1.7., Krylov], 

    \begin{itemize}
        \item For $\alpha > 2,$ there exists a universal constant $C>0$ depending only on the dimension s.t. 
    \begin{align*}
        \| \partial_{x_i} a^{ij}(t,\cdot) \partial_{x_j} \phi(t,\cdot) \|_{H^{\alpha,-1}(U_{R})}
        &\le C  \|\partial_{x_j}a^{ij}(t,\cdot) \|_{L^{2}(U_{R})} \|  \nabla \phi(t,\cdot) \|_{L^\alpha(U_{R})}.
    \end{align*}
    By using Hölder inequality successively, we find a constant $C$ depending on the dimension and $T_0$ s.t.
    \begin{align*}
        \| \partial_{x_i} a^{ij} \partial_{x_j} \phi \|_{\H^{\alpha,-1}(U_{R,T_0})}
        \le C {\rm diam}(U_R)^{(\alpha-2)/\alpha} \|\nabla \ab \|_{L^{\alpha}(U_{R,T_0})} \|  \phi \|_{\H^{\alpha,1}(U_{R,T_0})}.
    \end{align*}
    Using similar approach, we get 
    \begin{align*}
        \| b^i \partial_{x_i} \phi \|_{\H^{\alpha,-1}(U_{R,T_0})}
        \le C {\rm diam}(U_R)^{(\alpha-2)/\alpha} \|\bb \|_{L^{\alpha}(U_{R,T_0})} \|  \phi \|_{\H^{\alpha,1}(U_{R,T_0})}.
    \end{align*}
    
    \item For $\alpha < 2,$ there exists a universal constant $C>0$ only depending on the dimension s.t. 
    \begin{align*}
        \| \partial_{x_i} a^{ij} \partial_{x_j} \phi \|_{\H^{\alpha,-1}(U_{R,T_0})}
        &\le C {\rm diam}(U_R)^{1-d/\alpha'} \|\partial_{x_j}a^{ij} \|_{L^{\alpha'}(U_{R,T_0})} \|  \nabla \phi \|_{L^\alpha(U_{R,T_0})}
        \\
        &\le C {\rm diam}(U_R)^{1-d/\alpha'} \|\nabla \ab \|_{L^{\alpha'}(U_{R,T_0})} \|  \phi \|_{\H^{\alpha,1}(U_{R,T_0})}.
    \end{align*}
    Again, we can find that
    \begin{align*}
        \| b^i \partial_{x_i} \phi \|_{\H^{\alpha,-1}(U_{R,T_0})}
        &\le C {\rm diam}(U_R)^{1-d/\alpha'} \|\bb \|_{L^{\alpha'}(U_{R,T_0})} \|  \phi \|_{\H^{\alpha,1}(U_{R,T_0})}.
    \end{align*}
    
    \item For $\alpha = 2,$ and $s>(2-s) \vee 1,$  there exists a universal constant $C>0$ only depending on the dimension s.t. 
    \begin{align*}
        \| \partial_{x_i} a^{ij}(t,\cdot) \partial_{x_j} \phi(t,\cdot) \|_{H^{2,-1}(U_{R})}
        &\le C {\rm diam}(U_R)^{2+2/s} \|\partial_{x_j}a^{ij}(t,\cdot)  \nabla \phi(t,\cdot) \|_{L^s(U_{R})}.
    \end{align*}
    Here again, by using Hölder inequality successively, we find a constant $C$ depending on the dimension and $T_0$ s.t.
    \begin{align*}
        \| \partial_{x_i} a^{ij} \partial_{x_j} \phi \|_{\H^{2,-1}(U_{R,T_0})}
        \le C {\rm diam}(U_R)^{2+2/s} \|\nabla \ab \|_{L^{2s/(2-s)}(U_{R,T_0})} \|  \phi \|_{\H^{2,1}(U_{R,T_0})}
    \end{align*}
    and
    \begin{align*}
        \| b^i \partial_{x_i} \phi \|_{\H^{2,-1}(U_{R,T_0})}
        \le C {\rm diam}(U_R)^{2+2/s} \|\bb \|_{L^{2s/(2-s)}(U_{R,T_0})} \|  \phi \|_{\H^{2,1}(U_{R,T_0})}.
    \end{align*}
    
    \end{itemize}

    By combining the previous results, we get the proof of the proposition by setting
    \begin{align*}
        \ell(\pi,\alpha,R)
        :=
        N \Big\{ {\rm diam}(U_R)^{\pi} +C {\rm diam}(U_R)^{1-d/\alpha'} \1_{\alpha<2} +C {\rm diam}(U_R)^{2+2/s} \1_{\alpha=2} +C {\rm diam}(U_R)^{(\alpha-2)/\alpha} \1_{\alpha>2} \Big\}.
    \end{align*}.
\end{proof}
Notice that, similarly to \Cref{remark:assump}, in the previous result when $\ab$ does not belong to a Sobolev or H\"older space, the upper bound is infinite and the result is still true. The proof shows that $\ell$ is given by
\begin{align} \label{eq:def_ell}
        \ell(\pi,\alpha,R)
        :=
        N \Big\{ {\rm diam}(U_R)^{\pi} +C {\rm diam}(U_R)^{1-d/\alpha'} \1_{\alpha<2} +C {\rm diam}(U_R)^{2+2/s} \1_{\alpha=2} +C {\rm diam}(U_R)^{(\alpha-2)/\alpha} \1_{\alpha>2} \Big\}.
    \end{align}
Let $\widehat{C}(\pi,\alpha,\ab,\bb,U_R,T_0)$ be defined by
\begin{align} \label{eq:def_c}
    \widehat{C}(\pi,\alpha,\ab,\bb,U_R,T_0):=
    \esssup_{t \in (0,T_0)} \|\ab(t,\cdot) \|_{C^{0,\pi}(U_R)} + \Gamma(\alpha,\ab,\bb,U_R,T_0).
\end{align}
From \Cref{prop:estimates_PPDE}, we easily deduce the next corollary which gives an estimate of the solution $\phi.$
\begin{corollary} \label{cor:estimates_general}
If $1-\ell (\pi,\alpha,R)\widehat{C}(\alpha,\ab,\bb,U_R,T_0) >0,$ one has
\begin{align*}
    \| \phi \|_{\H^{\alpha,1}(U_{R,T_0})} 
        \le \frac{N}{1-\ell (\pi,\alpha,R)\widehat{C}(\pi,\alpha,\ab,\bb,U_R,T_0)} \| \partial_{x_i}f^i + g \|_{\H^{\alpha,-1}(U_{R,T_0})}.
\end{align*}
\end{corollary}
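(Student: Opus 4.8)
The plan is to obtain the corollary by a one-line rearrangement of the a priori estimate in \Cref{prop:estimates_PPDE}, the only point deserving a moment's care being the finiteness of the quantity that is moved across the inequality. Recalling the definitions \eqref{eq:def_ell} and \eqref{eq:def_c} of $\ell$ and $\widehat{C}$, \Cref{prop:estimates_PPDE} reads
\begin{align*}
    \| \phi \|_{\H^{\alpha,1}(U_{R,T_0})}
    \le
    N \| \partial_{x_i} f^i + g \|_{\H^{\alpha,-1}(U_{R,T_0})} + \ell (\pi,\alpha,R)\,\widehat{C}(\pi,\alpha,a,b,U_R,T_0)\, \| \phi \|_{\H^{\alpha,1}(U_{R,T_0})}.
\end{align*}

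First I would record that $\| \phi \|_{\H^{\alpha,1}(U_{R,T_0})} < \infty$: the set $U_{R,T_0}=(0,T_0)\times U_R$ has finite Lebesgue measure since $U_R$ is a bounded ball, and $\phi$ together with $\nabla \phi$ is bounded on $U_{R,T_0}$ (it is $C^{1,2}$ and vanishes on the parabolic boundary $\{0\}\times U_R$ and on $[0,T_0]\times \partial U_R$, so it extends with the stated regularity up to $\overline{U_{R,T_0}}$, the maps $f,g$ having compact support inside $U_{R,T_0}$); hence $|\phi|^\alpha$ and $|\nabla\phi|^\alpha$ are integrable over $U_{R,T_0}$. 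This finiteness is precisely what licenses the subtraction below — without it the displayed inequality would be the vacuous statement $\infty\le\infty$.

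Then, under the standing hypothesis $1 - \ell(\pi,\alpha,R)\,\widehat{C}(\pi,\alpha,a,b,U_R,T_0) > 0$, I would subtract the last term on the right from both sides to get
\begin{align*}
    \big(1 - \ell(\pi,\alpha,R)\,\widehat{C}(\pi,\alpha,a,b,U_R,T_0)\big)\,\| \phi \|_{\H^{\alpha,1}(U_{R,T_0})} \le N \| \partial_{x_i} f^i + g \|_{\H^{\alpha,-1}(U_{R,T_0})},
\end{align*}
and divide by the strictly positive coefficient to reach the asserted bound. There is essentially no obstacle: the argument is purely algebraic once \Cref{prop:estimates_PPDE} is in hand, and the only substantive point is the justification of $\| \phi \|_{\H^{\alpha,1}(U_{R,T_0})}<\infty$, which is what converts the self-referential estimate of \Cref{prop:estimates_PPDE} into a genuine bound on $\phi$.
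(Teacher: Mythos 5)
Your proposal is correct and is exactly the argument the paper intends: the corollary is obtained by rearranging the self-referential estimate of \Cref{prop:estimates_PPDE} and dividing by the positive factor $1-\ell(\pi,\alpha,R)\widehat{C}(\pi,\alpha,a,b,U_R,T_0)$. Your explicit check that $\|\phi\|_{\H^{\alpha,1}(U_{R,T_0})}<\infty$ (which the paper leaves implicit) is the right point to verify, since it is what makes the subtraction legitimate.
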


\medskip
In the next part, we study the regularity, in terms of the Hölder norm, of functions verifying a certain type of equation. We recall that $d=2$ and $T_0>0$. Let $\boldsymbol{\psi}:=(\psi_i)_{0 \le i \le d}: [0,T_0] \x \R^d \to \R^{d+1}$ and $u: [0,T_0] \x \R^d \to \R$ be Borel maps satisfying:
\begin{align*}
    \mathrm{d} \langle u(t,\cdot), \phi \rangle 
    =
    \langle \psi_0 (t,\cdot), \phi \rangle
    +
    \sum_{i=1}^d\langle \psi_i (t,\cdot), \partial_{x_i}\phi \rangle \mathrm{d}t\;\;\mbox{for any }\phi \in C^\infty_c(\R^d).
\end{align*}

%Let $0 < v < 1/2$ and $1 < \alpha < 2.$ We choose $2 <\widehat \alpha\le \frac{\alpha}{\alpha-1}$ and $\alpha$ close to $1$ s.t. $v-1/\widehat{\alpha} >0.$ 
Let $\widehat{\alpha} > 2$ and $\frac{1}{2} > \eta > \kappa > \frac{1}{\hat{\alpha}}$ with $(1-2\eta) \widehat{\alpha} > d$.
\begin{proposition} \label{prop:estimates_holder}
    There exists $N >0$  depending only on $(\kappa,\widehat{\alpha},d,\eta,T_0)$ s.t.
    \begin{align*}
        \| u \|_{C^{0,\kappa-1/\hat \alpha}\big([0,T_0],\;C^{0,1-2\eta-d/\hat \alpha}(\R^d) \big)} \le N \Big[ \| u\|_{\H^{\hat{\alpha},1}([0,T_0] \x \R^d)} + \| \boldsymbol{\psi}\|_{\H^{\hat{\alpha},-1}([0,T_0] \x \R^d)} + \| u(0,\cdot) \|_{H^{\hat \alpha,1}(\R^d)} \Big].
    \end{align*}
\end{proposition}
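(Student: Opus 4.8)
The plan is to exploit the two time-regularizing effects separately: the equation $\mathrm{d}\langle u(t,\cdot),\phi\rangle=\langle\psi_0,\phi\rangle+\sum_i\langle\psi_i,\partial_{x_i}\phi\rangle\,\mathrm{d}t$ tells us that $t\mapsto\langle u(t,\cdot),\phi\rangle$ is absolutely continuous with derivative controlled by $\|\psi(t,\cdot)\|_{H^{\hat\alpha,-1}}\|\phi\|_{W^{\hat\alpha',1}_0}$, so $u$ is Hölder in time as a map into $H^{\hat\alpha,-1}$; meanwhile the hypothesis $u\in\mathcal{H}^{\hat\alpha,1}([0,T_0]\times\R^d)$ together with $u(0,\cdot)\in H^{\hat\alpha,1}$ gives spatial $W^{\hat\alpha,1}$-control that, interpolated with the time regularity, upgrades to the claimed joint Hölder bound. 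First I would fix $\phi\in C^\infty_c(\R^d)$ and integrate the equation to get, for $s<t$,
\begin{align*}
    |\langle u(t,\cdot)-u(s,\cdot),\phi\rangle|
    \le
    \int_s^t \Big( |\langle\psi_0(r,\cdot),\phi\rangle| + \sum_{i=1}^d|\langle\psi_i(r,\cdot),\partial_{x_i}\phi\rangle| \Big)\mathrm{d}r
    \le
    \|\phi\|_{W^{\hat\alpha',1}_0}\int_s^t\|\psi(r,\cdot)\|_{H^{\hat\alpha,-1}(\R^d)}\,\mathrm{d}r,
\end{align*}
and then apply Hölder's inequality in $r$ with exponents $\hat\alpha,\hat\alpha'$ to bound the integral by $|t-s|^{1/\hat\alpha'}\|\psi\|_{\mathcal{H}^{\hat\alpha,-1}([0,T_0]\times\R^d)}$. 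Taking the supremum over $\|\phi\|_{W^{\hat\alpha',1}_0}\le 1$ yields $\|u(t,\cdot)-u(s,\cdot)\|_{H^{\hat\alpha,-1}}\le|t-s|^{1-1/\hat\alpha}\|\psi\|_{\mathcal{H}^{\hat\alpha,-1}}$, i.e. $u\in C^{0,1-1/\hat\alpha}([0,T_0],H^{\hat\alpha,-1}(\R^d))$.

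Next I would invoke an interpolation inequality of Gagliardo–Nirenberg type: for the triple of spaces $H^{\hat\alpha,-1}\subset L^{\hat\alpha}\subset W^{\hat\alpha,1}$ one has, for $w\in W^{\hat\alpha,1}(\R^d)$,
\begin{align*}
    \|w\|_{C^{0,1-2\beta-d/\hat\alpha}(\R^d)}
    \le
    N\,\|w\|_{H^{\hat\alpha,-1}(\R^d)}^{\,1-2\beta}\,\|w\|_{W^{\hat\alpha,1}(\R^d)}^{\,2\beta}
\end{align*}
(the Hölder exponent $1-2\beta-d/\hat\alpha$ is exactly what Morrey embedding gives for the fractional Sobolev space $W^{2\beta,\hat\alpha}$ obtained by interpolating $W^{1,\hat\alpha}$ at parameter $2\beta$ with the negative-order space, and the condition $(1-2\beta)\hat\alpha>d$ guarantees positivity of the exponent). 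Apply this with $w=u(t,\cdot)-u(s,\cdot)$: the $H^{\hat\alpha,-1}$ factor is $\lesssim|t-s|^{1-1/\hat\alpha}$ and the $W^{\hat\alpha,1}$ factor is $\lesssim\|u(t,\cdot)\|_{W^{\hat\alpha,1}}+\|u(s,\cdot)\|_{W^{\hat\alpha,1}}$, which after raising to the power $2\beta<1$ and integrating is controlled by $\|u\|_{\mathcal{H}^{\hat\alpha,1}}$ (plus the initial datum) on a set of times of full measure, and in fact uniformly after choosing a continuous representative. Thus
\begin{align*}
    \|u(t,\cdot)-u(s,\cdot)\|_{C^{0,1-2\beta-d/\hat\alpha}(\R^d)}
    \le
    N\,|t-s|^{(1-1/\hat\alpha)(1-2\beta)}\Big[\|u\|_{\mathcal{H}^{\hat\alpha,1}}+\|\psi\|_{\mathcal{H}^{\hat\alpha,-1}}+\|u(0,\cdot)\|_{H^{\hat\alpha,1}}\Big],
\end{align*}
and since $\beta>\kappa>1/\hat\alpha$ implies $(1-1/\hat\alpha)(1-2\beta)\ge\kappa-1/\hat\alpha$ (for $|t-s|\le T_0$, absorbing constants depending on $T_0$), the time-Hölder seminorm of exponent $\kappa-1/\hat\alpha$ is bounded. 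The supremum-in-$t$ part of the target norm, namely $\sup_t\|u(t,\cdot)\|_{C^{0,1-2\beta-d/\hat\alpha}}$, is handled the same way using $\|w\|_{C^{0,\cdots}}\le N\|w\|_{L^{\hat\alpha}}^{1-2\beta}\|w\|_{W^{\hat\alpha,1}}^{2\beta}$ together with the continuous embedding $\mathcal{H}^{\hat\alpha,1}([0,T_0]\times\R^d)\cap C^{0,\cdots}([0,T_0],H^{\hat\alpha,-1})\hookrightarrow C([0,T_0],L^{\hat\alpha})$ and the bound on $\sup_t\|u(t,\cdot)\|_{W^{\hat\alpha,1}}$ that follows from the time-continuity into $H^{\hat\alpha,-1}$, the datum at $0$, and interpolation.

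The main obstacle is the bound $\sup_{t\in[0,T_0]}\|u(t,\cdot)\|_{W^{\hat\alpha,1}(\R^d)}<\infty$: a priori $u\in\mathcal{H}^{\hat\alpha,1}$ only gives $\|u(t,\cdot)\|_{W^{\hat\alpha,1}}\in L^{\hat\alpha}(0,T_0)$, finite for a.e.\ $t$ but not obviously uniformly. The resolution is the standard real-interpolation / trace argument: a function that lies in $L^{\hat\alpha}(0,T_0;W^{\hat\alpha,1})$ and is Hölder continuous (hence bounded) in time with values in the negative-order space $H^{\hat\alpha,-1}$ actually has a representative in $C([0,T_0];X)$ for the interpolation space $X=(H^{\hat\alpha,-1},W^{\hat\alpha,1})_{\theta,\infty}$ with $\theta$ close to $1$, and in particular its $W^{\hat\alpha,1}$-norm is essentially bounded on $[0,T_0]$ — this is where the precise numerology $(1-2\beta)\hat\alpha>d$ and $\beta>\kappa>1/\hat\alpha$ is consumed, ensuring all the interpolation exponents land in the admissible open ranges. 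Once uniform spatial control is in hand, the two displays above combine to give exactly the asserted estimate with $N=N(\kappa,\hat\alpha,d,\beta,T_0)$, completing the proof.
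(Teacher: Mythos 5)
Your overall architecture — (i) get Hölder continuity in time with values in $H^{\hat\alpha,-1}$ from the weak formulation of the equation, (ii) interpolate against the spatial $W^{\hat\alpha,1}$ regularity to land in a fractional Sobolev space $W^{\hat\alpha,1-2\beta}$, (iii) apply the Morrey-type embedding $W^{\hat\alpha,1-2\beta}(\R^d)\hookrightarrow C^{0,1-2\beta-d/\hat\alpha}(\R^d)$ under $(1-2\beta)\hat\alpha>d$ — mirrors what the paper does, except that the paper outsources steps (i)--(ii) entirely to a cited theorem (\cite[Theorem 7.2]{AnaApproach}, i.e.\ Krylov's embedding $\{u\in\H^{\hat\alpha,1},\ \partial_t u\in\H^{\hat\alpha,-1}\}\hookrightarrow C^{0,\kappa-1/\hat\alpha}([0,T_0];W^{\hat\alpha,1-2\beta})$) and only does step (iii) by hand. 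Your step (iii) is fine, and your step (i) is a correct standard computation. The problem is that step (ii) is precisely the hard content of the cited theorem, and your argument for it does not close. The pointwise-in-time interpolation $\|w\|\le N\|w\|_{H^{\hat\alpha,-1}}^{a}\|w\|_{W^{\hat\alpha,1}}^{b}$ applied to $w=u(t,\cdot)-u(s,\cdot)$ produces a high-regularity factor $\|u(t,\cdot)\|_{W^{\hat\alpha,1}}+\|u(s,\cdot)\|_{W^{\hat\alpha,1}}$ that the hypothesis $u\in\H^{\hat\alpha,1}$ controls only in $L^{\hat\alpha}(\mathrm{d}t)$, not uniformly; so even on a full-measure set of times you do not obtain a uniform Hölder constant, and "choosing a continuous representative" cannot repair this. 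You correctly flag $\sup_t\|u(t,\cdot)\|_{W^{\hat\alpha,1}}<\infty$ as the obstacle, but the resolution you offer is false: the trace/real-interpolation method for $u\in L^{\hat\alpha}(0,T_0;W^{\hat\alpha,1})$ with $\partial_t u\in L^{\hat\alpha}(0,T_0;H^{\hat\alpha,-1})$ yields continuity into \emph{strictly intermediate} spaces $(H^{\hat\alpha,-1},W^{\hat\alpha,1})_{\theta,\hat\alpha}$ with $\theta=1-1/\hat\alpha<1$, never into the endpoint $W^{\hat\alpha,1}$, and $\sup_t\|u(t,\cdot)\|_{W^{\hat\alpha,1}}$ is in general infinite under these hypotheses. (That trace space does embed into $W^{\hat\alpha,1-2\beta}$ since $\beta>1/\hat\alpha$, which would give you the $\sup_t$ part of the norm; but the Hölder-in-time seminorm into $W^{\hat\alpha,1-2\beta}$ still requires the genuine mixed-derivative argument — factorization via the heat semigroup or the garsia-type estimates in Krylov's proof — and cannot be reduced to two lines of interpolation.)

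Two smaller points. First, your interpolation exponents are off: to reach $W^{\hat\alpha,1-2\beta}$ from the couple $(H^{\hat\alpha,-1},W^{\hat\alpha,1})$ the parameter is $\theta=1-\beta$, so the inequality should read $\|w\|_{W^{\hat\alpha,1-2\beta}}\le N\|w\|_{H^{\hat\alpha,-1}}^{\beta}\|w\|_{W^{\hat\alpha,1}}^{1-\beta}$, not exponents $1-2\beta$ and $2\beta$ (those would land you in $W^{\hat\alpha,-1+4\beta}$). With the corrected exponents the resulting time exponent $(1-1/\hat\alpha)\beta$ does dominate $\kappa-1/\hat\alpha$, so this slip is repairable, but it propagates into your claimed exponent $(1-1/\hat\alpha)(1-2\beta)$. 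Second, if you want a self-contained proof rather than a citation, the honest route is to prove (or quote) the parabolic embedding $\H^{\hat\alpha,1}\cap\{\partial_t u\in\H^{\hat\alpha,-1}\}\hookrightarrow C^{0,\kappa-1/\hat\alpha}([0,T_0];W^{\hat\alpha,1-2\beta})$ as a theorem in its own right; as written, your proposal implicitly assumes it.
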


\begin{proof}
    Let $\widehat{\alpha} > 2$ and $\frac{1}{2} > \eta > \kappa > \frac{1}{\hat{\alpha}}$.
    By \cite[Theorem 7.2]{AnaApproach} (see also \cite[Theorem 6.2.2]{FK-PL-equations}), %[Theorem 6.2.2 of FP]),
    there exists $N >0$ depending only on $(\kappa,\widehat{\alpha},d,\eta,T_0)$ s.t.
    \begin{align*}
         \| u \|_{C^{0,\kappa-1/\hat \alpha}\big([0,T_0],W^{\hat \alpha,1-2\eta}(\R^d)\big)} \le N  \Big[ \|\nabla^2 u\|_{\H^{\hat \alpha,-1}([0,T_0] \x \R^d)} + \| \boldsymbol{\psi}\|_{\H^{\hat \alpha,-1}([0,T_0] \x \R^d)} + \| u(0,\cdot) \|_{H^{\hat \alpha,1}(\R^d)} \Big],
    \end{align*}
    where $W^{\hat \alpha,1-2\eta}(\R^2)$ denotes a fractional Sobolev space (see \cite[Chapter 4]{demengel2012functional} for details). We have $\|\nabla^2 u\|_{\H^{\hat \alpha,-1}([0,T_0] \x \R^d)} \le \| u\|_{\H^{\hat{\alpha},1}([0,T_0] \x \R^d)}$.  Let us mention that the statement in \cite[Theorem 7.2]{AnaApproach} assumes $\| u\|_{\H^{\hat{\alpha},1}([0,T_0] \x \R^d)} + \| \boldsymbol{\psi}\|_{\H^{\hat{\alpha},-1}([0,T_0] \x \R^d)} + \| u(0,\cdot) \|_{H^{\hat \alpha,1}(\R^d)} < \infty$. However, in the case where this quantity is not finite, the inequality is trivially true. By \cite[Theorem 4.47]{demengel2012functional} %[Theorem 4.47 of Functional Spaces for the Theory of Elliptic Partial Differential E], 
    for $(1-2\eta) \widehat{\alpha} > d,$ there exists a constant $C >0$ independent of $u$ s.t.
    \begin{align*}
         \| u \|_{C^{0,\kappa-1/\hat \alpha}\big([0,T_0],C^{0,1-2\eta-d/\hat \alpha}(\R^d)\big)}
         \le
         C\| u \|_{C^{0,\kappa-1/\hat \alpha}\big([0,T_0],W^{\hat \alpha,1-2\eta}(\R^d)\big)}.
    \end{align*}
    We can conclude the result.
\end{proof}

\subsection{Estimates for Fokker--Planck equation} \label{sec_estimates_FP_general}

This section is dedicated to the analysis of the regularity of solution of Fokker--Planck in terms of the regularity of the coefficients. We will provide here some explicit estimates. 

\medskip
Let $\theta \in (-1,1)$ and $[\widetilde{b},\widetilde{\lambda},\widetilde{\sigma},\widetilde{\beta}]: \R_+ \x \R^2 \to \R^4$ be a Borel map. For $\xbb=(x_1,x_2),$ let us define $b^1(t,\xbb):=\widetilde{b}(t,\xbb)$, $b^2(t,\xbb):=\widetilde{\lambda}(t,\xbb)$,
\begin{align*}
    a^{1,1}(t,\xbb):=\frac{1}{2} \widetilde{\sigma}(t,\xbb)^2,\;a^{2,1}(t,\xbb)=a^{1,2}(t,\xbb):= \frac{1}{2}\widetilde{\sigma}(t,\xbb)\widetilde{\beta}(t,\xbb) \theta\;\;\mbox{and}\;a^{2,2}(t,\xbb):=\frac{1}{2} \widetilde{\beta}(t,\xbb)^2.
\end{align*} 
These maps satisfy
\begin{align}\label{assum:minimal}
    m\; \mathrm{I}_2 \le \ab\;\;\mbox{and}\;\; |(\widetilde{b},\widetilde{\lambda},\widetilde{\sigma},\widetilde{\beta})|\le M.
\end{align} 
Let $\Xbb:=(X^1,X^2)$ be a weak solution of
\begin{align*}
    \mathrm{d}X^1_t
    =
    \widetilde{b}(t,\Xbb_t) \mathrm{d}t
    +
    \widetilde{\sigma}(t,\Xbb_t) \mathrm{d}W_t\;\;\mbox{and}\;\;\mathrm{d}X^2_t=\widetilde{\lambda}(t,\Xbb_t) \mathrm{d}t
    +\widetilde \beta(t,\Xbb_t)\mathrm{d}B_t\;\mbox{where}\;\mathrm{d}\langle W, B \rangle_t= \theta \mathrm{d}t.
\end{align*}
%The maps $\widetilde{b},$ $\widetilde{\lambda},$ $\widetilde{\sigma}$ and $\widetilde{\beta}$ satisfied $m\; \mathrm{I}_2 \le a$ and $|(\widetilde{b},\widetilde{\lambda},\widetilde{\sigma},\widetilde{\beta})|\le M.$ 
Under these assumptions, the map $p(t,x_1,x_2)$ which denotes the density of $\mu_t:=\Lc(\Xbb_t)$ is well defined almost every $t$. 

\medskip
%$\mathbf{\underline{Growth\;assumption}}$
\subsubsection{Estimates with Borel measurable coefficients } 
Without assuming any additional assumptions over $(\ab,\bb)$ i.e. $\ab$ and $\bb$ just satisfy \eqref{assum:minimal}, we give an estimate of the density $p$. This proposition is essentially an application of \cite[Chapter 2 Section 3 Theorem 4]{KrylovControlledDiffusion}. Its proof mainly uses geometric arguments.

\begin{proposition} \label{prop:density-integrability}
    Let $T_0>0$ and $\gamma \in [1,(d+1)']$. There exists a constant $G>0$ depending only on $T_0,$ $\gamma,$ the dimension $d=2,$ $m$ and M i.e. $G=G(T_0,\gamma,2,m,M)$ s.t.
    \begin{align*}         \int_{[0,T_0] \x     \R^2}           |p(t,x_1,x_2)|^\gamma \mathrm{d}x_1\; \mathrm{d}x_2\; \mathrm{d}t \le G.
    \end{align*}
\end{proposition}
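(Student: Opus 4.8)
The plan is to reduce the bound on $\|p\|_{L^\gamma([0,T_0]\times\R^2)}$ to a local $L^\infty$--type estimate on the occupation measure of $\Xbb$, and then invoke the cited theorem of Krylov. First I would fix a bounded Borel set $A\subset[0,T_0]\times\R^2$ and estimate $\int_A p(t,x)\,\mathrm{d}t\,\mathrm{d}x = \E\big[\int_0^{T_0}\1_A(t,\Xbb_t)\,\mathrm{d}t\big]$, i.e. the expected time the space--time process $(t,\Xbb_t)$ spends in $A$. Krylov's estimate (\cite[Chapter 2, Section 3, Theorem 4]{KrylovControlledDiffusion}) applied to the diffusion $\Xbb$ with diffusion matrix bounded below by $m\,\mathrm{I}_2$ and drift/diffusion bounded by $M$ gives, for the dimension $d=2$,
\begin{align*}
    \E\bigg[\int_0^{T_0}\1_A(t,\Xbb_t)\,\mathrm{d}t\bigg]
    \le
    K(T_0,d,m,M)\,\big(\mathrm{Leb}_{d+1}(A)\big)^{1/(d+1)},
\end{align*}
since the relevant Krylov exponent for a $d$--dimensional diffusion over a time interval is $1/(d+1)$ (here $1/3$). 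I would state this carefully: the bounds $m\,\mathrm{I}_2\le a$ and $|(\widetilde b,\widetilde\lambda)|\le M$ are exactly the hypotheses needed to apply that theorem with constants depending only on $T_0$, $d$, $m$, $M$ (the upper bound $a\le M\,\mathrm{I}_2$ is not even required here but is available).

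Next I would translate this into a statement about the density. The inequality $\int_A p\,\mathrm{d}t\,\mathrm{d}x\le K\,(\mathrm{Leb}_{d+1}(A))^{1/(d+1)}$, valid for all bounded Borel $A$, is precisely the statement that $p$ belongs to the Lorentz space $L^{(d+1)',\infty}$ of the space--time domain with norm controlled by $K$; equivalently, the distribution function of $p$ satisfies $\mathrm{Leb}\{(t,x): p(t,x)>\lambda\}\le (K/\lambda)^{d+1}$ for all $\lambda>0$ (take $A=\{p>\lambda\}$ intersected with a large ball and let the ball grow, using that $p$ is a probability density in $x$ for a.e.\ $t$ so the set has finite measure). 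This weak--type bound is what forces $\gamma$ to stay strictly below $(d+1)'$: a weak--$L^{(d+1)'}$ function need not be in $L^{(d+1)'}$, but it lies in $L^\gamma$ of any set of finite measure for every $\gamma<(d+1)'$. To get a bound on all of $[0,T_0]\times\R^2$ rather than just on bounded sets, I would additionally use that $\int_{[0,T_0]\times\R^2}p\,\mathrm{d}t\,\mathrm{d}x=T_0$ (mass conservation, $p(t,\cdot)$ is a probability density for a.e.\ $t$), and interpolate: writing $\{p>1\}$ and $\{p\le 1\}$ separately, $\int p^\gamma = \int_{\{p\le1\}}p^\gamma + \int_{\{p>1\}}p^\gamma \le \int_{\{p\le1\}} p + \int_0^\infty \gamma\lambda^{\gamma-1}\mathrm{Leb}\{p>\lambda\vee 1\}\,\mathrm{d}\lambda$, and the second integral is finite and bounded by a constant depending only on $K$ and $\gamma$ precisely when $\gamma<(d+1)'$, since $\int_1^\infty \lambda^{\gamma-1}\cdot\lambda^{-(d+1)}\,\mathrm{d}\lambda<\infty$ iff $\gamma<d+1$... wait, I should be careful: the exponent on $\lambda$ from the weak bound is $-(d+1)$, so convergence needs $\gamma-1-(d+1)<-1$, i.e.\ $\gamma<d+1$; but the sharper statement with the correct Lorentz exponent $(d+1)'$ follows from the full strength of Krylov's estimate, so I would cite the precise exponent in that theorem rather than re-derive it, and set $G:=G(T_0,\gamma,2,m,M)$ accordingly.

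The main obstacle is bookkeeping about exactly which exponent Krylov's theorem delivers and matching it to the claimed range $\gamma\in[1,(d+1)']$: one must check that the space--time Krylov estimate (with the time variable counted as one of the $d+1$ dimensions) gives the measure of $A$ to the power $1/(d+1)$, hence weak--$L^{(d+1)'}$ integrability, and that this is the sharp threshold. The geometric/covering argument alluded to in the paper is the standard route to deduce the $L^\gamma$ bound for $\gamma$ strictly below the endpoint from the weak--type endpoint estimate, combined with the finite total mass. No PDE regularity theory is needed here — only the stochastic--control $L^p$ estimate of Krylov and elementary distribution--function manipulations. I would present the argument in three short steps: (1) the Krylov occupation bound, (2) conversion to a weak--type / distribution--function inequality for $p$, (3) integration against $\gamma\lambda^{\gamma-1}\,\mathrm{d}\lambda$ plus mass conservation to conclude for $\gamma<(d+1)'$, with the boundary value $\gamma=(d+1)'$ covered directly by the Lorentz--space form of Krylov's estimate.
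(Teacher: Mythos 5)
Your proposal rests on the same key ingredient as the paper — the occupation-time estimate of Krylov \cite[Chapter 2, Section 3, Theorem 4]{KrylovControlledDiffusion} — but you take a longer route through the weak-type endpoint. The paper simply applies the estimate in its strong form, $\int_{[0,T_0]\times\R^2}|f|\,p\,\mathrm{d}x\,\mathrm{d}t\le G\|f\|_{L^{d+1}}$ for all Borel $f$, and concludes by duality that $\|p\|_{L^{(d+1)'}}\le G$; the range $\gamma<(d+1)'$ then follows from this together with $\int p\,\mathrm{d}x\,\mathrm{d}t=T_0$ (your $\{p\le 1\}/\{p>1\}$ split, or equivalently $L^1$--$L^{(d+1)'}$ interpolation — a step the paper leaves implicit and you rightly make explicit). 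Two points in your detour need repair. First, the distribution-function bound you derive from $\int_A p\le K|A|^{1/(d+1)}$ is off: taking $A=\{p>\lambda\}$ gives $\lambda|A|\le K|A|^{1/(d+1)}$, hence $\mathrm{Leb}\{p>\lambda\}\le (K/\lambda)^{(d+1)'}$ with exponent $(d+1)'=(d+1)/d$, not $d+1$; with the correct exponent your convergence criterion $\gamma-1-(d+1)'<-1$ gives exactly $\gamma<(d+1)'$, resolving the discrepancy you flagged mid-argument. Second, the endpoint $\gamma=(d+1)'$ is genuinely not reachable from the weak-type (indicator-function) estimate alone — weak $L^{(d+1)'}$ does not embed into $L^{(d+1)'}$ — so your closing appeal to "the Lorentz-space form" only works if by that you mean the strong $L^{d+1}$-dual form of Krylov's estimate; once you invoke that, you are doing precisely the paper's two-line proof, and the weak-type machinery becomes superfluous.
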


\begin{proof}
    By \cite[Chapter 2 Section 3 Theorem 4]{KrylovControlledDiffusion}, there exists a constant $G>0$ depending only on $T_0,$ $\gamma,$ the dimension $d=2,$ $m$ s.t. for any Borel $f:[0,T_0] \x \R^d \to \R$ 
    \begin{align*}
        \int_{[0,T_0] \x     \R^d} |f(t,\xbb)|\;p(t,\xbb) \;\mathrm{d}\xbb\; \mathrm{d}t \le G \| f \|_{L^{d+1}([0,T_0] \x \R^d)}.
    \end{align*}
    By using a duality argument, we deduce the result.
\end{proof}

\medskip
Recall that $d=2$. By using Itô's formula and taking the expectation, we find that $\mbox{for any}\;\;\varphi \in C^{\infty}_c(\R_+ \x \R^d)$,
\begin{align*}
    \mathrm{d} \langle \varphi, \mu_t \rangle
    &=
    \int_{\R^2} \big[ \partial_t \varphi(t,\xbb) + b^{i}(t,\xbb) \partial_{x_i} \varphi(t,\xbb) + a^{i,j}(t,\xbb) \partial_{x_i} \partial_{x_j} \varphi(t,\xbb) \big] \mu_t(\mathrm{d}\xbb) \mathrm{d}t
    =
    \int_{\R^2} \big[ \partial_t \varphi(t,\xbb) + \Lc \varphi(t,\xbb) \big] \mu_t(\mathrm{d}\xbb) \mathrm{d}t.
\end{align*}
For any $T_0>0,$ let us introduce 
\begin{align*}
    C^\infty_0(T_0):=\{\wr \in C^\infty([0,\infty))\;\mbox{s.t.}\;\mbox{supp}(\wr) =[0,T_0]\}.
\end{align*}
Notice that if $\wr \in C^\infty_0(T_0)$, $\wr^{(j)}(T_0)=0$ and $\wr^{(j)} \in C^\infty_0(T_0)$ for all $j \ge 0$ where $\wr^{(j)}$ denote the $j$th derivative of $\wr$.
We provide, in the next lemma, estimates involving the generator of the process $\Xbb$ i.e. $\Lc$. Let $T_0 >0$, $R>0$ and $U_R=U_R^1 \x U_R^2$.
\begin{lemma} \label{lemm:FP-estimates}
    Let $\wr \in C_0^\infty(T_0)$ and $\alpha \in (1,\infty)$. For any $\phi \in C^{\infty}_c([0,\infty) \x U_R)$ verifying $\phi(0,\cdot)=0$, one has
    \begin{align*}
        \bigg| \int_{[0,T_0] \x \R^d} \wr(t)\big[ \partial_t \phi(t,\xbb) + \Lc\phi(t,\xbb) \big] \mu_t(\mathrm{d}\xbb)\;\mathrm{d}t \bigg|
        \le \int_{U_{R,T_0}} |\wr'(t)\phi(t,\xbb) p(t,\xbb)| \mathrm{d}\xbb\;\mathrm{d}t=\|\wr'\;\phi\;p\|_{L^1(U_{R,T_0})}
    \end{align*}
    and
    \begin{align*}
        &%\bigg|\int_{0}^{T_0} \int_{\R^2} \wr(t)\big[ \partial_t \phi(t,x) + \Lc\phi(t,x) \big] \mu_t(\mathrm{d}x) \mathrm{d}t \bigg|
        \|\wr'\;\phi\;p\|_{L^1(U_{R,T_0})}
        \le %C \| \phi\|_{\H^{\alpha,1}(U_{R,T_0})} \int_{U^2_R}\bigg[\int_{U^1_{R,T_0}} |\wr'(t)p(t,x_1,x_2)|^{\alpha'} \mathrm{d}x_1\; \mathrm{d}t \bigg]^{1/\alpha'}
        %\mathrm{d}x_2= 
        2 \| \phi\|_{\H^{\alpha,1}(U_{R,T_0})}\int_{U^2_R}\|\wr'\;p(\cdot,\cdot,x_2)\|_{L^{\alpha'}(U^1_{R,T_0})}
        \mathrm{d}x_2.
    \end{align*}
\end{lemma}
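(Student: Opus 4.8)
The plan is to derive the first inequality by inserting the test function $\wr\phi$ into the Itô identity recorded just above the lemma and exploiting the two boundary conditions, and then to obtain the second inequality by a Fubini--H\"older slicing argument in the variable $x_2$ combined with a one--dimensional trace estimate.

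\emph{Step 1 (first inequality).} Since $\wr\in C^\infty_0(T_0)$ and $\phi\in C^\infty_c([0,\infty)\x U_R)$ with $\phi(0,\cdot)=0$, the function $\varphi:=\wr\phi$ lies in $C^\infty_c(\R_+\x\R^2)$, vanishes at $t=0$, and is identically zero for $t\ge T_0$ (recall $\wr(T_0)=0$). I would plug this $\varphi$ into $\mathrm{d}\langle\varphi,\mu_t\rangle=\int_{\R^2}[\partial_t\varphi+\Lc\varphi]\,\mu_t(\mathrm{d}x)\,\mathrm{d}t$, use $\partial_t(\wr\phi)=\wr'\phi+\wr\,\partial_t\phi$ and $\Lc(\wr\phi)=\wr\,\Lc\phi$ (as $\Lc$ differentiates only in space while $\wr$ depends only on $t$), and integrate over $t\in[0,\infty)$. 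The telescoping left--hand side $\langle\wr(t)\phi(t,\cdot),\mu_t\rangle$ vanishes at $t=0$ (by $\phi(0,\cdot)=0$) and for large $t$ (by $\wr\equiv0$ there), so one is left with
\begin{align*}
  \int_{[0,T_0]\x\R^2}\wr(t)\big[\partial_t\phi+\Lc\phi\big](t,x)\,\mu_t(\mathrm{d}x)\,\mathrm{d}t
  =
  -\int_{[0,T_0]\x\R^2}\wr'(t)\,\phi(t,x)\,\mu_t(\mathrm{d}x)\,\mathrm{d}t.
\end{align*}
Taking absolute values, recalling that $\mu_t(\mathrm{d}x)\,\mathrm{d}t$ has density $p$ on $[0,T_0]\x\R^2$ and that $\phi(t,\cdot)$ is supported in $U_R$, delivers the first bound.

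\emph{Step 2 (second inequality).} Writing $U_R=U_R^1\x U_R^2$ and integrating first over $(t,x_1)\in U^1_{R,T_0}$ for fixed $x_2$, H\"older's inequality with exponents $\alpha$ and $\alpha'$ on each slice gives
\begin{align*}
  \|\wr'\,\phi\,p\|_{L^1(U_{R,T_0})}
  \le
  \int_{U_R^2}\|\phi(\cdot,\cdot,x_2)\|_{L^\alpha(U^1_{R,T_0})}\,\|\wr'\,p(\cdot,\cdot,x_2)\|_{L^{\alpha'}(U^1_{R,T_0})}\,\mathrm{d}x_2.
\end{align*}
It then suffices to bound $\sup_{x_2\in U_R^2}\|\phi(\cdot,\cdot,x_2)\|_{L^\alpha(U^1_{R,T_0})}$ by $2\,\|\phi\|_{\H^{\alpha,1}(U_{R,T_0})}$. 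Since $\phi(t,x_1,\cdot)$ has compact support inside the interval $U_R^2$ and $\alpha>1$, the map $x_2\mapsto|\phi(t,x_1,x_2)|^\alpha$ is $C^1$ and vanishes near the endpoints, so the fundamental theorem of calculus gives $|\phi(t,x_1,x_2)|^\alpha\le\alpha\int_{U_R^2}|\phi(t,x_1,s)|^{\alpha-1}|\partial_{x_2}\phi(t,x_1,s)|\,\mathrm{d}s$ uniformly in $x_2$; integrating in $(t,x_1)$ over $U^1_{R,T_0}$ and applying H\"older with exponents $\alpha'$ and $\alpha$ (using $(\alpha-1)\alpha'=\alpha$) yields $\sup_{x_2}\|\phi(\cdot,\cdot,x_2)\|_{L^\alpha(U^1_{R,T_0})}^\alpha\le\alpha\,\|\phi\|_{L^\alpha(U_{R,T_0})}^{\alpha-1}\|\partial_{x_2}\phi\|_{L^\alpha(U_{R,T_0})}\le\alpha\,\|\phi\|_{\H^{\alpha,1}(U_{R,T_0})}^\alpha$, hence $\sup_{x_2}\|\phi(\cdot,\cdot,x_2)\|_{L^\alpha(U^1_{R,T_0})}\le\alpha^{1/\alpha}\|\phi\|_{\H^{\alpha,1}(U_{R,T_0})}\le 2\,\|\phi\|_{\H^{\alpha,1}(U_{R,T_0})}$ because $\alpha^{1/\alpha}\le e^{1/e}<2$ for all $\alpha>1$. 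Combining this with the slicing inequality above closes the proof.

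\emph{Expected main obstacle.} None of the steps is deep; the point demanding the most care is the one--dimensional trace estimate in $x_2$, namely checking that $\phi$ is compactly supported in each $x_2$--slice (this is exactly where the product structure $U_R=U_R^1\x U_R^2$ together with $\phi\in C^\infty_c([0,\infty)\x U_R)$ is used) and verifying that the resulting constant $\alpha^{1/\alpha}$ stays below $2$ uniformly in $\alpha>1$. A secondary subtlety is making sure both temporal boundary terms in Step 1 genuinely vanish, which relies on $\wr(T_0)=0$ at one end and on $\phi(0,\cdot)=0$ at the other.
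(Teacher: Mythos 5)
Your proof is correct. Step 1 is essentially identical to the paper's: apply the Fokker--Planck/It\^o identity, observe that the temporal boundary terms vanish because $\phi(0,\cdot)=0$ and $\wr(T_0)=0$, and bound the remaining term. For Step 2 you take a genuinely different route. The paper first proves a one--dimensional Morrey inequality $\sup_{x_2}|g(x_2)|\le C(\alpha)\|g\|_{H^{\alpha,1}(U_R^2)}$ with $C(\alpha)=2^{(\alpha-1)/\alpha}\le 2$, and then verifies the slicing bound only for \emph{product} test functions $\psi(t,x_1,x_2)=\theta(t)f(x_1)g(x_2)$, asserting that "this is enough to deduce the result" (the extension to general $\phi$ is left implicit). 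You instead work directly with an arbitrary $\phi\in C^\infty_c([0,\infty)\times U_R)$: after the Fubini--H\"older slicing in $x_2$, you control $\sup_{x_2}\|\phi(\cdot,\cdot,x_2)\|_{L^\alpha(U^1_{R,T_0})}$ via the fundamental theorem of calculus applied to $x_2\mapsto|\phi|^\alpha$ (legitimate for $\alpha>1$, using that the support of $\phi$ projects to a compact subset of $U_R^2$) followed by H\"older with $(\alpha-1)\alpha'=\alpha$, arriving at the constant $\alpha^{1/\alpha}\le e^{1/e}<2$. This is a more self--contained argument for the lemma as stated, since it avoids both the Morrey embedding and the density/extension step from products to general $\phi$; the paper's version, on the other hand, isolates the sup--bound on the $x_2$--slice as a reusable embedding fact. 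Both yield the stated constant $2$.
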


\begin{proof}
Since $\wr \in  C_0^\infty(T_0)$, we have in particular that $\wr(T_0)=0$.  By applying Itô's formula and taking the expectation, we find
\begin{align*}
    -\int_{[0,T_0] \x \R^d} \wr'(t) \phi(t,\xbb) \mu_t(\mathrm{d}\xbb) \mathrm{d}t
    =
    \int_{[0,T_0] \x \R^d} \wr(t)\big[ \partial_t \phi(t,\xbb)  + b^{i}(t,\xbb) \partial_{x_i} \varphi(t,\xbb) + a^{i,j}(t,\xbb) \partial_{x_i} \partial_{x_j} \phi(t,\xbb) \big] \mu_t(\mathrm{d}\xbb) \mathrm{d}t.
\end{align*}
Consequently, we have
\begin{align*}
    \Big| \int_{[0,T_0] \x \R^d} \wr(t) \big[ \partial_t \phi(t,\xbb)  + b^{i}(t,\xbb) \partial_{x_i} \varphi(t,\xbb) + a^{i,j}(t,\xbb) \partial_{x_i} \partial_{x_j} \phi(t,\xbb) \big] \mu_t(\mathrm{d}\xbb) \mathrm{d}t \Big| \le \int_{U_{R,T_0}} |\wr'(t)\phi(t,\xbb) p(t,\xbb)| \mathrm{d}\xbb\;\mathrm{d}t.
\end{align*}

\medskip
    Next, for any $g \in C^\infty_c(U^2_R),$ by Morrey's inequality, for any $\alpha >1,$ there exists a constant $C$ depending only on $\alpha$ s.t.
    \begin{align*}
        \sup_{x_2 \in U^2_R} |g(x_2)| \le C \| g \|_{H^{\alpha,1}(U^2_R)}.
    \end{align*}
    The constant $C$ may be taken $C=C(\alpha)=2^{\frac{\alpha-1}{\alpha}}$ (see for instance the proof of \cite[Section 5.6 Theorem 4]{evans2010partial}). Then, $C(r) \le 2$ for any $r \ge 1$.
    Let $\chi\in C^\infty([0,T_0])$ s.t. $\chi(0)=0$ and $f \in C^\infty_c(U^1_R).$ We set $\psi(t,x_1,x_2):=\chi(t) f(x_1)g(x_2).$ One has
    \begin{align*}
        \| \psi\|_{\H^{\alpha,1}(U_{R,T_0})}^\alpha
        :=
        \|\chi\|_{L^\alpha([0,T_0])}^\alpha \bigg[ \|f\|_{L^\alpha(U^1_{R})}^\alpha \|g\|_{L^\alpha(U^2_{R})}^\alpha
        +
        \|f'\|_{L^\alpha(U^1_{R})}^\alpha \|g\|_{L^\alpha(U^2_{R})}^\alpha
        +
        \|f\|_{L^\alpha(U^1_{R})}^\alpha \|g'\|_{L^\alpha(U^2_{R})}^\alpha
        \bigg].
    \end{align*}
    We observe that
    \begin{align*}
        \|f\|_{L^\alpha(U^1_{R})}\|\chi\|_{L^\alpha([0,T_0])}\sup_{y \in U^2_R} |g(y)| \le C \| g \|_{H^{\alpha,1}(U^2_R)} \|f\|_{L^\alpha(U^1_{R})}\|\chi\|_{L^\alpha([0,T_0])}
        \le C \| \psi\|_{\H^{\alpha,1}(U_{R,T_0})}.
    \end{align*}
    Then, we find that
    \begin{align*}
        \int_{U_{R,T_0}} |\psi(t,\xbb)\wr'(t)p(t,\xbb)| \mathrm{d}\xbb\; \mathrm{d}t
        &=
        \int_{U_{R,T_0}} |\chi(t) f(x_1)g(x_2)\wr'(t)p(t,x_1,x_2)| \mathrm{d}x_1\; \mathrm{d}x_2\; \mathrm{d}t
        \\
        &\le 
        \sup_{y' \in U^2_R} |g(y')| \int_{U^2_R}\int_{U^1_{R,T_0}} |\chi(t) f(x_1)| \wr'(t)p(t,x_1,x_2) \mathrm{d}x_1\; \mathrm{d}x_2\; \mathrm{d}t
        \\
        &\le 
        \|f\|_{L^\alpha(U^1_{R})}\|\chi\|_{L^\alpha([0,T_0])}\sup_{y' \in U^2_R} |g(y')| \int_{U^2_R}\Big(\int_{U^1_{R,T_0}} |\wr'(t)p(t,x_1,x_2)|^{\alpha'} \mathrm{d}x_1\; \mathrm{d}t \Big)^{1/\alpha'}
        \mathrm{d}x_2
        \\
        &\le 
         C \| \psi\|_{\H^{\alpha,1}(U_{R,T_0})} \int_{U^2_R}\Big(\int_{U^1_{R,T_0}} |\wr'(t)p(t,x_1,x_2)|^{\alpha'} \mathrm{d}x_1\; \mathrm{d}t \Big)^{1/\alpha'}
        \mathrm{d}x_2.
    \end{align*}
    By using a reasoning by approximation, we can deduce that the previous inequality is true for any map $\psi$ belonging to $\H^{\alpha,1}(U_{R,T_0})$. This is enough to deduce the result.
\end{proof}

\subsubsection{Estimates with additional assumptions over the coefficients} 

Now, by considering additional assumptions over the coefficients, we provide some inequalities. Recall that we still keep $d=2$. Let $T_0>0$, $\alpha \in (1,\infty) \setminus \{2\}$ and $\pi \in (0,1)$. Recall that $\widehat{C}(\pi,\alpha,\ab,\bb,U_R,T_0)$ is defined in \eqref{eq:def_c} and $\ell(\pi,\alpha,R)$ is defined in \eqref{eq:def_ell}. We introduce the collection of sets $\Sc$ and the constant $\widehat{C}_\infty(\pi,\alpha,\ab,\bb,T_0)$ by
\begin{align} \label{eq:def_C}
    \Sc
    :=
    \big\{ E \subset \R^2\;\mbox{ open set s.t.}\;{\rm diam}(E)\le 1 \big\}\;\;\mbox{and}\;\;\widehat{C}_\infty(\pi,\alpha,\ab,\bb,T_0)
    :=
    \sup_{E \in \Sc} \widehat{C}(\pi,\alpha,\ab,\bb,E,T_0).
\end{align}

For any $U_R \subset \R^d$ with $R \le 1,$ we have $\widehat{C}(\pi,\alpha,\ab,\bb,U_R,T_0) \le \widehat{C}_\infty(\pi,\alpha,\ab,\bb,T_0)$. Notice that $\widehat{C}(\pi,\alpha,\ab,\bb,U_R,T_0)=\widehat{C}(\pi,\alpha',\ab,\bb,U_R,T_0)$, recall that $\alpha'$ is the conjugate of $\alpha$ i.e. $\alpha'=\alpha/(\alpha-1)$. We define $\Rc$ by
\begin{align*}
    \Rc(\pi,\alpha,\ab,\bb,T_0) 
    :=
    \Big{\{}0<R\le 1\;\mbox{ s.t. }1-\big[\ell(\pi,\alpha,R)+\ell(\pi,\alpha',R)\big]\widehat{C}_\infty(\pi,\alpha,\ab,\bb,T_0)>0\Big{\}}.
\end{align*}
We assume that
\begin{align}\label{eq:general_cond}
    \Rc(\pi,\alpha,\ab,\bb,T_0)\mbox{ is non--empty}.
\end{align}

%Let $\alpha \in (1,\infty)\subset \{2\},$ $R>0$ and $T_0>0$. 
%We assume in addition that $\widetilde{\sigma}$ and $\widetilde{\beta}$ are uniformly continuous in $x$ uniformly in $t.$ Let $g \in C^\infty_c(U_{R,T})$, $U_R:=U_R^1 \x U_R^2 \subset \R^2$ with ${\rm diam}(U_R)=R$, and $(f^i)_{i=1,2} \subset C^\infty_c(U_{R,T})$.
%Let $\H^{\alpha,2;1}_0(U_{R,T_0})$ denote the closure of the space of smooth functions $u$ on the cylinder $U_{R,T_0}$ that vanish on $\big([0,T_0] \x \partial U_R \big) \cup \big(\{0\} \x U_R \big)$ i.e., have zero limits on this part of the boundary, with respect to the norm 
%By [Theorem 2.1. of Krylov], there exists $\phi \in \H^{\alpha,2;1}_0(U_{R,T_0})$ satisfying: $\phi(0,\cdot)=0,$ $\phi_{[0,T_0] \x \partial U_R}=0,$ and 
%\begin{align*}
    %\partial_t \phi(t,x) + \Lc\phi(t,x)
    %=
    %\sum_{i=1}^d\partial_{x_i}f^i(t,x) g(t,x)\;\;\mbox{for any}\;\;(t,x) \in (0,T_0) \x U_R.
%\end{align*}
The next proposition gives estimates that will help us find estimations for the gradients of $p$ and $p_{\mathbf{1}}$. The first estimate is used for estimating the gradient of $p$ and the second for estimating the gradient of $p_{\mathbf{1}}$.
\begin{proposition}
 \label{prop_estimate FP}    Let $\wr \in C^\infty_0(T_0)$. There exists ${N} >0$ depending only on $T_0$, $\alpha$, $m,M,$ and the dimension $d=2$ s.t. for $\overline{R} \in \Rc(\pi,\alpha,\ab,\bb,T_0)$, any $R \le \overline{R}$ and, $g$,  $(f^i)_{1 \le i \le d}$, and $v$ belonging to $C^\infty_c(U_{R,T_0})$,
    \begin{align*}
        \bigg| \int_{U_{R,T_0}}  \big[\partial_{x_i}f^i+g\big]v(t,\xbb)\wr(t) \mu_t(\mathrm{d}\xbb) \mathrm{d}t \bigg|
        \le \frac{{N}}{1-\ell(\pi,\alpha,R)\widehat{C}_\infty(\pi,\alpha,\ab,\bb,T_0)} \| (\partial_{x_i}f^i+g)v\|_{\H^{\alpha,-1}( U_{R,T_0})} \|\wr'\;p\|_{L^{\alpha'}(U_{R,T_0})} %\Big(\int_{U_{R,T_0}} |w'(t)p(t,x)|^{\alpha'} \mathrm{d}x \mathrm{d}t \Big)^{1/\alpha'}
    \end{align*}
    and
    \begin{align*}
        &\bigg| \int_{U_{R,T_0}}  \big[\partial_{x_i}f^i+g\big]v(t,\xbb)\wr(t) \mu_t(\mathrm{d}\xbb) \mathrm{d}t \bigg|
        \\
        &\le \frac{2{N}}{1-\ell(\pi,\alpha,R)\widehat{C}_\infty(\pi,\alpha,\ab,\bb,T_0)} \| (\partial_{x_i}f^i+g)v\|_{\H^{\alpha,-1}( U_{R,T_0})} \int_{U^2_R}\|\wr'\;p(\cdot,\cdot,x_2)\|_{L^{\alpha'}(U^1_{R,T_0})}
        \mathrm{d}x_2.
    \end{align*}
\end{proposition}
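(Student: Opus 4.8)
The plan is to run a duality argument built on \Cref{lemm:FP-estimates} and \Cref{cor:estimates_general}. Fix $\overline R\in\Rc(\pi,\alpha,a,b,T_0)$, $R\le\overline R$, and $g,(f^i)_{1\le i\le d},v\in C^\infty_c(U_{R,T_0})$. First I would put the source in divergence-plus-zero-order form,
\begin{align*}
    \big(\partial_{x_i}f^i+g\big)v=\partial_{x_i}\big(f^iv\big)+\big(gv-f^i\partial_{x_i}v\big),
\end{align*}
noting that $f^iv$ and $gv-f^i\partial_{x_i}v$ lie in $C^\infty_c(U_{R,T_0})$, so $(\partial_{x_i}f^i+g)v$ is exactly of the type for which \Cref{cor:estimates_general} gives an estimate, and that this rewriting does not change its $\H^{\alpha,-1}(U_{R,T_0})$-norm. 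Then I would introduce $\phi$, the solution of $\partial_t\phi+b^i\partial_{x_i}\phi+a^{i,j}\partial_{x_i}\partial_{x_j}\phi=\big(\partial_{x_i}f^i+g\big)v$ on $U_{R,T_0}$ with zero lateral data $\phi|_{[0,T_0]\times\partial U_R}=0$ and zero terminal data $\phi(T_0,\cdot)=0$, in a class of functions to which \Cref{lemm:FP-estimates} applies once $\phi$ is extended by zero; here one uses that $\big(\partial_{x_i}f^i+g\big)v$ has compact support in $(0,T_0)\times U_R$ and that the initial condition required in \Cref{lemm:FP-estimates} only serves, in its proof, to kill a boundary term in the Itô identity which vanishes anyway because $\wr\in C^\infty_0(T_0)$ vanishes with all its derivatives at $0$ and $T_0$. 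Granting such a $\phi$, \Cref{lemm:FP-estimates} and $\mu_t(\mathrm{d}x)=p(t,x)\,\mathrm{d}x$ give
\begin{align*}
    \bigg|\int_{U_{R,T_0}}\big[\partial_{x_i}f^i+g\big]v(t,x)\,\wr(t)\,\mu_t(\mathrm{d}x)\,\mathrm{d}t\bigg|
    =\bigg|\int_{U_{R,T_0}}\wr(t)\big[\partial_t\phi+\Lc\phi\big](t,x)\,\mu_t(\mathrm{d}x)\,\mathrm{d}t\bigg|
    \le\|\wr'\,\phi\,p\|_{L^1(U_{R,T_0})}.
\end{align*}

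The two inequalities then come from two ways of estimating $\|\wr'\phi p\|_{L^1(U_{R,T_0})}$. For the first, Hölder's inequality with conjugate exponents $\alpha',\alpha$ gives $\|\wr'\phi p\|_{L^1(U_{R,T_0})}\le\|\wr'p\|_{L^{\alpha'}(U_{R,T_0})}\|\phi\|_{L^\alpha(U_{R,T_0})}\le\|\wr'p\|_{L^{\alpha'}(U_{R,T_0})}\|\phi\|_{\H^{\alpha,1}(U_{R,T_0})}$; for the second, the second inequality of \Cref{lemm:FP-estimates} gives $\|\wr'\phi p\|_{L^1(U_{R,T_0})}\le2\|\phi\|_{\H^{\alpha,1}(U_{R,T_0})}\int_{U^2_R}\|\wr'p(\cdot,\cdot,x_2)\|_{L^{\alpha'}(U^1_{R,T_0})}\,\mathrm{d}x_2$. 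It remains to bound $\|\phi\|_{\H^{\alpha,1}(U_{R,T_0})}$: since $\ell(\pi,\alpha,\cdot)$ is increasing with $\ell(\pi,\alpha,0)=0$, since $\widehat C(\pi,\alpha,a,b,U_R,T_0)\le\widehat C_\infty(\pi,\alpha,a,b,T_0)$ for $R\le1$, and since $\overline R\in\Rc(\pi,\alpha,a,b,T_0)$ forces $1-\big[\ell(\pi,\alpha,\overline R)+\ell(\pi,\alpha',\overline R)\big]\widehat C_\infty(\pi,\alpha,a,b,T_0)>0$, one has $1-\ell(\pi,\alpha,R)\widehat C(\pi,\alpha,a,b,U_R,T_0)\ge1-\ell(\pi,\alpha,R)\widehat C_\infty(\pi,\alpha,a,b,T_0)>0$, whence \Cref{cor:estimates_general} yields
\begin{align*}
    \|\phi\|_{\H^{\alpha,1}(U_{R,T_0})}\le\frac{N}{1-\ell(\pi,\alpha,R)\widehat C_\infty(\pi,\alpha,a,b,T_0)}\,\big\|\big(\partial_{x_i}f^i+g\big)v\big\|_{\H^{\alpha,-1}(U_{R,T_0})},
\end{align*}
with $N=N(T_0,\alpha,m,M,d)$. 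Substituting this into the two estimates of $\|\wr'\phi p\|_{L^1(U_{R,T_0})}$ gives the two announced inequalities.

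The step I expect to be the main obstacle is the construction of the auxiliary $\phi$ with enough regularity to feed into \Cref{lemm:FP-estimates}, since $a$ is only Hölder continuous in $x$ and $b,\nabla a$ only lie in $L^\alpha$ — which is precisely what finiteness of $\widehat C_\infty(\pi,\alpha,a,b,T_0)$ (forced by $\Rc(\pi,\alpha,a,b,T_0)$ being non-empty) ensures. I would handle this by regularising the coefficients, solving the smooth-coefficient problems, using \Cref{cor:estimates_general} to obtain an $\H^{\alpha,1}(U_{R,T_0})$-bound uniform in the regularisation parameter — the regularised constants $\widehat C$ stay below $\widehat C_\infty(\pi,\alpha,a,b,T_0)$ because convolving with a probability kernel increases neither Hölder seminorms nor $L^\alpha$-norms of gradients — and then passing to the weak limit in $\H^{\alpha,1}(U_{R,T_0})$. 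The delicate bookkeeping there is to check that the error produced when the regularised generator is replaced by $\Lc$ in \Cref{lemm:FP-estimates}, once tested against $\wr\,p$, vanishes in the limit.
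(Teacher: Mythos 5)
Your overall strategy is the same as the paper's: regularise $(a,b)$, solve the auxiliary parabolic problem with the smooth coefficients, get a uniform $\H^{\alpha,1}(U_{R,T_0})$ bound from \Cref{cor:estimates_general} using the monotonicity of $\ell(\pi,\alpha,\cdot)$ and $\widehat C(\pi,\alpha,a,b,U_R,T_0)\le\widehat C_\infty(\pi,\alpha,a,b,T_0)$, feed the result into \Cref{lemm:FP-estimates}, and conclude by H\"older (first inequality) or by the second estimate of \Cref{lemm:FP-estimates} (second inequality). That part is sound.

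There is, however, a genuine gap in how you pose the auxiliary problem. You take \emph{terminal} data $\phi(T_0,\cdot)=0$ and claim the $t=0$ boundary term in the It\^o identity ``vanishes anyway because $\wr\in C^\infty_0(T_0)$ vanishes with all its derivatives at $0$ and $T_0$.'' This is false: by definition ${\rm supp}(\wr)=[0,T_0]$, so only $\wr^{(j)}(T_0)=0$ is guaranteed, and indeed the whole construction is later applied with $\wr\equiv 1$ on $[0,T]$, so $\wr(0)=1$. The It\^o identity reads
\begin{align*}
    \E\big[\wr(T_0)\phi(T_0,\Xbb_{T_0})\big]-\E\big[\wr(0)\phi(0,\Xbb_{0})\big]
    =\int_{[0,T_0]\x\R^2}\wr'(t)\phi(t,x)\,\mu_t(\mathrm{d}x)\,\mathrm{d}t
    +\int_{[0,T_0]\x\R^2}\wr(t)\big[\partial_t\phi+\Lc\phi\big](t,x)\,\mu_t(\mathrm{d}x)\,\mathrm{d}t,
\end{align*}
and with terminal data the solution of the backward problem is generically nonzero at $t=0$ (the source is supported in $(0,T_0)\x U_R$, but information propagates down to $t=0$), so the term $\wr(0)\int\phi(0,x)p_0(x)\,\mathrm{d}x$ survives. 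Keeping it would pollute the stated estimate with a quantity not controlled by $\|\wr'p\|_{L^{\alpha'}(U_{R,T_0})}$. The paper avoids this by imposing $\phi^n(0,\cdot)=0$ on the regularised problems, which is exactly the hypothesis under which \Cref{lemm:FP-estimates} holds; you need the same condition.

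Two further remarks on the part you defer. First, you cannot apply \Cref{lemm:FP-estimates} to the limiting $\phi$ with the non-smooth generator $\Lc$ (it is only a weak $\H^{\alpha,1}$ limit, not $C^{1,2}$); the paper keeps the identity at the level of $(\phi^n,\Lc^n,\mu^n)$ throughout and only passes to the limit in $\|\wr'\phi^n p^n\|_{L^1}$ at the very end, using relative compactness of $(\phi^n)$ in $L^{\hat\alpha}$, Morrey's inequality with $\widetilde\alpha=\max(\alpha,\alpha')>2$, Arzel\`a--Ascoli, and weak convergence $p^n\to p$. Second, this limit passage is precisely why the uniform bound is needed in \emph{both} $\H^{\alpha,1}$ and $\H^{\alpha',1}$, which is why $\Rc(\pi,\alpha,a,b,T_0)$ involves $\ell(\pi,\alpha,\overline R)+\ell(\pi,\alpha',\overline R)$ rather than $\ell(\pi,\alpha,\overline R)$ alone; your sketch only invokes the $\H^{\alpha,1}$ bound.
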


\begin{proof}
    %As $\phi$ has a compact support in $U_R,$ using Sobolev inequality with $1< q < 2,$ there exists $K:=K(2,q)$ depending only on the dimension and $q$ s.t.
    %\begin{align*}
        %\|\phi\|_{L^{\hat q}((t_0,t_1)\x U_R)}
        %\le K \|\nabla \phi\|_{L^q((t_0,t_1)\x U_R)}
    %\end{align*}
    %where $\widehat q := 2q/(2-q)>q.$ Also, by Holder's inequality, we find that
    %\begin{align*}
        %\|\phi\|_{L^{q}((t_0,t_1)\x U_R)}
        %\le
        %\|\phi\|_{L^{\hat{q}}((t_0,t_1)\x U_R)} \|\mathbf{1}\|_{L^{q*(\hat{q}/q)'}((t_0,t_1)\x U_R)}
        %\le K \|\nabla \phi\|_{L^q((t_0,t_1)\x U_R)} \|\mathbf{1}\|_{L^{q(\hat{q}/q)'}((t_0,t_1)\x U_R)}.
    %\end{align*}
   %We will mainly use \Cref{cor:estimates_general}. 
   The numbers $\pi$ and $\alpha$ are given. Since we have $\overline{R} \in \Rc(\pi,\alpha,\ab,\bb,T_0)$, this means that
   \begin{align*}
       \mbox{$\overline{R}$ > 0 and $1-\big[\ell(\pi,\alpha,\overline{R})+\ell(\pi,\alpha',\overline{R})\big]\widehat{C}_\infty(\pi,\alpha,\ab,\bb,T_0)>0$},
   \end{align*}
   therefore 
   $$
    \sup_{E \in \Sc}\widehat{C}(\pi,\alpha,\ab,\bb,E,T_0)=
    \sup_{E \in \Sc}\bigg[\esssup_{t \in (0,T_0)} \|\ab(t,\cdot) \|_{C^{0,\pi}(E)} + \Gamma(\alpha,\ab,\bb,E,T_0) \bigg]< \infty.
$$
Consequently, there exists a sequence of smooth functions  $(\ab^n,\bb^n)_{n \ge 1} \subset C^\infty_c([0,T_0] \x \R^2)$ s.t. $m \mathrm{I}_2 \le \ab^n \le M \mathrm{I}_2$, $\lim_{n \to \infty}(\ab^n,\bb^n)=(\ab,\bb)$ a.e., and for any $E \in \Sc$,
    \begin{align} \label{conv_coeff_proof}
        \Lim_{n \to \infty}\esssup_{t \in (0,T_0)} \|\ab^n(t,\cdot) \|_{C^{0,\pi}(E)} = \esssup_{t \in (0,T_0)}\|\ab(t,\cdot) \|_{C^{0,\pi}(E)}\;\;\mbox{and}\;\; \Lim_{n \to \infty}\Gamma(\alpha,\ab-\ab^n,\bb-\bb^n,E,T_0) 
        %+ \| (g\; \partial_{x_i}f^i)-(g^n\; \partial_{x_i}f^{n,i})\|_{L^{\alpha}( U_{R,T_0})}
        =0.
    \end{align}
    Notice that in \Cref{conv_coeff_proof}, it is not possible to approximate $\ab$ in $C([0,T_0];C^{0,\pi}(E))$ as $\ab$ is not necessary continue in $t$. % is due to the fact that $a$ is not necessary continuous in $t$. Therefore, an approximation in $L^\infty$ norm is not possible by smooth functions.
   %Up to a sub--sequence, we can assume that $\lim_{n \to \infty}(a^n,b^n)=(a,b)$ a.e.
   %and $\lim_{n \to \infty } (g^n\; \partial_{x_i}f^{n,i})=(g\; \partial_{x_i}f^{i})$ a.e. 
   For each $n \ge 1$, by \cite[Theorem 10.4.1]{krylov1996lectures}, there exists $\phi^n \in C^{1,2}((0,T_0) \x U_R)$ with derivatives belonging to H\"older spaces satisfying: $\phi^n(0,\cdot)=0,$ $\phi^n_{[0,T_0] \x \partial U_R}=0,$ and for any $(t,\xbb) \in (0,T_0) \x U_R$,
\begin{align*}
    &\partial_t \phi^n(t,\xbb) + b^{n,i}(t,\xbb) \partial_{x_i} \phi^n(t,\xbb) + a^{n,i,j}(t,\xbb) \partial_{x_i} \partial_{x_j} \varphi^n(t,\xbb)
    =\partial_t \phi^n(t,\xbb) + \Lc^n\varphi^n(t,\xbb)
    =
    \big[\partial_{x_i}f^i+g\big]v(t,\xbb).
\end{align*}
Let us observe that as $(0,1) \ni r \to \ell(\pi,\alpha,r) + \ell(\pi,\alpha',r) \in \R_+$ is increasing, for any $R \le \overline{R}$,
\begin{align*}
    1-\big[\ell(\pi,\alpha,R)+\ell(\pi,\alpha',R)\big]\widehat{C}(\pi,\alpha,\ab,\bb,U_R,T_0) 
    \ge 
    1-\big[\ell(\pi,\alpha,\overline{R})+\ell(\pi,\alpha',\overline{R})\big]\widehat{C}_\infty(\pi,\alpha,\ab,\bb,T_0) >0.
\end{align*}
Thanks to the previous observation, for $n$ large enough, $1-\big[\ell(\pi,\alpha,R)+\ell(\pi,\alpha',R)\big]\widehat{C}(\pi,\alpha,\ab^n,\bb^n,U_R,T_0)>0$.
We can check that $\widehat{C}(\pi,\alpha,\ab^n,\bb^n,U_R,T_0)=\widehat{C}(\pi,\alpha',\ab^n,\bb^n,U_R,T_0)$, by \Cref{cor:estimates_general}, there exists $N$ depending only on $(d,T_0,M,m,\alpha)$ s.t. for any $z \in \{\alpha,\alpha'\}$
\begin{align} \label{eq:estimates_phi_app}
    \|\phi^n\|_{\H^{z,1}(U_{R,T_0})} 
    \le \frac{N}{1-\ell(\pi,z,R)\widehat{C}(\pi,\alpha,\ab^n,\bb^n,U_R,T_0)} \| (\partial_{x_i}f^i+g)v\|_{\H^{z,-1}( U_{R,T_0})}.
\end{align}
Let $p^n(t,x_1,x_2)$ be the density of $\mu^n_t:=\Lc(\Xbb^n_t):=\Lc(X^{n,1}_t,X^{n,2}_t)$ where: $\Lc(\Xbb^n_0)=\Lc(\Xbb_0)$,
\begin{align*}
    \mathrm{d}\Xbb^n_t
    =
    \bb^n(t,\Xbb^n_t) \mathrm{d}t +  \ab^n(t,\Xbb^n_t)^{1/2} \binom{\mathrm{d}W^1_t}{\mathrm{d}W^2_t},
\end{align*}
and $(W^1,W^2)$ an $\R^2$--valued Brownian motion.
By \Cref{lemm:FP-estimates}, for any $\wr \in C^\infty_0(T_0)$,
    \begin{align*}
        \bigg| \int_{[0,T_0] \x \R^d} \wr(t)\big[ \partial_t \phi^n(t,\xbb) + \Lc^n\phi^n(t,\xbb) \big] \mu^n_t(\mathrm{d}\xbb) \mathrm{d}t \bigg|
        \le \int_{U_{R,T_0}} |\wr'(t)\phi^n(t,\xbb) p^n(t,\xbb)| \mathrm{d}\xbb\;\mathrm{d}t=\|\wr'\;\phi^n\;p^n\|_{L^1(U_{R,T_0})}.
    \end{align*}
    Therefore,
   \begin{align} \label{eq:estimates:appr}
       &\Liminf_{n \to \infty}\int_{U_{R,T_0}}  \wr(t)\big[\partial_{x_i}f^i+g\big]v(t,\xbb) \mu^n_t(\mathrm{d}\xbb) \mathrm{d}t \nonumber
       \\
       &= 
       \liminf_{n \to \infty}\int_{0}^{T_0} \int_{\R^d} \wr(t)\big[ \partial_t \phi^n(t,\xbb) + \Lc^n\phi^n(t,\xbb) \big] \mu^n_t(\mathrm{d}\xbb) \mathrm{d}t
       \le \Liminf_{n \to \infty} \|\wr'\;\phi^n\;p^n\|_{L^1(U_{R,T_0})}.
   \end{align}
   We know that $\| (\partial_{x_i}f^i+g)v\|_{\H^{\alpha',-1}( U_{R,T_0})} + \| (\partial_{x_i}f^i+g)v\|_{\H^{\alpha,-1}( U_{R,T_0})}< \infty$, then by \eqref{eq:estimates_phi_app}, $\sup_{n \ge 1} \|\phi^n\|_{\H^{{\alpha'},1}(U_{R,T_0})} + \|\phi^n\|_{\H^{{\alpha},1}(U_{R,T_0})} < \infty$. We can deduce that $(\phi^n)_{n \ge 1}$ is relatively compact in $L^{\hat \alpha}(U_{R,T_0})$ where $\hat{\alpha}:=\min(\alpha,\alpha')$ . Up to a sub--sequence, we can assume that there is $\phi$ s.t. $\lim_{n \to \infty} \phi^n=\phi$ a.e. Since $\phi^n(t,\cdot)$ is s.t $\phi^n_{[0,T_0] \x \partial U_R}=0$ and $\widetilde{\alpha}:=\max(\alpha,\alpha')> d=2$, by Morrey's inequality,
   \begin{align*}
        \sup_{n \ge 1}\int_0^{T_0}\|\phi^n(t,\cdot)\|_{C^{0,1-2/\tilde a}(\R^d)}^{\tilde \alpha} \mathrm{d}t \le
        \sup_{n \ge 1} C(2,\tilde \alpha)^{\tilde \alpha} \|\phi^n\|^{\tilde \alpha}_{\H^{\tilde \alpha,1}([0,T_0]\x\R^d)}
        =
        \sup_{n \ge 1} C(2,\tilde \alpha)^{\tilde \alpha} \|\phi^n\|^{\tilde \alpha}_{\H^{\tilde \alpha,1}(U_{R,T_0})} < \infty.
    \end{align*}
    
    For any $\beta \in C^\infty([0,T_0])$, by applying Arzelà--Ascoli theorem together with the previous result, the sequence of functions $(\varphi^{\beta,n})_{n \ge 1} \subset C(\R^d)$ is relatively compact for the uniform topology where
    \begin{align*}
        \varphi^{\beta,n}(\xbb)
        :=
        \int_0^{T_0} \beta(t) \phi^{n}(t,\xbb)\mathrm{d}t.
    \end{align*}
    We know that $\lim_{n \to \infty} \phi^n=\phi$ a.e., then $\lim_{n \to \infty}\varphi^{\beta,n}(\cdot) = \int_{0}^{T_0} \beta(t)\phi(t,\cdot)\mathrm{d}t$ for the uniform topology. Consequently,
    \begin{align*}
        \lim_{n \to \infty}\|\wr'\;(\phi^n-\phi)\;p^n\|_{L^1(U_{R,T_0})}=0.
    \end{align*}
   Since $\ab$ and $\bb$ are bounded, $\ab$ is non--degenerate and $\ab(t,\xbb)$ is uniformly continuous in $\xbb$ uniformly in $t$, then the density $p$ is uniquely defined. We know that $\lim_{n \to \infty} (\ab^n,\bb^n)=(\ab,\bb)$ a.e., consequently $\lim_{n \to \infty} p^n=p$ in the weak sense. Therefore
   \begin{align*}
       \Liminf_{n \to \infty} \|\wr'\;\phi^n\;p^n\|_{L^1(U_{R,T_0})}
       =
       \Liminf_{n \to \infty} \|\wr'\;\phi\;p^n\|_{L^1(U_{R,T_0})}
       =
       \|\wr'\;\phi\;p\|_{L^1(U_{R,T_0})}.
   \end{align*}
   By \eqref{eq:estimates:appr} and \eqref{eq:estimates_phi_app},
   \begin{align*}
       &\int_{U_{R,T_0}}  \big[\partial_{x_i}f^i+g\big]v(t,\xbb)\wr(t) \mu_t(\mathrm{d} \xbb) \mathrm{d}t
       \le 
       \|\wr'\;\phi\;p\|_{L^1(U_{R,T_0})}
       \le 
       \|\phi\|_{L^{\alpha}(U_{R,T_0})}
       \|\wr'\;p\|_{L^{\alpha'}(U_{R,T_0})}
       \\
       &\le 
       \frac{N}{1-\ell(\pi,\alpha,R)\widehat{C}(\pi,\alpha,\ab,\bb,U_R,T_0)} \| (\partial_{x_i}f^i+g)v\|_{\H^{{\alpha},-1}( U_{R,T_0})}
       \|\wr'\;p\|_{L^{\alpha'}(U_{R,T_0})}.
   \end{align*}
   
   We obtain the second estimates by using the second inequality in \Cref{lemm:FP-estimates} and the fact that $\| \phi\|_{\H^{\alpha,1}(U_{R,T_0})} \le \Liminf_{n \to \infty} \| \phi^n\|_{\H^{\alpha,1}(U_{R,T_0})}$.

\end{proof}

We set $v \in L^\infty_{\ell oc}([0,T_0] \x \R^d)$ s.t. $\nabla v \in L^{\alpha \vee \alpha'}_{\ell oc} ([0,T_0] \x \R^d)$ and $\overline{R} \in \Rc(\pi,\alpha,\ab,\bb,T_0)$, and we define $K(\pi,\alpha,\ab,\bb,v,T_0,\overline{R}):=K(\pi,\alpha,\ab,\bb,v,T_0)$ by
\begin{align} \label{eq:const_K}
        &K(\pi,\alpha,\ab,\bb,v,T_0) \nonumber
        \\
        &:=
        \frac{2{N}\;\;\sup_{E \in \Sc}\bigg[ \esup_{(t,\xbb) \in [0,T_0] \x E}|v(t,\xbb)|+ \ell(\pi,\alpha,\overline{R}) \Big[\|\nabla v\|_{L^\alpha([0,T_0] \x E)} \mathbf{1}_{\alpha >2} + \|\nabla v\|_{L^{\alpha'}([0,T_0] \x E)} \mathbf{1}_{\alpha <2} \Big] \bigg]}{1-\big[\ell(\pi,\alpha,\overline{R}) + \ell(\pi,\alpha',\overline{R})\big] \widehat{C}_\infty(\pi,\alpha,\ab,\bb,T_0)}.
    \end{align}
    %where ${C}(\alpha)$ is a constant depending on $\alpha$. 
    Let $R \le \overline{R}$, $\fb:=(f^i)_{1 \le i \le d} \subset C_c^\infty(U_{R,T_0})$ and $g \in C_c^\infty(U_{R,T_0})$. 

\begin{corollary}\label{corr:estimates_ineq}
    Under the consideration of {\rm \Cref{prop_estimate FP}}, one has
    \begin{align*}
        \bigg| \int_{U_{R,T_0}}  \big[\partial_{x_i}f^i+g\big]v(t,\xbb) \wr(t) \mu_t(\mathrm{d}\xbb) \mathrm{d}t \bigg|
        \le K(\pi,\alpha,\ab,\bb,v,T_0) \| (\fb,g)\|_{L^{\alpha}( U_{R,T_0})} \|\wr'\;p\|_{L^{\alpha'}(U_{R,T_0})} %\bigg[\int_{U_{R,T_0}} |w'(t)p(t,x)|^{\alpha'} \mathrm{d}x \mathrm{d}t \bigg]^{1/\alpha'}
    \end{align*}
    and
    \begin{align*}
        \bigg| \int_{U_{R,T_0}}  \big[\partial_{x_i}f^i+g\big]v(t,\xbb) \wr(t) \mu_t(\mathrm{d}\xbb) \mathrm{d}t \bigg|
        \le K(\pi,\alpha,\ab,\bb,v,T_0) \| (\fb,g)\|_{L^{\alpha}( U_{R,T_0})} \int_{U^2_R}\|\wr'\;p(\cdot,\cdot,x_2)\|_{L^{\alpha'}(U^1_{R,T_0})}
        \mathrm{d}x_2.
    \end{align*}
    
\end{corollary}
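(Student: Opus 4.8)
The plan is to read both inequalities off \Cref{prop_estimate FP}: its right‑hand side already has the shape of the claim, so it remains only to bound the negative‑order norm $\|(\partial_{x_i}f^i+g)v\|_{\H^{\alpha,-1}(U_{R,T_0})}$ by $\|(f,g)\|_{L^{\alpha}(U_{R,T_0})}$ and the ingredients of $K$ in \eqref{eq:const_K}. Precisely, the key estimate to establish is: for some purely dimensional constant $C_0$ and every ball $U_R$ with $R\le\overline{R}\le1$,
\begin{align*}
    \|(\partial_{x_i}f^i+g)\,v\|_{\H^{\alpha,-1}(U_{R,T_0})}
    \le
    C_0\Big[\esup_{[0,T_0]\x U_R}|v|+\ell(\pi,\alpha,\overline{R})\big(\|\nabla v\|_{L^{\alpha}(U_{R,T_0})}\mathbf{1}_{\alpha>2}+\|\nabla v\|_{L^{\alpha'}(U_{R,T_0})}\mathbf{1}_{\alpha<2}\big)\Big]\|(f,g)\|_{L^{\alpha}(U_{R,T_0})}.
\end{align*}
Granting this, one plugs it into \Cref{prop_estimate FP}: since $r\mapsto\ell(\pi,z,r)$ is nonnegative and increasing and $R\le\overline{R}$, one has $1-\ell(\pi,\alpha,R)\widehat{C}_\infty(\pi,\alpha,a,b,T_0)\ge1-\big[\ell(\pi,\alpha,\overline{R})+\ell(\pi,\alpha',\overline{R})\big]\widehat{C}_\infty(\pi,\alpha,a,b,T_0)>0$, the last inequality because $\overline{R}\in\Rc(\pi,\alpha,a,b,T_0)$; bounding in addition every $U_R$‑quantity by its supremum over $E\in\Sc$ — legitimate since $R\le\overline{R}\le1$ makes $U_R\in\Sc$ — one recovers the first (resp.\ second) inequality of the corollary with exactly the constant $K(\pi,\alpha,a,b,v,T_0)$ of \eqref{eq:const_K}, the factor $2$ and the constant $N$ there leaving the room needed to absorb $C_0$ and the universal embedding constants used below.

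To prove the key estimate I would use the product rule $(\partial_{x_i}f^i+g)v=\partial_{x_i}(f^iv)+(gv-f^i\partial_{x_i}v)$. As each $\partial_{x_i}$ is bounded $L^{\alpha}(U_{R,T_0})\to\H^{\alpha,-1}(U_{R,T_0})$, $L^{\alpha}(U_{R,T_0})\hookrightarrow\H^{\alpha,-1}(U_{R,T_0})$ continuously, and multiplication by $v$ is bounded on $L^{\alpha}$ by $\esup|v|$, the pieces $\partial_{x_i}(f^iv)$ and $gv$ together contribute at most a multiple of $\esup|v|\,\|(f,g)\|_{L^{\alpha}}$, with no diameter factor. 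The only term producing a power of $\mathrm{diam}(U_R)$, hence the factor $\ell$, is $f^i\partial_{x_i}v$. For it I would pair against $w\in\H^{\alpha',1}_0(U_{R,T_0})$ and bound, at each fixed $t$, $\int_{U_R}|f^i|\,|\partial_{x_i}v|\,|w|\,\mathrm{d}x$ by Hölder's inequality together with the Sobolev embedding of $W^{\alpha',1}(U_R)$ — into $L^{2\alpha/(\alpha-2)}(U_R)$ when $\alpha>2$, into $C^0(U_R)$ when $\alpha<2$ (the hypothesis $\alpha\neq2=d$ being used here) — and with the Poincaré inequality on $U_R$, valid since $w(t,\cdot)$ vanishes on $\partial U_R$. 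This brings out exactly the power of $\mathrm{diam}(U_R)$ appearing in the definition \eqref{eq:def_ell} of $\ell$ — $\mathrm{diam}(U_R)^{(\alpha-2)/\alpha}$ if $\alpha>2$ and $\mathrm{diam}(U_R)^{1-d/\alpha'}$ if $\alpha<2$ — which is therefore $\le\ell(\pi,\alpha,R)/N\le\ell(\pi,\alpha,\overline{R})/N$; integrating in $t$, where $w\in L^{\alpha'}_t$ paired against $f^i\in L^{\alpha}_t$ puts $\nabla v$ into $L^{\alpha\vee\alpha'}$, completes the estimate.

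I expect this last computation to be the main obstacle: the Hölder exponents in the spatial and in the time variable must be matched so that exactly $\|\nabla v\|_{L^{\alpha}}$ (resp.\ $\|\nabla v\|_{L^{\alpha'}}$) and exactly the diameter power sitting inside $\ell$ come out, with nothing superfluous, and it is the essential bound on $v$ that keeps the time integration summable. The argument runs in parallel with the treatment of $\|\partial_{x_i}a^{ij}\partial_{x_j}\phi\|_{\H^{\alpha,-1}(U_{R,T_0})}$ in the proof of \Cref{prop:estimates_PPDE}, with $\nabla v$ in the role of $\nabla a$ and $(f,g)$ in that of $\nabla\phi$; the universal constants there are exactly those built into \eqref{eq:def_ell}, which is why dividing $\ell$ by $N$ leaves enough to dominate the extracted diameter power. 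The second inequality of the corollary follows identically, starting from the second estimate of \Cref{prop_estimate FP} and carrying the factor $\int_{U^2_R}\|\wr'\,p(\cdot,\cdot,x_2)\|_{L^{\alpha'}(U^1_{R,T_0})}\,\mathrm{d}x_2$ along unchanged.
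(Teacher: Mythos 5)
Your proposal is correct and follows essentially the same route as the paper: apply \Cref{prop_estimate FP} with the monotonicity of $r\mapsto\ell(\pi,\alpha,r)$ to pass from $R$ to $\overline{R}$, then bound $\|(\partial_{x_i}f^i+g)v\|_{\H^{\alpha,-1}(U_{R,T_0})}$ via the product-rule decomposition $v\,\partial_{x_i}f^i=\partial_{x_i}(f^iv)-f^i\partial_{x_i}v$ and the same embedding estimates used for $\partial_{x_i}a^{ij}\partial_{x_j}\phi$ in \Cref{prop:estimates_PPDE}. The paper states this last bound in one line where you spell out the duality argument, but the content is identical.
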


\begin{proof}
    Similarly to the proof of {\rm \Cref{prop_estimate FP}}, by definition, since $(0,1) \ni r \to \ell(\pi,\alpha,r) \in \R_+$ is increasing, for any $R \le \overline{R}$, we have
    \begin{align*}
        \frac{1}{1-\ell(\pi,\alpha,{R})\widehat{C}_\infty(\pi,\alpha,\ab,\bb,T_0)} \le \frac{1}{1-\ell(\pi,\alpha,\overline{R})\widehat{C}_\infty(\pi,\alpha,\ab,\bb,T_0)}. 
    \end{align*}
    Then, by \Cref{prop_estimate FP}, we find 
    \begin{align*}
        &\bigg| \int_{U_{R,T_0}}  \big[\partial_{x_i}f^i+g\big]v(t,\xbb) \wr(t) \mu_t(\mathrm{d}\xbb) \mathrm{d}t \bigg|
        \le \frac{N}{1-\ell(\pi,\alpha,\overline{R})\widehat{C}_\infty(\pi,\alpha,\ab,\bb,T_0)} \| (\partial_{x_i}f^i+g)v\|_{\H^{\alpha,-1}( U_{R,T_0})} \|\wr'\;p\|_{L^{\alpha'}(U_{R,T_0})} %\Big(\int_{U_{R,T_0}} |w'(t)p(t,x)|^{\alpha'} \mathrm{d}x \mathrm{d}t \Big)^{1/\alpha'}
    \end{align*}
    and
    \begin{align*}
        &\bigg| \int_{U_{R,T_0}}  \big[\partial_{x_i}f^i+g\big]v(t,\xbb) \wr(t) \mu_t(\mathrm{d}\xbb) \mathrm{d}t \bigg|
        \\
        &\le \frac{2{N}}{1-\ell(\pi,\alpha,\overline{R})\widehat{C}_\infty(\pi,\alpha,\ab,\bb,T_0)} \| (\partial_{x_i}f^i+g)v\|_{\H^{\alpha,-1}( U_{R,T_0})} \int_{U^2_R}\|\wr'\;p(\cdot,\cdot,x_2)\|_{L^{\alpha'}(U^1_{R,T_0})}
        \mathrm{d}x_2.
    \end{align*}
    Notice that 
    \begin{align*}
        \| v\;\partial_{x_i}f^i \|_{\H^{\alpha,-1}(U_{R,T_0})}
        &=
        \| \partial_{x_i}(f^i v)-f^i \partial_{x_i}v\|_{\H^{\alpha,-1}( U_{R,T_0})}
        \\
        &\le \bigg[ \sup_{(t,\xbb) \in [0,T_0] \x U_R}|v(t,\xbb)| + \ell(\pi,\alpha,R) \Big[\|\nabla v\|_{L^\alpha(U_{R,T_0})} \1_{\alpha >2} + \|\nabla v\|_{L^{\alpha'}(U_{R,T_0})} \1_{\alpha <2} \Big]  \bigg] \| f^i\|_{L^{\alpha}( U_{R,T_0})}.
    \end{align*}
    This is enough to conclude the proof.
\end{proof}

\medskip
With $d=2$, for any Borel map $(h,f):[0,T_0] \x \R^d \to \R^d$, whenever it is well--defined, we recall that the notation $(hf)_{\mathbf{1}}(t,x_1)$ means $(hf)_{\mathbf{1}}(t,x_1):=\int_\R h(t,x_1,x_2) f(t,x_1,x_2) \mathrm{d}x_2$. In particular, the notations $|\partial_{x_1}(v\;p)|_{\mathbf{1}}$ and  $|(v\;p)|_{\mathbf{1}}$ mean
$$
    |(v\;p)|_{\mathbf{1}}:=\int_\R |(v\;p)(t,x_1,x_2)| \mathrm{d}x_2
    \;\mbox{and}\;|\partial_{x_1}(v\;p)|_{\mathbf{1}}:=\int_\R |\partial_{x_1}(v\;p)(t,x_1,x_2)| \mathrm{d}x_2.
$$
We are now able to provide in the next proposition a relation between $(p,p_{\mathbf{1}})$ and its gradient.
\begin{proposition} \label{prop:inegal_general}
    Let $\pi \in (0,1)$, $\alpha \in (1,\infty)\setminus \{2\}$ and $\wr \in C_0^\infty(T_0)$.
    If $\Rc(\pi,\alpha,\ab,\bb,T_0)$ is non--empty,
    we have %for any $v \in \H^{\alpha,1}_{\ell oc}([0,T_0] \x \R^2) \cap L^\infty_{\ell oc}([0,T_0] \x \R^2)$,
    \begin{align*}
         \|\wr\;  v\;p\|_{\H^{\alpha',1}([0,T_0] \x \R^2)}^{\alpha'} \le K(\pi,\alpha,\ab,\bb,v,T_0)^{\alpha'} 2^d \|\wr'\;p\|_{L^{\alpha'}([0,T_0] \x \R^2)}^{\alpha'}
    \end{align*}
    and
    \begin{align*}
    \|\wr\;  (v\;p)_{\mathbf{1}}\|_{\H^{\alpha',1}([0,T_0] \x \R)}&\le
    \Big[\|\wr|\partial_{x_1}(v\;p)|_{\mathbf{1}}\|_{L^{\alpha'}([0,T_0]\x\R)}^{\alpha'}+ \|\wr|(v\;p)|_{\mathbf{1}}\|_{L^{\alpha'}([0,T_0] \x \R)}^{\alpha'}  \Big]^{1/\alpha'}
    \\
        &\le K(\pi,\alpha,\ab,\bb,v,T_0) 2^d \int_{\R}\|\wr'\;p(\cdot,\cdot,x_2)\|_{L^{\alpha'}([0,T_0] \x \R)}
        \mathrm{d}x_2.
    \end{align*}
    
\end{proposition}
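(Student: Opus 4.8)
The plan is to derive both estimates from \Cref{corr:estimates_ineq} by combining the $L^{\alpha'}$--$L^{\alpha}$ duality with a localisation over a countable family of rectangles of small diameter. Throughout, fix $\overline{R}\in\Rc(\pi,\alpha,a,b,T_0)$ (non-empty by hypothesis) and write $K=K(\pi,\alpha,a,b,v,T_0)$ for the constant in \eqref{eq:const_K}. For the first inequality, set $u:=\wr\,v\,p$ and fix an open rectangle $U_R=U_R^1\x U_R^2$ of diameter at most $\overline{R}$. Since $C^\infty_c$ is dense in $L^\alpha$ and $\wr$ does not depend on $x$, one integration by parts in the space variables shows that $\|u\|_{\H^{\alpha',1}(U_{R,T_0})}$ equals the supremum, over $(g,f^1,f^2)\in C^\infty_c(U_{R,T_0})^3$ with $\|(f,g)\|_{L^{\alpha}(U_{R,T_0})}\le1$, of $\big|\int_{U_{R,T_0}}[\partial_{x_i}f^i+g]\,v\,\wr\,\mu_t(\mathrm{d}x)\,\mathrm{d}t\big|$ (recall $\mu_t(\mathrm{d}x)=p(t,x)\,\mathrm{d}x$). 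By the first estimate of \Cref{corr:estimates_ineq} this supremum is at most $K\,\|\wr'\,p\|_{L^{\alpha'}(U_{R,T_0})}$, hence $\|u\|_{\H^{\alpha',1}(U_{R,T_0})}\le K\,\|\wr'\,p\|_{L^{\alpha'}(U_{R,T_0})}$. It remains to cover $\R^2$ by a countable family $(U^{(k)})_k$ of such rectangles whose overlap multiplicity is at most $2^d$ ($d=2$) --- for instance the product of two coverings of $\R$ by intervals of length $\le\overline{R}/\sqrt2$, each with overlap $\le2$ --- and to note that $\|u\|_{\H^{\alpha',1}([0,T_0]\x\R^2)}^{\alpha'}\le\sum_k\|u\|_{\H^{\alpha',1}([0,T_0]\x U^{(k)})}^{\alpha'}$ while $\sum_k\|\wr'p\|_{L^{\alpha'}([0,T_0]\x U^{(k)})}^{\alpha'}\le 2^d\,\|\wr'p\|_{L^{\alpha'}([0,T_0]\x\R^2)}^{\alpha'}$; summing the local bounds yields the first inequality.

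For the second inequality the first ``$\le$'' is elementary: differentiating $(v\,p)_{\mathbf 1}(t,x_1)=\int_\R(v\,p)(t,x_1,x_2)\,\mathrm{d}x_2$ in $x_1$ under the integral sign gives $\partial_{x_1}[(v\,p)_{\mathbf 1}]=(\partial_{x_1}(v\,p))_{\mathbf 1}$, so $|(v\,p)_{\mathbf 1}|\le|(v\,p)|_{\mathbf 1}$ and $|\partial_{x_1}[(v\,p)_{\mathbf 1}]|\le|\partial_{x_1}(v\,p)|_{\mathbf 1}$, which is exactly the stated bound on $\|\wr\,(v\,p)_{\mathbf 1}\|_{\H^{\alpha',1}([0,T_0]\x\R)}$. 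For the second ``$\le$'' I would apply Minkowski's integral inequality twice --- first to pull the $\mathrm{d}x_2$--integral out of each $L^{\alpha'}([0,T_0]\x\R)$--norm, then to recombine the two terms --- to get
\[
\Big[\|\wr\,|\partial_{x_1}(v\,p)|_{\mathbf 1}\|_{L^{\alpha'}([0,T_0]\x\R)}^{\alpha'}+\|\wr\,|(v\,p)|_{\mathbf 1}\|_{L^{\alpha'}([0,T_0]\x\R)}^{\alpha'}\Big]^{1/\alpha'}\;\le\;\int_\R\big\|\wr\,(v\,p)(\cdot,\cdot,x_2)\big\|_{\H^{\alpha',1}([0,T_0]\x\R)}\,\mathrm{d}x_2 .
\]
One then estimates this last quantity along the same lines as in the first part, but using the \emph{second} estimate of \Cref{corr:estimates_ineq}: the right-hand side above is the $L^1$--in--$x_2$ norm of a function whose values are $\H^{\alpha',1}([0,T_0]\x\R_{x_1})$--norms, and dualising it against $L^\infty$--in--$x_2$ test functions, followed by an integration by parts in $x_1$, controls it by pairings of $v\,\wr\,p$ against test functions of the product form $\chi(x_2)\,h(t,x_1)$. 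Localising \emph{both} variables by finite-overlap coverings $(I_k)$ of $\R_{x_1}$ and $(J_m)$ of $\R_{x_2}$ with ${\rm diam}(I_k\x J_m)\le\overline{R}$ puts every term in the range of the second estimate of \Cref{corr:estimates_ineq}, whose right-hand side is already of the form $\int_{J_m}\|\wr'p(\cdot,\cdot,x_2)\|_{L^{\alpha'}([0,T_0]\x I_k)}\,\mathrm{d}x_2$; summing over $k$ and $m$ and using the two overlaps reassembles $K\,2^d\int_\R\|\wr'p(\cdot,\cdot,x_2)\|_{L^{\alpha'}([0,T_0]\x\R)}\,\mathrm{d}x_2$.

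The routine points are: density of $C^\infty_c$ in $L^\alpha$ and the fact that the a priori bound obtained from \Cref{corr:estimates_ineq} itself upgrades $v\,p$ (respectively $(v\,p)_{\mathbf 1}$) to a local Sobolev function, so the only regularity genuinely needed is that $p$ has enough local integrability (Krylov's estimates, cf.\ \Cref{prop:density-integrability}) for the differentiation under the integral sign to be legitimate --- and if it is not, both inequalities are trivial, their right-hand sides being infinite; one also uses that replacing $v$ by $|v|$ cannot increase $K$, which disposes of the absolute values that appear in the marginal quantities. The main obstacle is the second inequality: one must reconcile the one-dimensional object $(v\,p)_{\mathbf 1}$, living on the $x_1$--axis, with the intrinsically two-dimensional estimate of \Cref{corr:estimates_ineq}. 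This works only because that estimate was built in \Cref{lemm:FP-estimates} by measuring the $x_2$--dependence of the test function in a supremum norm and the $x_1$--dependence of $p$ in an $L^{\alpha'}$--norm --- precisely the $L^1_{x_2}L^{\alpha'}_{t,x_1}$ structure of its right-hand side --- and arranging the localisations in $x_1$ and in $x_2$ so that only the overlap factors, multiplying to $2^d$, survive is where the bookkeeping requires care.
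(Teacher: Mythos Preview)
Your treatment of the first inequality is correct and is essentially the paper's argument: dualise the local $\H^{\alpha',1}$ norm on a rectangle of diameter $\le\overline R$ via \Cref{corr:estimates_ineq}, then sum over a finite-overlap covering. The paper packages this with cutoff functions $\varphi_{k,q}$ rather than plain restrictions, but the bookkeeping is the same.

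For the second inequality there is a genuine gap in your route. After the Minkowski step you are trying to bound
\[
\int_\R \|\wr\,(vp)(\cdot,\cdot,x_2)\|_{\H^{\alpha',1}([0,T_0]\times\R_{x_1})}\,\mathrm{d}x_2,
\]
and you propose to dualise against product test functions $\chi(x_2)\,h(t,x_1)$. But the dual of $L^1_{x_2}(\H^{\alpha',1}_{t,x_1})$ is (essentially) $L^\infty_{x_2}(\H^{\alpha,-1}_{t,x_1})$, whose extremal elements are \emph{not} of product form: for each $x_2$ the optimal $h$ depends on $x_2$. Product test functions only yield a smaller supremum, so they cannot deliver the upper bound you need. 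If instead you allow $h=h(t,x_1,x_2)$ with $\sup_{x_2}\|h(\cdot,\cdot,x_2)\|_{L^\alpha}\le1$, the localisation becomes the problem: the pairing $\int(\partial_{x_1}h^1+h^0)\,\wr\,vp$ has a sign, so you cannot split it over a covering by $1\le\sum_{k,m}\varphi_{k,m}$, and cutting off $h$ by $\varphi_{k,m}$ produces uncontrolled $\partial_{x_1}\varphi_{k,m}$ terms.

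The paper avoids this by skipping Minkowski altogether. It dualises the \emph{original} left-hand side against $(\beta^1,\beta^0)\in L^\alpha([0,T_0]\times\R_{x_1})$ and then chooses two-dimensional test functions
\[
\widetilde f^1(t,x)=|\beta^1(t,x_1)|\,\mathrm{sgn}\big(\wr\,\partial_{x_1}(vp)(t,x)\big),\qquad \widetilde f^2=0,\qquad \widetilde g(t,x)=|\beta^0(t,x_1)|\,\mathrm{sgn}\big(\wr\,(vp)(t,x)\big).
\]
Their \emph{moduli} depend only on $(t,x_1)$ --- so $\|(\widetilde f,\widetilde g)\|_{L^\alpha(O_{k,q})}\le|\widetilde O_q|^{1/\alpha}\|(\beta^1,\beta^0)\|_{L^\alpha(\widetilde O_k)}\le\|(\beta^1,\beta^0)\|_{L^\alpha(\widetilde O_k)}$ since $|\widetilde O_q|\le1$ --- while the signs, which do depend on $x_2$, make the integrand $|\beta^1|\,|\partial_{x_1}(vp)|+|\beta^0|\,|vp|$ \emph{nonnegative}. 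Positivity is the whole point: it lets one pass from the integral over $\R^2$ to $\sum_{k,q}\int_{O_{k,q}}\varphi_{k,q}[\cdots]$ without any derivative hitting $\varphi_{k,q}$, after which the second estimate of \Cref{corr:estimates_ineq} applies termwise; summing over $q$ (overlap $2$) and over $k$ (discrete H\"older plus Minkowski, contributing another $2$) yields $2^d$. So the right test functions are not of product form $\chi(x_2)h(t,x_1)$ but rather have $(t,x_1)$-dependent modulus and $x$-dependent sign --- and this sign trick is what your sketch is missing.
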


\begin{remark}
    Among other things, the previous result shows the passage of local estimates to global estimates. The existence of a small diameter $\overline{R} \in \Rc(\pi,\alpha,\ab,\bb,T_0)$ allows to propagate the estimates over all the space. It is possible in particular because $K(\pi,\alpha,\ab,\bb,v,T_0)$ is independent of the choice of the ball $U_{\overline{R}}$. It only depends on the diameter $\overline{R}$. The techniques used in the proof below is similar to those used in the proof of {\rm \cite[Theorem 3.2.2]{FK-PL-equations}}.
\end{remark}

\begin{proof}

Let $0< r < 1$ s.t. $2r \le \overline{R}$. Let $(\widetilde{E}_k)_{k \ge 1}$ be a sequence of disjoint sets of diameter $r$ s.t.  $\cup_{k \ge 1} \widetilde{E}_k=\R$. Also, we consider a sequence of open sets of diameter $2r$, $(\widetilde{O}_k)_{k \ge 1}$, s.t. $\widetilde{E}_k \subset \widetilde{O}_k$. We take $(\widetilde{\varphi}^k)_{k \ge 1}$ satisfying $\widetilde{\varphi}_k \in C_c(\widetilde{O}_k)$, $0 \le \widetilde{\varphi}_k \le 1$ and $\widetilde{\varphi}_k(e)=1$ for $e \in \widetilde{E}_k$. Let $E_{k,q}:=\widetilde{E}_k \x \widetilde{E}_q$, $O_{k,q}:=\widetilde{O}_k \x \widetilde{O}_q$ and $\varphi_{k,q}(x_1,x_2):=\widetilde{\varphi}_k(x_1) \widetilde{\varphi}_q(x_2)$. 
By \Cref{corr:estimates_ineq}, for $f^i:=\widetilde{f}^i\varphi_{k,q}$ and $g:=\widetilde{g}\varphi_{k,q}$. 
    \begin{align*}
        \bigg| \int_{[0,T_0] \x O_{k,q}}  \big[\partial_{x_i}(\widetilde{f}^i\;\varphi_{k,q}) + \widetilde{g}\;\varphi_{k,q} \big] v(t,\xbb) \wr(t) \mu_t(\mathrm{d}\xbb) \mathrm{d}t \bigg|
        \le K(\pi,\alpha,\ab,\bb,v,T_0) \| (\widetilde{\fb},\widetilde{g})\|_{L^{\alpha}([0,T_0]\x O_{k,q})} \|\wr'\;p\|_{L^{\alpha'}([0,T_0] \x O_{k,q})} %\bigg[ \int_{O_{k,T_0}} |w'(t)p(t,x)|^{\alpha'}\;\mathrm{d}x\; \mathrm{d}t \bigg]^{1/\alpha'}
    \end{align*}
    and
    \begin{align*}
        &\bigg| \int_{[0,T_0] \x O_{k,q}}  \big[\partial_{x_i}(\widetilde{f}^i\;\varphi_{k,q}) + \widetilde{g}\;\varphi_{k,q} \big] v(t,\xbb) \wr(t) \mu_t(\mathrm{d}\xbb) \mathrm{d}t \bigg|
        \\
        &\le K(\pi,\alpha,\ab,\bb,v,T_0) \| (\widetilde{\fb},\widetilde{g})\|_{L^{\alpha}([0,T_0]\x O_{k,q})} \int_{\tilde{O}_q}\|\wr'\;p(\cdot,\cdot,x_2)\|_{L^{\alpha'}([0,T_0] \x \tilde{O}_k)}
        \mathrm{d}x_2.
    \end{align*}
    
    If $\|\wr'\;p\|_{L^{\alpha'}([0,T_0] \x O_{k,q})} < \infty$ or $\int_{\widetilde{O}_q}\|\wr'\;p(\cdot,\cdot,x_2)\|_{L^{\alpha'}([0,T_0] \x \widetilde{O}_k)}
        \mathrm{d}x_2< \infty$, the previous inequality shows that $\wr\; \partial_{x_i}(vp)$ is well defined on $[0,T_0] \x \widetilde{O}_{k,q}$ (see for instance point $(ii)$ of \cite[Section 5.8 Theorem 3]{evans2010partial}). Then, by integration by part, for any $\widetilde{f}^i$ and $\widetilde{g}$
    \begin{align*}
        \bigg| \int_{[0,T_0] \x O_{k,q}}  \big[\widetilde{f}^i\partial_{x_i}(v p) +\widetilde{g}\;(vp) \big]\;\varphi_{k,q}(t,\xbb) \wr(t)\; \mathrm{d}\xbb\; \mathrm{d}t \bigg|
        \le K(\pi,\alpha,\ab,\bb,v,T_0) \| (\widetilde{\fb},\widetilde{g})\|_{L^{\alpha}([0,T_0] \x O_{k,q})} \|\wr'\;p\|_{L^{\alpha'}([0,T_0] \x O_{k,q})}
    \end{align*}
    and
    \begin{align*}
        &\bigg| \int_{[0,T_0] \x O_{k,q}}  \big[\widetilde{f}^i\partial_{x_i}(v p) +\widetilde{g}\;(vp) \big]\;\varphi_{k,q}(t,\xbb) \wr(t)\; \mathrm{d}\xbb\; \mathrm{d}t \bigg|
        \\
        &\le K(\pi,\alpha,\ab,\bb,v,T_0) \| (\widetilde{\fb},\widetilde{g})\|_{L^{\alpha}([0,T_0] \x O_{k,q})} \int_{\widetilde{O}_q}\|\wr'\;p(\cdot,\cdot,x_2)\|_{L^{\alpha'}([0,T_0] \x \widetilde{O}_k)}
        \mathrm{d}x_2.
    \end{align*}
    Notice that, even when the quantities  $\|\wr'\;p\|_{L^{\alpha'}([0,T_0] \x O_{k,q})}$ or $\int_{\widetilde{O}_q}\|\wr'\;p(\cdot,\cdot,x_2)\|_{L^{\alpha'}([0,T_0] \x \widetilde{O}_k)}
        \mathrm{d}x_2$ are not finite, the previous inequalities are still true. Indeed, in this case all upper bounds are infinite. Besides, the inequalities are true for $\widetilde{f}^i$ belonging to $L^{\alpha}([0,T_0] \x O_{k,q})$.

\medskip
We start by dealing with the first inequality. In the first inequality, by taking the supremum over $\|( \widetilde{\fb},\widetilde{g})\|_{L^{\alpha}([0,T_0] \x O_{k,q})} \le 1$, we find
    \begin{align*}
        \|\varphi_{k,q} \wr\; \nabla (v\;p)\|_{L^{\alpha'}([0,T_0] \x O_{k,q})}^{\alpha'}
        +
        \|\varphi_{k,q} \wr\; (v\;p)\|_{L^{\alpha'}([0,T_0] \x O_{k,q})}^{\alpha'}
        \le K(\pi,\alpha,\ab,\bb,v,T_0)^{\alpha'} \|\wr'\;p\|_{L^{\alpha'}([0,T_0] \x O_{k,q})}^{\alpha'}.
    \end{align*}
    Therefore, as the constant $K(\pi,\alpha,\ab,\bb,v,T_0)$ is independent of $(k,q)$, $\widetilde{\varphi}_k(e)=1$ for $e \in \widetilde{E}_k$ and $0 \le \widetilde{\varphi}_k \le 1$, by taking the summation, we get 
    \begin{align*}
        \|\wr\;  v\;p\|_{\H^{\alpha',1}([0,T_0] \x \R^d)}^{\alpha'}=&\|\wr\; \nabla (v\;p)\|_{L^{\alpha'}([0,T_0] \x \R^d)}^{\alpha'}
        +
        \|\wr\; v\;p\|_{L^{\alpha'}([0,T_0] \x \R^d)}^{\alpha'}
        \\
        &\le
        \sum_{k,q \ge 1}\|\varphi_{k,q} \wr\; \nabla (v\;p)\|_{L^{\alpha'}([0,T_0] \x O_{k,q})}^{\alpha'}
        +
        \|\varphi_{k,q} \wr\; v\;p\|_{L^{\alpha'}([0,T_0] \x O_{k,q})}^{\alpha'}
        \\
        &\le K(\pi,\alpha,\ab,\bb,v,T_0)^{\alpha'} \sum_{k,q \ge 1} \|\wr'\;p\|_{L^{\alpha'}([0,T_0] \x O_{k,q})}^{\alpha'}
        \le K(\pi,\alpha,\ab,\bb,v,T_0)^{\alpha'} 2^d \|\wr'\;p\|_{L^{\alpha'}([0,T_0] \x \R^d)}^{\alpha'}.
    \end{align*}
    
    Now, we consider the second inequality. We take $\widetilde{f}^2=0$,
    $$
        \widetilde{f}^1(t,x_1,x_2):=|\beta^1(t,x_1)| [\1_{\wr\;\partial_{x_1}(v\;p)(t,x_1,x_2) \ge 0} - \1_{\wr\;\partial_{x_1}(v\;p)(t,x_1,x_2) \le 0}]\1_{(t,x_1,x_2) \in[0,T_0] \x O_{k,q}}
    $$
    and
    $$
        \widetilde{g}(t,x_1,x_2):=|\beta^0(t,x_1)| [\1_{\wr\;(v\;p)(t,x_1,x_2) \ge 0} - \1_{\wr\;(v\;p)(t,x_1,x_2) \le 0}]\1_{(t,x_1,x_2) \in[0,T_0] \x O_{k,q}}.
    $$
    We find
    \begin{align*}
        &
        \bigg| \int_{[0,T_0] \x \R^d}   |\wr(t)| \big[\;|\beta^1(t,x_1)||\partial_{x_1}(v p)(t,x_1,x_2)| + |\beta^0(t,x_1)||v p (t,x_1,x_2)| \big]\;\mathrm{d}x_1\mathrm{d}x_2\; \mathrm{d}t \bigg|
        \\
        &\le \sum_{k,q \ge 1}\bigg| \int_{[0,T_0] \x O_{k,q}}  \varphi_{k,q}(x_1,x_2) |\wr(t)| \big[\;|\beta^1(t,x_1)||\partial_{x_1}(v p)(t,x_1,x_2)| + |\beta^0(t,x_1)||v p (t,x_1,x_2)| \big]\;\mathrm{d}x_1\mathrm{d}x_2\; \mathrm{d}t \bigg|
        \\
        &\le K(\pi,\alpha,\ab,\bb,v,g,T_0) \sum_{k \ge 1} \| (\beta^1,\beta^0)\|_{L^{\alpha}([0,T_0] \x \widetilde{O}_{k})} \sum_{q \ge 1}\int_{\widetilde{O}_q}\|\wr'\;p(\cdot,\cdot,x_2)\|_{L^{\alpha'}([0,T_0] \x \widetilde{O}_k)}
        \mathrm{d}x_2
        \\
        &\le K(\pi,\alpha,a,\bb,v,g,T_0) \sum_{k \ge 1} \| (\beta^1,\beta^0)\|_{L^{\alpha}([0,T_0] \x \widetilde{O}_{k})} 2\;\int_{\R}\|\wr'\;p(\cdot,\cdot,x_2)\|_{L^{\alpha'}([0,T_0] \x \widetilde{O}_k)}
        \mathrm{d}x_2
        \\
        &\le K(\pi,\alpha,\ab,\bb,v,T_0) 2^d \| (\beta^1,\beta^0)\|_{L^{\alpha}([0,T_0] \x \R)} \int_{\R}\|\wr'\;p(\cdot,\cdot,x_2)\|_{L^{\alpha'}([0,T_0] \x \R)}
        \mathrm{d}x_2.
    \end{align*}
    We take the supremum over $\| (\beta^1,\beta^0)\|_{L^{\alpha}([0,T_0] \x \R)} \le 1$ and find the result. It is worth mentioning that we implicitly suppose that $\partial_{x_1}(v\;p)$ is well defined. Indeed, as mentioning earlier, when it is not the case, all the upper bonds are infinite.

\end{proof}
In the proposition just below, we provide a recursive estimate making a relation between the $L^z$ norm of $p$ and the $L^{z+1}$ norm of p. Its usefulness will be clear in \Cref{sec:invariantset}.
\begin{proposition} \label{prop:recursive}
    For any $z>2$ and $\wr \in C_0^\infty(T_0)$,  if $\Rc(\pi,z,\ab,\bb,T_0)$ is non--empty,
    we have
    \begin{align*}
        \|\wr\;p\|_{L^{z+1}([0,T_0] \x \R^d)}^{z+1}
        \le 
        \|\wr\|_{\infty} C(2,z)^z K(\pi,z',\ab,\bb,\1,T_0)^z 2^d \|\wr'\;p\|_{L^z([0,T_0] \x \R^d)}^z
    \end{align*}
    and
    \begin{align*}
        \int_0^{T_0} \|\wr(t)\;p(t,\cdot)\|^z_{C^{0,1-2/z}(\R^d)} \mathrm{d}t
        \le
        C(2,z)^z K(\pi,z',\ab,\bb,\1,T_0)^z 2^d \|\wr'\;p\|_{L^z([0,T_0] \x \R^d)}^z
    \end{align*}
    where $C(2,z)$ is a constant appearing in the Sobolev embedding Theorem depending on $d=2$ and $z$.
\end{proposition}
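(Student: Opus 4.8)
The plan is to obtain both bounds from \Cref{prop:inegal_general} (applied with $v\equiv\1$ and conjugate exponent $\alpha=z'$) together with the Morrey embedding $W^{z,1}(\R^2)\hookrightarrow C^{0,1-2/z}(\R^2)$, which is available precisely because $z>2=d$. Since the right--hand sides of the two claimed inequalities coincide up to the factor $\|\wr\|_\infty$, I would prove the second one first and then deduce the first from it, using that $p(t,\cdot)$ is a probability density.

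First I would justify invoking \Cref{prop:inegal_general} with $\alpha=z'$. Because $z>2$ we have $z'=z/(z-1)\in(1,2)$, so $\alpha=z'\in(1,\infty)\setminus\{2\}$ as required; moreover $\widehat{C}(\pi,\alpha,a,b,U_R,T_0)=\widehat{C}(\pi,\alpha',a,b,U_R,T_0)$ and the quantity $\ell(\pi,\alpha,R)+\ell(\pi,\alpha',R)$ entering the definition of $\Rc$ is symmetric in $\alpha\leftrightarrow\alpha'$, hence $\Rc(\pi,z',a,b,T_0)=\Rc(\pi,z,a,b,T_0)$, which is non--empty by hypothesis. The first estimate of \Cref{prop:inegal_general} with $v\equiv\1$ then reads
\begin{align*}
    \|\wr\;p\|_{\H^{z,1}([0,T_0]\x\R^2)}^{z}
    \le
    K(\pi,z',a,b,\1,T_0)^{z}\,2^d\,\|\wr'\;p\|_{L^{z}([0,T_0]\x\R^2)}^{z}.
\end{align*}
If the right--hand side is infinite there is nothing to prove, so I may assume it finite, in which case $\wr(t)p(t,\cdot)\in W^{z,1}(\R^2)$ for a.e.\ $t$. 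Next I would apply Morrey's inequality in the same form already used in the proof of \Cref{prop_estimate FP}, namely $\|\wr(t)p(t,\cdot)\|_{C^{0,1-2/z}(\R^2)}\le C(2,z)\|\wr(t)p(t,\cdot)\|_{H^{z,1}(\R^2)}$ for a.e.\ $t$; raising to the power $z$ and integrating over $t\in[0,T_0]$ gives
\begin{align*}
    \int_0^{T_0}\|\wr(t)p(t,\cdot)\|_{C^{0,1-2/z}(\R^2)}^{z}\,\mathrm{d}t
    \le
    C(2,z)^{z}\,\|\wr\;p\|_{\H^{z,1}([0,T_0]\x\R^2)}^{z},
\end{align*}
which, combined with the displayed consequence of \Cref{prop:inegal_general}, is exactly the second inequality.

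Finally, for the first inequality I would use that $p(t,\cdot)\ge0$ with $\int_{\R^2}p(t,x)\,\mathrm{d}x=1$ for a.e.\ $t$ (since $\mu_t$ is a probability measure). Writing $|\wr(t)p(t,x)|^{z+1}=|\wr(t)|\,|\wr(t)p(t,x)|^{z}\,p(t,x)$, bounding $|\wr(t)p(t,x)|^{z}\le\|\wr(t)p(t,\cdot)\|_{L^\infty(\R^2)}^{z}\le\|\wr(t)p(t,\cdot)\|_{C^{0,1-2/z}(\R^2)}^{z}$ and $|\wr(t)|\le\|\wr\|_\infty$, then integrating in $x$ (using the unit mass of $p(t,\cdot)$) and in $t$ yields
\begin{align*}
    \|\wr\;p\|_{L^{z+1}([0,T_0]\x\R^2)}^{z+1}
    \le
    \|\wr\|_\infty\int_0^{T_0}\|\wr(t)p(t,\cdot)\|_{C^{0,1-2/z}(\R^2)}^{z}\,\mathrm{d}t,
\end{align*}
and the second inequality closes the argument. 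There is no genuine obstacle here: the only points needing care are the bookkeeping of which conjugate exponent feeds into \Cref{prop:inegal_general} (and the resulting symmetry $\Rc(\pi,z,a,b,T_0)=\Rc(\pi,z',a,b,T_0)$), and the device of trading one power of $p$ in the $L^{z+1}$ norm for the bounded integral $\int_{\R^2}p(t,\cdot)\,\mathrm{d}x=1$ — this is precisely the mechanism that makes the estimate ``recursive'', raising the integrability exponent from $z$ to $z+1$ and thus allowing a bootstrap in \Cref{sec:invariantset}.
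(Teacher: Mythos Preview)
Your proof is correct and follows essentially the same route as the paper. The paper invokes \Cref{prop:second_sobolev} (which packages Morrey's inequality together with the mass--bound trick $\int p(t,\cdot)\,\mathrm{d}x\le G$) and then \Cref{prop:inegal_general}; you apply \Cref{prop:inegal_general} and Morrey directly, and spell out the mass--bound computation $|\wr p|^{z+1}=|\wr|\,|\wr p|^z\,p$ by hand---this is precisely the endpoint case $\ell=r+1$ of \Cref{prop:second_sobolev}, so the two arguments coincide.
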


\begin{proof}
    Since $z > d=2,$ we use \Cref{prop:second_sobolev} and find for any $z < r \le  z+1$
        \begin{align*}
           \int_0^{T_0}\|\wr(t)\;p(t,\cdot)\|_{C^{0,1-2/z}(\R^d)}^z \mathrm{d}t \le C(2,z)^z \|\wr\;p\|^z_{\H^{z,1}([0,T_0] \x \R^d)}
        \end{align*}
        and
        \begin{align*}
            \|\wr\;p\|^r_{L^{r}([0,T_0] \x \R^d)} \le G(\wr)^{1/s'} C(d,z)^{z/s'}\|\wr\;p\|^{z/s}_{L^z([0,T_0] \x \R^d)} \|\wr\;p\|_{\H^{z,1}([0,T_0] \x \R^d)}^{z/s'}
        \end{align*}
        where $s:=\frac{1}{z+1-r}$. We can conclude by taking $r=z+1$ and using \Cref{prop:inegal_general}.
\end{proof}
The next result gives a more explicit estimate for the gradient of $p_{\mathbf{1}}$. Also, in the same spirit as the previous proposition, it provides a relation between the $L^{s+1}$ norm of $(v\;p)_{\mathbf{1}}$ and the $\H^{s,1}$ norm of $(v\;p)_{\mathbf{1}}$.
\begin{proposition} \label{prop:estimates_marg}
    Let $r >1$ and $1< s < r +1$ s.t. $s \neq 2$. For any $\wr \in C^\infty_0(T_0)$, if $\Rc(\pi,s,\ab,\bb,T_0)$ is non--empty,
    we have the estimate
    \begin{align*}
        & \Big[\|\wr|\partial_{x_1}(v\;p)|_{\mathbf{1}}\|_{L^{s}([0,T_0] \x \R)}^s+ \|\wr\;|(v\;p)|_{\mathbf{1}}\|_{L^{s}([0,T_0] \x \R)}^s  \Big]^{1/s}
        \\
        &\le
        K(\pi,s,\ab,\bb,v,T_0) 2^d\bigg[\int_0^{T_0} \|\wr'(t)p(t,\cdot)\|_\infty^r \mathrm{d}t \bigg]^{\frac{s-1}{sr}} \int_\R \bigg[\int_0^{T_0}\Big|\int_\R |\wr'(t)p(t,x_1,x_2)| \mathrm{d}x_1\Big|^{u'} \mathrm{d}t \bigg]^{\frac{1}{su'}} \mathrm{d}x_2
    \end{align*}
    where $u'=\frac{r}{r+1-s}$. In addition, %for any $s < \ell \le  s+1 $,
    \begin{align*}
        \|\wr\;(v\;p)_{\mathbf{1}}\|^{s+1}_{L^{s+1}([0,T_0] \x\R)}
        \le\|\wr\|_{\infty} C(1,s)^{s} \|\wr\;(v\;p)_{\mathbf{1}}\|^{s}_{\H^{s,1}([0,T_0] \x \R)}
    \end{align*}
     where $C(1,s)$ is a constant appearing in the Sobolev embedding Theorem depending on $1$ and $s$.
    %where $z=\frac{1}{\ell -(s+1)}$.
\end{proposition}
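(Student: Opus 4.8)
The plan is to deduce both estimates from the global bound for the marginal already obtained in \Cref{prop:inegal_general}, together with elementary interpolation inequalities in $x_1$ and in $t$; the second estimate is the one--dimensional counterpart of the Sobolev interpolation used in the proof of \Cref{prop:recursive}. As in the preceding proofs, all the inequalities below are to be read as trivially true whenever the right--hand side (or $\partial_{x_1}(v\,p)$, $(v\,p)_{\mathbf{1}}$ themselves) fails to be well defined, since then the upper bound is infinite.

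\emph{First estimate.} First I would apply \Cref{prop:inegal_general} with the exponent chosen so that its conjugate equals $s$ (that is, $\alpha=s'$ and $\alpha'=s$); this is legitimate because $s\neq 2$ forces $s'\neq 2$ and $\Rc(\pi,s,a,b,T_0)=\Rc(\pi,s',a,b,T_0)$ is non--empty by hypothesis. This gives at once
\[
\Big[\|\wr\,|\partial_{x_1}(v\,p)|_{\mathbf{1}}\|^{s}_{L^{s}([0,T_0]\x\R)}+\|\wr\,|(v\,p)|_{\mathbf{1}}\|^{s}_{L^{s}([0,T_0]\x\R)}\Big]^{1/s}\le K(\pi,s,a,b,v,T_0)\,2^{d}\int_{\R}\|\wr'\,p(\cdot,\cdot,x_2)\|_{L^{s}([0,T_0]\x\R)}\,\mathrm{d}x_2,
\]
so it remains to estimate, for each fixed $x_2$, the inner quantity $\|\wr'\,p(\cdot,\cdot,x_2)\|_{L^{s}([0,T_0]\x\R)}$. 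Using $p\ge 0$ and $|p(t,x_1,x_2)|^{s-1}\le\|p(t,\cdot)\|^{s-1}_{\infty}$ one gets
\[
\int_0^{T_0}\int_{\R}|\wr'(t)p(t,x_1,x_2)|^{s}\,\mathrm{d}x_1\,\mathrm{d}t\le\int_0^{T_0}\|\wr'(t)p(t,\cdot)\|^{s-1}_{\infty}\,|\wr'(t)|\,\Big(\int_{\R}p(t,x_1,x_2)\,\mathrm{d}x_1\Big)\,\mathrm{d}t.
\]
Since $\tfrac{s-1}{r}+\tfrac{1}{u'}=1$ with $u'=\tfrac{r}{r+1-s}$, the exponents $\tfrac{r}{s-1}$ and $u'$ are conjugate; H\"older's inequality in $t$ then bounds the last display by $\big(\int_0^{T_0}\|\wr'(t)p(t,\cdot)\|_{\infty}^{r}\,\mathrm{d}t\big)^{(s-1)/r}$ times $\big(\int_0^{T_0}\big(|\wr'(t)|\int_{\R}p(t,x_1,x_2)\,\mathrm{d}x_1\big)^{u'}\mathrm{d}t\big)^{1/u'}$. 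Taking the $s$--th root, integrating over $x_2$, and writing $|\wr'(t)|\int_{\R}p(t,x_1,x_2)\,\mathrm{d}x_1=\int_{\R}|\wr'(t)p(t,x_1,x_2)|\,\mathrm{d}x_1$ (using $p\ge0$ once more) yields exactly the asserted inequality with constant $K(\pi,s,a,b,v,T_0)\,2^{d}$.

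\emph{Second estimate.} Here I would reproduce, in spatial dimension $d=1$, the interpolation step behind \Cref{prop:recursive}, applied to $u(t,x_1):=\wr(t)(v\,p)_{\mathbf{1}}(t,x_1)$. Since $s>1$, Morrey's inequality on $\R$ gives $\|u(t,\cdot)\|_{L^{\infty}(\R)}\le C(1,s)\,\|u(t,\cdot)\|_{H^{s,1}(\R)}$ for a.e.\ $t$; hence
\[
\int_{\R}|u(t,x_1)|^{s+1}\,\mathrm{d}x_1\le\|u(t,\cdot)\|^{s}_{L^{\infty}(\R)}\,\|u(t,\cdot)\|_{L^{1}(\R)}\le C(1,s)^{s}\,\|u(t,\cdot)\|^{s}_{H^{s,1}(\R)}\,\|u(t,\cdot)\|_{L^{1}(\R)},
\]
and $\|u(t,\cdot)\|_{L^{1}(\R)}=|\wr(t)|\,\|(v\,p)_{\mathbf{1}}(t,\cdot)\|_{L^{1}(\R)}\le\|\wr\|_{\infty}\,\|(v\,p)_{\mathbf{1}}(t,\cdot)\|_{L^{1}(\R)}$, where $\|(v\,p)_{\mathbf{1}}(t,\cdot)\|_{L^{1}(\R)}$ is controlled because $p(t,\cdot)$ is a probability density and $v$ is bounded. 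Integrating over $t\in[0,T_0]$ and recognising $\int_0^{T_0}\|u(t,\cdot)\|^{s}_{H^{s,1}(\R)}\,\mathrm{d}t=\|\wr\,(v\,p)_{\mathbf{1}}\|^{s}_{\H^{s,1}([0,T_0]\x\R)}$ gives the stated bound.

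The step I expect to cost the most care is the weighted bookkeeping rather than any genuinely new analytic input: in the first estimate one must split $|\wr'(t)|^{s}\|p(t,\cdot)\|_{\infty}^{s-1}$ into its $r$-- and $u'$--pieces so that the exponents $\tfrac{s-1}{sr}$ and $\tfrac{1}{su'}$ and the constant $K(\pi,s,a,b,v,T_0)\,2^{d}$ appear precisely as written; in the second estimate one must distribute the powers of $\wr$ (and keep the $L^1$--in--space norm of $(v\,p)_{\mathbf{1}}$ under control via the probability--density property of $p$) so that exactly $\|\wr\|_{\infty}C(1,s)^{s}$ survives. All the substantive content is already contained in \Cref{prop:inegal_general} and in the standard one--dimensional Sobolev embedding.
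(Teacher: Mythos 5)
Your proposal is correct and follows essentially the same route as the paper: the first estimate is the second inequality of \Cref{prop:inegal_general} combined with the bound $|\wr'p|^{s}\le\|\wr'p(t,\cdot)\|_\infty^{s-1}|\wr'p|$ and H\"older in $t$ with the conjugate pair $\big(\tfrac{r}{s-1},u'\big)$, and the second estimate is the $d=1$ case of \Cref{prop:second_sobolev} with $\ell=s+1$. The only difference is the order in which you apply the two ingredients (Fokker--Planck estimate first, interpolation second, versus the paper's reverse order), which is immaterial.
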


\begin{proof}
    We observe that, for $s< r+1$ and $u >1$,
    \begin{align*}
        &\int_\R \bigg[ \int_0^{T_0} \int_\R |\wr'(t)p(t,x_1,x_2)|^s \mathrm{d}x_1\;\mathrm{d}t \bigg]^{1/s} \mathrm{d}x_2
        =
        \int_\R \bigg[ \int_0^{T_0} \int_\R |\wr'(t)p(t,x_1,x_2)|^{s-1} |\wr'(t)p(t,x_1,x_2)| \mathrm{d}x_1\;\mathrm{d}t \bigg]^{1/s} \mathrm{d}x_2
        \\
        &\le
        \int_\R \bigg[ \int_0^{T_0} \int_\R \sup_{\xbb' \in \R^d}|\wr'(t)p(t,\xbb')|^{s-1} |\wr'(t)p(t,x_1,x_2)| \mathrm{d}x_1\;\mathrm{d}t \bigg]^{1/s} \mathrm{d}x_2
        \\
        &\le \bigg[\int_0^{T_0} \sup_{\xbb' \in \R^2} |\wr'(t)p(t,\xbb')|^{(s-1)u} \mathrm{d}t \bigg]^{\frac{1}{su}} \int_\R \bigg[\int_0^{T_0}\Big|\int_\R |\wr'(t)p(t,x_1,x_2)| \mathrm{d}x_1\Big|^{u'} \mathrm{d}t \bigg]^{\frac{1}{su'}} \mathrm{d}x_2.
    \end{align*}
    We take $(s-1)u=r$. Then, we have $u'=\frac{r}{r+1-s}$ and 
    \begin{align*}
        &\int_\R \bigg[ \int_0^{T_0} \int_\R |\wr'(t)p(t,x_1,x_2)|^s \mathrm{d}x_1\;\mathrm{d}t \bigg]^{1/s} \mathrm{d}x_2
        \\
        &\le 
        \bigg[\int_0^{T_0} \sup_{\xbb' \in \R^2} |\wr'(t)p(t,\xbb')|^{r} \mathrm{d}t \bigg]^{\frac{s-1}{sr}} \int_\R \bigg[\int_0^{T_0}\Big|\int_\R |\wr'(t)p(t,x_1,x_2)| \mathrm{d}x_1\Big|^{u'} \mathrm{d}t \bigg]^{\frac{1}{su'}} \mathrm{d}x_2.
    \end{align*}
    Consequently, as $s \neq 2$ and $\Rc(\pi,s,\ab,\bb,T_0)$ is non--empty,
    \begin{align*}
        &\bigg[ \int_{[0,T_0] \x \R} \wr(t) \bigg|\int_\R |\partial_{x_1}(v p)(t,x_1,x_2)| \mathrm{d}x_2 \bigg|^{s} \mathrm{d}x_1\;\mathrm{d}t + \int_{[0,T_0] \x \R} \wr(t) \bigg|\int_\R |(v p)(t,x_1,x_2)| \mathrm{d}x_2 \bigg|^{s} \mathrm{d}x_1\;\mathrm{d}t \bigg]^{1/s}
        \\
        &\le K(\pi,s,\ab,\bb,v,T_0) 2^d \int_{\R}\bigg[\int_{[0,T_0] \x \R} |\wr'(t)p(t,x_1,x_2)|^{s} \mathrm{d}x_1\; \mathrm{d}t \bigg]^{1/s}
        \mathrm{d}x_2
        \\
        &\le K(\pi,s,\ab,\bb,v,T_0) 2^d\bigg[\int_0^{T_0} \sup_{\xbb' \in \R^2} |\wr'(t)p(t,\xbb')|^{r} \mathrm{d}t \bigg]^{\frac{s-1}{sr}} \int_\R \bigg[\int_0^{T_0}\Big|\int_\R |\wr'(t)p(t,x_1,x_2)| \mathrm{d}x_1\Big|^{u'} \mathrm{d}t \bigg]^{\frac{1}{su'}} \mathrm{d}x_2.
    \end{align*}
    The second inequality follows from \Cref{prop:second_sobolev}.
\end{proof}

Now, we are going to establish some estimates in H\"older norm.
Let $r > 2$ and $\frac{1}{2} > \eta > \kappa > \frac{1}{r}$ with $(1-2\eta) r > 2$.
\begin{proposition} \label{prop:holder_estimates_p}
    We assume that $\Rc(\pi,r,\ab,\bb,T_0)$ is non--empty then
    \begin{align*}
        &\|\; \wr\;p\; \|_{C^{0,\kappa-1/r}\big([0,T_0];C^{0,1-2\eta-2/r}(\R^d)\big)}
        \\
        &\le D \bigg[  \Big[\; \big[K(\pi,r',\ab,\bb,\1,T_0)+K(\pi,r', \ab,\bb,\ab,T_0) \big]\; 2^{d/r}
         + 1 \Big]
         \| \wr'\; p\|_{L^{r}([0,T_0] \x \R^2)}
         \\
         &~~~~~~~~~~~~~~~~~~~~~~~~~~~~~~~~~~~~~~~~+\|\bb\|_{\infty}\| \wr\; p\|_{L^{r}([0,T_0] \x \R^2)}+ \| \wr(0)\;p(0,\cdot) \|_{H^{r,1}(\R^2)} \bigg]
    \end{align*}
    where $D$ is the constant used in {\rm \Cref{prop:estimates_holder} } depending only on $(\eta,\kappa,r,d=2,T_0).$ 
    %The constant $N$ also depends on $O$ if $O$ is bounded. 
\end{proposition}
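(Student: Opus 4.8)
The plan is to recognise $\wr p$ as a weak solution of the type handled by \Cref{prop:estimates_holder} and then to control the right-hand side of that estimate using the global Sobolev bounds of \Cref{prop:inegal_general}. Starting from the Fokker--Planck identity $\mathrm{d}\langle\varphi,\mu_t\rangle=\int_{\R^2}\big[b^i\partial_{x_i}\varphi+a^{ij}\partial_{x_i}\partial_{x_j}\varphi\big]\,p\,\mathrm{d}x\,\mathrm{d}t$ for time-independent $\varphi\in C^\infty_c(\R^2)$, I would multiply by $\wr(t)$ (a function of $t$ alone) and integrate by parts once in the second-order term to obtain
\begin{align*}
    \mathrm{d}\langle\wr(t)p(t,\cdot),\varphi\rangle
    =
    \langle\psi_0(t,\cdot),\varphi\rangle\,\mathrm{d}t+\sum_{i=1}^{2}\langle\psi_i(t,\cdot),\partial_{x_i}\varphi\rangle\,\mathrm{d}t,
    \qquad
    \psi_0:=\wr'p,\quad \psi_i:=\wr b^i p-\sum_{j}\partial_{x_j}(\wr a^{ij}p).
\end{align*}
The integration by parts is legitimate once $a^{ij}p$ is known to have an $L^r_{\ell oc}$ weak spatial gradient; this, together with $\psi_i\in L^r_{\ell oc}$, follows from \Cref{prop:inegal_general} applied with $v=a^{ij}$ (and with $v=\1$ for the plain gradient of $p$). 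To invoke that proposition one notes that $\Rc(\pi,r,a,b,T_0)=\Rc(\pi,r',a,b,T_0)$ is non-empty, since $\widehat{C}_\infty(\pi,\alpha,a,b,T_0)$ and $\ell(\pi,\alpha,R)+\ell(\pi,\alpha',R)$ are symmetric under $\alpha\leftrightarrow\alpha'$, and that non-emptiness of $\Rc$ forces $\widehat{C}_\infty(\pi,r,a,b,T_0)<\infty$, hence $\nabla a\in L^r_{\ell oc}$; thus the hypotheses of \Cref{prop:inegal_general} (taken with $\alpha=r'$, so $\alpha'=r$) hold for both $v=\1$ and $v=a^{ij}$.

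With the equation in hand, I would apply \Cref{prop:estimates_holder} with $\widehat{\alpha}=r$, $d=2$, $u=\wr p$ and $\psi=(\psi_0,\psi_1,\psi_2)$, bounding the left-hand Hölder norm by $D\big[\|\wr p\|_{\H^{r,1}([0,T_0]\x\R^2)}+\|\psi\|_{\H^{r,-1}([0,T_0]\x\R^2)}+\|\wr(0)p(0,\cdot)\|_{H^{r,1}(\R^2)}\big]$, with $D$ the constant of \Cref{prop:estimates_holder}. Then I would estimate each term: \Cref{prop:inegal_general} with $v=\1$ gives $\|\wr p\|_{\H^{r,1}}\le K(\pi,r',a,b,\1,T_0)\,2^{d/r}\|\wr'p\|_{L^r}$; the continuous inclusion $L^r\hookrightarrow\H^{r,-1}$ (constant $1$) controls the contribution of $\psi_0=\wr'p$ and is responsible for the additive $1$ in the coefficient of $\|\wr'p\|_{L^r}$; $\|\wr b^i p\|_{L^r}\le\|b\|_\infty\|\wr p\|_{L^r}$ controls the drift part of $\psi_i$; and $\|\partial_{x_j}(\wr a^{ij}p)\|_{L^r}\le\|\wr a^{ij}p\|_{\H^{r,1}}$ combined with \Cref{prop:inegal_general} for $v=a^{ij}$ (summed over $i,j$, the aggregate constant written $K(\pi,r',a,b,a,T_0)$) produces the term $K(\pi,r',a,b,a,T_0)\,2^{d/r}\|\wr'p\|_{L^r}$. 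Collecting these bounds yields exactly the asserted inequality.

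The main obstacle is the first step: checking that $\wr p$ is a weak solution of precisely the form demanded by \Cref{prop:estimates_holder}, i.e. that $a^{ij}p$ has an $L^r_{\ell oc}$ weak gradient so that the integration by parts is valid with no boundary contribution. This is exactly where the non-emptiness of $\Rc(\pi,r,a,b,T_0)$ and the full strength of \Cref{prop:inegal_general} (itself proved by a mollification argument, necessary because $a$ is only assumed bounded, non-degenerate and Hölder in space, but not continuous in $t$) are used; everything afterwards is routine bookkeeping with Sobolev embeddings and Hölder's inequality.
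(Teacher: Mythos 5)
Your proposal is correct and follows essentially the same route as the paper: write the Fokker--Planck equation for $\wr p$ in the divergence form required by \Cref{prop:estimates_holder} with $\psi_0=\wr'p$ and $\psi_i$ collecting the drift term and $\partial_{x_j}(\wr a^{ij}p)$, apply that proposition with $\widehat{\alpha}=r$, and then control $\|\wr p\|_{\H^{r,1}}$ and $\|\nabla(\wr a p)\|_{L^r}$ via \Cref{prop:inegal_general} with $v=\1$ and $v=a$ respectively, which yields exactly the stated constants. The only difference is that you make explicit two points the paper leaves implicit (the symmetry $\Rc(\pi,r,a,b,T_0)=\Rc(\pi,r',a,b,T_0)$ and the justification of the integration by parts via the existence of the weak gradient of $a^{ij}p$), which is a welcome clarification rather than a deviation.
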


\begin{proof}
    In a weak sense, we can write, $\partial_t (\wr\;p)(t,\xbb)= \wr'(t)\;p(t,\xbb)- \partial_{x_i}(b^i \wr\;p)(t,\xbb) + \partial_{x_i} \partial_{x_j} (a^{i,j} \wr\;p) (t,\xbb).$ Then by applying \Cref{prop:estimates_holder}, we find 
    \begin{align*}
        &\|\; \wr\;p\; \|_{C^{0,\kappa-1/r}\big([0,T_0],C^{0,1-2\eta-2/r}(\R^d)\big)}
        \\
        &\le D \Big[ \|\wr\;\nabla p\|_{L^{r}([0,T_0] \x \R^d)} + \| \wr'\;p- \partial_{x_i}(b^i \wr\;p) + \partial_{x_i} \partial_{x_j} (a^{i,j} \wr\;p)\|_{\H^{r,-1}([0,T_0] \x \R^d)} + \| \wr(0)\;p(0,\cdot) \|_{H^{r,1}(\R^d)} \Big].
    \end{align*}
    By \Cref{prop:inegal_general},
    \begin{align*}
         \| \partial_{x_i} \partial_{x_j} (a^{i,j} \wr\;p)\|_{\H^{r,-1}([0,T_0] \x \R^d)}
         &\le 
         \| \nabla (\ab\;\wr\; p)\|_{L^{r}([0,T_0] \x \R^d)}
         \\
         &\le K(\pi,r',\ab,\bb,\ab,T_0) 2^{d/r} \| \wr'\; p\|_{L^{r}([0,T_0] \x \R^d)}.
    \end{align*}
    Also,
    \begin{align*}
         &\|\wr\;\nabla p\|_{L^{r}([0,T_0] \x \R^d)}
         +
         \| \wr'\;p\|_{\H^{r,-1}([0,T_0] \x \R^d)}
         +
         \| \partial_{x_i}(b^i\;\wr\; p)\|_{\H^{r,-1}([0,T_0] \x \R^d)}
         \\
         &\le 
         \|\wr\;\nabla p\|_{L^{r}([0,T_0] \x \R^d)}
         +
         \| \wr'\;p\|_{L^{r}([0,T_0] \x \R^d)}
         +
         \| b^i\;\wr\; p\|_{L^{r}([0,T_0] \x \R^d)}
         \\
         &\le
          K(\pi,r',\ab,\bb,\1,T_0) 2^{d/r} \| \wr'\; p\|_{L^{r}([0,T_0] \x \R^d)} +  \| \wr'\;p\|_{L^{r}([0,T_0] \x \R^d)} + \|\bb\|_{\infty} \| \;\wr\; p\|_{L^{r}([0,T_0] \x \R^d)}.
    \end{align*}
    %We can therefore find the result using the inequality
    %\begin{align*}
        %&\|w\;p\|_{\H^{q,1}([0,S] \x \R^2)}
        %\\
        %&\le 
        %\Big[ K(q',a,b,g,\R^2,T_0)^q 2^{d} \|w'\|^{q\delta}_{C^{0,0}([0,T_0])} T_0^{q\delta} \boldsymbol{\Gamma_1}(q,\gamma,T_0,w')^{q(1-\delta)} +  \|w\|^{q\delta}_{C^{0,0}([0,T_0])} T_0^{q\delta} \boldsymbol{\Gamma_1}(q,\gamma,T_0,w)^{q(1-\delta)} \Big]^{1/q}.
    %\end{align*}
\end{proof}

\medskip
%Let $h \in C^\infty_c([0,T_0] \x \R)$.
Let us observe that, for any $\phi \in C^\infty_c(\R)$, by It\^o's formula,
\begin{align*}
    \mathrm{d} \int_{\R^2} \phi(x_1) \wr(t) p(t,x_1,x_2) \mathrm{d}x_2 \mathrm{d}x_1
    &=
    \bigg[\int_{\R^2} \phi(x_1) \wr'(t) p(t,x_1,x_2) \mathrm{d}x_2 \mathrm{d}x_1 +\int_{\R^2}\phi'(x_1)\wr(t)\;b^1(t,x_1,x_2)p(t,x_1,x_2) \mathrm{d}x_2\mathrm{d}x_1
    \\
    &\;\;\;\;\;\;+\int_{\R^2}\phi''(x_1)\wr(t)a^{1,1}(t,x_1,x_2)p(t,x_1,x_2) \mathrm{d}x_2\mathrm{d}x_1\bigg]\mathrm{d}t.
\end{align*}
%\begin{align*}
    %&\mathrm{d} \int_{\R^2} \phi(x_1) \wr(t)h(t,x_2) p(t,x_1,x_2) \mathrm{d}x_2 \mathrm{d}x_1
    %\\
    %&=
    %\int_{\R^2} \phi(x_1) w(t)\partial_th(t,x_2) p(t,x_1,x_2) \mathrm{d}x_2 \mathrm{d}x_1 + \int_{\R^2} \phi(x_1) w'(t)h(t,x_2) p(t,x_1,x_2) \mathrm{d}x_2 \mathrm{d}x_1
    %\\
    %&\;\;+\int_{\R^2}\phi'(x_1)w(t)h(t,x_2)\;b^1(t,x_1,x_2)p(t,x_1,x_2) \mathrm{d}x_2\mathrm{d}x_1
    %+
    %\int_{\R^2}\phi(x_1)w(t)h'(t,x_2)\;b^{2}(t,x_1,x_2)p(t,x_1,x_2) \mathrm{d}x_2\mathrm{d}x_1
    %\\
    %&\;\;+ \int_{\R^2} \phi(x_1)w(t)h''(t,x_2)a^{2,2}(t,x_1,x_2)p(t,x_1,x_2)\mathrm{d}x_2\mathrm{d}x_1+\int_{\R^2}\phi''(x_1)w(t)h(t,x_2)a^{1,1}(t,x_1,x_2)p(t,x_1,x_2) \mathrm{d}x_2\mathrm{d}x_1
    %\\
    %&\;\; + \int_{\R^2}\phi'(x_1)w(t)h'(t,x_2)\;a^{1,2}(t,x_1,x_2)p(t,x_1,x_2) \mathrm{d}x_2\mathrm{d}x_1.
%\end{align*}
For any bounded measurable map  $f:[0,T_0]\x \R^2 \to \R$, we recall that $(pf)_{\mathbf{1}}(t,x_1):=\int_{\R} f(t,x_1,x_2)p(t,x_1,x_2)\mathrm{d}x_2$. We rewrite
\begin{align*}
    \mathrm{d} \int_{\R} \phi(x_1) (p\wr)_{\mathbf{1}}(t,x_1) \mathrm{d}x_1
    =
    \bigg[ \int_{\R} \phi(x_1) (p\wr')_{\mathbf{1}}(t,x_1) \mathrm{d}x_1 + \int_{\R}\phi'(x_1)(p\wr \;b^1)_{\mathbf{1}}(t,x_1) \mathrm{d}x_1
    +\int_{\R}\phi''(x_1)(p\wr \;a^{1,1})_{\mathbf{1}}(t,x_1)\mathrm{d}x_1 \bigg] \mathrm{d}t.
\end{align*}

Let $\overline{r} > 2$ and $\frac{1}{2} > \overline{\eta} > \overline{\kappa} > \frac{1}{\overline{r}}$ with $(1-2\overline{\eta}) \overline{r}  > 1$. %The constant $v$ is s.t. $2v < \gamma +2$.
\begin{proposition} \label{prop:holder_estimates_p1}
    We have the estimate
     \begin{align*}
        \|\wr p_\mathbf{1}\|_{C^{0,\overline{\kappa}-1/\overline{r} }\big([0,T_0],C^{0,1-2\overline{\eta}-1/\overline{r} }(\R)\big)}
        \le \overline{D} \bigg[ \|\bb\|_\infty \Big{\|} \big[ \wr p_{\mathbf{1}}, \wr'p_{\mathbf{1}}, \wr|\partial_{x_1}p|_{\mathbf{1}},\wr |\partial_{x_1}(p\;a^{1,1})|_{\mathbf{1}} \big] \Big{\|}_{L^{\overline{r} }([0,T_0] \x \R)}
        + \|\wr p_{\mathbf{1}}(0,\cdot)\|_{H^{\overline{r} ,1}(\R)}  \bigg]
    \end{align*}
    where $\overline{D}$ is the constant used in {\rm \Cref{prop:estimates_holder} } depending only on $(\overline{\eta},\overline{\kappa},\overline{r},d=1,T_0).$ 
\end{proposition}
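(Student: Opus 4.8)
The plan is to mirror the proof of \Cref{prop:holder_estimates_p}, with the two--dimensional density $\wr p$ replaced by its one--dimensional profile $\wr p_{\mathbf{1}}$ and with \Cref{prop:estimates_holder} invoked in dimension $d=1$. The first step is to write down the parabolic equation solved by $\wr p_{\mathbf{1}}$. Starting from the It\^o identity displayed just above the statement, and using that $\wr$ and $\wr'$ depend on $t$ only (so that $(p\wr)_{\mathbf{1}}=\wr p_{\mathbf{1}}$ and $(p\wr')_{\mathbf{1}}=\wr' p_{\mathbf{1}}$), one integrates by parts once, resp. twice, in the terms tested against $\phi'$ and $\phi''$ to obtain, in the weak sense on $[0,T_0]\x\R$,
\[
    \partial_t(\wr p_{\mathbf{1}})
    =
    \wr' p_{\mathbf{1}}
    -\partial_{x_1}\big[(p\,\wr\, b^1)_{\mathbf{1}}\big]
    +\partial_{x_1}^2\big[(p\,\wr\, a^{1,1})_{\mathbf{1}}\big].
\]
This is exactly the divergence--type structure handled by \Cref{prop:estimates_holder} with $\widehat\alpha=\overline r$, $\beta=\overline\beta$, $\kappa=\overline\kappa$ and $d=1$; its hypotheses $\overline r>2$, $\frac{1}{2}>\overline\beta>\overline\kappa>\frac{1}{\overline r}$ and $(1-2\overline\beta)\overline r>1$ are precisely the standing assumptions preceding the proposition.

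Applying \Cref{prop:estimates_holder} (in the same way as in the proof of \Cref{prop:holder_estimates_p}) bounds $\|\wr p_{\mathbf{1}}\|_{C^{0,\overline\kappa-1/\overline r}([0,T_0],\,C^{0,1-2\overline\beta-1/\overline r}(\R))}$ by $\overline D$ times the sum of $\|\wr\,\partial_{x_1}p_{\mathbf{1}}\|_{L^{\overline r}([0,T_0]\x\R)}$, the $\H^{\overline r,-1}([0,T_0]\x\R)$--norm of the forcing $\wr' p_{\mathbf{1}}-\partial_{x_1}[(p\wr b^1)_{\mathbf{1}}]+\partial_{x_1}^2[(p\wr a^{1,1})_{\mathbf{1}}]$, and $\|\wr p_{\mathbf{1}}(0,\cdot)\|_{H^{\overline r,1}(\R)}$. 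It then remains to estimate the first two of these by the quantities in the statement. Differentiating under the integral sign gives $|\partial_{x_1}p_{\mathbf{1}}|\le|\partial_{x_1}p|_{\mathbf{1}}$ and $\big|\partial_{x_1}[(p\wr a^{1,1})_{\mathbf{1}}]\big|\le\wr\,|\partial_{x_1}(p\,a^{1,1})|_{\mathbf{1}}$, while $|b^1|\le\|b\|_\infty$ yields $|(p\wr b^1)_{\mathbf{1}}|\le\|b\|_\infty\,\wr\, p_{\mathbf{1}}$; together with the elementary inequalities $\|g\|_{\H^{\overline r,-1}}\le\|g\|_{L^{\overline r}}$ and $\|\partial_{x_1}h\|_{\H^{\overline r,-1}}\le\|h\|_{L^{\overline r}}$ already used in the proof of \Cref{prop:holder_estimates_p}, the forcing term is controlled by $\|\wr'p_{\mathbf{1}}\|_{L^{\overline r}}+\|b\|_\infty\|\wr p_{\mathbf{1}}\|_{L^{\overline r}}+\|\wr|\partial_{x_1}(p\,a^{1,1})|_{\mathbf{1}}\|_{L^{\overline r}}$. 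Collecting these, together with $\|\wr\,\partial_{x_1}p_{\mathbf{1}}\|_{L^{\overline r}}\le\|\wr|\partial_{x_1}p|_{\mathbf{1}}\|_{L^{\overline r}}$, into the single vector--valued norm $\big\|[\wr p_{\mathbf{1}},\wr'p_{\mathbf{1}},\wr|\partial_{x_1}p|_{\mathbf{1}},\wr|\partial_{x_1}(p\,a^{1,1})|_{\mathbf{1}}]\big\|_{L^{\overline r}([0,T_0]\x\R)}$, with the numerical constants and the factor $\|b\|_\infty$ absorbed into $\overline D$, gives the asserted inequality. As everywhere in the paper, if the right--hand side is infinite the estimate is trivial, so one may assume from the start that $p$ possesses the Sobolev bounds that make all the integrations by parts licit.

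The argument is essentially bookkeeping, and the only point that deserves care is the very first step: passing from the identity preceding the statement — which only tests against $\phi,\phi',\phi''$ — to a genuine divergence--form parabolic equation for $\wr p_{\mathbf{1}}$ with forcing in $\H^{\overline r,-1}([0,T_0]\x\R)$. This requires knowing that $(p\wr a^{1,1})_{\mathbf{1}}\in\H^{\overline r,1}$ and $(p\wr b^1)_{\mathbf{1}}\in L^{\overline r}$, which is exactly what the a priori estimates of \Cref{prop:inegal_general} (applied with $v=a^{1,1}$) and \Cref{prop:estimates_marg} deliver; once that is in hand, the chain of pointwise inequalities above closes the proof.
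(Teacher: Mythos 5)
Your proposal is correct and follows essentially the same route as the paper: write the divergence--form parabolic equation for $\wr p_{\mathbf{1}}$ from the It\^o identity, apply \Cref{prop:estimates_holder} in dimension $d=1$, and control the forcing via $\|g\|_{\H^{\overline r,-1}}\le\|g\|_{L^{\overline r}}$, $\|\partial_{x_1}h\|_{\H^{\overline r,-1}}\le\|h\|_{L^{\overline r}}$ and differentiation under the integral sign. Your closing remark on the a priori regularity needed to justify the integrations by parts matches what the paper records in the remark following the proposition via \Cref{prop:estimates_marg}.
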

\begin{remark}
    Notice that, by {\rm \Cref{prop:estimates_marg}}, if $\Rc(\pi,\overline{r},\ab,\bb,T_0)$ is non--empty,
    \begin{align*}
        &
         \| \wr |\partial_{x_1}(p\;a^{1,1})|_{\mathbf{1}} \|_{L^{\overline{r} }([0,T_0] \x \R)}
         \\
         &\le K(\pi,\overline{r},\ab,\bb,a^{1,1},T_0) 2^d\bigg[\int_0^{T_0} \|\wr'(t)p(t,\cdot)\|_\infty^{\overline{r}} \mathrm{d}t \bigg]^{\frac{\overline{r} -1}{\overline{r}^2}} \int_\R \bigg[\int_0^{T_0}\Big|\int_\R |\wr'(t)p(t,x_1,x_2)| \mathrm{d}x_1\Big|^{\overline{r} } \mathrm{d}t \bigg]^{\frac{1}{\overline{r}^2}} \mathrm{d}x_2.
         %\\
         %&\le K(\pi,v,a,b,a^{1,1},T_0) 2^d\bigg[C(2,v)^v K(\pi,v,a,b,\mathbf{1},T_0)^v 2^d \|\wr''\;p\|_{L^v([0,T_0] \x \R^2)}^v \bigg]^{\frac{v-1}{v^2}} \int_\R \bigg[\int_0^{T_0}\Big|\int_\R |\wr'(t)p(t,x_1,x_2)| \mathrm{d}x_1\Big|^{v} \mathrm{d}t \bigg]^{\frac{1}{v^2}} \mathrm{d}x_2.
    \end{align*}
    %$\cdots$
\end{remark}
\begin{proof}
     By \Cref{prop:estimates_holder},
    \begin{align*}
        &\|\wr p_\mathbf{1}\|_{C^{0,\overline{\kappa}-1/\overline{r} }\big([0,T_0],C^{0,1-2\overline{\eta}-1/\overline{r} }(\R)\big)}
        \\
        &\le \overline{D} \bigg[ \|\wr p_\mathbf{1}'\|_{L^{\overline{r} }([0,T_0] \x \R)}
        + \|\wr'p_{\mathbf{1}} + \wr (p \;b^1)_{\mathbf{1}}' + \wr (p\;a^{1,1})_{\mathbf{1}}'' \|_{\H^{\overline{r} ,-1}([0,T_0] \x \R)} 
        + \|\wr p_{\mathbf{1}}(0,\cdot)\|_{H^{\overline{r} ,1}(\R)} \bigg].
    \end{align*}
    Since $\| \wr (p\;a^{1,1})_{\mathbf{1}}'' \|_{\H^{\overline{r},-1}([0,T_0] \x \R)}
        \le 
        \| \wr (p\;a^{1,1})_{\mathbf{1}}' \|_{L^{\overline{r}}([0,T_0] \x \R)}\le
         \| \wr |\partial_{x_1}(p\;a^{1,1})|_{\mathbf{1}} \|_{L^{\overline{r} }([0,T_0] \x \R)}$, we obtain
    \begin{align*}
        &\|\wr p_{\mathbf{1}}\|_{C^{0,\overline{\kappa}-1/\overline{r} }\big([0,T_0],C^{0,1-2\overline{\eta}-1/\overline{r} }(\R)\big)}
        \\
        &\le \overline{D} \bigg[ \|\bb\|_\infty \Big{\|} \big[ \wr p_{\mathbf{1}}, \wr'p_{\mathbf{1}}, \wr|\partial_{x_1}p|_{\mathbf{1}},\wr |\partial_{x_1}(p\;a^{1,1})|_{\mathbf{1}} \big] \Big{\|}_{L^{\overline{r} }([0,T_0] \x \R)}
        + \|\wr p_{\mathbf{1}}(0,\cdot)\|_{H^{\overline{r},1}(\R)}  \bigg].
    \end{align*}
\end{proof}
Let us define $\gamma:=(1+1/2)\frac{2+1}{2}=2+1/4$. Contrary to the previous last results where we showed estimates putting in relation $p$ and its gradient, in the next proposition we give estimate of the $L^\gamma$ norm of $p$ involving only the coefficients of the SDE.
\begin{proposition} \label{prop:estimates-gamma}
    Let $\pi \in (0,1)$ and $\wr \in C_0^\infty(T_0)$. If $\Rc(\pi,3,\ab,\bb,T_0)$ is non--empty, we have
    \begin{align*}
        \|\wr\;p\|_{L^\gamma([0,T_0] \x \R^2)}^\gamma \le C(2,1+1/2)\|(\wr,\wr')\|_{\infty}^{1+1/2} K(\pi,3,\ab,\bb,\1,T_0)^{1+1/2}2^d G(T_0,1+1/2,2,m,M) 
    \end{align*}
    where $G(T_0,1+1/2,2,m,M)$ is the constant used in {\rm \Cref{prop:density-integrability}} and $C(2,1+1/2)$ is the constant appearing in Sobolev embedding theorem.
\end{proposition}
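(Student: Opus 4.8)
The plan is to push the $\H^{3/2,1}$-bound for $\wr p$ that comes out of \Cref{prop:inegal_general} through a Gagliardo--Nirenberg interpolation on $\R^2$, using the free mass identity $\int_{\R^2}p(t,x)\,\mathrm{d}x=1$ to raise the integrability in $t$; the exponent $\gamma=(1+1/2)\frac{2+1}{2}=9/4$ is exactly the one for which spatial $W^{3/2,1}$-control pointwise in $t$ and $L^\infty_tL^1_x$-control of $p$ combine. Concretely, I would first apply \Cref{prop:inegal_general} with $v=\1$ and $\alpha=3$ (so $\alpha'=3/2$), which is legitimate since $3\in(1,\infty)\setminus\{2\}$ and $\Rc(\pi,3,a,b,T_0)$ is assumed non-empty, and which produces directly the constant $K(\pi,3,a,b,\1,T_0)$ of the statement:
\[
\|\wr p\|_{\H^{3/2,1}([0,T_0]\times\R^2)}^{3/2}\le K(\pi,3,a,b,\1,T_0)^{3/2}\,2^{d}\,\|\wr' p\|_{L^{3/2}([0,T_0]\times\R^2)}^{3/2}.
\]
As $\wr'\in C^\infty_0(T_0)$, the right side is at most $K(\pi,3,a,b,\1,T_0)^{3/2}2^{d}\|\wr'\|_\infty^{3/2}\|p\|_{L^{3/2}([0,T_0]\times\R^2)}^{3/2}$, and \Cref{prop:density-integrability} applied with $\gamma=3/2=(d+1)'$ bounds $\|p\|_{L^{3/2}([0,T_0]\times\R^2)}^{3/2}$ by $G(T_0,1+1/2,2,m,M)$; hence $\wr p\in\H^{3/2,1}$ with an explicit norm bound.

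Since $3/2<d=2$, the Gagliardo--Nirenberg inequality $\|v\|_{L^{9/4}(\R^2)}\le C\|\nabla v\|_{L^{3/2}(\R^2)}^{2/3}\|v\|_{L^1(\R^2)}^{1/3}$ on $\R^2$ (equivalently, $W^{3/2,1}(\R^2)\hookrightarrow L^{6}(\R^2)$ interpolated against $L^1$) holds; raising it to the power $\gamma=9/4$ turns the exponents $2/3,1/3$ into $3/2,3/4$ and, after bounding $\|\nabla v\|_{L^{3/2}}$ by $\|v\|_{W^{3/2,1}}$, yields the constant $C(2,1+1/2)$ of the statement via
\[
\|v\|_{L^{9/4}(\R^2)}^{9/4}\le C(2,1+1/2)\,\|v\|_{W^{3/2,1}(\R^2)}^{3/2}\,\|v\|_{L^{1}(\R^2)}^{3/4}.
\]
I would apply this with $v=\wr(t)p(t,\cdot)$, which lies in $W^{3/2,1}(\R^2)$ for a.e.\ $t$ by the first step, use $\|\wr(t)p(t,\cdot)\|_{L^1(\R^2)}=|\wr(t)|\le\|(\wr,\wr')\|_\infty$, and integrate over $t\in[0,T_0]$, obtaining
\[
\|\wr p\|_{L^{\gamma}([0,T_0]\times\R^2)}^{\gamma}\le C(2,1+1/2)\,\|(\wr,\wr')\|_\infty^{3/4}\,\|\wr p\|_{\H^{3/2,1}([0,T_0]\times\R^2)}^{3/2}.
\]
Plugging in the first step and absorbing the residual factors $\|\wr\|_\infty^{3/4}\|\wr'\|_\infty^{3/2}$ into $\|(\wr,\wr')\|_\infty^{3/2}$ (recalling $\wr$ is a $[0,1]$-valued cut-off, so $\|\wr\|_\infty\le1$) gives the claimed inequality.

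The one genuine obstacle is the $L^{3/2}$-barrier: \Cref{prop:density-integrability} only delivers space--time $L^{3/2}$-integrability of $p$, so one is forced to work through $\H^{3/2,1}$ rather than $\H^{3,1}$ (whose \Cref{prop:inegal_general}-bound would require an a priori $L^{3}$-control of $\wr' p$), and climbing from the time-exponent $3/2$ to $\gamma=9/4$ genuinely needs the unit-mass bound $\|p(t,\cdot)\|_{L^1(\R^2)}=1$ rather than merely the space--time $L^{3/2}$ bound; everything else is routine tracking of constants. This estimate then serves as the base case ($z=\gamma>2$) of the recursion in \Cref{prop:recursive}, a single step of which reaches $z+1=\gamma+1=13/4=q$.
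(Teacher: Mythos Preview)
Your argument is essentially the same as the paper's: both combine \Cref{prop:inegal_general} with $\alpha=3$ (so $\alpha'=3/2$), the Gagliardo--Nirenberg/Sobolev interpolation encoded in \Cref{prop:first_sobolev} (which is exactly your inequality $\|v\|_{L^{9/4}}^{9/4}\le C\|\nabla v\|_{L^{3/2}}^{3/2}\|v\|_{L^1}^{3/4}$ with the unit-mass bound supplying the $L^1$ factor), and \Cref{prop:density-integrability}, differing only in the order of application. One cosmetic slip: you invoke $\|\wr\|_\infty\le 1$ to close the constant bookkeeping, but this is nowhere assumed; the honest output is $\|\wr\|_\infty^{3/4}\|\wr'\|_\infty^{3/2}\le\|(\wr,\wr')\|_\infty^{9/4}$, and indeed the paper's own exponent on $\|(\wr,\wr')\|_\infty$ is inconsistent between the statement ($3/2$) and the later summary ($3/4$), so this is not a structural issue.
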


\begin{proof}
    Since $1+1/2 < d=2,$ by \Cref{prop:first_sobolev},
    \begin{align*}
            \|\wr\;p\|^\gamma_{L^{\gamma}([0,T_0] \x \R^2)} \le C(2,1+1/2)^{1+1/2} G(\wr)^{1/2+1/4} \|\wr\;\nabla p\|_{L^{1+1/2}([0,T_0] \x \R^2)}^{1+1/2}
        \end{align*}
        where $G(\wr):=\|\wr\|_{\infty}$, $\gamma=(1+1/2) \frac{2+1}{2}=2+1/4$ and $C(2,1+1/2)$ is a positive constant depending only on $1+1/2$ and $2$.
    As $\Rc(\pi,3,\ab,\bb,T_0)$ is non--empty, by applying \cref{prop:inegal_general} and then {\rm \Cref{prop:density-integrability}}, we obtain
    \begin{align*}
        \|\wr\;p\|_{L^{\gamma}([0,T_0] \x \R^2)}^\gamma 
        &\le 
        \|\wr\|^{1+1/2}_\infty \big(C(2,1+1/2)\;K(\pi,3,\ab,\bb,\1,T_0) \big)^{1+1/2} 2^d \|\wr'\;p\|_{L^{1+1/2}([0,T_0] \x \R^2)}^{1+1/2}
        \\
        &\le
        \|(\wr',\wr)\|^{1+1/2}_\infty \big(C(2,1+1/2)\;K(\pi,3,\ab,\bb,\1,T_0) \big)^{1+1/2} 2^d G(T_0,1+1/2,2,m,M).
    \end{align*}
    %It is enough to use {\rm \Cref{prop:density-integrability}} to conclude.
\end{proof}

\paragraph*{Summary of the estimates} %\label{para:summuray_estimates}
In order to get a general idea of all the estimates we have proven so far, we summarize them here before using them in the next section.

\begin{itemize}
    \item In \Cref{prop:estimates-gamma}, for $\gamma:=2+1/4$,
    \begin{align*}
        \|\wr\;p\|_{L^\gamma([0,T_0] \x \R^d)}^\gamma \le C(2,1+1/2)\|(\wr,\wr')\|_{\infty}^{(1+1/2)/2} K(\pi,3,\ab,\bb,\1,T_0)^{1+1/2}2^d G(T_0,1+1/2,2,m,M) ;
    \end{align*}

    \item In \Cref{prop:recursive}, for any $z > 2$,
    \begin{align*}
        \|\wr\;p\|_{L^{z+1}([0,T_0] \x \R^d)}^{z+1}
        \le 
        \|\wr\|_{\infty} C(2,z)^z K(\pi,z',\ab,\bb,\1,T_0)^z 2^d \|\wr'\;p\|_{L^z([0,T_0] \x \R^d)}^z
    \end{align*}
    and
    \begin{align*}
        \int_0^{T_0} \|\wr(t)\;p(t,\cdot)\|^z_{C^{0,1-2/z}(\R^d)} \mathrm{d}t
        \le
        C(2,z)^z K(\pi,z',\ab,\bb,\1,T_0)^z 2^d \|\wr'\;p\|_{L^z([0,T_0] \x \R^d)}^z;
    \end{align*}
    
    \item In \Cref{prop:inegal_general}, for $\alpha >1$, 
    \begin{align*}
        \|\wr\;  v\;p\|_{\H^{\alpha',1}([0,T_0] \x \R^d)}^{\alpha'} \le K(\pi,\alpha,a,b,v,T_0)^{\alpha'} 2^d \|\wr'\;p\|_{L^{\alpha'}([0,T_0] \x \R^d)}^{\alpha'};
    \end{align*}
    
    \item In \Cref{prop:estimates_marg}, for any $r >1$ and $1< s < r +1$ s.t. $r \neq 2$,
    \begin{align*}
        &\Big[\|\wr|\partial_{x_1}(v\;p)|_{\mathbf{1}}\|_{L^{s}([0,T_0] \x \R)}^s+ \|\wr\;|v\;p|_{\mathbf{1}}\|_{L^{s}([0,T_0] \x \R)}^s  \Big]^{1/s}
        \\
        &\le
        K(\pi,s,\ab,\bb,v,T_0) 2^d\bigg[\int_0^{T_0} \|\wr'(t)p(t,\cdot)\|_\infty^r \mathrm{d}t \bigg]^{\frac{s-1}{sr}} \int_\R \bigg[\int_0^{T_0}\Big|\int_\R |\wr'(t)p(t,x_1,x_2)| \mathrm{d}x_1\Big|^{u'} \mathrm{d}t \bigg]^{\frac{1}{su'}} \mathrm{d}x_2
    \end{align*}
    where $u'=\frac{r}{r+1-s}$. In addition, %for any $s < \ell \le  s+1 $,
    \begin{align*}
        \|\wr\;(v\;p)_{\mathbf{1}}\|^{s+1}_{L^{s+1}([0,T_0] \x\R)}
        \le\|\wr\|_{\infty} C(1,s)^{s} \|\wr\;(v\;p)_{\mathbf{1}}\|^{s}_{\H^{s,1}([0,T_0] \x \R)};
    \end{align*}
    
    \item In \Cref{prop:holder_estimates_p}, for any $r > 2$ and $\frac{1}{2} > \eta > \kappa > \frac{1}{r}$ s.t. $(1-2\eta) r > 2$,
    \begin{align*}
        &\|\; \wr\;p\; \|_{C^{0,\kappa-1/r}\big([0,T_0];C^{0,1-2\eta-2/r}(\R^d)\big)}
        \\
        &\le D \bigg[  \Big[ \;\big[K(\pi,r',\ab,\bb,\mathbf{1},T_0)+K(\pi,r',\ab,\bb,\ab,T_0)\big]\; 2^{d/r}
         + 1 \Big]
         \| \wr'\; p\|_{L^{r}([0,T_0] \x \R^2)}
         \\
         &~~~~~~~~~~~~~~~~~~~~~~~~~~~~~~~~~~~~~~~~+\|\bb\|_{\infty}\| \wr\; p\|_{L^{r}([0,T_0] \x \R^2)}+ \| \wr(0)\;p(0,\cdot) \|_{H^{r,1}(\R^2)} \bigg];
    \end{align*}
    
    \item In \Cref{prop:holder_estimates_p1}, for any $\overline{r} > 2$ and $\frac{1}{2} > \overline{\eta} > \overline{\kappa} > \frac{1}{\overline{r}}$ with $(1-2\overline{\eta}) \overline{r} > 1$,
    \begin{align*}
        &\|\wr p_\mathbf{1}\|_{C^{0,\overline{\kappa}-1/\overline{r}}\big([0,T_0],C^{0,1-2\overline{\eta}-1/\overline{r}}(\R)\big)}
        \\
        &\le  \overline{D} \bigg[ \|\bb\|_\infty \Big{\|} \big[ \wr p_{\mathbf{1}}, \wr'p_{\mathbf{1}}, \wr|\partial_{x_1}p|_{\mathbf{1}},\wr |\partial_{x_1}(p\;a^{1,1})|_{\mathbf{1}} \big] \Big{\|}_{L^{\overline{r}}([0,T_0] \x \R)}
        + \|\wr p_{\mathbf{1}}(0,\cdot)\|_{H^{\overline{r},1}(\R)}  \bigg].
    \end{align*}
    
\end{itemize}

\medskip
\subsection{Construction of the appropriate set for the density $p$} \label{sec:invariantset}
This section is devoted to find the appropriate upper bounds for the Lebesgue norm of $p$ and its gradient in terms of the coefficients of the SDE. These upper bounds will be used to defined the right space for $p$.

\medskip
%In this part, we consider that $(a,b)$ may depend on $\wr \in C_0^\infty(T_0)$ i.e. $(a,b):=(a(\wr),b(\wr))$. Notice that, all the previous result stay true.
\medskip
Let $j \ge 1$ and $\pi \in (0,1)$. Recall that $\widehat{C}_\infty$ is defined in \Cref{eq:def_C} and $\gamma=2+1/4$. For any $2 < \tau \le \gamma + j$, we can check that
\begin{align*}
    \widehat{C}_\infty(\pi,\tau',\ab,\bb,T_0)
    =\widehat{C}_\infty(\pi,\tau,\ab,\bb,T_0) \le |T_0|^{\frac{1}{\tau}-\frac{1}{\gamma+j}}\widehat{C}_\infty(\pi,\gamma+j,\ab,\bb,T_0).
\end{align*}

We assume that there is $0<\overline{R} \le 1$ s.t.
\begin{align*}
    1-\bigg[\big[\ell (\pi,3,\overline{R})+\ell (\pi,3',\overline{R}) \big] |T_0|^{\frac{1}{3}-\frac{1}{\gamma+j}} + \sum_{i=0}^j\big[\ell (\pi,\gamma+i,\overline{R})+\ell (\pi,(\gamma+i)',\overline{R}) \big] |T_0|^{\frac{1}{\gamma+i}-\frac{1}{\gamma+j}} \bigg] \widehat{C}_\infty(\pi,\gamma+j,\ab,\bb,T_0)\;\; >\;\;0.
\end{align*}

Consequently, we easily verify that, as $3 \le \gamma + j$, %$\overline{R} \in \Rc(\pi,3,a,b,T_0)$ and 
\begin{align*}
    \overline{R} \in \Rc(\pi,3,a,b,T_0)\;\;\mbox{and}\;\;\overline{R} \in \Rc(\pi,\gamma+i,\ab,\bb,T_0)\;\mbox{for each }1\le i \le j.
\end{align*}

We set $A_j(\pi)$ by
\begin{align*}
    &A_j(\pi)
    \\
    &:=
    \frac{2N}{1-\bigg[\big[\ell (\pi,3,\overline{R})+\ell (\pi,3',\overline{R}) \big] |T_0|^{\frac{1}{3}-\frac{1}{\gamma+j}} + \sum_{i=0}^j\big[\ell (\pi,\gamma+i,\overline{R})+\ell (\pi,(\gamma+i)',\overline{R}) \big] |T_0|^{\frac{1}{\gamma+i}-\frac{1}{\gamma+j}} \bigg] \widehat{C}_\infty(\pi,\gamma+j,\ab,\bb,T_0)}.
\end{align*}

It is straightforward that $K(\pi,3,\ab,\bb,\1,T_0,\overline{R})\le A_j(\pi)$ and for any $1\le i \le j$,
\begin{align*}
    K(\pi,(\gamma+i)',\ab,\bb,\1,T_0,\overline{R})
    =K(\pi,\gamma+i,\ab,\bb,\1,T_0,\overline{R}) \le A_j(\pi)
\end{align*}
where we recall that $K$ is defined in \eqref{eq:const_K}.
%We set 
%\begin{align*}
    %A^j:=\max \big[K(\gamma+j,a,b,\mathbf{1},\R^2,T_0)\;\;,\;\;K(q,a,b,\mathbf{1},\R^2,T_0) \big]
%\end{align*}

\begin{proposition} \label{prop:recurcive_estimates}
    Let $j \ge 1$ and $\pi \in (0,1)$. For any $i \le j$, there exists a positive constant 
    $$
        \widehat{H}_{ij}:=\widehat{H}_i(A_j(\pi),T_0,m,M,|(\wr,\cdots,\wr^{(i+1)})|_\infty)
    $$
    depending only on $A_j(\pi)$, $i$, $T_0$, $m$, $M$, and the supremum of $\wr$ and its first $(i+1)$--derivatives s.t. the constant $\widehat{H}_{ij}$ is a locally bounded function of the indicated quantities and
    \begin{align} \label{eq:recursive}
        \|\wr \; p\|_{L^{\gamma + i}([0,T_0] \x \R^2)}^{\gamma+i} \le \widehat{H}_{ij}.
    \end{align}
    
\end{proposition}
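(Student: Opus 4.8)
The plan is to prove the estimate \eqref{eq:recursive} by induction on $i$, bootstrapping from the base case provided by \Cref{prop:estimates-gamma} and using the recursive relation of \Cref{prop:recursive} together with the H\"older estimates of \Cref{prop:holder_estimates_p}. The key observation is that, once $\Rc(\pi,\gamma+j,a,b,T_0)$ contains $\overline R$, the monotonicity inequality $\widehat C_\infty(\pi,\tau',a,b,T_0)\le |T_0|^{\frac1\tau-\frac1{\gamma+j}}\widehat C_\infty(\pi,\gamma+j,a,b,T_0)$ recorded just before the statement guarantees that $\overline R\in\Rc(\pi,3,a,b,T_0)$ and $\overline R\in\Rc(\pi,\gamma+i,a,b,T_0)$ for all $1\le i\le j$, and that $K(\pi,(\gamma+i)',a,b,\1,T_0,\overline R)\le A_j(\pi)$. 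So all the conditional hypotheses needed to invoke \Cref{prop:recursive}, \Cref{prop:inegal_general} and \Cref{prop:holder_estimates_p} at every level $i\le j$ are in force with a single uniform constant $A_j(\pi)$.

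First I would set up the base case: apply \Cref{prop:estimates-gamma} to obtain $\|\wr\,p\|_{L^\gamma([0,T_0]\x\R^2)}^\gamma$ bounded by a function of $\|(\wr,\wr')\|_\infty$, $K(\pi,3,a,b,\1,T_0,\overline R)\le A_j(\pi)$, $m$, $M$ and $T_0$; this is a locally bounded function of the indicated quantities, giving $\boldsymbol H_{0j}$. For the inductive step, assume $\|\wr\,p\|_{L^{\gamma+i-1}([0,T_0]\x\R^2)}^{\gamma+i-1}\le\boldsymbol H_{(i-1)j}$ for a particular choice of cutoff $\wr$. The point is that \Cref{prop:recursive} with $z=\gamma+i-1>2$ yields
\begin{align*}
    \|\wr\,p\|_{L^{\gamma+i}([0,T_0]\x\R^2)}^{\gamma+i}
    \le
    \|\wr\|_\infty\,C(2,z)^z\,K(\pi,z',a,b,\1,T_0)^z\,2^d\,\|\wr'\,p\|_{L^z([0,T_0]\x\R^2)}^z,
\end{align*}
so the $L^{\gamma+i}$ norm of $\wr\,p$ is controlled by the $L^{\gamma+i-1}$ norm of $\wr'\,p$. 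Since $\wr'\in C_0^\infty(T_0)$ whenever $\wr\in C_0^\infty(T_0)$, one applies the induction hypothesis to the cutoff $\wr'$ (and, iterating, to $\wr''$, etc.), which is exactly why the constant $\boldsymbol H_{ij}$ must be allowed to depend on $\esup$ of $\wr$ and its first $(i+1)$ derivatives — each application of \Cref{prop:recursive} peels off one more derivative of the cutoff. Plugging $K(\pi,z',a,b,\1,T_0,\overline R)\le A_j(\pi)$ and $\|\wr'\,p\|_{L^z}^z\le\boldsymbol H_{(i-1)j}$ (with $\boldsymbol H_{(i-1)j}$ now taken relative to the cutoff $\wr'$) gives the bound; local boundedness of $\boldsymbol H_{ij}$ in $(A_j(\pi),T_0,m,M,|(\wr,\dots,\wr^{(i+1)})|_\infty)$ is inherited from that of $\boldsymbol H_{(i-1)j}$ and the explicit (polynomial) dependence in the recursion, together with local boundedness of $C(2,z)$.

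The main technical subtlety — what I would treat most carefully — is the bookkeeping of cutoff functions across the induction: the estimate at level $i$ for cutoff $\wr$ feeds on the estimate at level $i-1$ for cutoff $\wr'$, so to close the induction cleanly one should either (a) phrase the inductive hypothesis uniformly over the family $\{\wr^{(k)}:\wr\in C_0^\infty(T_0),\,k\le N\}$ for suitable $N$, or (b) fix at the outset a single $\wr\in C_0^\infty(T_0)$ and carry along all its derivatives $\wr',\dots,\wr^{(j+1)}$ simultaneously, defining $\boldsymbol H_{ij}$ in terms of $\max_{k\le i+1}\|\wr^{(k)}\|_\infty$. I would adopt (b). A second minor point is ensuring that $p$ genuinely lies in the relevant Sobolev/Lebesgue classes so that \Cref{prop:recursive} is not vacuous; but \Cref{prop:density-integrability} already gives $p\in L^\gamma([0,T_0]\x\R^2)$ unconditionally (since $\gamma\le(d+1)'$), which anchors the base case, and each step of the induction then propagates finiteness upward. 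Everything else is substitution of the already-established constant bounds, so no further obstacle is expected.
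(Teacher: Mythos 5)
Your proposal is correct and follows essentially the same route as the paper: the base case is \Cref{prop:estimates-gamma} combined with $K(\pi,3,a,b,\1,T_0)\le A_j(\pi)$, and the inductive step applies \Cref{prop:recursive} to pass from the $L^{\gamma+i}$ norm of $\wr\,p$ to the $L^{\gamma+i-1}$ norm of $\wr'\,p$, invoking the induction hypothesis for the cutoff $\wr'$ — which is exactly how the paper generates the dependence on the first $(i+1)$ derivatives of $\wr$. Your bookkeeping choice (b) matches the paper's definition of $\boldsymbol{H}_{ij}$ verbatim, so no further comment is needed.
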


\begin{proof}
    We proceed in a recursive way. For $i=0$. By the estimate obtained in \Cref{prop:estimates-gamma} and recall in $\mathbf{Summary\;of\;the\;estimates}$, we have
    \begin{align*}
        \|\wr\;p\|_{L^\gamma([0,T_0] \x \R^d)}^\gamma \le C(2,1+1/2)\|(\wr,\wr')\|_{\infty}^{(1+1/2)/2} K(\pi,3,\ab,\bb,\1,T_0)^{1+1/2}2^d G(T_0,1+1/2,2,m,M).
    \end{align*}
    Since $3 \le \gamma + j$, $K(\pi,3,\ab,\bb,\1,T_0) \le A_j(\pi)$, if we set
    \begin{align*}
        &\widehat{H}_0(A_j(\pi),T_0,m,M,|(\wr,\cdots,\wr^{(1)})|_\infty)
        :=
        C(2,1+1/2)\|(\wr,\wr')\|_{\infty}^{(1+1/2)/2} (A_{j}(\pi))^{1+1/2}2^d G(T_0,1+1/2,2,m,M),
    \end{align*}
    the inequality \eqref{eq:recursive} is true for $i=0$. Let us assume that it is true for $i\in \{0,\cdots,j-1\}$. We check for $i=j$. By \Cref{prop:recursive}, we know that
    \begin{align*}
        &\|\wr\;p\|_{L^{\gamma+j}([0,T_0] \x \R^d)}^{\gamma+j}
        \le 
        \|\wr\|_{\infty} C(2,\gamma+j-1)^{\gamma+j-1} K(\pi,\gamma+j-1,\ab,\bb,\1,T_0)^{\gamma+j-1} 2^d \|\wr'\;p\|_{L^{\gamma+j-1}([0,T_0] \x \R^d)}^{\gamma+j-1}.
    \end{align*}
    Again we have $K(\pi,\gamma+j-1,\ab,\bb,\1,T_0) \le A_j(\pi)$. Also, by recursive assumption
    %\begin{align*}
        %\|\wr\;p\|_{L^{\gamma+j-1}([0,T_0] \x \R^2)}^{\gamma+j-1}
        %\le 
        %\boldsymbol{H}_{j-1}(A_j(\pi),T_0,m,M,|(\wr,\cdots,\wr^{(j)})|_\infty)
    %\end{align*}
    %and 
    \begin{align*}
        \|\wr'\;p\|_{L^{\gamma+j-1}([0,T_0] \x \R^d)}^{\gamma+j-1}
        \le 
        \widehat{H}_{j-1}(A_j(\pi),T_0,m,M,|(\wr',\cdots,\wr'^{(j)})|_\infty).
    \end{align*}
    Let $z:=\gamma+j-1,$ it is enough to define
    \begin{align*}
        &\widehat{H}_{j}(A_j(\pi),T_0,m,M,|(\wr,\cdots,\wr^{(j+1)})|_\infty)
        :=
        \|\wr\|_{\infty} C(2,z)^{z} (A_j(\pi))^{z} 2^d \widehat{H}_{j-1}(A_j(\pi),T_0,m,M,|(\wr',\cdots,\wr'^{(j)})|_\infty)
    \end{align*}
    to conclude the proof.
\end{proof}
\paragraph*{Reminder of the main estimates} Combining the previous results of $\mathbf{Summary\;of\;the\;estimates}$ and \Cref{prop:recurcive_estimates}, we get
\begin{itemize}
    \item For any $j \ge 1$ (see also \Cref{prop:inegal_general}),
    \begin{align*}
        \|\wr\; p\|_{\H^{\gamma+j,1}([0,T_0] \x \R^d)}^{\gamma+j} \le (A_j(\pi))^{\gamma+j} 2^d \widehat{H}_j\big(A_j(\pi),T_0,m,M,|(\wr',\cdots,(\wr')^{(j+1)})|_\infty\big);
    \end{align*}

    \item  For all $j \ge 1$ (see also \Cref{prop:recursive}),
    \begin{align*}
        \int_0^{T_0} \|\wr(t)\;p(t,\cdot)\|^{\gamma+j}_{C^{0,1-\frac{2}{\gamma+j}}(\R^d)} \mathrm{d}t
        \le
        C(2,\gamma+j) (A_j(\pi))^{\gamma+j} 2^d \widehat{H}_j(A_j(\pi),T_0,m,M,|(\wr',\cdots,(\wr')^{(j+1)})|_\infty);
    \end{align*}

    \item For any $j \ge 1$ (see also \Cref{prop:estimates_marg}),
    \begin{align*}
        & \Big[\|\wr|\partial_{x_1}(v\;p)|_{\mathbf{1}}\|_{L^{\gamma+j}([0,T_0] \x \R)}^{\gamma+j}+ \|\wr(v\;p)_{\mathbf{1}}\|_{L^{\gamma+j}([0,T_0] \x \R)}^{\gamma+j}  \Big]^{\frac{1}{\gamma+j}}
        \\
        &\le
        K(\pi,\gamma+j,\ab,\bb,v,T_0) 2^d\bigg[\int_0^{T_0} \|\wr(t)\;p(t,\cdot)\|^{\gamma+j}_{C^{0,1-\frac{2}{\gamma+j}}(\R^d)} \mathrm{d}t \bigg]^{\frac{\gamma+j-1}{(\gamma+j)^2}} \int_\R \bigg[\int_0^{T_0}\Big| |\wr'(t)p_{X_2}(t,x_2)|\Big|^{\gamma+j} \mathrm{d}t \bigg]^{\frac{1}{(\gamma+j)^2}} \mathrm{d}x_2
    \end{align*}
    where $p_{X_2}(t,x_2):=\int_{\R}p(t,x_1,x_2)\;\mathrm{d}x_1$;
    %where $a'=\gamma+j$;

    \item For any $j \ge 1$, $2 <r = \gamma+j$ and $\frac{1}{2} > \eta > \kappa > \frac{1}{r}$ s.t. $(1-2\eta) r > 2$ (see also \Cref{prop:holder_estimates_p}),
    \begin{align*}
        &\|\;\wr\;p\; \|_{C^{0,\kappa-1/r}\big([0,T_0],C^{0,1-2\eta-2/r}(\R^d)\big)}
        \\
        &\le D \bigg[  \Big[\;\; \Big[1+\|\ab\|_\infty+\sup_{E \in \Sc}\|\nabla \ab\|_{L^{\gamma+j}([0,T_0] \x E)} \Big] A_j(\pi)\;2^{d/r}
         + 1 \;\;\Big]
         \| \wr'\; p\|_{L^{r}([0,T_0] \x \R^2)}
         \\
         &~~~~~~~~~~~~~~~~~~~~~~~~~~~~~~~~~~~~~~~~+\|b\|_{\infty}\| \wr\; p\|_{L^{r}([0,T_0] \x \R^d)}+ \| \wr(0)p(0,\cdot) \|_{H^{r,1}(\R^d)} \bigg];
    \end{align*}
    
    \item %If $|\nabla a(t,x_1,x_2)| \le K + \eta(t,x_1) + {\zeta}(t,x_1,x_2)$, 
    For any $j \ge 1$, $\overline{r}=\gamma+j > 2$ and $\frac{1}{2} > \overline{\eta} > \overline{\kappa} > \frac{1}{\overline{r}}$ with $(1-2\overline{\eta}) \overline{r} > 1$ (see also \Cref{prop:holder_estimates_p1}),
    \begin{align*}
        &\|\;\wr\;p_\mathbf{1}\;\|_{C^{0,\overline{\kappa}-1/\overline{r}}\big([0,T_0],C^{0,1-2\overline{\eta}-1/\overline{r}}(\R)\big)}
        \\
        &\le \overline{D} \bigg[ \|\bb\|_\infty \Big{\|} \big[ \wr p_{\mathbf{1}}, \wr'p_{\mathbf{1}}, \wr|\partial_{x_1}p|_{\mathbf{1}},\wr |\partial_{x_1}(p\;a^{1,1})|_{\mathbf{1}} \big] \Big{\|}_{L^{\overline{r}}([0,T_0] \x \R)}
        + \|\wr p_{\mathbf{1}}(0,\cdot)\|_{H^{\overline{r},1}(\R)}  \bigg].
    \end{align*}
    
\end{itemize}

\paragraph*{Choice of the appropriate constant for the upper bound} \label{para_choice_constant} %First, let us consider that
%$$
    %|\nabla a(t,x_1,x_2)| \le K + \eta(t,x_1) + {\zeta}(t,x_1,x_2).
%$$
We recall that $\gamma=2+1/4$. Let $j \ge 1$, $2 <r = \gamma+j$ with $\frac{1}{2} > \eta > \kappa > \frac{1}{r}$ s.t. $(1-2\eta) r > 2$ and $\overline{r}=\gamma+j > 2$ with $\frac{1}{2} > \overline{\eta} > \overline{\kappa} > \frac{1}{\overline{r}}$ s.t. $(1-2\overline{\eta}) \overline{r} > 1$. We set $\pi:=1-2\overline{\eta}-\frac{1}{\gamma+j}$.  It is straightforward to check that
\begin{align*}
    &\widehat{C}_\infty(\pi,\gamma+j,\ab,\bb,\R^d,T_0)
    \\
    &\le \sup_{E \in \Sc(\R^d)}\bigg[\esup_{t \in [0,T_0]}\|\ab(t,\cdot)\|_{C^{0,\pi}(E)} + \sup_{(t,\xbb) \in [0,T_0] \x E}|\bb(t,\xbb)| + \|\nabla \ab\|_{L^{\gamma+j}([0,T_0] \x E)}\bigg]
    \\
    &\le \sup_{E \in \Sc(\R^d)}\esup_{t \in [0,T_0]}\|\ab(t,\cdot)\|_{C^{0,\pi}(E)} + \|\bb\|_\infty + \sup_{E \in \Sc(\R^d)}\|\nabla \ab\|_{L^{\gamma+j}([0,T_0] \x E)}.
\end{align*}
%where $v > 2$ s.t. $2v= \gamma+j$ and $\frac{1}{2} > \beta > \kappa > \frac{1}{v}$ with $(1-2\beta) v > 1$.
Let $L^\infty_j$ and $L_j$ be two constants s.t.
\begin{align*}
    \sup_{E \in \Sc(\R^d)}\sup_{t \in [0,T_0]}\|\ab(t,\cdot)\|_{C^{0,\pi}(E)} \le L^\infty_j
\end{align*}
and
\begin{align*}
    &\|\bb\|_\infty + \sup_{E \in \Sc(\R^d)}\|\nabla \ab\|_{L^{\gamma+j}([0,T_0] \x E)}
    \le \|\bb\|_\infty + 1 + \sup_{E \in \Sc(\R^d)}\|\nabla \ab\|_{L^{\gamma+j}([0,T_0] \x E)}^{\gamma+j}
    \le L_j.
\end{align*}
Let $\varepsilon>0$, we choose $0<R_{\varepsilon}(L^\infty_j+L_j):=\overline{R}\le 1$ s.t.
\begin{align*}
    1-\bigg[\big[\ell (\pi,3,\overline{R})+\ell (\pi,3',\overline{R}) \big] |T_0|^{\frac{1}{3}-\frac{1}{\gamma+j}} + \sum_{i=0}^j\big[\ell (\pi,\gamma+i,\overline{R})+\ell (\pi,(\gamma+i)',\overline{R}) \big] |T_0|^{\frac{1}{\gamma+i}-\frac{1}{\gamma+j}} \bigg] \big(L^\infty_j+L_j\big)
    =
    \varepsilon >0.
\end{align*}
Notice that, it is possible because $[0,1] \ni r \to \ell(\pi,\nu,r) \in \R_+$ is continuous with $\ell(\pi,\nu,0)=0$ for any $\nu >1$.
This leads to
\begin{align*}
        A_j(\pi) \le \frac{2N}{\varepsilon}.
    \end{align*}
By using the previous inequality and $\mathbf{Reminder\;of\;the\;main\;estimates}$, we have
\begin{align*}
    &\|\;\wr\; p\;\|_{\H^{\gamma+j,1}([0,T_0] \x \R^d)}^{\gamma+j}
    \le \Big(\frac{2N}{\varepsilon}\Big)^{\gamma+j} 2^d \widehat{H}_j\bigg(\frac{2N}{\varepsilon},T,m,M,|(\wr',\cdots,(\wr')^{(j+1)})|_\infty\bigg)
    =: \widehat{L^0}_j(|\wr^{(1)},\cdots,\wr^{(j+2)}|_\infty),
\end{align*}

\begin{align*}
        &\int_0^{T_0} \|\wr(t)\;p(t,\cdot)\|^{\gamma+j}_{C^{0,1-\frac{2}{\gamma+j}}(\R^d)} \mathrm{d}t
        \le
        C(2,\gamma+j) \widehat{L^0}_j(|\wr^{(1)},\cdots,\wr^{(j+2)}|_\infty) =: \widehat{L^1}_j(|\wr,\cdots,\wr^{(j+2)}|_\infty)
    \end{align*}
    and
    \begin{align*}
        & \Big[\|\wr|\partial_{x_1}p|_{\mathbf{1}}\|_{L^{\gamma+j}([0,T_0] \x \R)}^{\gamma+j}+ \|\;\wr p_{\mathbf{1}}\|_{L^{\gamma+j}([0,T_0] \x \R)}^{\gamma+j}  \Big]^{\frac{1}{\gamma+j}}
        \\
        &\le
        \frac{2N}{\varepsilon} 2^d\big[\widehat{L^1}_j(|\wr,\cdots,\wr^{(j+2)}|_\infty) \big]^{\frac{\gamma+j-1}{(\gamma+j)^2}} \int_\R \bigg[\int_0^{T_0}\Big| |\wr'(t)p_{X_2}(t,x_2)| \Big|^{\gamma+j} \mathrm{d}t \bigg]^{\frac{1}{(\gamma+j)^2}} \mathrm{d}x_2 =: \widehat{L^2}_j(|\wr,\cdots,\wr^{(j+2)}|_\infty)^{\frac{1}{\gamma+j}}.
    \end{align*}
    
    \medskip
    We choose $L_j$ s.t.
    \begin{align} \label{eq:choice1}
        1+\widehat{L^0}_j(|\wr^{(1)},\cdots,\wr^{(j+2)}|_\infty)+\widehat{L^1}_j(|\wr,\cdots,\wr^{(j+2)}|_\infty)+\widehat{L^2}_j(|\wr,\cdots,\wr^{(j+2)}|_\infty) \le \frac{L_j}{M}.
    \end{align}
    Notice that $L_j$ depends on $\varepsilon$, $j$, $N$, $T_0$, $m$, $M$, $\int_\R \Big[\int_0^{T_0}\Big| \wr'(t)p_{X_2}(t,x_2) \Big|^{\gamma+j} \mathrm{d}t \Big]^{\frac{1}{(\gamma+j)^2}} \mathrm{d}x_2$ and $(\wr,\cdots,\wr^{(j+2)})$. 
    
    \vspace{5mm}
    \medskip
    In addition,
    \begin{align*}
        &\|\; \wr\;p\; \|_{C^{0,\kappa-1/(\gamma+j)}\big([0,T_0],C^{0,1-2\eta-2/(\gamma+j)}(\R^d)\big)}
        \\
        &\le D \Big[  \big[(1+M+L_j) \frac{2N}{\varepsilon}\;2^{d/(\gamma+1)}
         + 1 + \|\bb\|_{\infty} \big]
         \widehat{L^0}_j(|\wr^{(1)},\cdots,\wr^{(j+2)}|_\infty) + \| \wr(0)p(0,\cdot) \|_{H^{\gamma+j,1}(\R^d)} \Big] =: \widehat{L^3}_j(|\wr,\cdots,\wr^{(j+2)}|_\infty)
    \end{align*}
    and
    \begin{align*}
        &\|\wr p_{\mathbf{1}}\|_{C^{0,\overline{\kappa}-1/\overline{r}}\big([0,T_0],C^{0,1-2\overline{\eta}-1/\overline{r}}(\R)\big)}
        \\
        &\le \overline{D} \bigg[ M\;\Big[2\widehat{L^2}_j(\wr,\cdots,\wr^{(j+2)}|_\infty)+\frac{2N}{\varepsilon}\Big[M + L_j \Big]2^d\Big[\widehat{L^1}_j(|\wr',\cdots,\wr^{(j+3)}|_\infty) \Big]^{\frac{\overline{r}-1}{\overline{r}^2}}+\int_\R \bigg[\int_0^{T_0}\Big| \wr'(t)p_{X_2}(t,x_2)\Big|^{\overline{r}} \mathrm{d}t \bigg]^{\frac{1}{\overline{r}^2}} \mathrm{d}x_2 \Big]  \\
        &~~~~~~~~~~~~+ \|\wr p_{\mathbf{1}}(0,\cdot)\|_{H^{\overline{r},1}(\R)}  \bigg]
        :=\widehat{L^4}_j(|\wr,\cdots,\wr^{(j+3)}|_\infty).
    \end{align*}
    
    \medskip
    We choose $L^\infty_j$ s.t.
    \begin{align}\label{eq:choice2}
        1+\widehat{L^3}_j(|\wr,\cdots,\wr^{(j+2)}|_\infty)+\widehat{L^4}_j(|\wr,\cdots,\wr^{(j+3)}|_\infty)  \le \frac{L^\infty_j}{M}.
    \end{align}
    the constant $L^\infty_j$ depends on $\varepsilon$, $j$, $N$, $D$, $\overline{D}$, $T_0$, $m$, $M$, $\int_\R \Big[\int_0^{T_0}\Big| \wr'(t)p_{X_2}(t,x_2)\Big|^{\gamma+j} \mathrm{d}t \Big]^{\frac{1}{(\gamma+j)^2}} \mathrm{d}x_2$,  $\| \wr(0)\;p(0,\cdot) \|_{H^{\gamma+j,1}(\R^d)}$, $\|\wr p_{\mathbf{1}}(0,\cdot)\|_{H^{\gamma+j,1}(\R)} $ and $(\wr,\cdots,\wr^{(j+3)})$.  
    We can easily check the next proposition
\begin{proposition} \label{prop:choice_constant}
    For any $j \ge 1$, with the previous choice of $L_j$ and $L^\infty_j$, one has if 
    $$
        \sup_{E \in \Sc(\R^d)}\sup_{t \in [0,T_0]}\|\ab(t,\cdot)\|_{C^{0,1-2\overline{\eta}-1/\overline{r}}(E)} \le L^\infty_j
    $$
    and
    $$
         \|\bb\|_\infty + 1 + \sup_{E \in \Sc(\R^d)}\|\nabla \ab\|_{L^{\gamma+j}([0,T_0] \x E)}^{\gamma+j}
        \le L_j
    $$
    then 
    \begin{align*}
        1+\|\wr\; p\|_{\H^{\gamma+j,1}([0,T_0] \x \R^d)}^{\gamma+j} &+ \int_0^{T_0} \|\wr(t)\;p(t,\cdot)\|^{\gamma+j}_{C^{0,1-\frac{2}{\gamma+j}}(\R^d)} \mathrm{d}t
        +  \Big[\|\wr|\partial_{x_1}p|_{\mathbf{1}}\|_{L^{\gamma+j}([0,T_0] \x \R)}^{\gamma+j}+ \|\;\wr p_{\mathbf{1}}\|_{L^{\gamma+j}([0,T_0] \x \R)}^{\gamma+j}  \Big]
        \;\;\le \;\;\frac{L_j}{M}
    \end{align*}
    and
    \begin{align*}
        1+\|\; \wr\;p\; \|_{C^{0,\kappa-1/(\gamma+j)}\big([0,T_0],C^{0,1-2\eta-2/(\gamma+j)}(\R^d)\big)}
        +
        \|\wr p_{\mathbf{1}}\|_{C^{0,\overline{\kappa}-1/\overline{r}}([0,T_0],C^{0,1-2\overline{\eta}-1/\overline{r}}(\R))}
        \le \frac{L^\infty_j}{M}
    \end{align*}
    where $r=\gamma+j$, $\overline{r}=\gamma+j$, $\frac{1}{2} > \eta > \kappa > \frac{1}{r}$ s.t. $(1-2\eta) r > 2$ and $\frac{1}{2} > \overline{\eta} > \overline{\kappa} > \frac{1}{\overline{r}}$ with $(1-2\overline{\eta}) \overline{r} > 1$. 
\end{proposition}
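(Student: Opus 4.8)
The plan is to simply unwind the construction of the constants $L_j$ and $L^\infty_j$ carried out in the paragraph \textbf{Choice of the appropriate constant for the upper bound}, and to check that the two hypotheses on $(a,b)$ are exactly what is needed to activate every inequality collected in \textbf{Reminder of the main estimates}. No new analytic input is required; the argument is pure bookkeeping on top of \Cref{prop:recurcive_estimates} and the summary.

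First I would translate the hypotheses into a bound on $\widehat{C}_\infty$. Recall that, with $\pi:=1-2\overline{\beta}-1/(\gamma+j)$, the elementary inequality displayed just before the introduction of $L^\infty_j$ and $L_j$ gives
\begin{align*}
    \widehat{C}_\infty(\pi,\gamma+j,a,b,T_0)\le \sup_{E\in\Sc}\esssup_{t\in[0,T_0]}\|a(t,\cdot)\|_{C^{0,\pi}(E)}+\|b\|_\infty+\sup_{E\in\Sc}\|\nabla a\|_{L^{\gamma+j}([0,T_0]\x E)}.
\end{align*}
Bounding the first term by the first hypothesis, and the last two by the second hypothesis (using $x\le 1+x^{\gamma+j}$ with $x=\sup_{E}\|\nabla a\|_{L^{\gamma+j}}$, so that $\|b\|_\infty+x\le \|b\|_\infty+1+x^{\gamma+j}\le L_j$), I obtain $\widehat{C}_\infty(\pi,\gamma+j,a,b,T_0)\le L^\infty_j+L_j$.

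Next I would use the defining property of $\overline{R}=R_\varepsilon(L^\infty_j+L_j)\le 1$, namely that $1-[\cdots]\,(L^\infty_j+L_j)=\varepsilon>0$, where $[\cdots]$ denotes the bracketed combination of the $\ell(\pi,\cdot,\overline{R})$'s and powers of $T_0$; such an $\overline{R}$ exists since $r\mapsto\ell(\pi,\nu,r)$ is continuous and increasing with $\ell(\pi,\nu,0)=0$. Combining this with the previous step and the nonnegativity of each $\ell$ yields $1-[\cdots]\,\widehat{C}_\infty(\pi,\gamma+j,a,b,T_0)\ge\varepsilon>0$, which is precisely the condition assumed in the paragraph preceding \Cref{prop:recurcive_estimates}; hence $\overline{R}\in\Rc(\pi,3,a,b,T_0)$ and $\overline{R}\in\Rc(\pi,\gamma+i,a,b,T_0)$ for $1\le i\le j$, and moreover the denominator of $A_j(\pi)$ is at least $\varepsilon$, so $A_j(\pi)\le 2N/\varepsilon$ and $K(\pi,\gamma+i,a,b,\1,T_0,\overline{R})\le 2N/\varepsilon$ for all $i\le j$.

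Finally, with these memberships in hand, \Cref{prop:recurcive_estimates} and every bullet of \textbf{Reminder of the main estimates} apply, each with $A_j(\pi)$ majorized by the explicit value $2N/\varepsilon$; this produces exactly the quantities $\boldsymbol{L^0}_j,\dots,\boldsymbol{L^4}_j$ of the construction (for the Hölder bound on $p$ I use the version of \Cref{prop:holder_estimates_p} recalled there, with $\|a\|_\infty\le M$ coming from $a\le M\mathrm{I}_2$ and $\sup_E\|\nabla a\|_{L^{\gamma+j}}\le L_j$; for $p_{\mathbf 1}$ I use \Cref{prop:holder_estimates_p1} and the remark after it with $v=a^{1,1}$, $\|a^{1,1}\|_\infty\le M$). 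Adding $1$ and invoking the defining inequalities \eqref{eq:choice1} and \eqref{eq:choice2} of $L_j$ and $L^\infty_j$ then gives the two displayed conclusions. I expect the only delicate point to be the bookkeeping: one must use the single Hölder exponent $\pi=1-2\overline{\beta}-1/(\gamma+j)$ consistently in the $C^{0,\pi}$ seminorm of $a$, in $\ell(\pi,\cdot,\cdot)$, and in $K(\pi,\cdot)$, so that the hypothesis on $\|a(t,\cdot)\|_{C^{0,1-2\overline{\beta}-1/\overline{r}}}$ is exactly the one needed, and one must respect the order of the choices — $L_j$ first (from the $L^{\gamma+j}$ bounds on $p$ and $p_{\mathbf 1}$), then $L^\infty_j$ (from the Hölder bounds, which involve $L_j$) — noting that neither depends on $\overline{R}$, precisely because every occurrence of $A_j(\pi)$ and of $K(\pi,\cdot,a,b,\cdot,T_0,\overline{R})$ has already been replaced by $2N/\varepsilon$.
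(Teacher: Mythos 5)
Your proposal is correct and follows exactly the route the paper intends: the paper offers no separate proof beyond the remark "we can easily check," since the statement is by construction the content of the paragraph defining $L_j$ and $L^\infty_j$, and your bookkeeping (hypotheses $\Rightarrow$ $\widehat{C}_\infty\le L^\infty_j+L_j$ $\Rightarrow$ $A_j(\pi)\le 2N/\varepsilon$ via the choice of $\overline{R}$ $\Rightarrow$ the bounds $\boldsymbol{L^0}_j,\dots,\boldsymbol{L^4}_j$ $\Rightarrow$ conclusion via \eqref{eq:choice1}--\eqref{eq:choice2}) is precisely that construction, including the two genuinely delicate points (the consistent use of $\pi=1-2\overline{\beta}-1/\overline{r}$ and the order in which $L_j$ and $L^\infty_j$ are fixed).
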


\subsection{Existence of solution}
We set $j \ge 1$, $r=\overline{r}=\gamma+j$, $\eta=\overline{\eta}$, $\kappa=\overline{\kappa}$ satisfying $\frac{1}{2} > \eta > \kappa > \frac{1}{r}$ and $(1-2\eta) r > 2$. In particular $\frac{1}{2} > \overline{\eta} > \overline{\kappa} > \frac{1}{\overline{r}}$ and $(1-2\overline{\eta}) \overline{r} > 1$. 

\medskip
We recall again that $d=2$. Let $T_0>0$ and $\theta \in [-1,1]$. We give ourselves the following maps
\begin{align*}
    [\overline{b},\overline{\sigma}]:L^1([0,T_0] \x \R^d) \x [0,T_0] \x \R^d \to \R \x \R\;\;\mbox{and}\;\;[\overline{\lambda},\overline{\beta}]:[0,T_0] \x \R \to \R^d.
\end{align*}
\begin{assumption} \label{general_coef_diffusion}
The positive numbers $m$ and $M$ are s.t. for any map $f\in L^1([0,T_0] \x \R^d)$ s.t. $f \ge 0$,  for each $t \in [0,T_0]$ and $\xbb=(x_1,x_2) \in \R^2,$ if we introduce the vector map $\overline{\bb}$ and the symmetric matrix $\overline{a}:=(\overline{\ab}^{i,j})_{1 \le i ,j \le d}$ by $\overline{\bb}^1(t,\xbb):=\overline{b}(f)(t,\xbb),$ $\overline{\bb}^2(t,\xbb):=\overline{\lambda}(t,x_2)$,
\begin{align*}
    \overline{a}^{1,1}(t,\xbb)
    :=
    \frac{1}{2}\overline{\sigma}( f) (t,\xbb)^2,\;\;\overline{a}^{2,1}(t,\xbb)
    :=
    \overline{a}^{1,2}(t,\xbb)
    :=
    \frac{1}{2}\overline{\sigma}( f) (t,\xbb)\overline{\beta}(t,x_2) \theta\;\mbox{and}\;\overline{a}^{2,2}(t,\xbb)
    :=
    \frac{1}{2}\overline{\beta}(t,x_2)^2
\end{align*}
%where
%\begin{align*}
    %\overline{\sigma} (t,x)
    %:=
    %\overline{\sigma} \big(t,x^1,x^2, f(h^1)(t,x_1), f(h^2)(t,x_1) \big)
%\end{align*}
%and
one has that $|\overline{\bb}| \le {\rm M}$, $ m\;\mathrm{I}_d \le \overline{\ab} \le M\;\mathrm{I}_d,\;$ for  $E \subset \R^d$ open set with ${\rm diam}(E) \le 1$
\begin{align*}
    \|\nabla \overline{\ab}\|_{L^{\gamma+j}([0,T_0] \x E)}^{\gamma+j} \le M \bigg[1+ \| f \|_{\H^{\gamma+j,1}([0,T_0]\R^d)}^{\gamma+j} + \|f_{\mathbf{1}}\|_{L^{\gamma+j}([0,T_0] \x \R)}^{\gamma+j}
    +\||\partial_{x_1}f|_{\mathbf{1}}\|_{L^{\gamma+j}([0,T_0] \x \R)}^{\gamma+j}
    %+ \frac{\int_\R |\partial_{x_1} f(t,x_1,x_2)| \mathrm{d}x_2}{\int_\R f(t,x_1,x_2) \mathrm{d}x_2}
    %+
    %\frac{\int_\R |\partial_{x_2} f(t,x_1,x_2)| \mathrm{d}x_1}{\int_\R f(t,x_1,x_2) \mathrm{d}x_1}
    \bigg]
\end{align*}
and
\begin{align*}
    \sup_{t \in [0,T_0]}\|\overline{\ab}(t,\cdot)\|_{C^{0,1-2{\eta}-2/{r}}(E)} \le  M \bigg[1+ \sup_{t \in [0,T_0]}\| f(t,\cdot) \|_{C^{0,1-2{\eta}-2/r}(\R^d)} + \sup_{t \in [0,T_0]}\|f_{\mathbf{1}}(t,\cdot)\|_{C^{0,1-2{\eta}-1/{r}}(\R)}
    \bigg].
\end{align*}
The map $[0,T_0] \x \R \ni (t,x_2) \to \overline{\beta}(t,x_2) \in \R$ is Lipschitz in $x_2$ uniformly in $t$.
%where
%\begin{align*}
    %f(h^1)(t,x_1)
    %:=
    %\int_{\R}h^1(t,e)f(x_{1},e) \mathrm{d}e\;\;\mbox{and}\;\;f(h^2)(t,x_1)
    %:=
    %\int_{\R}h^2(t,e)f(x_{1},e) \mathrm{d}e
%\end{align*}
\end{assumption}
%and
%\begin{align*}
    %|a(t,x)-a(t,x')| \le {\rm M}^G \Big[ |x-x'| + |f(h^1)(t,x_1)-f(h^1)(t,x'_1)| + |f(h^2)(t,x_1)-f(h^2)(t,x'_1)| \Big].
%\end{align*}
\medskip
Let $\wr \in C^\infty_0(T_0)$ s.t. $\wr \ge 0$, and $\varrho: [0,T_0) \x \R^d \to \R_+$ be a map s.t. for each $t \in [0,T_0),$ $\varrho(t,\cdot)$ is a density of probability i.e. $\int_{\R^d} \varrho(t,\xbb) \mathrm{d}\xbb=1$  and $\wr p$ can be extended to $[0,T_0] \x \R^d$. 
%We set $\varrho^{\Tb}(t,x)=\varrho(t \wedge \Tb,x)$ for all $(t,x) \in [0,T] \x \R^2$. 
We introduce the density $p(t,\cdot)$ of $\mu_t:=\Lc(\Xbb_t)$ where $\Xbb:=(X^{1},X^{2})$ is solution of: $p(0,\cdot)=\varrho(0,\cdot)=\varrho_0(\cdot)$, 
\begin{align*}
    \mathrm{d}X^{1}_t
    =
    \overline{b} \big(\wr\varrho \big) (t,\Xbb_{t} ) \mathrm{d}t
    +
    \overline{\sigma} \big(\wr\varrho \big) (t,\Xbb_{t} ) \mathrm{d}W_t\;\;\mbox{and}\;\;\mathrm{d}X^{2}_t=\overline{\lambda}(t,X^2_{t})\mathrm{d}t+
    \overline{\beta}(t,X^2_{t})\mathrm{d}B_t
\end{align*}
$\;\mbox{where}\;\mathrm{d}\langle W, B \rangle_t= \theta \mathrm{d}t.$ %with
%\begin{align*}
    %\langle h^1(t,\cdot), \varrho(t,\cdot,X^{2}_t) \rangle
    %:=
    %\int_{\R}h^1(t,e)\varrho(t,e,X^{2}_t) \mathrm{d}e\;\;\mbox{and}\;\;\langle h^2(t,\cdot), \varrho(t,X^{1}_t,\cdot) \rangle
    %:=
    %\int_{\R}h^2(t,e)\varrho(t,X^{1}_t,e) \mathrm{d}e.
%\end{align*}
%\medskip
%For each $t \in [0,T_0]$, $x=(x_1,x_2) \in \R^2,$ as previously, we introduce the vector map $\bb$ and the symmetric matrix ${a}:=({a}^{i,j})_{1 \le i ,j \le 2}$ by ${\bb}^1(t,x):=\overline{b}(\wr\varrho)(t,x),$ ${\bb}^2(t,x):=\overline{\lambda}(\wr\varrho)(t,x_1)$, 
%\begin{align*}
    %a^{1,1}(t,x)
    %:=
    %\frac{1}{2}\overline{\sigma}(\wr\varrho) (t,x)^2,\;\;a^{2,1}(t,x)
    %:=
    %a^{1,2}(t,x)
    %:=
    %\frac{1}{2}\overline{\sigma}(\wr\varrho) (t,x)\overline{\beta}(\wr\varrho)(t,x_1) \theta\;\mbox{and}\;a^{2,2}(t,x)
    %:=
    %\frac{1}{2}\overline{\beta}(\wr\varrho) (t,x_1)^2.
%\end{align*}
Let $j \ge 1$, we take the initial density $\varrho_0$ s.t.
\begin{align*}
     \| \varrho_0 \|_{H^{\gamma+j,1}(\R^d)} + \|(\varrho_0)_{\mathbf{1}}\|_{H^{\gamma+j,1}(\R)} < \infty\;\;\mbox{and there is }\alpha>0\;\mbox{s.t.}\int_{\R} e^{\alpha\;|z_2|^2}\bigg|\int_{\R}\varrho_0(z_1,z_2)\mathrm{d}z_1\bigg|^{\gamma+j}\;\mathrm{d}z_2< \infty.
\end{align*}
Since \Cref{general_coef_diffusion} holds, $\overline{\lambda}$ is bounded and $\overline{\beta}$ is Lipschitz. Therefore (see Appendix \Cref{prop:appendix:integrability})
\begin{align*}
   \int_\R \bigg[\int_0^{\Tb} |p_{X_2}(t,x_2)|^{\gamma+j} \mathrm{d}t \bigg]^{\frac{1}{(\gamma+j)^2}} \mathrm{d}x_2 < \infty,\;\;\mbox{for each}\;\Tb>0,
\end{align*}
where the upper bound depends only on $\overline{\lambda}$, $\overline{\beta}$ and $\varrho_0$.
Using the properties verified by $(\overline{b},\overline{\sigma},\overline{\lambda},\overline{\beta})$ in \Cref{general_coef_diffusion}, by an obvious application of \Cref{prop:choice_constant}, we obtain the upper bounds of $p$ given the upper bounds of $\varrho$.
\begin{proposition} \label{prop:preservation}
    Let $j \ge 1$ and $\wr \in C_0^\infty(T_0)$.  For $L_j$ and $L^\infty_j$ chosen as in \eqref{eq:choice1} and \eqref{eq:choice2}, we find that: $\mbox{if}\;\;\varrho\;\;\mbox{is s.t.}\;\;\;$
\begin{align*}
    1+  \|\wr\; \varrho \|_{C^{0,\kappa-1/(\gamma+j)}\big([0,T_0],C^{0,1-2\eta-2/(\gamma+j)}(\R^d)\big)} + \|\wr\;\varrho_{\mathbf{1}}\|_{C^{0,\overline{\kappa}-1/\overline{r}}\big([0,T_0],C^{0,1-2\overline{\eta}-1/\overline{r}}(\R)\big)} \le \frac{L^\infty_j}{M}
\end{align*}
and 
\begin{align*}
     1+\;\|b\|_\infty +\Big[\|\wr\;|\partial_{x_1}\varrho|_{\mathbf{1}} \|_{L^{\gamma+j}([0,T_0] \x \R)}^{\gamma+j} + \|\wr\;\varrho_{\mathbf{1}} \|_{L^{\gamma+j}([0,T_0] \x \R)}^{\gamma+j} \Big] + \|\wr\;\varrho \|_{\H^{\gamma+j,1}([0,T_0] \x \R^d)}^{\gamma+j}\le \frac{L_j}{M}
\end{align*}
    then
    \begin{align*}
        1+\|\wr\; p\|_{\H^{\gamma+j,1}([0,T_0] \x \R^d)}^{\gamma+j} &+ \int_0^{T_0} \|\wr(t)\;p(t,\cdot)\|^{\gamma+j}_{C^{0,1-\frac{2}{\gamma+j}}(\R^d)} \mathrm{d}t
        +  \Big[\|\wr|\partial_{x_1}p|_{\mathbf{1}}\|_{L^{\gamma+j}([0,T_0] \x \R)}^{\gamma+j}+ \|\;\wr p_{\mathbf{1}}\|_{L^{\gamma+j}([0,T_0] \x \R)}^{\gamma+j}  \Big]
        \le \frac{L_j}{M}
    \end{align*}
    and
    \begin{align*}
        1+\| \wr\;p \|_{C^{0,\kappa-1/(\gamma+j)}\big([0,T_0],C^{0,1-2\eta-2/(\gamma+j)}(\R^d)\big)}
        +
        \|\wr p_{\mathbf{1}}\|_{C^{0,\overline{\kappa}-1/\overline{r}}\big([0,T_0],C^{0,1-2\overline{\eta}-1/\overline{r}}(\R)\big)}
        \le \frac{L^\infty_j}{M}.
    \end{align*}
\end{proposition}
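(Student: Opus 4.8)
The plan is to show that the assignment $\varrho \mapsto p$ maps the set carved out by the two displayed bounds into itself (this is the self‑map property needed for the Schauder argument), by first converting the hypotheses on $\wr\varrho$ into bounds on the coefficients of the SDE defining $\Xbb$ via \Cref{general_coef_diffusion}, and then invoking \Cref{prop:choice_constant}. The preliminary remark is that, since $\wr \in C^\infty_0(T_0)$ is a function of $t$ only, one has $(\wr\varrho)_{\mathbf 1}=\wr\,\varrho_{\mathbf 1}$, $|\partial_{x_1}(\wr\varrho)|_{\mathbf 1}=\wr\,|\partial_{x_1}\varrho|_{\mathbf 1}$, and for each fixed $t$ the spatial (Sobolev or Hölder) norms of $\wr(t)\varrho(t,\cdot)$ are $\wr(t)$ times those of $\varrho(t,\cdot)$; hence the two hypotheses on $\varrho$ are exactly statements about $f:=\wr\varrho$, and in particular they force $f\ge 0$ to belong to $\H^{\gamma+j,1}([0,T_0]\times\R^2)$ and to the relevant Hölder spaces, so that \Cref{general_coef_diffusion} is applicable with this $f$.

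First I would feed $f=\wr\varrho$ into the growth part of \Cref{general_coef_diffusion}. This yields $|\overline{\bb}|\le M$, $m\,\mathrm{I}_2\le\overline{a}\le M\,\mathrm{I}_2$, and, for every open $E\subset\R^2$ with ${\rm diam}(E)\le 1$,
\[
\|\nabla\overline{a}\|_{L^{\gamma+j}([0,T_0]\times E)}^{\gamma+j}
\le M\Big[1+\|\wr\varrho\|_{\H^{\gamma+j,1}([0,T_0]\times\R^2)}^{\gamma+j}+\|\wr\varrho_{\mathbf 1}\|_{L^{\gamma+j}([0,T_0]\times\R)}^{\gamma+j}+\|\wr|\partial_{x_1}\varrho|_{\mathbf 1}\|_{L^{\gamma+j}([0,T_0]\times\R)}^{\gamma+j}\Big],
\]
together with the companion bound on $\sup_{t}\|\overline{a}(t,\cdot)\|_{C^{0,\cdot}(E)}$ in terms of the spatial Hölder norms of $\wr\varrho$ and $\wr\varrho_{\mathbf 1}$. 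Bounding each $\sup_t$ of a purely spatial norm by the stronger mixed time--space Hölder norm appearing in the hypothesis on $\varrho$, and using $\|\overline{b}\|_\infty\le M$, I would deduce
\[
\sup_{E\in\Sc(\R^2)}\sup_{t\in[0,T_0]}\|\overline{a}(t,\cdot)\|_{C^{0,1-2\overline{\beta}-1/\overline{r}}(E)}\le L^\infty_j
\quad\text{and}\quad
\|\overline{b}\|_\infty+1+\sup_{E\in\Sc(\R^2)}\|\nabla\overline{a}\|_{L^{\gamma+j}([0,T_0]\times E)}^{\gamma+j}\le L_j,
\]
the point being that $L_j$ and $L^\infty_j$ were fixed in \eqref{eq:choice1}--\eqref{eq:choice2} precisely so that $M$ times the two quantities that the hypotheses bound by $L_j/M$ and $L^\infty_j/M$ stays $\le L_j$, respectively $\le L^\infty_j$.

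With these two coefficient bounds in hand, the argument closes by a direct appeal to \Cref{prop:choice_constant}: its two hypotheses are exactly the two displays just obtained (applied with $a:=\overline{a}$, $b:=\overline{b}$), and its two conclusions are exactly the two bounds on $p$ asserted here. For completeness one also records that the density $p$ of $\Lc(\Xbb_t)$ is well defined and uniquely determined -- this follows from the boundedness and non--degeneracy of $\overline{a}$ in \Cref{general_coef_diffusion} together with the discussion in \Cref{sec_estimates_FP_general} -- and that the quantity $\int_\R\big[\int_0^{\overline{T}}|p_{X_2}(t,x_2)|^{\gamma+j}\,\mathrm{d}t\big]^{1/(\gamma+j)^2}\,\mathrm{d}x_2$ that enters the definition of $L_j$ and $L^\infty_j$ is finite, by \Cref{prop:appendix:integrability}, since $\overline{\lambda}$ is bounded, $\overline{\beta}$ Lipschitz and non--degenerate and $(\varrho_0)_{\mathbf 1}$ has a Gaussian tail.

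I expect the only genuine difficulty to be the bookkeeping in the middle step: one must verify that the explicit forms of $L_j$ and $L^\infty_j$ assembled from $\boldsymbol{L^0}_j,\dots,\boldsymbol{L^4}_j$ in the paragraph \emph{Choice of the appropriate constant} really do absorb both the factor $M$ produced by \Cref{general_coef_diffusion} and the loss incurred when replacing $\sup_t\|\cdot\|$ by the full mixed time--space Hölder norms. Since $\wr$, $\varrho_0$, $T_0$, $m$ and $M$ are all fixed, every quantity appearing is finite and the chain of inequalities closes, with no new estimate needed beyond those collected in \emph{Reminder of the main estimates} and \Cref{prop:choice_constant}.
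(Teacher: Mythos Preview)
Your proposal is correct and follows exactly the approach the paper intends: the paper itself dispatches this proposition in one line, saying ``Using the properties verified by $(\overline{b},\overline{\sigma},\overline{\lambda},\overline{\beta})$ in \Cref{general_coef_diffusion}, by an obvious application of \Cref{prop:choice_constant}, we obtain the upper bounds of $p$ given the upper bounds of $\varrho$.'' Your write-up simply unpacks this sentence---feeding $f=\wr\varrho$ into \Cref{general_coef_diffusion} to obtain the two coefficient bounds that match the hypotheses of \Cref{prop:choice_constant}, then invoking that proposition---and adds the auxiliary observations (identities for $(\wr\varrho)_{\mathbf 1}$, finiteness of the $p_{X_2}$ integral via \Cref{prop:appendix:integrability}) that the paper records in the surrounding text.
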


\medskip
Let $j \ge 1$ and $\varepsilon >0$ be fixed. We take  $\wr \in C_0^\infty(T_0)$ and, $L_j$ and $L^\infty_j$ chosen as previously. We say that $f \in C([0,T_0) \x \R^d;\R_+)$ belongs to $\Theta(\wr)$ if: $\int_{\R^d} f(t,\xbb) \mathrm{d}\xbb=1\;\mbox{for each}\;t\in [0,T_0),$ $f(0,\cdot)=\varrho_0(\cdot),$
\begin{align*}
    \Big[\|\wr\;|\partial_{x_1}f|_{\mathbf{1}} \|_{L^{\gamma+j}([0,T_0] \x \R)}^{\gamma+j} + \|\wr\;f_{\mathbf{1}} \|_{L^{\gamma+j}([0,T_0] \x \R)}^{\gamma+j} \Big] + \|\wr\;f \|_{\H^{\gamma+j,1}([0,T_0] \x \R^d)}^{\gamma+j} \le \frac{L_j}{M}-1
\end{align*}
and
\begin{align*}
    \| \wr\;f \|_{C^{0,\kappa-1/(\gamma+j)}\big([0,T_0],C^{0,1-2\eta-2/(\gamma+j)}(\R^d)\big)}
        +
        \|\wr f_{\mathbf{1}}\|_{C^{0,\overline{\kappa}-1/\overline{r}}\big([0,T_0],C^{0,1-2\overline{\eta}-1/\overline{r}}(\R)\big)} \le \frac{L^\infty_j}{M}-1.\;
\end{align*}

We equip $C([0,T_0) \x \R^d;\R_+)$ with the locally uniform topology i.e. $(f^n)_{n \ge 1} \subset C([0,T_0) \x \R^d;\R_+)$ converges to $f \in C([0,T_0) \x \R^d;\R_+)$ if for any $s < T_0$ and any $K>0,$
$$
    \Lim_{n \to \infty} \sup_{(t,x)\in [0,s] \x [-K,K]^d} |f^n(t,\xbb)-f(t,\xbb)|=0.
$$

\begin{proposition}
    For any $\wr \in C_0^\infty(T_0)$ the set $\Theta(\wr)$ is convex. If for each $s<T_0$, $\inf_{t \in [0,s]} \wr(t)>0$ then $\Theta(\wr)$ is compact for the locally uniform topology.
\end{proposition}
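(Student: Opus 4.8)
The plan is to realise $\Theta(\wr)$ as an intersection of convex sets. The constraints ``$\int_{\R^2}f(t,\cdot)\,\mathrm dx=1$ for every $t$'' and ``$f(0,\cdot)=\varrho_0$'' are affine in $f$, hence convex. Each remaining inequality is a sublevel set of a convex functional of $f$: the maps $f\mapsto \wr f$ and $f\mapsto \wr f_{\mathbf1}$ are linear, and $f\mapsto \wr\,|\partial_{x_1}f|_{\mathbf1}$ is pointwise sublinear (from $|\partial_{x_1}(\lambda f_1+(1-\lambda)f_2)|\le\lambda|\partial_{x_1}f_1|+(1-\lambda)|\partial_{x_1}f_2|$ a.e., then integrating in $x_2$ and using $\wr\ge0$); composing with the norms $\|\cdot\|_{\H^{\gamma+j,1}}$, $\|\cdot\|_{L^{\gamma+j}}$, $\|\cdot\|_{C^{0,\cdots}}$ (monotone on nonnegative functions for the first two), and, where it appears, with $t\mapsto t^{\gamma+j}$ (increasing and convex on $\R_+$, as $\gamma+j>1$), preserves convexity. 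A finite sum of convex functionals is convex, so each displayed bound in the definition of $\Theta(\wr)$ cuts out a convex set, and $\Theta(\wr)$ is their intersection.

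\textbf{Compactness: extraction of a limit.} Since $C([0,T_0)\times\R^2;\R_+)$ with the locally uniform topology is metrizable, it suffices to prove sequential compactness. Given $(f^n)_{n\ge1}\subset\Theta(\wr)$, the Hölder bound in the definition shows that $(\wr f^n)_n$ is bounded in $C^{0,\kappa-1/(\gamma+j)}\big([0,T_0];C^{0,1-2\beta-2/(\gamma+j)}(\R^2)\big)$; in particular $(\wr f^n)_n$ is uniformly bounded and uniformly (Hölder-)equicontinuous on $[0,T_0]\times\R^2$. Applying Arzel\`a--Ascoli on each compact set $[0,T_0]\times K$, $K\subset\R^2$ compact, and extracting a diagonal subsequence, we obtain $\wr f^{n_k}\to h$ locally uniformly on $[0,T_0]\times\R^2$ for some $h\in C([0,T_0]\times\R^2;\R_+)$. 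Using $\inf_{[0,s]}\wr>0$ for every $s<T_0$, set $f:=h/\wr$ on $[0,T_0)\times\R^2$; then $f\in C([0,T_0)\times\R^2;\R_+)$ and $f^{n_k}\to f$ in the locally uniform topology (on $[0,s]\times K$ one divides the uniform convergence of $\wr f^{n_k}$ by the positive continuous function $\wr$). It remains to check $f\in\Theta(\wr)$.

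\textbf{Passing the constraints to the limit.} The initial condition is immediate: $f(0,\cdot)=\lim_k f^{n_k}(0,\cdot)=\varrho_0$. The Hölder bound on $\wr f$ passes to the limit by lower semicontinuity: each difference quotient, and the sup, defining $\|\wr f\|_{C^{0,\cdots}}$ is continuous under locally uniform convergence, and a supremum of continuous functionals is lower semicontinuous, so $\|\wr f\|_{C^{0,\cdots}}\le\liminf_k\|\wr f^{n_k}\|_{C^{0,\cdots}}$. The $L^{\gamma+j}$- and $\H^{\gamma+j,1}$-bounds on $\wr f$ (and on $\wr\,|\partial_{x_1}f|_{\mathbf1}$) pass to the limit by weak lower semicontinuity: up to a further subsequence $\wr f^{n_k}$ converges weakly in the reflexive space $\H^{\gamma+j,1}([0,T_0]\times\R^2)$, the weak limit must equal $\wr f$ by uniqueness of distributional limits, the $\H^{\gamma+j,1}$-norm is weakly lower semicontinuous, and for $\wr\,|\partial_{x_1}f|_{\mathbf1}$ one writes it as $\sup\{\int_\R\wr\,\partial_{x_1}f(t,x_1,x_2)\psi(x_2)\,\mathrm dx_2:\ \|\psi\|_\infty\le1\}$, a supremum of functionals continuous under distributional convergence, so again its $L^{\gamma+j}$-norm is lower semicontinuous.

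\textbf{Main obstacle.} The genuinely delicate point is the mass normalisation $\int_{\R^2}f(t,\cdot)\,\mathrm dx=1$ for each $t\in[0,T_0)$, together with the identifications $f^{n_k}_{\mathbf1}\to f_{\mathbf1}$ and $f^{n_k}_{X_2}\to f_{X_2}$ that are needed to transfer the bounds involving the marginals: locally uniform convergence gives only $\int_{\R^2}f(t,\cdot)\,\mathrm dx\le1$ by Fatou, and the reverse inequality (and the marginal identifications) require ruling out escape of mass to spatial infinity. I would resolve this by combining the uniform Hölder-in-time regularity of $(\wr f^n)_n$, the fixed initial datum $f^n(0,\cdot)=\varrho_0$ (a probability density), and the uniform integrability of the $x_2$-marginal supplied by \Cref{prop:appendix:integrability}: these yield a uniform tightness estimate for the family $\{f^n(t,\cdot)\}_{n,t}$, whence $\int_{\R^2}f(t,\cdot)\,\mathrm dx=1$ and the required convergence of marginals follow by a uniformly integrable convergence argument. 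Once this tightness bookkeeping is in place, all the other constraints pass to the limit by the soft arguments above, and $f\in\Theta(\wr)$, proving compactness.
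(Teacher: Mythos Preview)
Your overall strategy—Arzelà–Ascoli on $(\wr f^n)$, divide by $\wr$ on each $[0,s]$, then pass the norm constraints to the limit by lower semicontinuity—is the same as the paper's, and you are right that the mass normalisation $\int_{\R^2}f(t,\cdot)\,\mathrm dx=1$ (and with it the identification $\lim_k f^{n_k}_{\mathbf 1}=f_{\mathbf 1}$) is the only step that is not soft; the paper's own proof is terse on exactly this point.

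Your resolution of that step, however, has a genuine gap. \Cref{prop:appendix:integrability} is a statement about the density $p_{X_2}$ of the one-dimensional diffusion $\mathrm dZ_t=\lambda(t,Z_t)\,\mathrm dt+\beta(t,Z_t)\,\mathrm dB_t$; the Gaussian-type tail control it yields comes from the \emph{dynamics} of that SDE. An arbitrary $f\in\Theta(\wr)$ solves no equation: the set $\Theta(\wr)$ is defined purely by the initial datum, the mass constraint, and the listed norm bounds, none of which constrains the $x_2$-marginal $\int_{\R}f(t,x_1,x_2)\,\mathrm dx_1$. Hence \Cref{prop:appendix:integrability} gives no information whatsoever about the $f^n$. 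In fact every constraint in $\Theta(\wr)$ other than the initial condition is invariant under translation in $x_2$ (the Sobolev and Hölder norms of $\wr f$, and all quantities built from the $x_1$-marginal $f_{\mathbf 1}$), so for compactly supported $\varrho_0$ and a smooth $\phi$ with $\phi(0)=0$, the sequence $f^n(t,x)=(1-\phi(t))\varrho_0(x)+\phi(t)\varrho_0(x_1,x_2-n)$ meets all the bounds uniformly in $n$, has $f^n(0,\cdot)=\varrho_0$ and $\int f^n(t,\cdot)=1$, yet converges locally uniformly to $(1-\phi(t))\varrho_0$, which has mass $1-\phi(t)<1$. The tightness you need therefore cannot be extracted from the stated constraints; what is missing is either an additional affine constraint in the definition of $\Theta(\wr)$ pinning the $x_2$-marginal (e.g.\ to $p_{X_2}$, which is preserved by $\Psi_{\wr}$ since the $X^2$-dynamics do not depend on $\varrho$), or a different argument that neither you nor the paper provides.
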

\begin{proof}
    The convexity of $\Theta(\wr)$ for any $\wr \in C^\infty_0(T_0)$ is obvious. Let $\wr \in C^\infty_0(T_0)$ be s.t. for any  $s<T_0$, $\inf_{t \in [0,s]} \wr(t)>0$. Let $(f^n)_{n \ge 1} \subset \Theta(\wr)$. As $ \| \;\wr\;f^n\; \|_{C^{0,\kappa-1/(\gamma+j)}\big([0,T_0],C^{0,1-2\eta-2/(\gamma+j)}(\R^d)\big)} \le \frac{L^\infty_j}{M}-1$, the sequence $(\wr\;f^n)_{n \ge 1}$ is relatively compact in $C([0,T_0] \x \R^d)$ for the uniform topology. Let $f^{\wr}$ be the limit of a sub--sequence. We use the same notation for the sequence and its sub--sequence. We set $f(t,\xbb):=\frac{f^{\wr}(t,\xbb)}{\wr(t)}$ for any $(t,\xbb) \in [0,T_0) \x \R^d$. It is easy to see that for $s <T_0$,
    $$
        f(t,\xbb)=\lim_{n \to \infty} f^n(t,\xbb)=\lim_{n \to \infty} \frac{\wr(t)f^n(t,\xbb)}{\wr(t)}\;\mbox{for any }(t,\xbb) \in [0,s] \x \R^d.
    $$
    Notice that, for all $s<T_0$ and any $(t,t')\in [0,s]\x [0,s]$
    \begin{align*}
        |f(t,\xbb)-f(t',\xbb)| 
        \le 
        \frac{1}{\delta^2} \big[ \|\wr\|_\infty |\wr(t)\;f(t,\xbb)-\wr(t')\;f(t',\xbb)|+ \|\wr\;f\|_\infty |\wr(t)-\wr(t')|\big],
    \end{align*}
    where $\delta:=\inf_{t \in [0,s]} \wr(t)$.  Then $\sup_{n \ge 1} \| f^n \|_{C^{0,\kappa-1/(\gamma+j)}\big([0,s],C^{0,1-2\eta-2/(\gamma+j)}(\R^d)\big)}< \infty$.
    This is true for any $s>T_0$, consequently, the (sub--)sequence $(f^n)_{n \ge 1}$ converges towards $f$ for the locally uniform topology on $C([0,T_0) \x \R^d)$. Using  $\|\wr f^n_{\mathbf{1}}\|_{C^{0,\overline{\kappa}-1/\overline{r}}([0,T_0],C^{0,1-2\overline{\eta}-1/\overline{r}}(\R))} \le \frac{L^\infty_j}{M}-1$, by similar way, up to a sub--sequence, we also conclude that the sequence $(f^n_{\mathbf{1}})_{n \ge 1}$ converges to $f_{\mathbf{1}}$ for the locally uniform topology on $C([0,T_0) \x \R)$. We easily verify that
    \begin{align*}
         &\| \;\wr\;f\; \|_{C^{0,\kappa-1/(\gamma+j)}\big([0,T_0],C^{0,1-2\eta-2/(\gamma+j)}(\R^d)\big)} + \|\wr f_{\mathbf{1}}\|_{C^{0,\overline{\kappa}-1/\overline{r}}([0,T_0],C^{0,1-2\overline{\eta}-1/\overline{r}}(\R))}
         \\
         &\le \Liminf_{n \to \infty}
         \| \;\wr\;f^n\; \|_{C^{0,\kappa-1/(\gamma+j)}\big([0,T_0],C^{0,1-2\eta-2/(\gamma+j)}(\R^2)\big)} + \|\wr f^n_{\mathbf{1}}\|_{C^{0,\overline{\kappa}-1/\overline{r}}([0,T_0],C^{0,1-2\overline{\eta}-1/\overline{r}}(\R))}
         \le \frac{L^\infty_j}{M}-1.
    \end{align*}
    By using $\|\wr\;|\partial_{x_1}f^n|_{\mathbf{1}} \|_{L^{\gamma+j}([0,T_0] \x \R)}^{\gamma+j} + \|\wr\;\nabla f^n \|_{L^{\gamma+j}([0,T_0] \x \R^d)}^{\gamma+j} \le \frac{L_j}{M}-1$, we deduce that $\wr\;\nabla f$ and $\wr\;|\partial_{x_1}f|_{\mathbf{1}}$ are well defined. By using the compactness results from the Sobolev embedding Theorem and Fatou lemma, we check the different inequalities for $f$. All of these results allow us to conclude that $f \in \Theta(\wr)$. Consequently, $\Theta(\wr)$ is compact.
\end{proof}

\medskip
Let $\wr \in C^\infty_0(T_0)$ be s.t. for each $s<T_0$, $\inf_{t \in [0,s]} \wr(t)>0$. We introduce the application 
\begin{align*}
    \Psi_{\wr}: \Theta(\wr) \ni \varrho \mapsto \Psi_{\wr}(\varrho) \in \Theta(\wr)
\end{align*}
where $\Psi_{\wr}(\varrho)(t,\xbb)=p(t,\xbb)$ for $(t,\xbb) \in [0,T_0) \x \R^d,$ and $p$ is the density of $\mu_t:=\Lc(\Xbb_t)$ where $\Xbb:=(X^{1},X^{2})$ is solution of: $p(0,\cdot)=\varrho_0(\cdot)$, 
\begin{align*}
    \mathrm{d}X^{1}_t
    =
    \overline{b} (\wr\;\varrho ) (t,\Xbb_{t} ) \mathrm{d}t
    +
    \overline{\sigma} (\wr\;\varrho ) (t,\Xbb_{t} ) \mathrm{d}W_t\;\;\mbox{and}\;\;\mathrm{d}X^{2}_t=\overline{\lambda}(t,X^2_{t})\mathrm{d}t+
    \overline{\beta}(t,X^2_{t})\mathrm{d}B_t
\end{align*}
$\;\mbox{with}\;\mathrm{d}\langle W, B \rangle_t= \theta \mathrm{d}t$. %and we recall that
%\begin{align*}
    %\varrho(h^1)(t,X^1_t)
    %:=
    %\int_{\R}h^1(t,e)\varrho(t,X^{1}_t,e) \mathrm{d}e\;\;\mbox{and}\;\; \varrho(h^2)(t,X^1_t)
    %:=
    %\int_{\R}h^2(t,e)\varrho(t,X^{1}_t,e) \mathrm{d}e.
%\end{align*}
Besides \Cref{general_coef_diffusion}, we assume that $(\overline{b},\overline{\sigma})$ satisfies:

\begin{assumption} \label{assumption:continuity_coeff}
     Whenever $\Lim_{n \to \infty}\|f^n-f\|_{L^1([0,s] \x \R^d)}=0$ for any $s \in [0,T_0)$ and any $\varphi \in C_c([0,T_0) \x \R^d)$,
\begin{align*} 
    \Lim_{n \to \infty}\int_{[0,T_0] \x \R^d} \overline{b}(f^n)(t,\xbb) \varphi(t,\xbb) \;\mathrm{d}\xbb\;\mathrm{d}t
    =
    \int_{[0,T_0] \x \R^d} \overline{b}(f)(t,\xbb) \varphi(t,\xbb) \;\mathrm{d}\xbb\;\mathrm{d}t
\end{align*}
and
\begin{align*}
    \Lim_{n \to \infty}\int_{[0,T_0] \x \R^d} a^{i,j}(f^n)(t,\xbb) \varphi(t,\xbb) \;\mathrm{d}\xbb\;\mathrm{d}t
    =
    \int_{[0,T_0] \x \R^d} a^{i,j}(f)(t,\xbb) \varphi(t,\xbb) \;\mathrm{d}\xbb\;\mathrm{d}t
\end{align*}
where $a^{1,1}(\rho)(t,\xbb)
    :=
    \frac{1}{2}\overline{\sigma}(\rho) (t,\xbb)^2,\;\;a^{2,1}(\rho)(t,\xbb)
    :=
    a^{1,2}(\rho)(t,\xbb)
    :=
    \frac{1}{2}\overline{\sigma}(\rho) (t,\xbb)\overline{\beta}(t,x_2) \theta\;\mbox{and}\;a^{2,2}(\rho)(t,\xbb)
    :=
    \frac{1}{2}\overline{\beta} (t,x_2)^2$.
\end{assumption}
We say that the map $\Psi_{\wr}: \Theta(\wr) \to \Theta(\wr)$ is well defined if for $\varrho \in \Theta(\wr)$ there is a unique density $\Psi_{\wr}(\varrho)$ and we have $\Psi_{\wr}(\varrho) \in \Theta(\wr).$
\begin{proposition} \label{prop:continuity_app}
    The map $\Psi_{\wr}: \Theta(\wr) \to \Theta(\wr)$ is well defined and continuous.
\end{proposition}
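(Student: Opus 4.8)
The plan is to prove the two claims separately. \emph{Well-definedness} is mostly a matter of assembling the a priori estimates already obtained; \emph{continuity} will rest on the compactness of $\Theta(\wr)$ together with a uniqueness argument for the Fokker--Planck equation attached to the system.

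For well-definedness, fix $\varrho\in\Theta(\wr)$. First I would note that, by \Cref{general_coef_diffusion}, the maps $\overline b(\wr\varrho),\overline\sigma(\wr\varrho),\overline\lambda,\overline\beta$ are bounded Borel maps and $m\,\mathrm I_2\le\overline a(\wr\varrho)\le M\,\mathrm I_2$, so the martingale problem for $\varphi\mapsto\overline\bb^i(\wr\varrho)\partial_{x_i}\varphi+\overline a^{ij}(\wr\varrho)\partial_{x_i}\partial_{x_j}\varphi$ with initial law $\varrho_0(x)\,\mathrm dx$ has a solution, yielding a weak solution $\Xbb=(X^1,X^2)$ of the displayed system (with $\mathrm d\langle W,B\rangle_t=\theta\,\mathrm dt$), whose time-marginals admit a density $p(t,\cdot)$ for a.e.\ $t$ by \Cref{prop:density-integrability}. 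Because $\varrho\in\Theta(\wr)$ is H\"older continuous in $x$ uniformly in $t$, \Cref{general_coef_diffusion} forces $\overline a(\wr\varrho)(t,\cdot)$ to be H\"older continuous uniformly in $t$, so (as already invoked in this section, see also \cite{FK-PL-equations}) the Fokker--Planck equation with these coefficients and datum $\varrho_0$ has a unique solution; this gives uniqueness of $p=:\Psi_{\wr}(\varrho)$. Since $\|\overline b(\wr\varrho)\|_\infty\le M$ and $\varrho\in\Theta(\wr)$ supplies exactly the bounds demanded in \Cref{prop:preservation}, $p$ inherits $p(0,\cdot)=\varrho_0$, $\int_{\R^2}p(t,x)\,\mathrm dx=1$, $p\ge0$ and the Sobolev/H\"older bounds defining $\Theta(\wr)$; the H\"older bound makes $\wr p$ continuous on $[0,T_0]\x\R^2$, hence $p\in C([0,T_0)\x\R^2;\R_+)$ since $\inf_{[0,s]}\wr>0$ for every $s<T_0$. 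Thus $\Psi_{\wr}(\varrho)\in\Theta(\wr)$.

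For continuity, take $\varrho^n\to\varrho$ in $\Theta(\wr)$ for the locally uniform topology and set $p^n:=\Psi_{\wr}(\varrho^n)$, $p:=\Psi_{\wr}(\varrho)$. As $\Theta(\wr)$ is compact, it suffices to show that every locally uniform limit point $\tilde p\in\Theta(\wr)$ of $(p^n)_{n\ge1}$ equals $p$. I would first upgrade the convergence of the arguments: for each $t<T_0$, $\varrho^n(t,\cdot),\varrho(t,\cdot)$ are probability densities with $\varrho^n(t,\cdot)\to\varrho(t,\cdot)$ pointwise and equal total mass $1$, so Scheff\'e's lemma gives $\|\varrho^n(t,\cdot)-\varrho(t,\cdot)\|_{L^1(\R^2)}\to0$, and since this quantity is bounded by $2$ and $\wr$ is bounded, dominated convergence yields $\|\wr\varrho^n-\wr\varrho\|_{L^1([0,s]\x\R^2)}\to0$ for every $s<T_0$. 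Then \Cref{assumption:continuity_coeff} makes $\overline\bb^i(\wr\varrho^n)$ and $\overline a^{ij}(\wr\varrho^n)$ converge, weakly against $C_c([0,T_0)\x\R^2)$, to $\overline\bb^i(\wr\varrho)$ and $\overline a^{ij}(\wr\varrho)$, staying bounded by $M$. Next, using the weak Fokker--Planck identity satisfied by $p^n$, namely
\begin{align*}
    \int_0^{T_0}\eta'(t)\langle\varphi,p^n(t,\cdot)\rangle\,\mathrm dt+\eta(0)\langle\varphi,\varrho_0\rangle+\int_0^{T_0}\eta(t)\big\langle\overline\bb^i(\wr\varrho^n)\partial_{x_i}\varphi+\overline a^{ij}(\wr\varrho^n)\partial_{x_i}\partial_{x_j}\varphi,\,p^n(t,\cdot)\big\rangle\,\mathrm dt=0
\end{align*}
for $\varphi\in C^\infty_c(\R^2)$, $\eta\in C^\infty_c([0,T_0))$, I would pass to the limit: on the compact support of $\eta(t)\partial_{x_i}\varphi(x)$ the factor $\overline\bb^i(\wr\varrho^n)$ converges weakly-$*$ in $L^\infty$ while $p^n\to\tilde p$ uniformly with $\sup_n\|p^n\|_\infty<\infty$, so their product converges when tested against $\eta\,\partial_{x_i}\varphi$; the same holds for the second-order term and $\int\eta'\langle\varphi,p^n\rangle\to\int\eta'\langle\varphi,\tilde p\rangle$. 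Hence $\tilde p$ solves the Fokker--Planck equation with coefficients $(\overline\bb(\wr\varrho),\overline a(\wr\varrho))$ and datum $\varrho_0$; since $\tilde p\in\Theta(\wr)$ is in particular a probability density flow, the uniqueness invoked above forces $\tilde p=p$. As every limit point equals $p$ and $(p^n)_{n\ge1}$ is relatively compact in $\Theta(\wr)$, $p^n\to p$ locally uniformly, which is the asserted continuity.

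The main obstacle will be the two passages just described: upgrading locally uniform convergence of the densities $\varrho^n$ to $L^1([0,s]\x\R^2)$ convergence (this is precisely where the unit-mass normalisation enters, via Scheff\'e), and passing to the limit in the Fokker--Planck equation with coefficients converging only weakly --- feasible here because the a priori estimates of the previous subsections give \emph{strong} (locally uniform) convergence of $(p^n)$, which combines with the weak convergence of the coefficients and the uniqueness of Fokker--Planck solutions under the non-degenerate, H\"older-in-$x$ structure. The remaining points are routine bookkeeping, chiefly verifying that membership in $\Theta(\wr)$ really supplies the hypotheses of \Cref{prop:preservation}.
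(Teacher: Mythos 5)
Your proposal is correct and follows essentially the same route as the paper: well-definedness via the martingale-problem uniqueness for bounded, non-degenerate, H\"older-in-$x$ coefficients together with \Cref{prop:preservation}, and continuity via compactness of $\Theta(\wr)$, upgrading locally uniform convergence of $\varrho^n$ to $L^1([0,s]\x\R^2)$ convergence, invoking \Cref{assumption:continuity_coeff}, passing to the limit in the weak Fokker--Planck identity, and identifying the limit by uniqueness. The only (harmless) variation is that you obtain the $L^1$ upgrade by Scheff\'e's lemma, whereas the paper uses a truncation on $[-K,K]^2$ combined with weak convergence and the Portmanteau theorem; both arguments rest on the unit-mass normalisation and are equally valid.
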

\begin{proof}
    By \cite[6.4.4 Corollary]{stroock2007multidimensional}, we know that for $\varrho \in \Theta(\wr),$ the density $\Psi_{\wr}(\varrho)$ is uniquely defined. By \Cref{prop:preservation}, we see that $\Psi_{\wr}(\varrho) \in \Theta(\wr).$ Let us verified that $\Psi_{\wr}$ is continuous. Let $(\varrho^n)_{n \ge 1} \subset \Theta(\wr)$ be a sequence s.t. $\lim_{n \to \infty} \varrho^n=\varrho^\infty$ for the locally uniform topology with $\varrho^\infty \in \Theta(\wr).$ As $\Theta(\wr)$ is a compact set, the sequence $\big(\Psi_{\wr}(\varrho^n)\big)_{n \ge 1} \subset \Theta(\wr)$ is relatively compact. Notice that, as $\lim_{n \to \infty} \varrho^n=\varrho^\infty$ for the locally uniform topology, using \Cref{assumption:continuity_coeff},
    %\begin{align*}
        %\lim_{n \to \infty} \widetilde{\sigma} \big(t,x_1,x_2,\varrho^n(h^1)(t,x_1),\varrho^n(h^2)(t,x_1) \big)
        %=
        %\widetilde{\sigma} \big(t,x_1,x_2,\varrho^\infty(h^1)(t,x_1),\varrho^\infty(h^2)(t,x_1) \big).
    %\end{align*}
    and passing in the limit in the Fokker--Planck equation, we can verify that the limit of any sub--sequence of $(\Psi_{\wr}(\varrho^n)_{n \ge 1}$ is $\Psi_{\wr}(\varrho^\infty).$ Indeed, let $\widetilde{p}^\infty$ be the limit of a sub--sequence. For simplification, the sub--sequence and the sequence $(\widetilde{p}^n:=\Psi_{\wr}(\varrho^n))_{n \ge 1}$ share the same notation. For each $s<T_0$ and $K >0$, we have
    \begin{align*}
        \|\varrho^n-\varrho^\infty\|_{L^1(\R^d_{s})} 
        &\le s(2K)^d \sup_{(t,x_1,x_2) \in [0,s] \x (-K,K)^d} |(\varrho^n-\varrho^\infty)(t,x_1,x_2)|  + \|\varrho^n-\varrho^\infty\|_{L^1([0,s] \x (\R^d \setminus (-K,K)^d))}
        \\
        &\le s(2K)^d\sup_{(t,x_1,x_2) \in [0,s] \x (-K,K)^d} |(\varrho^n-\varrho^\infty)(t,x_1,x_2)| + \int_{\R^d_{s}} \1_{|\xbb| \ge K}(\varrho^n + \varrho^\infty) (t,\xbb)\mathrm{d}\xbb)\;\mathrm{d}t.
    \end{align*}
    It is easy to check that for each $t \in [0,s],$ the sequence of distribution $(\varrho^n(t,\xbb) \mathrm{d}\xbb)_{n \ge 1}$ converges weakly to $\varrho^\infty(t,\xbb) \mathrm{d}\xbb$. As $\int_{\R^d_{s}} \1_{|\xbb| = K} \varrho^\infty(t,\xbb) \mathrm{d}\xbb\;\mathrm{d}t=0$ for each $K>0$, by Portemanteau Theorem, we have that 
    \begin{align*}
        \lim_{n \to \infty} \int_{\R^d_{s}} \1_{|\xbb| \ge K} \varrho^n(t,\xbb) \mathrm{d}\xbb\;\mathrm{d}t
        =
        \int_{\R^d_{s}} \1_{|\xbb| \ge K} \varrho^\infty(t,\xbb) \mathrm{d}\xbb\;\mathrm{d}t.
    \end{align*}
    Therefore, by taking first $n \to \infty$ and then $K \to \infty$, we find that $ \lim_{n \to \infty}\|\varrho^n-\varrho^\infty\|_{L^1(\R^d_{s})}=0 $. Consequently, %as $\inf_{t \in [0,s]}\wr(t)>0$ for each $s \in [0,T_0),$ 
    we have $ \lim_{n \to \infty}\|\wr\varrho^n-\wr\varrho^\infty\|_{L^1(\R^d_{s})}=0 $ and $ \lim_{n \to \infty}\|\widetilde{p}^n-\widetilde{p}^\infty\|_{C^\infty([0,s]\x \R^d)}=0 $ for any $s \in[0,T_0)$. By \Cref{assumption:continuity_coeff}, we obtain that for any $\varphi \in C_c([0,T_0) \x \R^d)$ and $(\varphi^{i,j})_{1 \le i,j \le d} \subset C_c([0,T_0) \x \R^d)$
\begin{align} \label{proof:eq:conv:coeff}
    \Lim_{n \to \infty}\int_{\R^d_{T_0}} (\overline{b},\overline{\lambda})(\wr\;\varrho^n)(t,\xbb) \varphi(t,\xbb) \widetilde{p}^n(t,\xbb) \mathrm{d}\xbb\;\mathrm{d}t
    =
    \int_{\R^d_{T_0}} (\overline{b},\overline{\lambda})(\wr\;\varrho^\infty)(t,\xbb) \varphi(t,\xbb) \widetilde{p}^\infty(t,\xbb) \mathrm{d}\xbb\;\mathrm{d}t
\end{align}
and
\begin{align} \label{proof:eq:conv:coeff2}
    \Lim_{n \to \infty}\int_{\R^d_{T_0}} a^{i,j}(\wr\;\varrho^n)(t,\xbb) \varphi^{i,j}(t,\xbb) \widetilde{p}^n(t,\xbb) \mathrm{d}\xbb\;\mathrm{d}t
    =
    \int_{\R^d_{T_0}} a^{i,j}(\wr\;\varrho^\infty)(t,\xbb) \varphi^{i,j}(t,\xbb) \widetilde{p}^\infty(t,\xbb) \mathrm{d}\xbb\;\mathrm{d}t.
\end{align}
For each $1 \le n \le \infty,$ if we introduce the vector map $\bb^n$ by ${\bb}^{n,1}(t,\xbb)=\overline{b}(\wr\;\varrho^n)(t,\xbb),$ ${\bb}^{n,2}(t,\xbb)=\overline{\lambda}(t,x_2)$, using \eqref{proof:eq:conv:coeff} and \eqref{proof:eq:conv:coeff2}, we find that, for any $r \in [0,T_0)$ and $\varphi \in C^\infty_c(\R^d_{r})$
\begin{align*}
    &\int_{\R^d} \varphi(r,\xbb) \widetilde{p}^\infty(r,\xbb)\;\mathrm{d}\xbb
    =
    \lim_{n \to \infty}\int_{\R^d} \varphi(r,\xbb) \widetilde{p}^n(r,\xbb)\;\mathrm{d}\xbb
    \\
    &= \int_{\R^d} \varphi(0,\xbb) \varrho_0(\xbb)\;\mathrm{d}\xbb
    +
    \lim_{n \to \infty}\int_{\R^d_r} \big[ \partial_t \varphi(t,\xbb) + \bb^{n,i}(t,\xbb) \partial_{x_i} \varphi(t,\xbb) + a^{i,j}(\wr\;\varrho^n)(t,\xbb) \partial_{x_i} \partial_{x_j} \varphi(t,\xbb) \big] \widetilde{p}^n(t,\xbb)\;\mathrm{d}\xbb\; \mathrm{d}t
    \\
    &=
    \int_{\R^d} \varphi(0,\xbb) \varrho_0(\xbb)\;\mathrm{d}\xbb
    +
    \int_{\R^d_r} \big[ \partial_t \varphi(t,\xbb) + \bb^{\infty,i}(t,\xbb) \partial_{x_i} \varphi(t,\xbb) + a^{i,j}(\wr\;\varrho^\infty)(t,\xbb) \partial_{x_i} \partial_{x_j} \varphi(t,\xbb) \big] \widetilde{p}^\infty(t,\xbb)\;\mathrm{d}\xbb\; \mathrm{d}t.
\end{align*}
Let ${\Xbb}:=({X}^1,{X}^2)$ be the $\R^d$--valued $\F$--adapted continuous process satisfying: $\Lc(\Xbb_0)(\mathrm{d}\xbb)=\varrho_0(\xbb)\mathrm{d}\xbb$ and for each $t \in [0,T_0]$, 
\begin{align*}
    \mathrm{d}{X}^{1}_t
    =
    \overline{b} (\wr\;\varrho^\infty ) (t,{\Xbb}_{t} ) \mathrm{d}t
    +
    \overline{\sigma} (\wr\;\varrho^\infty) (t,{\Xbb}_{t} ) \mathrm{d}W_t\;\;\mbox{and}\;\;\mathrm{d}{X}^{2}_t=\overline{\lambda}(t,{X}^2_{t})\mathrm{d}t+
    \overline{\beta}(t,{X}^2_{t})\mathrm{d}B_t.
\end{align*}
We denote by $p$ the density of $\Lc(\Xbb_t)$ i.e. $\Lc(\Xbb_t)(\mathrm{d}\xbb)=p(t,\xbb)\mathrm{d}\xbb$. By uniqueness and equivalence between Fokker--Planck equation and Mckean--Vlasov process, we deduce that for each $t \in [0,T_0)$, $p(t,\xbb)=\widetilde{p}^\infty(t,\xbb)$.
%Then, there is an $\R^2$--valued $\F$--adapted continuous process $\widetilde{\Xbb}:=(\widetilde{X}^1,\widetilde{X}^2)$ s.t. for each $t \in [0,T_0],$ $\Lc(\widetilde{\Xbb}_t)=\widetilde{p}^\infty(t,x)\mathrm{d}x$ and
%\begin{align*}
    %\mathrm{d}\widetilde{X}^{1}_t
    %=
    %\overline{b} (\wr\;\varrho^\infty ) (t,\widetilde{\Xbb}_{t} ) \mathrm{d}t
    %+
    %\overline{\sigma} (\wr\;\varrho^\infty) (t,\widetilde{\Xbb}_{t} ) \mathrm{d}W_t\;\;\mbox{and}\;\;\mathrm{d}\widetilde{X}^{2}_t=\overline{\lambda}(t,\widetilde{X}^2_{t})\mathrm{d}t+
    %\overline{\beta}(t,\widetilde{X}^2_{t})\mathrm{d}B_t.
%\end{align*}
We deduce that $\widetilde{p}^\infty=\Phi_{\wr}(\varrho^\infty)$. This is true for any sub--sequence of $(\Psi_{\wr}(\varrho^n))_{n \ge 1}$. We deduce that the entire sequence $(\Psi_\wr(\varrho^n))_{n \ge 1}$ converges to $\Phi(\varrho^\infty)$. We can conclude the proof.

%By similar arguments, we also find $ \lim_{n \to \infty}\|p^n-p^\infty\|_{L^1(\R^2_{\Tb})}=0 $.
    %Since $(p^n)_{n \ge 1} \subset \Theta$, by using the fact $\sup_{n \ge 1}\|p^n\|_{\H^{q,1}([0,\Tb] \x \R^2)} < \infty$ and $\lim_{n \to \infty} p^n=p^\infty$ for the uniform topology, we deduce that $\| p^n-p^\infty\|_{L^r([0,T] \x )}$
    %We deduce that $(\Psi(\varrho^n)_{n \ge 1}$ converges to $\Psi(\varrho^\infty).$
\end{proof}
Since $\Psi_{\wr}$ is continuous and $\Theta(\wr)$ is a compact convex set, by a direct application of the Schauder fixed--point theorem, we have the following results
\begin{theorem} \label{thm:fixed_point_existence}
    The map $\Psi_{\wr}: \Theta(\wr) \to \Theta(\wr)$ has at least one fixed point i.e. there exists $p \in \Theta(\wr)$ s.t. $p=\Psi_{\wr}(p)$. 
\end{theorem}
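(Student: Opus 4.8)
The plan is to obtain the fixed point by a direct application of the Schauder--Tychonoff fixed--point theorem, all the substantive work having already been carried out above. First I would gather the three ingredients: the preceding proposition shows that, for $\wr \in C_0^\infty(T_0)$ with $\inf_{t\in[0,s]}\wr(t)>0$ for every $s<T_0$, the set $\Theta(\wr)$ is convex and compact for the locally uniform topology; \Cref{prop:continuity_app} shows that $\Psi_{\wr}:\Theta(\wr)\to\Theta(\wr)$ is well defined---each $\varrho\in\Theta(\wr)$ has a unique associated density $\Psi_{\wr}(\varrho)$, which lies in $\Theta(\wr)$ by \Cref{prop:preservation}---and continuous; and $\Theta(\wr)$ is nonempty, as explained below.

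Next I would record the functional--analytic setting. The space $C([0,T_0)\x\R^2;\R)$ endowed with the topology of uniform convergence on compact subsets of $[0,T_0)\x\R^2$ is a Fr\'echet space, hence a Hausdorff locally convex topological vector space, and $\Theta(\wr)$ is a convex subset of it. The Schauder--Tychonoff theorem then guarantees that every continuous map of a nonempty compact convex subset of such a space into itself has a fixed point. Applying this to $\Psi_{\wr}$ and $\Theta(\wr)$ produces $p\in\Theta(\wr)$ with $p=\Psi_{\wr}(p)$, which is the assertion of the theorem.

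For the nonemptiness of $\Theta(\wr)$ I would freeze the coefficients: choose a constant symmetric matrix $\widehat a$ with $m\,\mathrm{I}_2\le\widehat a\le M\,\mathrm{I}_2$ and $\widehat b\equiv 0$, let $\Xbb$ be the corresponding diffusion with $\Lc(\Xbb_0)(\mathrm{d}x)=\varrho_0(x)\,\mathrm{d}x$, and let $\widehat p$ be the density of $\Lc(\Xbb_t)$. Since $\nabla\widehat a=0$ and the relevant H\"older norm of the constant matrix $\widehat a$ equals $|\widehat a|\le M$, while by construction the choices in \eqref{eq:choice1}--\eqref{eq:choice2} make $L_j$ and $L^\infty_j$ at least $M$, the hypotheses of \Cref{prop:choice_constant} are satisfied; its conclusion, together with $\widehat p(0,\cdot)=\varrho_0$, $\int_{\R^2}\widehat p(t,x)\,\mathrm{d}x=1$ and the continuity of $\widehat p$, gives $\widehat p\in\Theta(\wr)$.

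I do not expect a genuine obstacle at this final stage. The difficulty of the whole construction is concentrated upstream: in the a priori Sobolev and H\"older estimates of \Cref{sec_estimates_FP_general} that dictate the precise choice of the constants $L_j,L^\infty_j$ making $\Psi_{\wr}$ an endomorphism of $\Theta(\wr)$; in the compactness of $\Theta(\wr)$; and in the stability of the Fokker--Planck equation under locally uniform convergence that underpins the continuity of $\Psi_{\wr}$. The only point requiring mild care here is to invoke the locally convex (Tychonoff) form of Schauder's theorem rather than its Banach--space version, since $\Theta(\wr)$ lives inside a Fr\'echet, not a Banach, space.
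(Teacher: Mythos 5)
Your argument is correct and coincides with the paper's own proof, which likewise deduces the fixed point directly from the Schauder theorem using the compactness and convexity of $\Theta(\wr)$ and the continuity of $\Psi_{\wr}$ established in the preceding propositions. Your additional verification that $\Theta(\wr)$ is nonempty (via frozen coefficients) and your remark that the locally convex Tychonoff form of the theorem is the one needed are points the paper leaves implicit, but they do not change the route.
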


\begin{corollary} \label{cor:existence}
    Let $j \ge 1$. For any $T < T_0,$ there exists an $\R^2$--valued $\F$--adapted continuous process $\Xbb:=(X^1,X^2)$ satisfying: $p(0,\cdot)=\varrho_0(\cdot)$
\begin{align*}
    \mathrm{d}X^{1}_t
    =
    \overline{b} (p) \big(t,\Xbb_t \big) \mathrm{d}t
    +
    \overline{\sigma} (p) \big(t,\Xbb_t \big) \mathrm{d}W_t\;\;\mbox{and}\;\;\mathrm{d}X^{2}_t=\overline{\lambda}(t,X^2_t)\mathrm{d}t
    +
    \overline{\beta}(t,X^2_t)\mathrm{d}B_t\;\;\mbox{for any }t \le T
\end{align*}
where $\Lc(\Xbb_t)(\mathrm{d}\xbb)=p(t,\xbb)\mathrm{d}\xbb$ and $p$ satisfies
\begin{align*}
        1+\|\; p\|_{\H^{\gamma+j,1}([0,T] \x \R^d)}^{\gamma+j} +  \Big[\||\partial_{x_1}p|_{\mathbf{1}}\|_{L^{\gamma+j}([0,T] \x \R)}^{\gamma+j}+ \|\; p_{\mathbf{1}}\|_{L^{\gamma+j}([0,T] \x \R)}^{\gamma+j}  \Big]
        \le \frac{L_j}{M}
    \end{align*}
    and
    \begin{align*}
        1+\| \;p \|_{C^{0,\kappa-1/(\gamma+j)}\big([0,T],C^{0,1-2\eta-2/(\gamma+j)}(\R^d)\big)}
        +
        \| p_{\mathbf{1}}\|_{C^{0,\overline{\kappa}-1/\overline{r}}\big([0,T],C^{0,1-2\overline{\eta}-1/\overline{r}}(\R)\big)}
        \le \frac{L^\infty_j}{M}
    \end{align*}
with $L_j$ and $L^\infty_j$ depend only on $j$, $m,$ $ M,$ $\varrho_0(\cdot):=\varrho(0,\cdot)$, $T$ and $T_0$.
\end{corollary}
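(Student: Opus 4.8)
This will follow from the Schauder fixed point statement \Cref{thm:fixed_point_existence} together with the a~priori bounds of \Cref{prop:preservation,prop:choice_constant}, once the cut--off $\wr$ is chosen so that $\wr\equiv 1$ on $[0,T]$. The plan is as follows. Fix $T<T_0$ and pick $\wr\in C^\infty_0(T_0)$ with $\wr\equiv 1$ on $[0,T]$, $0\le\wr\le 1$, $\wr>0$ on $[0,T_0)$ and $\wr\equiv 0$ on $[T_0,\infty)$; such a $\wr$ is obtained by a standard smooth interpolation on $[T,T_0]$, and then $\mathrm{supp}(\wr)=[0,T_0]$ and $\inf_{t\in[0,s]}\wr(t)>0$ for every $s<T_0$. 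Consequently $\Theta(\wr)$ is a nonempty convex set, compact for the locally uniform topology, and $\Psi_{\wr}:\Theta(\wr)\to\Theta(\wr)$ is well defined and continuous by \Cref{prop:continuity_app}. With $\wr$ fixed, the hypotheses imposed on $\varrho_0$ are precisely those required to run \Cref{para_choice_constant}, and they determine the constants $L_j$ and $L^\infty_j$ through \eqref{eq:choice1}--\eqref{eq:choice2} as functions of $j$, $m$, $M$, $\varrho_0$, $T$ and $T_0$ only; in particular the only density--dependent ingredient, $\int_\R\big[\int_0^{T_0}|\wr'(t)p_{X_2}(t,x_2)|^{\gamma+j}\,\mathrm dt\big]^{1/(\gamma+j)^2}\,\mathrm dx_2$, is controlled a~priori through \Cref{prop:appendix:integrability}.

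Next I would apply \Cref{thm:fixed_point_existence} to obtain $p\in\Theta(\wr)$ with $p=\Psi_{\wr}(p)$. Unwinding the definition of $\Psi_{\wr}$, $p(t,\cdot)$ is the density of $\Lc(\Xbb_t)$ for some $\F$--adapted continuous process $\Xbb=(X^1,X^2)$ with $\Lc(\Xbb_0)(\mathrm dx)=\varrho_0(x)\,\mathrm dx$ solving
\begin{align*}
    \mathrm dX^1_t=\overline b(\wr p)(t,\Xbb_t)\,\mathrm dt+\overline\sigma(\wr p)(t,\Xbb_t)\,\mathrm dW_t,\qquad \mathrm dX^2_t=\overline\lambda(t,X^2_t)\,\mathrm dt+\overline\beta(t,X^2_t)\,\mathrm dB_t,
\end{align*}
with $\mathrm d\langle W,B\rangle_t=\theta\,\mathrm dt$. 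Since $\wr\equiv 1$ on $[0,T]$ one has $\wr p=p$ on $[0,T]\x\R^2$; and since the value of $\overline b(f)(t,\cdot)$ (and likewise $\overline\sigma(f)(t,\cdot)$) is determined by $f$ through its restriction to times $\le t$ --- in the concrete examples of \Cref{paragr:example} even through the single slice $f(t,\cdot)$ --- the coefficients driving $\Xbb$ on $[0,T]$ are exactly $\overline b(p)$ and $\overline\sigma(p)$. Hence the restriction of $\Xbb$ to $[0,T]$ solves the system in the statement, with $\Lc(\Xbb_t)(\mathrm dx)=p(t,x)\,\mathrm dx$.

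The remaining step is a transfer of the bounds. From $p\in\Theta(\wr)$ one has the two membership inequalities of $\Theta(\wr)$; since $0\le\wr\le 1$ is $x$--independent and $\wr\equiv 1$ on $[0,T]$, we have $\wr|\partial_{x_1}p|_{\mathbf 1}=|\partial_{x_1}(\wr p)|_{\mathbf 1}$, and every norm of $p$ (resp.\ $p_{\mathbf 1}$) over $[0,T]$ occurring in the statement is dominated by the corresponding weighted norm of $\wr p$ (resp.\ $\wr p_{\mathbf 1}$) over $[0,T_0]$ --- restricting the time interval only decreases the $L^{\gamma+j}$ and Hölder norms in time, and the weight is trivial on $[0,T]$. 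Adding $1$ to each side of the two membership inequalities of $\Theta(\wr)$ yields the two displayed estimates with the constants $L_j/M$ and $L^\infty_j/M$. I expect the only point requiring genuine care to be the identification $\overline b(\wr p)=\overline b(p)$, $\overline\sigma(\wr p)=\overline\sigma(p)$ on $[0,T]$: this is precisely why one insists that $\wr\equiv 1$ on $[0,T]$ rather than taking an arbitrary admissible cut--off. All the substantive work --- the local parabolic estimates, the passage to global Sobolev and Hölder bounds for $p$ and $p_{\mathbf 1}$, and the compactness of $\Theta(\wr)$ --- has already been carried out in the preceding subsections.
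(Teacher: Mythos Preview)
Your proposal is correct and follows exactly the paper's approach: the paper's own proof is the single sentence ``As $T<T_0$, it is enough to choose $\wr\in C^\infty_0(T_0)$ such that $\wr(t)=1$ for each $t\in[0,T]$.'' You have simply unpacked this one-liner, and in fact you explicitly flag the one point the paper leaves implicit, namely the identification $\overline b(\wr p)=\overline b(p)$ and $\overline\sigma(\wr p)=\overline\sigma(p)$ on $[0,T]$.
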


\begin{proof}[Proof of \Cref{cor:existence}]
    As $T < T_0$, it is enough to choose $\wr \in C^\infty_0(T_0)$ such that $\wr(t)=1$ for each $t \in [0,T]$.
\end{proof}

\subsubsection{Proof of \cref{thm:main}}

Let $T>0$, $d=2$. Given the properties of the initial density $p_0$ given in \cref{thm:main}, in order to proof \cref{thm:main},  we just need to check that the coefficients $(b,\lambda,\sigma,\beta)$ given in \Cref{assum:main1} satisfy \Cref{general_coef_diffusion} and \Cref{assumption:continuity_coeff} for $j=1$. Let $\nu >0$. We consider $T_0=T + \nu$. By considering the restriction of $(b,\lambda,\sigma,\beta)$ on $L^1([0,T_0];L^1(\R^d)) \x [0,T_0] \x \R^d$, given the property of $(b,\lambda,\sigma,\beta)$, \Cref{general_coef_diffusion} is clearly satisfied. Let us check \Cref{assumption:continuity_coeff}. We take the sequence $(f^n)_{1 \le n \le \infty}$ s.t. $\lim_{n \to \infty} \|f^n-f^\infty\|_{L^1([0,s] \x \R^d)}$ for any $s < T_0$. For $\varphi \in C_c([0,T_0) \x \R^d)$, we can find $\overline{t}<T_0$ s.t. ${\rm supp}(\varphi) \subset [0,\overline{t}] \x \R^d$. Since $\overline{t}<T_0$, we have $\lim_{n \to \infty} \|f^n-f^\infty\|_{L^1([0,\overline{t}] \x \R^d)}=0$, by the continuity assumption satisfies by $(b,\lambda,\sigma,\beta)$ in \Cref{assum:main1}, we can therefore check \Cref{assumption:continuity_coeff}.

\qed

\subsubsection{Checking the assumptions for the examples given in \Cref{paragr:example}} \label{sub_sec_checkExample}
In this part, we prove that the examples given in \Cref{paragr:example} satisfy \Cref{assum:main1}. We only check the first example, the proof for the second example is similar.   Recall that the coefficients $b^\circ$, $\sigma^\circ$, $\lambda$ and $\beta$ are given in \Cref{paragr:example}. We have the definition
\begin{align*}
    {b}(f)(t,x_1,x_2)
    =
    b^\circ \big(t,x_1,x_2,f(t,\xbb),f_{\mathbf{1}}(t,x_1),(fh)_{\mathbf{1}}(t,x_1) \big)\;\mbox{and}\;{\sigma}(f)(t,x_1,x_2)
    =
    \sigma^\circ \big(t,x_1,x_2,f(t,\xbb),f_{\mathbf{1}}(t,x_1),(fh)_{\mathbf{1}}(t,x_1) \big).
\end{align*}
%To check the assumption, we need to show that $(\overline{b},\lambda,\overline{\sigma},\beta)$ verifies \Cref{general_coef_diffusion} and \Cref{assumption:continuity_coeff}. 
As in \Cref{assum:main1}, we introduce the vector map $\overline{\bb}$ and the symmetric matrix $\overline{\ab}:=(\overline{a}^{i,j})_{1 \le i ,j \le d}$ by $\overline{\bb}^1(f)(t,\xbb)={b}(f)(t,\xbb),$ $\overline{\bb}^2(f)(t,\xbb)={\lambda}(t,x_2)$,
\begin{align*}
    \overline{a}^{1,1}(f)(t,\xbb)
    :=
    \frac{1}{2}{\sigma}( f) (t,\xbb)^2,\;\;\overline{a}^{2,1}(f)(t,\xbb)
    :=
    \overline{a}^{1,2}(f)(t,\xbb)
    :=
    \frac{1}{2}{\sigma}( f) (t,\xbb){\beta}(t,x_2) \theta\;\mbox{and}\;\overline{a}^{2,2}(f)(t,\xbb)
    :=
    \frac{1}{2}{\beta}(t,x_2)^2
\end{align*}

\medskip
{\small$\bullet$ $\mathbf{Verification\;of\;\underline{Growth\;assumption}}$} Let $T>0$. Notice that, for any $(t,x_1,x_2) \in [0,T] \x \R^2$ and $f \ge 0$, we have
\begin{align*}
    \big(t,x_1,x_2,f(t,\xbb),f_{\mathbf{1}}(t,x_1),(fh)_{\mathbf{1}}(t,x_1) \big) \in \Ec_h^T\;\;\;\mbox{and}\;\;\;\big(t,x_1,x_2,f(t,\xbb),f_{\mathbf{1}}(t,x_1),(fv)_{\mathbf{1}}(t,x_1) \big) \in \Ec_v^T,
\end{align*}
recall that $\Ec_h^T$ and $\Ec_v^T$ are defined in \Cref{eq:set_h} and \Cref{eq:set_v}. Knowing the assumptions satisfy by $(b^\circ,\sigma^\circ)$ in the example, we set 
\begin{align*}
   0< m:=\inf_{\Ec^T_v} \inf_{z \neq 0} \frac{z^\top\;\ab\;z}{|z|^2},\;\;\;
    M
    :=
    \sup_{\Ec_h^T}|\bb| + |\lambda| + \sup_{\Ec_v^T} |\overline{\ab}|+1+\sup_{\Ec^T_v} |\nabla \overline{\ab}|^q + \sum_{i=1}^k |c^i_M|^q.
\end{align*}

\medskip
We have for all $(t,\xbb) \in [0,T] \x \R^d$, $|\overline{\bb}(f)(t,\xbb)| \le M$, $m \mathrm{I}_d \le \overline{\ab}(f)(t,\xbb) \le M\mathrm{I}_d$. For any $E \subset \R^d$ open set s.t. ${\rm diam}(E) \le 1$
\begin{align*}
    \|\nabla \overline{\ab}(f)\|_{L^q([0,T] \x E)}^{q} \le M \bigg[1+ \| f\|_{\H^{q,1}([0,T] \x \R^d)}^q + \||\partial_{x_1} f|_{\mathbf{1}}\|_{L^\alpha([0,T] \x \R)}^q + \| f_{\mathbf{1}}\|_{L^\alpha([0,T] \x \R)}^q
    %+ \frac{\int_\R |\partial_{x_1} f(t,x_1,x_2)| \mathrm{d}x_2}{\int_\R f(t,x_1,x_2) \mathrm{d}x_2}
    %+
    %\frac{\int_\R |\partial_{x_2} f(t,x_1,x_2)| \mathrm{d}x_1}{\int_\R f(t,x_1,x_2) \mathrm{d}x_1}
    \bigg]
\end{align*}
and
\begin{align*}
    \sup_{t \in [0,T]}\|\overline{\ab}(f)(t,\cdot)\|_{C^{0,1-2\overline{\eta}-1/v}(E)} \le  M \bigg[1+ \sup_{t \in [0,T]}\| f(t,\cdot) \|_{C^{0,1-2{\eta}-2/(\gamma+j)}(\R^d)} + \sup_{t \in [0,T]}\|f_{\mathbf{1}}(t,\cdot)\|_{C^{0,1-2\overline{\eta}-1/v}(\R)}
    \bigg].
\end{align*}
%The constant ${\rm M}^G \|h\|_\infty$, m and ${\rm M}$ are independent of $f$. 
%Therefore, \Cref{assum:main1} is satisfied for $({b},\lambda,{\sigma},\beta)$.

\medskip
{\small $\bullet$ $\mathbf{Verification\;of\;\underline{Continuity\;assumption}}$} Let $T>0$ and $(f^n)_{1 \le n \le \infty}$ be a sequence s.t. $\lim_{n \to \infty} \|f^n-f^\infty\|_{L^1([0,T] \x \R^d)}=0$. We can verify that
\begin{align} \label{eq:verification_conv_noapp}
    \lim_{n \to \infty}\int_{[0,T] \x \R}|f^n_{\mathbf{1}}(t,x_1)-f^\infty_{\mathbf{1}}(t,x_1) |\; \mathrm{d}x_1\; \mathrm{d}t
    \le 
    \lim_{n \to \infty} \|f^n-f^\infty\|_{L^1([0,T] \x \R^d)}
    =
    0.
\end{align}
Let $\varphi \in C_c([0,T] \x \R^d)$ and $\varphi^{i,j} \in C_c([0,T] \x \R^d)$. The sequences $(z^n)_{n \ge 1} \subset \R^d$ and $(c^n)_{n \ge 1}$ are bounded where
\begin{align*}
    z^n
    :=
    \int_{[0,T] \x \R^d} \overline{\bb}(f^n)(t,\xbb) \varphi(t,\xbb) \mathrm{d}\xbb\;\mathrm{d}t\;\mbox{and}\;c^n:=\int_{[0,T] \x \R^d} \overline{a}^{i,j}(f^n)(t,\xbb) \varphi^{i,j}(t,\xbb) \;\mathrm{d}\xbb\;\mathrm{d}t.
\end{align*}
We only deal with $(z^n)_{n \ge 1}$, the analysis of the sequence $(c^n)_{n \ge 1}$ is similar. The sequence $(z^n)_{n \ge 1}$ is compact. Let $z^\infty$ be the limit of a convergent sub--sequence $(z^{n_k})_{k \ge 1}$. By using \Cref{eq:verification_conv_noapp}, we can find a sub--sequence $(n'_k:=U(n_k))_{k \ge 1}$ where $U$ is a non--decreasing map, s.t. $\lim_{k \to \infty} \big(f^{n'_k}(t,\xbb), (f^{n'_k}h)_{\mathbf{1}} (t,x_1) \big)=\big(f^{\infty}(t,\xbb), (f^{\infty}h)_{\mathbf{1}} (t,x_1) \big)$ a.e. $(t,\xbb)$. Consequently, since $({b}^\circ,\lambda,{\sigma}^\circ,\lambda)(t,x_1,x_2,\cdot)$ is continuous uniformly in $(t,x_1,x_2)$, by dominated convergence theorem, 
\begin{align*}
    z^\infty
    =
    \lim_{k \to \infty} z^{n_k}
    =
    \lim_{k \to \infty} z^{n'_k}
    =
    \lim_{k \to \infty} \int_{[0,T] \x \R^d} {b}(f^{n'_k})(t,\xbb) \varphi(t,\xbb) \mathrm{d}\xbb\;\mathrm{d}t
    =
    \int_{[0,T] \x \R^d} {b}(f^{\infty})(t,\xbb) \varphi(t,\xbb) \mathrm{d}\xbb\;\mathrm{d}t.
\end{align*}
We proved that any convergent--sub--sequence $(z^{n_k})_{k \ge 1}$ converges towards $\int_{[0,T] \x \R^d} \overline{\bb}(f^{\infty})(t,\xbb) \varphi(t,\xbb) \mathrm{d}\xbb\;\mathrm{d}t$. We can deduce that the entire sequence $(z^n)_{n \ge 1}$ converges to $\int_{[0,T] \x \R^d} \overline{\bb}(f^{\infty})(t,\xbb) \varphi(t,\xbb) \mathrm{d}\xbb\;\mathrm{d}t$. Therefore, \Cref{assum:main1} is satisfied for $({b},\lambda,{\sigma},\beta)$. %\Cref{assumption:continuity_coeff} is then verified. %We can apply \Cref{thm:fixed_point_existence} or \Cref{cor:existence}.

\subsection{Approximation by particle system} \label{sec_approx}

Let $\delta>0$ and $G_\delta:\R \to \R_+$ be a kernel satisfying: $\int_{\R} G_\delta(x) \mathrm{d}x=1$, and for any $\alpha \ge1$,
\begin{align*}
    \Lim_{\delta \to 0}G_\delta * \varphi=\varphi,\;\mbox{a.e.},\;\;\|G_\delta * \varphi\|_{L^\alpha(\R)} \le C\|\varphi\|_{L^\alpha(\R)}\;\mbox{and}\;\|\nabla(G_\delta * \varphi)\|_{L^\alpha(\R)} \le C\|\nabla \varphi\|_{L^\alpha(\R)},\;\mbox{for all }\varphi \in C^\infty_c(\R)
\end{align*}
where $C$ is \underline{independent} of $\delta$ and $*$ denotes the convolution product. 
We set $\widehat{G}_\delta(x)=G_\delta(x_1)G_\delta(x_2).$ 
%Let $h:[0,T] \x \R \to \R^k$ be a bounded function with $k \in \N^*$. 
We always consider $(b,\lambda,\sigma,\beta)$ as in  \Cref{assum:main1}. We define
\begin{align*}
    (\overline{b}_\delta,\overline{\sigma}_\delta)(f)(t,x_1,x_2)
    :=
    (b,\sigma) (f_\delta) \big(t,x_1,x_2 \big)
\end{align*}
where for any $f \in L^1_{\ell oc}(\R_+;L^1(\R^d))$, $f_\delta$ is defined by $f_\delta(t,\xbb)
    :=
    \widehat{G}_\delta * (f(t,\cdot))(\xbb)$. For proving the approximation by particle system, we start by giving some results involving the regularization of the dependency w.r.t. the density.
%We consider that \Cref{general_coef_diffusion} is satisfied for $(\widetilde{b},{\lambda},\widetilde{\sigma},{\beta})$.
%The map $({b},{\sigma},{\lambda},{\beta})$ is bounded, $\big({\sigma}(t,\cdot),\beta(t,\cdot)\big)$ is Lipschitz uniformly in $t$, ${b}(t,x_1,x_2,\cdot)$ is continuous uniformly in $(t,x_1,x_2)$. Also, $c\;\mathrm{I}_2 \le a$ where $c$ is independent of $f$, and ${a}:=({a}^{i,j})_{1 \le i ,j \le 2}$ with $a^{1,1}(t,x):=\frac{1}{2}\overline{\sigma}(f) (t,x)^2$, $a^{2,1}(t,x):=a^{1,2}(t,x):=\frac{1}{2}\overline{\sigma}(f) (t,x){\beta}(t,x_2) \theta$ and $a^{2,2}(t,x):=\frac{1}{2}{\beta} (t,x_2)^2$,  .
%With this definition of $(\overline{b},\overline{\sigma})$, using the property of $G_\delta,$ \Cref{general_coef_diffusion} is verified with the constants $m,M$ and ${\rm M}^G$ \underline{independent} of $\delta$.

\begin{proposition} \label{prop:regularized_existence}
    Let $j = 1$, $\delta >0$ and $T>0$.  There exists an $\R^d$--valued $\F$--adapted continuous process $\Xbb^\delta:=\Xbb:=(X^1,X^2)$ satisfying $p(0,\cdot)=\varrho_0(\cdot)$, for $t \in [0,T]$,
\begin{align*}
    \mathrm{d}X^{1}_t
    =
    \overline{b}_\delta(p^\delta) \big(t,\Xbb_t \big) \mathrm{d}t
    +
    \overline{\sigma}_\delta(p^\delta) \big(t,\Xbb_t \big)W_t\;\;\mbox{and}\;\;\mathrm{d}X^{2}_t=\lambda(t,X^2_t) )\mathrm{d}t
    +
    \beta(t,X^2_t)\mathrm{d}B_t
\end{align*}
where $p^\delta(t,x)\mathrm{d}x=\Lc(\Xbb^\delta_t)(\mathrm{d}x).$ In addition
\begin{align*}
        1+\|\; p^\delta\|_{\H^{\gamma+j,1}([0,T] \x \R^d)}^{\gamma+j} +  \Big[\||\partial_{x_1}p^\delta|_{\mathbf{1}}\|_{L^{\gamma+j}([0,T] \x \R)}^{\gamma+j}+ \|\; p^\delta_{\mathbf{1}}\|_{L^{\gamma+j}([0,T] \x \R)}^{\gamma+j}  \Big]
        \le \frac{L_j}{M}
    \end{align*}
    and
    \begin{align*}
        1+\| \;p^\delta \|_{C^{0,\kappa-1/(\gamma+j)}\big([0,T],C^{0,1-2\eta-2/(\gamma+j)}(\R^d)\big)}
        +
        \| p^\delta_{\mathbf{1}}\|_{C^{0,\overline{\kappa}-1/\overline{r}}\big([0,T],C^{0,1-2\overline{\eta}-1/\overline{r}}(\R)\big)}
        \le \frac{L^\infty_j}{M}.
    \end{align*}
    with $L_j$ and $L^\infty_j$ are {\rm\underline{independent}} of $\delta.$
\end{proposition}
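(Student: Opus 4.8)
The plan is to deduce \Cref{prop:regularized_existence} from \Cref{cor:existence} applied to the \emph{regularized} coefficients $(\overline{b}_\delta,\overline{\lambda},\overline{\sigma}_\delta,\overline{\beta})$, the only point requiring care being that the constants produced by \Cref{cor:existence} are insensitive to $\delta$. So the first step is to fix $\nu>0$, set $T_0:=T+\nu$, and pick a cutoff $\wr\in C^\infty_0(T_0)$ with $\wr\equiv 1$ on $[0,T]$ and $\inf_{[0,s]}\wr>0$ for every $s<T_0$; then the whole burden is to check that $(\overline{b}_\delta,\overline{\lambda},\overline{\sigma}_\delta,\overline{\beta})$ satisfies \Cref{general_coef_diffusion} and \Cref{assumption:continuity_coeff} (for $j=1$, i.e. $\gamma+j=q=3+1/4$) with constants $m$ and $\widetilde M$ independent of $\delta$.

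The second step is the elementary algebra of the mollification $f_\delta:=\mathbf{G}_\delta * f$. Since $\mathbf{G}_\delta(x)=G_\delta(x_1)G_\delta(x_2)$ and $\int_\R G_\delta=1$, integrating out $x_2$ gives
\[
   (f_\delta)_{\mathbf 1}=G_\delta * f_{\mathbf 1},\qquad |\partial_{x_1}f_\delta|_{\mathbf 1}\le G_\delta * |\partial_{x_1}f|_{\mathbf 1}\ \ \text{pointwise},
\]
the second bound using $\partial_{x_1}f_\delta=\mathbf{G}_\delta * \partial_{x_1}f$ and Jensen's inequality. Combining these identities with the three standing properties of $G_\delta$ — the $L^\alpha$-contraction, the gradient bound $\|\nabla(G_\delta * \varphi)\|_{L^\alpha}\le C\|\nabla\varphi\|_{L^\alpha}$, and the fact that convolution with a probability kernel is a contraction for $\|\cdot\|_\infty$ and for every Hölder seminorm — yields, with $C$ the ($\delta$-independent) kernel constant and for every open $E$ with ${\rm diam}(E)\le 1$ and every nonnegative $f\in L^1([0,T_0]\times\R^2)$,
\[
   \|f_\delta\|_{\H^{q,1}}\le C\|f\|_{\H^{q,1}},\quad \|(f_\delta)_{\mathbf 1}\|_{L^q}\le C\|f_{\mathbf 1}\|_{L^q},\quad \||\partial_{x_1}f_\delta|_{\mathbf 1}\|_{L^q}\le C\||\partial_{x_1}f|_{\mathbf 1}\|_{L^q},
\]
together with the corresponding bounds for the $C^{0,\cdot}$-norms appearing in \Cref{general_coef_diffusion}, and $f_\delta\ge 0$. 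Since $\overline{a}_\delta(f)=a(f_\delta)$ and $(b,\lambda,\sigma,\beta)$ — hence $a$ — satisfies \Cref{assum:main1}, equivalently the $j=1$ instance of \Cref{general_coef_diffusion} established in the proof of \Cref{thm:main}, the bounds $|\overline{\bb}_\delta|\le M$ and $m\,\mathrm{I}_2\le \overline{a}_\delta\le M\,\mathrm{I}_2$ hold verbatim, while the Sobolev and Hölder growth bounds follow by first writing the right-hand sides in terms of $f_\delta$ and then applying the displayed contraction estimates; this replaces $M$ by $\widetilde M:=M(1\vee C^{q})$ (same $m$), independent of $\delta$. The continuity requirement \Cref{assumption:continuity_coeff} is immediate: $\|f^n-f\|_{L^1([0,s]\times\R^2)}\to 0$ for all $s<T_0$ implies $\|f^n_\delta-f_\delta\|_{L^1([0,s]\times\R^2)}\to 0$ by Young's inequality, so the test-integral convergences for $\overline{b}_\delta,\overline{\sigma}_\delta$ follow from the analogous property of $(b,\sigma)$ in \Cref{assum:main1}. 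Note finally that $\overline{\lambda}=\lambda$ and $\overline{\beta}=\beta$ are untouched by the regularization, so the integrability input $\int_\R\big(\int_0^{\overline T}|p_{X_2}(t,x_2)|^{q}\,\mathrm{d}t\big)^{1/q^2}\mathrm{d}x_2<\infty$ furnished by \Cref{prop:appendix:integrability}, and the value of its upper bound, depend only on $\lambda,\beta,\varrho_0$.

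With \Cref{general_coef_diffusion} and \Cref{assumption:continuity_coeff} verified for $(\overline{b}_\delta,\overline{\lambda},\overline{\sigma}_\delta,\overline{\beta})$ with $\delta$-independent $m,\widetilde M$, the third and final step is simply to apply \Cref{cor:existence} with $j=1$ on $[0,T_0]$, using the cutoff $\wr$ chosen above: this produces $\Xbb^\delta=(X^{1,\delta},X^{2,\delta})$ with $\Lc(\Xbb^\delta_t)(\mathrm{d}x)=p^\delta(t,x)\mathrm{d}x$ solving the stated SDE on $[0,T]$ and satisfying the announced $\H^{q,1}$, $L^{q}$ and Hölder bounds, with $L_1$ and $L^\infty_1$ depending (via \eqref{eq:choice1}--\eqref{eq:choice2}) only on $1$, $m$, $\widetilde M$, $\varrho_0$, $T$, $T_0$, and hence independent of $\delta$. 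The main — and essentially only — obstacle is the uniform-in-$\delta$ bookkeeping of the mollified norms in the second step, specifically the control of the ``local'' marginal quantities $(f_\delta)_{\mathbf 1}$ and $|\partial_{x_1}f_\delta|_{\mathbf 1}$ by convolution with $G_\delta$ in the $x_1$-variable alone; once this is in place, blindness of $L_1,L^\infty_1$ to $\delta$ is automatic and the conclusion is a direct citation of \Cref{cor:existence}.
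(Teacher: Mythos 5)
Your proposal is correct and follows essentially the same route as the paper: verify that the mollified coefficients satisfy the growth and continuity hypotheses with constants independent of $\delta$ (via the $\delta$-uniform contraction properties of $G_\delta$ on the $L^q$, Sobolev and H\"older norms, including the marginal quantities $(f_\delta)_{\mathbf 1}=G_\delta * f_{\mathbf 1}$ and $|\partial_{x_1}f_\delta|_{\mathbf 1}\le G_\delta * |\partial_{x_1}f|_{\mathbf 1}$), and then invoke the existence machinery. The paper cites \Cref{thm:main} whereas you cite \Cref{cor:existence} directly, but since \Cref{thm:main} is itself proved by reducing to \Cref{cor:existence}, this is the same argument; your explicit treatment of the marginal convolution identities fills in a step the paper leaves implicit.
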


\begin{proof}
    We want now to apply \cref{thm:main}. %We set $(\overline{b},\overline{\lambda},\overline{\sigma},\overline{\beta})(f)(t,x):=(\widetilde{b},{\lambda},\widetilde{\sigma},{\beta})(f_\delta)(t,x)$ 
    For any $f$ where we recall that $f_\delta(t,\xbb)
    :=
    \widehat{G}_\delta * (f(t,\cdot))(\xbb)$. Using the property of $\widehat{G}_\delta$, we notice that
    \begin{align*}
        \|\nabla f_\delta (t,\cdot)\|_{L^\alpha(\R^d)} \le C \|\nabla f (t,\cdot)\|_{L^\alpha(\R^d)},\;\;\| |\partial_{x_1}f_\delta |_{\mathbf{1}} (t,\cdot)\|_{L^\alpha(\R)} \le C \| |\partial_{x_1}f|_{\mathbf{1}} (t,\cdot)\|_{L^\alpha(\R)}
    \end{align*}
    and
    \begin{align*}
        \| f_\delta(t,\cdot) \|_{C^{0,1-2{\eta}-2/(\gamma+j)}(\R^d)} + \|(f_\delta)_{\mathbf{1}}(t,\cdot)\|_{C^{0,1-2\overline{\eta}-1/\overline{r}}(\R)}
        \le C \Big[ \| f(t,\cdot) \|_{C^{0,1-2{\eta}-2/(\gamma+j)}(\R^d)} + \|f_{\mathbf{1}}(t,\cdot)\|_{C^{0,1-2\overline{\eta}-1/\overline{r}}(\R)} \Big].
    \end{align*}
    Therefore the map $(\overline{b}_\delta,{\lambda},\overline{\sigma}_\delta,{\beta})$ satisfies \small{$\mathbf{\underline{Growth\;assumption}}$} of \Cref{assum:main1} with $(m,M)$ independent of $\delta$. Notice that $m$ and $M$ can depend of $C$. For applying \Cref{thm:main}, we need to verify \small{$\mathbf{\underline{Continuity\;assumption}}$} of \Cref{assum:main1}. To do so, it is enough to notice that if $(f^n)_{1 \le n \le \infty}$ is a sequence s.t. $\lim_{n \to \infty} \|f^n-f^\infty\|_{L^1([0,T] \x \R^d)}=0$, one has
\begin{align} \label{eq:verification_conv}
    \Lim_{n \to \infty} \;\int_0^{T}\|\widehat{G}_\delta * (f^n(t,\cdot))-\widehat{G}_\delta * (f^\infty(t,\cdot)) \|_{L^1(\R^d)}\; \mathrm{d}t
    =
    0.
\end{align}
We can conclude that $\mathbf{\underline{Continuity\;assumption}}$ of \Cref{assum:main1} is verified. We can apply \Cref{thm:main}.

\end{proof}

%We highlight the dependency on $\delta$ by noting $p=p^\delta.$ 
As $L_1$ and $L^\infty_1$ are independent of $\delta$, the sequence $(p^\delta)_{\delta >0} \subset C([0,T] \x \R^d;\R_+)$ is relatively compact in $C([0,T] \x \R^d;\R_+).$ By using the property of $\widehat{G}_\delta$ and similar techniques as in the proof of \Cref{prop:continuity_app}, we can show that:
\begin{proposition} \label{prop:conver_regularize}
    The sequence $(p^\delta)_{\delta >0} \subset C([0,T] \x \R^d;\R_+)$ is relatively compact and the limit $p$ of any convergent sub--sequence satisfied $p(t,\xbb)\;\mathrm{d}\xbb=\Lc(\Xbb_t)(\mathrm{d}\xbb)$ where
    $\Xbb:=(X^1,X^2)$ verifies $p(0,\cdot)=\varrho_0(\cdot)$, 
\begin{align*}
    \mathrm{d}X^{1}_t
    =
    b(p) \big(t,\Xbb_t \big) \mathrm{d}t
    +
    \sigma(p) \big(t,\Xbb_t \big) \mathrm{d}W_t\;\;\mbox{and}\;\;\mathrm{d}X^{2}_t=\lambda(t,X^2_t) )\mathrm{d}t
    +
    \beta(t,X^2_t))\mathrm{d}B_t.
\end{align*}

\end{proposition}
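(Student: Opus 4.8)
The plan is to combine the $\delta$--uniform estimates of \Cref{prop:regularized_existence} with a stability argument for the Fokker--Planck equation, following closely the scheme already used in the proof of \Cref{prop:continuity_app}.

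First I would establish the relative compactness. Since the constants $L_1$ and $L^\infty_1$ produced by \Cref{prop:regularized_existence} do not depend on $\delta$, the family $(p^\delta)_{\delta>0}$ is bounded in $C^{0,\kappa-1/(\gamma+1)}\big([0,T],C^{0,1-2\beta-2/(\gamma+1)}(\R^2)\big)$, hence uniformly bounded and equicontinuous on $[0,T]\x\R^2$. The coefficients of $\Xbb^\delta$ are bounded by $M$ uniformly in $\delta$ and the law of $\Xbb^\delta_0$ is fixed, so the family $\{\Lc(\Xbb^\delta_t):\delta>0,\,t\in[0,T]\}$ is tight; combined with the uniform sup--bound on $p^\delta$ and the uniform H\"older modulus, an elementary geometric argument forces $p^\delta(t,x)\to 0$ as $|x|\to\infty$ uniformly in $(\delta,t)$. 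Arzel\`a--Ascoli, using this uniform tail decay to reduce to a compact spatial domain, then gives that $(p^\delta)_{\delta>0}$ is relatively compact in $C([0,T]\x\R^2)$ for the uniform topology, and likewise for $(p^\delta_{\mathbf{1}})_{\delta>0}$ on $[0,T]\x\R$.

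Next I would identify the limit of a convergent subsequence $p^{\delta_k}\to p$. The limit $p$ is automatically a probability density with $p(0,\cdot)=\varrho_0$ and inherits the bounds of \Cref{cor:existence} by Fatou's lemma and lower semicontinuity of the norms. Writing $p^{\delta_k}_{\delta_k}(t,\cdot):=\mathbf{G}_{\delta_k}*p^{\delta_k}(t,\cdot)$ and splitting
\begin{align*}
    p^{\delta_k}_{\delta_k}(t,\cdot)-p(t,\cdot)
    =
    \mathbf{G}_{\delta_k}*\big(p^{\delta_k}(t,\cdot)-p(t,\cdot)\big)
    +
    \big(\mathbf{G}_{\delta_k}*p(t,\cdot)-p(t,\cdot)\big),
\end{align*}
the first term is controlled in $L^1(\R^2)$ by $\|p^{\delta_k}(t,\cdot)-p(t,\cdot)\|_{L^1(\R^2)}$, which tends to $0$ uniformly in $t$ by the first step, and the second tends to $0$ in $L^1(\R^2)$ by the defining property of $\mathbf{G}_\delta$; by dominated convergence $\|p^{\delta_k}_{\delta_k}-p\|_{L^1([0,s]\x\R^2)}\to 0$ for every $s<T$. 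Since each $p^{\delta_k}$ solves the Fokker--Planck equation with coefficients $\big(b(p^{\delta_k}_{\delta_k}),\lambda,\sigma(p^{\delta_k}_{\delta_k}),\beta\big)$, I would feed this $L^1$ convergence into the continuity assumption of \Cref{assum:main1} and combine it with the uniform convergence $p^{\delta_k}\to p$, exactly as in the limit passage in the proof of \Cref{prop:continuity_app}, to obtain $\lim_k\int \bb^i(p^{\delta_k}_{\delta_k})\varphi\,p^{\delta_k}\,\mathrm{d}x\,\mathrm{d}t=\int \bb^i(p)\varphi\,p\,\mathrm{d}x\,\mathrm{d}t$ for all $\varphi\in C_c([0,T)\x\R^2)$, and the analogous statement for the $a^{i,j}$. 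Passing to the limit in the weak formulation then shows that $p$ solves the Fokker--Planck equation with coefficients $\big(b(p),\lambda,\sigma(p),\beta\big)$ and datum $\varrho_0$; since $a$ is bounded, non--degenerate and uniformly continuous in $x$ uniformly in $t$ and $b$ is bounded, \cite[6.4.4 Corollary]{stroock2007multidimensional} together with the equivalence between the Fokker--Planck equation and the associated martingale problem yields an $\R^2$--valued $\F$--adapted continuous process $\Xbb=(X^1,X^2)$ with $\Lc(\Xbb_t)(\mathrm{d}x)=p(t,x)\,\mathrm{d}x$ solving the stated system.

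The step I expect to be the main obstacle is this passage to the limit in the nonlinear coefficients: the map $f\mapsto (b,\sigma)(f)$ is only continuous in the weak sense of \Cref{assum:main1} (against $C_c^\infty$ test functions, under $L^1$ convergence of $f$), so to pass to the limit in the products $\bb^i(p^{\delta_k}_{\delta_k})\,\varphi\,p^{\delta_k}$ one must simultaneously exploit the strong uniform convergence of $p^{\delta_k}$ and the $L^1$ convergence $p^{\delta_k}_{\delta_k}\to p$, the latter requiring careful bookkeeping of the two coupled limits $\delta_k\to 0$ and $p^{\delta_k}\to p$ together with the $\delta$--uniform tail control established in the first step.
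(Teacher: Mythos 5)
Your proposal is correct and follows essentially the same route as the paper: relative compactness from the $\delta$--uniform bounds of \Cref{prop:regularized_existence}, the decomposition $\mathbf{G}_{\delta_k}*p^{\delta_k}-p=\mathbf{G}_{\delta_k}*(p^{\delta_k}-p)+(\mathbf{G}_{\delta_k}*p-p)$ to get $L^1$ convergence of the mollified densities, and then the limit passage in the weak Fokker--Planck equation via the continuity assumption exactly as in \Cref{prop:continuity_app}. The paper's proof is the same argument in compressed form (it controls the mollified difference via uniform convergence on $[-K,K]^2$ plus tail control rather than the $L^1$--contraction of the kernel, which is an equivalent bookkeeping).
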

\begin{proof}
    Let $p$ be the limit of a convergent sub--sequence of $(p^\delta)_{\delta >0}$. We use the same notation for the sequence and the sub--sequence. By using the same technique as in the proof of \Cref{prop:continuity_app}, we can show that $\lim_{\delta \to 0}\|p^\delta-p\|_{L^1(\R^d_{T})}=0$. Notice that
    \begin{align*}
        \sup_{t \in [0,T]}\sup_{x \in [-K,K]^d} |\widehat{G}_\delta * (p^{\delta}(t,\cdot))(\xbb)-\widehat{G}_\delta * (p(t,\cdot))(\xbb)| \le \sup_{(t,\xbb) \in [0,T] \x [-K,K]^d} |p^{\delta}(t,\xbb)-p(t,\xbb)| 
    \end{align*}
    and by techniques from the proof of \Cref{prop:continuity_app}
    \begin{align*}
        \lim_{K \to \infty}\limsup_{\delta \to 0}\int_{\R^d \setminus [-K,K]^d} |\widehat{G}_\delta * (p^{\delta}(t,\cdot))(\xbb)|+ |\widehat{G}_\delta * (p(t,\cdot))(\xbb)|\;\mathrm{d}\xbb=0 .
    \end{align*}
    Therefore, $\lim_{\delta \to 0} \|\widehat{G}_\delta * (p^{\delta}(,\cdot))-p\|_{L^1([0,T] \x \R^d)}=0$. By using similar techniques as in proof of \Cref{prop:regularized_existence} and proof of \Cref{prop:continuity_app}, we can let $\delta \to 0$ in the Fokker--Planck equation (in weak sense) satisfied by $p^\delta$, we find that $p$ verified the desired equation.
\end{proof}
We now provide the approximation by particle system.  Let us mention that
\begin{align*}
    \widehat{G}_\delta*(p^\delta(t,\cdot))(\xbb)
    =
    \int_{\R^d} \widehat{G}_\delta(\xbb-\xbb') p^\delta(t,\xbb') \mathrm{d}\xbb'
    =
    \int_{\R^d} \widehat{G}_\delta(\xbb-\xbb') \mu^\delta_t(\mathrm{d}\xbb')
    %=
    %\int_{\R^2} h(t,x_2') G_\delta(x_1-x_1') \mu_t^\delta(\mathrm{d}x_1',\mathrm{d}x_2')
    %=: \mu^\delta_t(\delta,h)(x_1)
\end{align*}
where $\mu^\delta_t=\Lc(\Xbb^\delta_t).$ For any $\nu:=(\nu_t)_{t \in [0,T]} \subset \Pc(\R^d),$ we recall that $\widehat{G}_\delta * \nu(t,\xbb):=\int_{\R^d} \widehat{G}_\delta(x_1-x_1',x_2-x_2')\nu_t(\mathrm{d}x_1',\mathrm{d}x_2')$. For each $\delta >0,$ we know that $G_\delta$ is smooth then for any $\nu:=(\nu_t)_{t \in [0,T]}$ and $\nu':=(\nu'_t)_{t \in [0,T]} \in C([0,T];\Pc_e(\R^d))$
\begin{align*}
    |(\widehat{G}_\delta * \nu)_{\mathbf{1}}(t,x_1)
    -
    (\widehat{G}_\delta * \nu')_{\mathbf{1}}(t,x'_1)| + |\widehat{G}_\delta * \nu(t,\xbb)
    -
    \widehat{G}_\delta * \nu'(t,\xbb')|
    \le K(\delta) \Big[ |\xbb-\xbb'| + \sup_{t \in [0,T]}\Wc_1(\nu_t,\nu'_t) \Big],
\end{align*}
and since $\|\widehat{G}_\delta * \nu(t,\cdot)-\widehat{G}_\delta * \nu'(t,\cdot)\|_{L^1(\R^d)}=\sup_{|\phi| \le 1}\int_{\R^d} \phi(\xbb)(\widehat{G}_\delta * \nu(t,\xbb)-\widehat{G}_\delta * \nu'(t,\xbb)) \;\mathrm{d}\xbb$, we check that
\begin{align*}
    \|\widehat{G}_\delta * \nu-\widehat{G}_\delta * \nu'\|_{L^1([0,T] \x \R^d)}
    \le K(\delta) \sup_{t \in [0,T]}\Wc_1(\nu_t,\nu'_t)
\end{align*}
where $K(\delta)$ is a constant depending on $\delta$ and $T$. Consequently, under \Cref{assm_particles}, for each $T>0$ and $\delta >0$, $C([0,T];\Pc_e(\R^d)) \x [0,T] \x \R^d \ni (\nu,t,\xbb) \to [b,\sigma](\widehat{G}_\delta *\nu)(t,\xbb) \in \R^d$ is Lipschitz in $(\nu,\xbb)$ uniformly in $t$. $(W^i,B^i)_{i \ge 1}$ is a sequence of independent random variables  s.t for each $i$, $W^i$ and $B^i$ are two $\R$--valued Brownian motions s.t. $\mathrm{d} \langle W^i,B^i \rangle_t=\theta \mathrm{d}t$. The initial density $\varrho_0 \in H^{\gamma+j,1}(\R^d)$ is s.t. $\int_{\R^d} |\xbb|^r \varrho_0(\mathrm{d}\xbb) < \infty$ for $r > e \ge 1$. %Also $f \in L^1_{prob}([0,T] \x \R^2)$ if $f \ge 0$, and for each $t \in [0,T],$ $\int_{\R^2} f(t,x)\;\mathrm{d}x=1$. In addition, we assume 
%\begin{assumption} 
     %For any $t \in [0,T]$, $f$, $f' \in L^1_{prob}([0,T] \x \R^2)$ and $(x,x') \in \R^2 \x \R^2$, we have
     %\begin{align*}
         %|[b,\sigma](f)(t,x)-[b,\sigma](f')(t,x')| 
         %\le {\rm M}^{\rm G} \Big[|x-x'| + |f(t,x)-f'(t,x')| + |f_{\mathbf{1}}(t,x_1)-f'_{\mathbf{1}}(t,x'_1)| + \Wc_e(\mu_t,\mu'_t) \Big]
     %\end{align*}
     %where $\mu_t(\mathrm{d}x):=f(t,x)\;\mathrm{d}x$ and $\mu'_t(\mathrm{d}x):=f'(t,x)\;\mathrm{d}x$.
%\end{assumption}
\begin{proposition} \label{prop:regularize:particle}
    %We assume that the map $(b,\lambda,\sigma,\beta)(t,\cdot)$ is continuous uniformly in $t$. 
    For each $\delta >0,$ if we let $(\Xbb^{N,1},\cdots,\Xbb^{N,N})$ be the solution of: $(\Xbb^{N,1}_0,\cdots,\Xbb^{N,N}_0)$ is i.i.d.,  $\Lc(\Xbb^{N,i}_0)(\mathrm{d}\xbb)=\varrho_0(\xbb) \mathrm{d}\xbb$,
    \begin{align*}
    \mathrm{d}\Xbb^{N,i,1}_t
    =
    b\big(\widehat{G}_\delta * \mu^N  \big) \big(t,\Xbb^{N,i}_t \big) \mathrm{d}t
    +
    \sigma \big(\widehat{G}_\delta * \mu^N  \big) \big(t,\Xbb^{N,i}_t \big) \mathrm{d}W^i_t\;\;\mbox{and}\;\;\mathrm{d}\Xbb^{N,i,2}_t=\lambda (t,\Xbb^{N,i,2}_t)\mathrm{d}t
    +
    \beta (t,\Xbb^{N,i,2}_t)\mathrm{d}B^i_t
\end{align*}
where $\mu^N_\cdot:=\sum_{i=1}^N \delta_{\Xbb^{N,i}_\cdot}$, one has that
\begin{align*}
    \Lim_{N \to \infty} \Lc(\mu^N)=\delta_{\mu^\delta}\;\mbox{{\rm in}}\;\Wc_e.
\end{align*}
In addition, for any $k \ge 1$ and any bounded measurable map $\phi: [0,T] \x \R^{2k} \to \R$
\begin{align*}
    \Lim_{N \to \infty}\E \bigg[\int_0^T \phi(t,\Xbb^{N,1}_t,\cdots,\Xbb^{N,k}_t)\;\mathrm{d}t \bigg]
    =
    \int_{[0,T] \x \R^{2k}} \phi(t,\xbb^1,\cdots,\xbb^k)\;p^\delta(t,\xbb^1)\cdots p^\delta(t,\xbb^k) \;\mathrm{d}\xbb^1\cdots\mathrm{d}\xbb^k\;\mathrm{d}t.
\end{align*}
%$\;\mbox{where}\;\Lc(\Xbb_t)=p(t,x)\mathrm{d}x.$
\end{proposition}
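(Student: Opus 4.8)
The plan is to carry out a standard propagation of chaos argument for the $\delta$-regularized system, exploiting that — by the preliminary estimates just above — the map $(\nu,t,x)\mapsto[b,\sigma](\mathbf{G}_\delta*\nu)(t,x)$ is Lipschitz in $(\nu,x)$ uniformly in $t$ on $C([0,T];\Pc_e(\R^2))\times[0,T]\times\R^2$. First I would establish well-posedness of the $N$-particle system: since the coefficients are globally Lipschitz in $(\nu,x)$ and $\lambda,\beta$ are Lipschitz (with $\beta$ Lipschitz and $\lambda$ bounded), the map $\Xbb^N\mapsto\frac1N\sum_i\delta_{\Xbb^{N,i}}$ composed with the coefficients is Lipschitz on the product space, giving existence and uniqueness of the solution $(\Xbb^{N,1},\dots,\Xbb^{N,N})$ by Picard iteration. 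A moment bound $\sup_N\E\big[\sup_{t\le T}|\Xbb^{N,i}_t|^r\big]<\infty$ follows from the linear growth of the coefficients, the assumption $\int|x|^r\varrho_0(\mathrm dx)<\infty$ with $r>e$, and Gronwall; this is what will let us work in $\Wc_e$.

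Next I would set up the coupling. Let $\overline\Xbb^i$ solve the regularized McKean--Vlasov equation of \Cref{prop:regularized_existence} driven by the same Brownian pair $(W^i,B^i)$ and started from $\Xbb^{N,i}_0$, so the $\overline\Xbb^i$ are i.i.d.\ with common law $\mu^\delta_\cdot$ and density $p^\delta$. Writing $\overline\mu^N_t:=\frac1N\sum_i\delta_{\overline\Xbb^i_t}$, I would estimate $\E\big[\sup_{s\le t}|\Xbb^{N,i}_s-\overline\Xbb^i_s|^e\big]$ using the BDG and Hölder inequalities together with the uniform Lipschitz property of $[b,\sigma](\mathbf{G}_\delta*\cdot)$ and of $\lambda,\beta$. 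This bounds the quantity by $C_\delta\int_0^t\E\big[\sup_{u\le s}|\Xbb^{N,i}_u-\overline\Xbb^i_u|^e\big]\mathrm ds + C_\delta\int_0^t\E\big[\sup_{u\le s}\Wc_e(\mu^N_u,\overline\mu^N_u)^e + \Wc_e(\overline\mu^N_s,\mu^\delta_s)^e\big]\mathrm ds$; since $\Wc_e(\mu^N_u,\overline\mu^N_u)^e\le\frac1N\sum_i|\Xbb^{N,i}_u-\overline\Xbb^i_u|^e$, a Gronwall argument yields $\sup_i\E\big[\sup_{s\le T}|\Xbb^{N,i}_s-\overline\Xbb^i_s|^e\big]\le C_\delta\,\E\big[\int_0^T\Wc_e(\overline\mu^N_s,\mu^\delta_s)^e\,\mathrm ds\big]$. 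The last term goes to $0$ by the law of large numbers for empirical measures of i.i.d.\ $\Pc_e$-valued samples (e.g.\ the quantitative bounds of Fournier--Guillin, using the $r$-th moment with $r>e$), which also gives $\E[\Wc_e(\overline\mu^N,\mu^\delta)^e]\to0$ in $C([0,T];\Pc_e(\R^2))$; combining the two gives $\Lc(\mu^N)\to\delta_{\mu^\delta}$ in $\Wc_e$, i.e.\ the first assertion.

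For the second assertion, fix $k\ge1$ and a bounded measurable $\phi$. Exchangeability gives, for any bounded continuous $\phi$,
\begin{align*}
    \E\Big[\int_0^T\phi(t,\Xbb^{N,1}_t,\dots,\Xbb^{N,k}_t)\,\mathrm dt\Big]
    -
    \E\Big[\int_0^T\phi(t,\overline\Xbb^1_t,\dots,\overline\Xbb^k_t)\,\mathrm dt\Big]
    \longrightarrow 0
\end{align*}
from the $L^e$-closeness just proved (first for Lipschitz $\phi$, then extending), while the i.i.d.\ property of the $\overline\Xbb^i$ with marginal density $p^\delta(t,\cdot)$ turns the second expectation into exactly $\int_{[0,T]\times\R^{2k}}\phi(t,x^1,\dots,x^k)\,p^\delta(t,x^1)\cdots p^\delta(t,x^k)\,\mathrm dx^1\cdots\mathrm dx^k\,\mathrm dt$. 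The passage from continuous to merely bounded measurable $\phi$ is where care is needed: here I would invoke that $\mu^\delta_t$ has the density $p^\delta(t,\cdot)$ established in \Cref{prop:regularized_existence} (indeed $p^\delta$ is continuous and bounded on $[0,T]\times\R^2$), so $\Lc(\overline\Xbb^1_t,\dots,\overline\Xbb^k_t)$ is absolutely continuous with density $\prod_i p^\delta(t,x^i)$ for a.e.\ $t$; a monotone-class / dominated-convergence argument approximating $\phi(t,\cdot)$ by continuous functions in $L^1$ of this product measure, uniformly controlled in $t$, then closes the gap. The main obstacle is precisely this last step together with keeping all constants uniform only in $N$ (they may — and do — depend on $\delta$ through $K(\delta)$); the $\delta\to0$ limit is handled separately in \Cref{prop:conver_regularize}, so here we only need the $N\to\infty$ convergence for fixed $\delta$, which makes the Lipschitz constants harmless.
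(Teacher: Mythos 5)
Your treatment of the first assertion is essentially the paper's: the paper also couples the particle system with i.i.d.\ copies $\Ybb^{N,i}$ of the regularized McKean--Vlasov dynamics driven by the same $(W^i,B^i)$ and started from the same initial data, and concludes $\lim_N\E[\sup_{t\le T}|\Ybb^{N,i}_t-\Xbb^{N,i}_t|]=0$; it simply outsources the Gronwall/Fournier--Guillin step to a citation, whereas you spell it out. That part is fine.

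The second assertion is where there is a genuine gap. You correctly flag the passage from continuous to bounded measurable $\phi$ as the obstacle, but the resolution you propose does not close it. Approximating $\phi(t,\cdot)$ by continuous functions $\phi_c$ in $L^1$ of the \emph{limiting} product measure $p^\delta(t,x^1)\cdots p^\delta(t,x^k)\,\mathrm{d}x\,\mathrm{d}t$ controls $\E[\int_0^T(\phi-\phi_c)(t,\overline\Xbb^1_t,\dots,\overline\Xbb^k_t)\,\mathrm{d}t]$, but it gives no control on $\E[\int_0^T(\phi-\phi_c)(t,\Xbb^{N,1}_t,\dots,\Xbb^{N,k}_t)\,\mathrm{d}t]$ uniformly in $N$: the set where $\phi-\phi_c$ is large may be null for the limit law while carrying positive mass for each pre-limit law $\Lc(\Xbb^{N,1}_t,\dots,\Xbb^{N,k}_t)$. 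Weak convergence (even in $\Wc_e$) cannot see bounded measurable test functions; what is needed is equi-absolute-continuity of the $k$-particle marginal laws. The paper supplies exactly this: since the $2k$-dimensional diffusion $(\Xbb^{N,1},\dots,\Xbb^{N,k})$ has bounded drift and a uniformly non-degenerate block-diagonal diffusion matrix, the Krylov estimate of \Cref{prop:density-integrability} (applied in dimension $2k$) gives $\sup_{N}\|f^N\|_{L^{(2k+1)'}([0,T]\times\R^{2k})}<\infty$ for the densities $f^N$ of $\Lc(\Xbb^{N,1}_t,\dots,\Xbb^{N,k}_t)$. One then extracts a weak-$*$ limit in $L^{(2k+1)'}$, identifies it with $\prod_i p^\delta(t,x^i)$ via the already-established convergence in law, and upgrades to bounded measurable $\phi$ by combining convergence against $L^{2k+1}$ test functions with the uniform tightness $\lim_{K\to\infty}\sup_N\int\mathbf{1}_{\{|x^1|+\cdots+|x^k|\ge K\}}f^N=0$. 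Without this uniform density bound your dominated-convergence step has nothing to dominate on the particle side, so you should add it to complete the proof.
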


\begin{proof}
    Since, for each $\delta >0$, the map $[0,T] \x \R^d \x C([0,T];\Pc_e(\R^d)) \ni (t,(x_1,x_2),\nu) \to (b,\lambda,\sigma,\beta)(\widehat{G}_\delta * \nu)(t,\xbb) \in \R^4$ is Lipschitz in $(\xbb,\nu)$ uniformly in $t$. Therefore, for each $N \ge 1$, $(\Xbb^{N,1},\cdots,\Xbb^{N,N})$ is uniquely defined. Also, the process $\Xbb^\delta$ is uniquely defined in distribution where
    $\Xbb^\delta:=\Xbb:=(X^1,X^2)$ satisfies: $p(0,\cdot)=\varrho_0(\cdot)$, for $t \in [0,T]$,
\begin{align*}
    \mathrm{d}X^{1}_t
    =
    \overline{b}_\delta(p^\delta) \big(t,\Xbb_t \big) \mathrm{d}t
    +
    \overline{\sigma}_\delta(p^\delta) \big(t,\Xbb_t \big)W_t\;\;\mbox{and}\;\;\mathrm{d}X^{2}_t=\lambda(t,X^2_t) )\mathrm{d}t
    +
    \beta(t,X^2_t)\mathrm{d}B_t
\end{align*}
where $p^\delta(t,\xbb)\mathrm{d}\xbb=\Lc(\Xbb^\delta_t)(\mathrm{d}\xbb).$ Let $(\Ybb^{N,1},\cdots,\Ybb^{N,N})$ be the solution of: $\Ybb^i_0=\Xbb^i_0$,
\begin{align*}
    \mathrm{d}\Ybb^{N,i,1}_t
    =
    \overline{b}_\delta(p^\delta) \big(t,\Ybb^{N,i,1}_t \big) \mathrm{d}t
    +
    \overline{\sigma}_\delta(p^\delta) \big(t,\Ybb^{N,i,1}_t \big)W^i_t\;\;\mbox{and}\;\;\mathrm{d}\Ybb^{N,i,2}_t=\lambda(t,\Ybb^{N,i,2}_t) )\mathrm{d}t
    +
    \beta(t,\Ybb^{N,i,2}_t)\mathrm{d}B^i_t.
\end{align*}
Notice that the sequence $(\Ybb^{N,i})_{1 \le i \le N}$ is i.i.d. with $\Lc(\Ybb^{N,i}_t)=p^\delta(t,\xbb)\mathrm{d}\xbb$.  Since $\int_{\R^d} |\xbb|^r \varrho_0(\mathrm{d}\xbb) < \infty$ for $r >e$, we can apply \cite[Proposition 4.15]{djete2019general} and find: for each $i \ge 1$,
\begin{align*}
    \lim_{N \to \infty}\E^{\P} \bigg[ \sup_{t \in [0,T]} |\Ybb^{N,i}_t-\Xbb^{N,i}_t| \bigg]=0.
\end{align*}
We can conclude that $\Lim_{N \to \infty} \Lc(\mu^N)=\delta_{\mu^\delta}\;\mbox{{\rm in}}\;\Wc_e$ and for each $k \ge 1$, $\Lim_{N \to \infty}\Lc(\Xbb^{N,1},\cdots,\Xbb^{N,k})=\Lc(\Xbb^\delta) \otimes \cdots \otimes \Lc(\Xbb^\delta)$ in $\Wc_e$. Let us show the last result. Let $f^{N}(t,\xbb^1,\cdots,\xbb^k)$ be the density of $\Lc(\Xbb^{N,1}_t,\cdots,\Xbb^{N,k}_t)$ i.e. $\Lc(\Xbb^{N,1}_t,\cdots,\Xbb^{N,k}_t)=f^{N}(t,\xbb^1,\cdots,\xbb^k) \mathrm{d}\xbb^1\cdots\mathrm{d}\xbb^k$. As $m \mathrm{I}_d \le a$ and $|(\ab,\bb)| \le M$, by \Cref{prop:density-integrability}, we have
\begin{align*}
    \sup_{N \ge 1}\int_{[0,T] \x \R^{2k}}|f^{N}(t,\xbb^1,\cdots,\xbb^k)|^{(2k+1)'} \mathrm{d}\xbb^1\cdots\mathrm{d}\xbb^k\;\mathrm{d}t < \infty.
\end{align*}
The sequence $(f^N)_{N \ge 1}$ is relatively compact for the weak * topology i.e. there is $f$ and a sub--sequence $(N_\ell)_{\ell \ge 1}$ s.t. for any $\varphi:[0,T] \x \R^{2k} \to \R$ verifying $\int_0^T |\varphi(t,\xbb^1,\cdots,\xbb^k)|^{2k+1}\mathrm{d}\xbb^1\cdots \mathrm{d}\xbb^k\;\mathrm{d}t < \infty$, we have
\begin{align*}
    &\lim_{\ell \to \infty} \int_{[0,T] \x \R^{2k}} \varphi(t,\xbb^1,\cdots,\xbb^k)f^{N_\ell}(t,\xbb^1,\cdots,\xbb^k)\mathrm{d}\xbb^1\cdots \mathrm{d}\xbb^k\;\mathrm{d}t
    \\
    &=
    \int_{[0,T] \x \R^{2k}} \varphi(t,\xbb^1,\cdots,\xbb^k)f(t,\xbb^1,\cdots,\xbb^k)\mathrm{d}\xbb^1\cdots \mathrm{d}\xbb^k\;\mathrm{d}t.
\end{align*}
It is easy to check that any limit point $f$ satisfies $\Lc(\Xbb^\delta) \otimes \cdots \otimes \Lc(\Xbb^\delta)=f(t,\xbb^1,\cdots,\xbb^k)\mathrm{d}\xbb^1\cdots\mathrm{d}\xbb^k $. Therefore the entire sequence $(f^N)_{N \ge 1}$ converges for the weak * topology towards $f$. Notice that
\begin{align*}
    \Lim_{K \to \infty}\sup_{N \ge 1}\int_{[0,T] \x \R^{2k}} \mathbf{1}_{\{|\xbb^1|+\cdots+|\xbb^k| \ge K \}}f^{N}(t,\xbb^1,\cdots,\xbb^k)\mathrm{d}\xbb^1\cdots \mathrm{d}\xbb^k\;\mathrm{d}t
    \le \Lim_{K \to \infty} \frac{T\sup_{N \ge 1}\sum_{i=1}^k \E[\sup_{t \in [0,T]}|\Xbb^{N,i}_t|]}{K}=0.
\end{align*}
This is enough to take $\varphi$ as a measurable bounded map in the previous convergence.
\end{proof}

\paragraph*{Proof of \Cref{prop:numerical_scheme}} The proof of \Cref{prop:numerical_scheme} is just a combination of \Cref{prop:conver_regularize} and \Cref{prop:regularize:particle}.

\medskip

\bibliographystyle{plain}
%\bibliography{McKean-Vlasov_bib}
\bibliography{McK--Vl_revision_version_arxiv}

%\bibliography{bibliography_Manuscript}

\begin{appendix}
\section{Technical results}
We set $r \in (1,\infty)$, $T>0$ and $d \in \N^*$. Let $u:[0,T] \x \R^d \to \R$ be a map, we introduce the linear operator
$$
    \Lc u : \H^{r,1}([0,T] \x \R^d) \to \R
$$
in the following way: for any $\varphi \in \H^{r,1}([0,T] \x \R^d)$,
\begin{align*}
    \Lc u(\varphi)&:=\int_{[0,T] \x \R^d} \partial_t u(t,\xbb)  \varphi(t,\xbb) - a^{i,j}_0(t) \partial_{x_i} u(t,\xbb)\partial_{x_j}\varphi(t,\xbb)\;\mathrm{d}\xbb\;\mathrm{d}t,
    %\\
    %&=
    %-\int_{[0,T] \x \R^d} f^i(t,x) \partial_{x_i}\varphi(t,x) +g(t,x) \varphi(t,x)\;\mathrm{d}x\;\mathrm{d}t.
\end{align*}
where $\ab_0:=(a_0^{i,j})_{1 \le i,j \le d}:[0,T] \to \Sc^d$ is a Borel map satisfying $0< m \mathrm{I}_d \le \ab \le M \mathrm{I}_d$. The next result is essentially a reformulation of \cite[Theorem 2.4]{krylovVMO} (see also \cite[Theorem 1.2.1]{FK-PL-equations} for the dual norm indicated in the proposition below).
\begin{proposition}\label{prop_app:estimates}
    Let $\fb:=(f^i)_{1\le i \le d}:[0,T] \x \R^d \to \R^{d}$ and $g:[0,T] \x \R^d \to \R$ be Borel maps. The map $u$ is s.t. $u(0,\cdot)=0$ and for any $\varphi \in C^\infty_c([0,T) \x \R^d)$,
    \begin{align*}
        \Lc u(\varphi)
        =
        -\int_{[0,T] \x \R^d} f^i(t,\xbb) \partial_{x_i}\varphi(t,\xbb) +g(t,\xbb) \varphi(t, \xbb)\;\mathrm{d}\xbb\;\mathrm{d}t.
    \end{align*}
    Then, there exists a positive constant $N$ depending only on $T$, $d$, $r$,  $M$ and $m$ s.t.
    \begin{align*}
        \|u\|_{\H^{r,1}([0,T]\x \R^d)} \le N\; \Big[ \|\fb\|_{L^r([0,T] \x \R^d)}^r + \|g\|_{L^r([0,T] \x \R^d)}^r  \Big]^{1/r}
        =N\;\|\Lc u\|_{\H^{r,-1}([0,T]\x \R^d)}.
    \end{align*}
\end{proposition}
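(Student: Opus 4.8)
The plan is to read the variational identity as the weak formulation of a parabolic Cauchy problem whose diffusion matrix does not depend on the spatial variable, and then to quote Krylov's $L^r$ theory for such problems. First I would rewrite the hypothesis: since $a_0=a_0(t)$, for every $\varphi\in C^\infty_c([0,T)\x\R^d)$ one has, distributionally, $\int a_0^{i,j}(t)\partial_{x_i}u\,\partial_{x_j}\varphi=-\int a_0^{i,j}(t)\partial_{x_i}\partial_{x_j}u\,\varphi$ and $\int f^i\partial_{x_i}\varphi=-\int(\partial_{x_i}f^i)\varphi$, so the identity $\mathcal Lu(\varphi)=-\int f^i\partial_{x_i}\varphi+g\varphi$ asserts exactly that
\[
\partial_t u-a_0^{i,j}(t)\partial_{x_i}\partial_{x_j}u=\partial_{x_i}f^i+g\quad\text{on }(0,T)\x\R^d,\qquad u(0,\cdot)=0,
\]
in the weak sense, with right-hand side in $\mathbb H^{r,-1}([0,T]\x\R^d)$ of norm controlled by $[\|f\|_{L^r}^r+\|g\|_{L^r}^r]^{1/r}$ (there is nothing to prove if $f\notin L^r$ or $g\notin L^r$). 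The matrix $a_0$ is bounded, uniformly parabolic with constants $m,M$, and measurable in $t$ and constant (hence VMO) in $x$, so the hypotheses of \cite[Theorem~2.4]{krylovVMO} are met.

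Next I would invoke that theory in the form recorded in \cite[Theorem~1.2.1]{FK-PL-equations}: the Cauchy problem with zero initial data and right-hand side $\partial_{x_i}F^i+G$, $F,G\in L^r([0,T]\x\R^d)$, is uniquely solvable in $\mathbb H^{r,1}([0,T]\x\R^d)$, and the solution $v$ satisfies $\|v\|_{\mathbb H^{r,1}}\le N[\|F\|_{L^r}^r+\|G\|_{L^r}^r]^{1/r}$ with $N=N(T,d,r,m,M)$. Applying this with $(F,G)=(f,g)$ produces such a $v$; then $w:=u-v$ obeys $\mathcal Lw(\varphi)=0$ for all $\varphi\in C^\infty_c([0,T)\x\R^d)$ with $w(0,\cdot)=0$, and the uniqueness assertion of the cited result forces $w=0$, so $u=v$ and the estimate transfers to $u$. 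The concluding identity in the statement is the standard identification of $\mathbb H^{r,-1}$ with divergence-plus-order-zero data: the bound $\|u\|_{\mathbb H^{r,1}}\le N[\|f\|_{L^r}^r+\|g\|_{L^r}^r]^{1/r}$ holds for \emph{every} representation $(f,g)$ of the functional $\mathcal Lu$, so taking the infimum over representations---which is $\|\mathcal Lu\|_{\mathbb H^{r,-1}}$ by the same theorem---yields $\|u\|_{\mathbb H^{r,1}}\le N\|\mathcal Lu\|_{\mathbb H^{r,-1}}$.

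Beyond this bookkeeping there is little genuinely new to do, since $a_0$ depending only on $t$ even permits an elementary proof by Fourier transform in $x$, the needed bounds being parabolic multiplier estimates. The one point requiring care is the reconciliation of function-space conventions: Krylov's estimates are usually stated in spaces carrying one time and two spatial derivatives, and descending to the energy-level space $\mathbb H^{r,1}$ with divergence-form data is obtained from them by the customary duality argument (testing against the solution of the adjoint problem in the conjugate exponent $r'$, which is of the same type because $a_0$ is symmetric and $x$-independent), together with the uniqueness step above---itself an instance of the duality method---to ensure the a priori bound applies to the given, possibly only weakly regular, $u$. I expect this matching of norms and solution classes, rather than any new analytic ingredient, to be the only real obstacle.
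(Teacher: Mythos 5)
Your proposal is correct and follows exactly the route the paper intends: the paper gives no proof of this proposition, presenting it as "essentially a reformulation of [Theorem 2.4, krylovVMO]" (with [Theorem 1.2.1, FK-PL-equations] for the dual-norm identity), which are precisely the results you invoke, together with the routine identification of the weak formulation, the uniqueness step, and the infimum over representations $(f,g)$. Nothing in your reduction conflicts with the paper's (implicit) argument.
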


\medskip
Now, let $u:[0,T] \x \R^d \to \R$ be a map s.t. there exists $G>0$ satisfying
\begin{align*}
    \int_{\R^d}|u(t,\xbb)| \mathrm{d}\xbb \le G,\;\;\mbox{for all a.e.}\;t \in [0,T].
\end{align*}    
    \begin{proposition}\label{prop:first_sobolev}
        If $1 \le r < d$, one has
        \begin{align*}
            \|u\|^\gamma_{L^{\gamma}([0,T] \x \R^d)} \le C(d,r)^r G^{r/d} \|\nabla u\|_{L^r([0,T] \x \R^d)}^r
        \end{align*}
        where $\gamma:=r \frac{d+1}{d}$ and $C(d,r)$ is a positive constant appearing in the Sobolev embedding Theorem depending only on $r$ and $d$.
    \end{proposition}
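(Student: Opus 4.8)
The plan is to combine, for each fixed time, the Gagliardo--Nirenberg--Sobolev inequality on $\R^d$ with a Hölder interpolation in the space variable, and then integrate in $t$. First I would dispose of the trivial case: if $\|\nabla u\|_{L^r([0,T] \x \R^d)}=\infty$ there is nothing to prove, so assume it is finite. Then $\nabla u(t,\cdot)\in L^r(\R^d)$ for a.e.\ $t\in[0,T]$, and since $u(t,\cdot)\in L^1(\R^d)$ for a.e.\ $t$ (by the standing hypothesis $\int_{\R^d}|u(t,x)|\,\mathrm{d}x\le G$), the additive constant in the Sobolev inequality must vanish. Hence the classical embedding $\dot W^{1,r}(\R^d)\hookrightarrow L^{r^*}(\R^d)$, valid for $1\le r<d$ with $r^*:=\tfrac{dr}{d-r}$, gives for a.e.\ $t\in[0,T]$
\[
    \|u(t,\cdot)\|_{L^{r^*}(\R^d)} \le C(d,r)\,\|\nabla u(t,\cdot)\|_{L^r(\R^d)},
\]
where $C(d,r)$ is the Sobolev constant depending only on $d$ and $r$.

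Next I would interpolate the $L^\gamma$ norm between $L^1$ and $L^{r^*}$. Taking $\lambda:=\tfrac{d}{d+1}$, one checks the identity $\tfrac1\gamma=(1-\lambda)+\tfrac{\lambda}{r^*}$ (both sides equal $\tfrac{d}{r(d+1)}$, since $\gamma=r\tfrac{d+1}{d}$), so Hölder's inequality yields, for a.e.\ $t\in[0,T]$,
\[
    \|u(t,\cdot)\|_{L^\gamma(\R^d)} \le \|u(t,\cdot)\|_{L^1(\R^d)}^{\,1-\lambda}\,\|u(t,\cdot)\|_{L^{r^*}(\R^d)}^{\,\lambda} \le G^{1/(d+1)}\,\bigl(C(d,r)\,\|\nabla u(t,\cdot)\|_{L^r(\R^d)}\bigr)^{d/(d+1)}.
\]
The choice of $\lambda$ is dictated by the requirement that, after raising to the power $\gamma$, the exponent of $\|\nabla u(t,\cdot)\|_{L^r(\R^d)}$ be exactly $r$: indeed $\gamma\lambda=r$ and $\gamma(1-\lambda)=r/d$.

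Finally I would raise the displayed bound to the power $\gamma$ and integrate over $t\in[0,T]$; by Tonelli's theorem,
\[
    \|u\|_{L^\gamma([0,T]\x\R^d)}^{\gamma} = \int_0^T \|u(t,\cdot)\|_{L^\gamma(\R^d)}^{\gamma}\,\mathrm{d}t \le G^{r/d}\,C(d,r)^r \int_0^T \|\nabla u(t,\cdot)\|_{L^r(\R^d)}^{r}\,\mathrm{d}t = C(d,r)^r\,G^{r/d}\,\|\nabla u\|_{L^r([0,T]\x\R^d)}^{r},
\]
which is the claimed estimate. I do not expect any substantial obstacle here: the only points requiring care are the measurability in $t$ of $t\mapsto\|u(t,\cdot)\|_{L^{r^*}(\R^d)}$ and $t\mapsto\|\nabla u(t,\cdot)\|_{L^r(\R^d)}$ (so that Tonelli applies, which follows from Fubini applied to $|u|$ and $|\nabla u|$), and the justification that the Sobolev inequality holds without an additive constant because $u(t,\cdot)\in L^1(\R^d)$; both are routine. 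The entire content of the proof is the exponent bookkeeping $\lambda=d/(d+1)$.
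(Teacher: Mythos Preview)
Your proof is correct and follows essentially the same route as the paper: apply the Gagliardo--Nirenberg--Sobolev inequality at each fixed $t$, interpolate $L^\gamma$ between $L^1$ and $L^{r^*}$ via H\"older, then integrate in $t$. The only cosmetic difference is that the paper writes the interpolation step as an explicit H\"older splitting $|u|^\ell=|u|^{\ell-1/a}|u|^{1/a}$ with $a=d/r$, whereas you quote the standard $L^p$--interpolation inequality with weight $\lambda=d/(d+1)$; the exponent bookkeeping is identical.
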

    
    \begin{proof}
        As $1 \le r < d$, by Gagliardo--Nirenberg--Sobolev inequality (Sobolev embedding Theorem), there exists $C(d,r)>0$ s.t. for each $t \in [0,T],$
        \begin{align*}
            \|u(t,\cdot)\|_{L^{r^*}(\R^d)} \le C(d,r) \|\nabla u(t,\cdot)\|_{L^r(\R^d)}\;\;\mbox{where}\;r^*:=\frac{dr}{d-r}.
        \end{align*}
        
        Let $a>1$,
        \begin{align*}
            \int_{\R^d} |u(t, \xbb)|^\ell \mathrm{d}\xbb
            =
            \int_{\R^d} |u(t,\xbb)|^{\ell-1/a}|u(t,\xbb)|^{1/a} \mathrm{d}\xbb \le G^{1/a}\bigg[\int_{\R^d} |u(t,\xbb)|^{(\ell-1/a)a'} \mathrm{d}\xbb \bigg]^{1/a'}
            =
            G^{1/a} \|u(t,\cdot)\|_{L^{(\ell-1/a)a'}(\R^d)}^{\frac{(\ell-1/a)a'}{a'}}.
        \end{align*}
        We want $(\ell-1/a)a'=\frac{dr}{d-r}$ and $r=\gamma-1/a$. This leads to $a=d/r$ and $\ell=r \frac{d+1}{d}$. Therefore
        \begin{align*}
            \|u\|^\ell_{L^\ell([0,T] \x \R^d)} \le \int_0^T \|u(t,\cdot)\|^r_{L^{r^*}(\R^d)} \mathrm{d}t \le C(d,r)^r G^{p/r} \|\nabla u\|^r_{L^r([0,T] \x \R^d)}.
        \end{align*}
    \end{proof}
    
    \begin{proposition} \label{prop:second_sobolev}
        If $d < r$, for any $r < \ell \le  r+1$, one has
        \begin{align*}
            \int_0^T\|u(t,\cdot)\|_{C^{0,1-d/r}(\R^d)}^r \mathrm{d}t \le C(d,r)^r \|u\|^r_{\H^{r,1}([0,T] \x \R^d)}
        \end{align*}
        and
        \begin{align*}
            \|u\|^\ell_{L^{\ell}([0,T] \x \R^d)} \le G^{1/s'} C(d,r)^{r/s'}\|u\|^{r/s}_{L^r([0,T] \x \R^d)} \|u\|_{\H^{r,1}([0,T] \x \R^d)}^{r/s'}
        \end{align*}
        where $s:=\frac{1}{r+1-\ell}$.
    \end{proposition}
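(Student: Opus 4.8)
The plan is to reduce both inequalities to pointwise-in-$t$ statements coming from Morrey's embedding $W^{r,1}(\R^d)\hookrightarrow C^{0,1-d/r}(\R^d)$, which is available precisely because $d<r$, and then integrate in time, using Hölder's inequality in $t$ for the second inequality. The whole argument mirrors the proof of \Cref{prop:first_sobolev}, with the sub-critical embedding into $L^\infty$ playing the role that the Gagliardo--Nirenberg--Sobolev exponent $r^\ast$ played there. As in that proof, I may assume the right-hand sides are finite (otherwise the estimates are trivial), and I always work with the Hölder-continuous representative of $u(t,\cdot)$ furnished by Morrey's inequality.

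For the first inequality I would simply invoke Morrey: there is $C(d,r)>0$ with $\|u(t,\cdot)\|_{C^{0,1-d/r}(\R^d)}\le C(d,r)\,\|u(t,\cdot)\|_{H^{r,1}(\R^d)}$ for a.e.\ $t$; raising to the power $r$ and integrating over $[0,T]$ gives $\int_0^T\|u(t,\cdot)\|_{C^{0,1-d/r}(\R^d)}^r\,\mathrm{d}t\le C(d,r)^r\|u\|_{\H^{r,1}([0,T]\times\R^d)}^r$. Since the $C^{0,1-d/r}$-norm dominates the sup-norm, the same constant $C(d,r)$ also yields $\|u(t,\cdot)\|_{L^\infty(\R^d)}\le C(d,r)\,\|u(t,\cdot)\|_{H^{r,1}(\R^d)}$, which is what the second inequality needs.

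For the second inequality, fix $t$ and interpolate the $L^\ell$-norm between $L^1$, $L^r$ and $L^\infty$: with weights $\lambda_1:=1-r/\ell$, $\lambda_2:=r(r+1-\ell)/\ell$, $\lambda_3:=r(\ell-r)/\ell$ --- which are nonnegative since $r<\ell\le r+1$ and satisfy $\lambda_1+\lambda_2+\lambda_3=1$ and $\lambda_1+\lambda_2/r=1/\ell$ --- iterated Hölder gives $\|u(t,\cdot)\|_{L^\ell(\R^d)}\le\|u(t,\cdot)\|_{L^1(\R^d)}^{\lambda_1}\|u(t,\cdot)\|_{L^r(\R^d)}^{\lambda_2}\|u(t,\cdot)\|_{L^\infty(\R^d)}^{\lambda_3}$. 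Substituting $\|u(t,\cdot)\|_{L^1(\R^d)}\le G$ and the Morrey bound for $\|u(t,\cdot)\|_{L^\infty(\R^d)}$, raising to the power $\ell$, and noting $\ell\lambda_1=1/s'$, $\ell\lambda_2=r/s$, $\ell\lambda_3=r/s'$ with $1/s=r+1-\ell$, $1/s'=\ell-r$, yields the pointwise bound $\|u(t,\cdot)\|_{L^\ell(\R^d)}^\ell\le G^{1/s'}C(d,r)^{r/s'}\big(\|u(t,\cdot)\|_{L^r(\R^d)}^r\big)^{1/s}\big(\|u(t,\cdot)\|_{H^{r,1}(\R^d)}^r\big)^{1/s'}$. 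Integrating in $t$ and applying Hölder with conjugate exponents $s$ and $s'$ to the two bracketed factors produces exactly $\|u\|_{L^\ell([0,T]\times\R^d)}^\ell\le G^{1/s'}C(d,r)^{r/s'}\|u\|_{L^r([0,T]\times\R^d)}^{r/s}\|u\|_{\H^{r,1}([0,T]\times\R^d)}^{r/s'}$.

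There is no real obstacle here: the only delicate point is the bookkeeping of the three interpolation weights so that the exponents of $G$, $\|u\|_{L^r}$ and $\|u\|_{H^{r,1}}$ land exactly on $1/s'$, $r/s$, $r/s'$, together with checking that the degenerate endpoints $\ell=r+1$ (so $s=\infty$, $s'=1$, $\lambda_2=0$) and $\ell\downarrow r$ are handled by the same formulas. One should also record at the start that a single Morrey constant $C(d,r)$ can be used in both inequalities, as the statement tacitly assumes.
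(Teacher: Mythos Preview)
Your proof is correct and follows essentially the same approach as the paper: both invoke Morrey's inequality for the first bound and then, for the second, interpolate spatially between $L^1$, $L^r$ and $L^\infty$ before applying H\"older in time with exponents $s,s'$. The paper organizes the spatial step a little differently---first pulling out an $L^\infty$ factor from $|u|^\ell=|u|^{\ell-a}|u|^a$ and then using Jensen's inequality with the $L^1$ bound $G$ to land on the $L^r$ norm---but your three-term H\"older accomplishes the same thing in one stroke with identical exponents.
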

    
    \begin{proof}
        Since $d < r$, by Morrey's inequality, there exists $C(d,r)$ s.t. for each $t \in [0,T]$
        \begin{align*}
            \|u(t,\cdot)\|_{C^{0,1-d/r}(\R^d)} \le C(d,r) \|u(t,\cdot)\|_{H^{r,1}(\R^d)}.
        \end{align*}
        Let $1 < a <\ell$, for any $s>1$
        \begin{align*}
            \|u\|^\ell_{L^\ell([0,T] \x \R^d)}
            &=
            \int_{[0,T] \x \R^d} |u(t,\xbb)|^{\ell-a}|u(t,\xbb)|^a \mathrm{d}\xbb \mathrm{d}t
            \le \int_{[0,T] \x \R^d} |u(t,\xbb)|^{\ell-a} \mathrm{d}\xbb |u(t,\cdot)|_{\infty}^a \mathrm{d}t
            \\
            &\le \bigg[ \int_0^T \Big| \int_{\R^d} |u(t,\xbb)|^{\ell-a} \mathrm{d}\xbb \Big|^s \mathrm{d}t \bigg]^{1/s} \bigg[ \int_0^T |u(t,\cdot)|^{as'}_{\infty} \mathrm{d}t \bigg]^{1/s'}
            \\
            &\le
            G^{1/s'}\bigg[  \int_0^T  \int_{\R^d} |u(t,\xbb)|^{s(\ell-a-1)}|u(t,\xbb)| \mathrm{d}\xbb \mathrm{d}t \bigg]^{1/s} \bigg[ \int_0^T |u(t,\cdot)|^{as'}_{\infty} \mathrm{d}t \bigg]^{1/s'},
        \end{align*}
        where we used Jensen inequality for the last inequality.
        We want $s(\ell-a-1)+1=r$ and $a\frac{s}{s-1}=r$. Then $s=\frac{1}{r+1-\ell}$. Since we must have $s>1$, this means $r < \ell \le r+1$. We obtain
        \begin{align*}
            \|u\|^\ell_{L^\ell([0,T] \x \R^d)}
            \le 
            G^{1/s'} \|u\|^{r/s}_{L^r([0,T] \x \R^d)} C(d,r)^{r/s'} \|u\|_{\H^{r,1}([0,T] \x \R^d)}^{r/s'}.
        \end{align*}
    \end{proof}

Let $(\Om,\F,\P)$ be a filtered probability space supporting an $\R$--valued $\F$--Brownian motion $B$. We consider bounded maps $[\lambda,\beta]:\R_+ \x \R \to \R \x \R$ s.t. $\beta$ is Lipschitz in $\R$ and $\beta \beta^\top \ge m$ with $m>0$. Let $(Z_t)_{t \ge 0}$ be the $\F$--adapted continuous process satisfying
\begin{align*}
    \mathrm{d}Z_t
    =
    \lambda(t,Z_t)\mathrm{d}t + \beta(t,Z_t)\mathrm{d}B_t.
\end{align*}
We denote by $q$ the density of $\Lc(Z_t)$ i.e. $\Lc(Z_t)(\mathrm{d}z)=q(t,z)\mathrm{d}z$.
\begin{proposition}\label{prop:appendix:integrability}
    For any $(r,\delta) \in (1,\infty) \x (0,1)$, if there is $\alpha>0$ s.t.
    \begin{align*}
        \int_{\R}  {\rm exp}^{\alpha|z_0|^2} q(0,z_0)^r\mathrm{d}z_0<\infty
    \end{align*} 
    then
\begin{align*}
   \int_\R \bigg[\int_0^{T}\big| q(t,z)\big|^{r} \mathrm{d}t \bigg]^{\delta} \mathrm{d}z < \infty,\;\;\mbox{for each}\;T>0.
\end{align*}
\end{proposition}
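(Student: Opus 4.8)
The plan is to reduce the estimate to a Gaussian upper bound for the transition density of $Z$, after which the statement follows from Jensen's and H\"older's inequalities, the Gaussian weight in the hypothesis being exactly what is needed to control the spatial tails of $q(t,\cdot)$ uniformly in $t$. Since $\lambda$ is bounded, $\beta$ is bounded and Lipschitz in space, and $\beta^2\ge m>0$, the Fokker--Planck equation $\partial_t q=\tfrac12\partial_{zz}(\beta^2 q)-\partial_z(\lambda q)$ can be written in divergence form $\partial_t q=\partial_z\big(\tfrac12\beta^2\,\partial_z q+(\tfrac12\partial_z(\beta^2)-\lambda)\,q\big)$ with bounded, measurable, uniformly elliptic coefficients, and $q(t,\cdot)$, the density of $\Lc(Z_t)$, is obtained by integrating the fundamental solution $\Gamma$ of this equation against the initial density, $q(t,z)=\int_\R\Gamma(t,z;0,y)\,q(0,y)\,\mathrm{d}y$ (well-posedness of $q$ under these assumptions being already used earlier in the paper). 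By an Aronson-type estimate (see e.g. \cite{FK-PL-equations}, or \cite{stroock2007multidimensional}) there are $C_1,C_2>0$ depending only on $m$, the sup-norms and the Lipschitz constant of $(\lambda,\beta)$ and on $T$, with $\Gamma(t,z;0,y)\le C_1 t^{-1/2}e^{-C_2|z-y|^2/t}$ for $0<t\le T$, hence
\[
    q(t,z)\;\le\;\frac{C_1}{\sqrt t}\int_\R e^{-C_2|z-y|^2/t}\,q(0,y)\,\mathrm{d}y,\qquad 0<t\le T.
\]

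Next I would derive a weighted $L^r$ bound on $q(t,\cdot)$ uniform in $t\in(0,T]$. Set $\alpha':=\tfrac12\min(\alpha,\,C_2/T)>0$ and $K_t(u):=C_1 t^{-1/2}e^{-C_2 u^2/t}$, and note $c_3:=\int_\R K_t(u)\,\mathrm{d}u$ is independent of $t$. Applying Jensen's inequality (convexity of $x\mapsto x^r$) to the probability kernel $K_t/c_3$ and then Fubini gives
\[
    \int_\R e^{\alpha'|z|^2}q(t,z)^r\,\mathrm{d}z\;\le\;c_3^{\,r-1}\int_\R q(0,y)^r\Big(\int_\R e^{\alpha'|z|^2}K_t(z-y)\,\mathrm{d}z\Big)\,\mathrm{d}y.
\]
A direct computation (completing the square and using $\alpha' t\le\alpha' T\le C_2/2$) shows the inner integral is at most $c_4\,e^{\alpha|y|^2}$ for a constant $c_4$ independent of $t$, so that $\sup_{t\in(0,T]}\int_\R e^{\alpha'|z|^2}q(t,z)^r\,\mathrm{d}z\le C_3:=c_3^{\,r-1}c_4\int_\R e^{\alpha|y|^2}q(0,y)^r\,\mathrm{d}y$, which is finite by hypothesis.

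Finally, by H\"older's inequality with conjugate exponents $1/\delta$ and $1/(1-\delta)$,
\[
    \int_\R\Big(\int_0^T q(t,z)^r\,\mathrm{d}t\Big)^{\delta}\mathrm{d}z\;\le\;\Big(\int_\R e^{\alpha'|z|^2}\!\int_0^T q(t,z)^r\,\mathrm{d}t\,\mathrm{d}z\Big)^{\delta}\Big(\int_\R e^{-\frac{\alpha'\delta}{1-\delta}|z|^2}\mathrm{d}z\Big)^{1-\delta},
\]
and by Fubini and the previous step the base of the first factor equals $\int_0^T\big(\int_\R e^{\alpha'|z|^2}q(t,z)^r\,\mathrm{d}z\big)\mathrm{d}t\le C_3 T$, while the second factor is a finite Gaussian integral since $\alpha'\delta/(1-\delta)>0$; this proves the claim, with a bound depending only on $\lambda$, $\beta$, $T$, $\delta$, $r$ and $\int_\R e^{\alpha|z_0|^2}q(0,z_0)^r\,\mathrm{d}z_0$. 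The only non-elementary input is the Aronson-type Gaussian estimate for the fundamental solution, so the main point to be careful about is pinning down a reference valid under exactly the present (merely Lipschitz/bounded) regularity; steps two and three are routine applications of Jensen, Fubini, H\"older and explicit Gaussian integrals.
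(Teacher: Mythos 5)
Your proof is correct and follows essentially the same route as the paper's: a Gaussian upper bound for the density (the paper treats the Brownian case explicitly and then invokes Aronson-type bounds from \cite{KUSUOKA2017359}, \cite{AronsonBounds} for the general case), Jensen's inequality on the Gaussian kernel, and the exponential-weight trick transferring the quadratic weight from $z$ to $z_0$ so that the hypothesis applies. The only cosmetic difference is the final step, where you use H\"older with exponents $1/\delta$ and $1/(1-\delta)$ while the paper integrates the $\delta$-power of a pointwise Gaussian bound in $z$ directly; these are equivalent.
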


\begin{proof}
    Let us first assume that $\lambda=0$ and $\beta=\mathrm{I}$. In that case, $q$ is the density of the normal distribution, more specifically
    \begin{align*}
        q(t,z)
        =
        \frac{1}{\sqrt{2 \pi\;t}} \int_{\R} {\rm exp}^{-\frac{1}{2t}|z-z_0|^2} q(0,z_0)\mathrm{d}z_0.
    \end{align*}
    We can verify that
    \begin{align*}
         {\rm exp}^{-\frac{1}{2t}|z-z_0|^2} \le  {\rm exp}^{-\frac{1}{2T}|z-z_0|^2}\;\;\mbox{and}\;\;{\rm exp}^{-\frac{1}{2T}|z-z_0|^2} \le {\rm exp}^{-\frac{1-\theta}{2T}|z|^2} {\rm exp}^{\frac{1-\theta}{2T\theta}|z_0|^2}\;\mbox{for any}\;\theta \in (0,1).
    \end{align*}
    Therefore for any $\theta \in (0,1)$,
    \begin{align*}
        &\int_0^{T}\big| q(t,z)\big|^{r} \mathrm{d}t
        \le 
        \int_0^{T}\bigg| \frac{1}{\sqrt{2 \pi\;t}} \int_{\R} {\rm exp}^{-\frac{1}{2t}|z-z_0|^2} q(0,z_0)\mathrm{d}z_0\bigg|^{r} \mathrm{d}t
        \le 
        \int_0^{T} \frac{1}{\sqrt{2 \pi\;t}} \int_{\R} {\rm exp}^{-\frac{1}{2t}|z-z_0|^2} q(0,z_0)^r\mathrm{d}z_0 \;\mathrm{d}t
        \\
        &\le 
        \int_0^{T} \frac{1}{\sqrt{2 \pi\;t}} \int_{\R} {\rm exp}^{-\frac{1-\theta}{2T}|z|^2} {\rm exp}^{\frac{1-\theta}{2T\theta}|z_0|^2} q(0,z_0)^r\mathrm{d}z_0 \;\mathrm{d}t
        =
        \int_0^{T} \frac{1}{\sqrt{2 \pi\;t}} \;\mathrm{d}t\int_{\R}  {\rm exp}^{\frac{1-\theta}{2T\theta}|z_0|^2} q(0,z_0)^r\mathrm{d}z_0\;{\rm exp}^{-\frac{1-\theta}{2T}|z|^2}.
    \end{align*}
    Then
    \begin{align*}
        \int_\R \bigg[\int_0^{T}\big| q(t,z)\big|^{r} \mathrm{d}t \bigg]^{\delta} \mathrm{d}z
        \le 
        \bigg[\int_0^{T} \frac{1}{\sqrt{2 \pi\;t}} \;\mathrm{d}t\bigg]^\delta \bigg[\int_{\R}  {\rm exp}^{\frac{1-\theta}{2T\theta}|z_0|^2} q(0,z_0)^r\mathrm{d}z_0\bigg]^\delta \;\int_{\R}{\rm exp}^{-\frac{(1-\theta)\delta}{2T}|z|^2}\;\mathrm{d}z.
    \end{align*}
    By taking $\theta$ s.t. $\frac{1-\theta}{2T\theta}<\alpha$ with $(1-\theta)>0$, we can deduce the result. For the general case, by \cite{KUSUOKA2017359} (see \cite{AronsonBounds} for the absence of Dini continuity over the drift), the density $q$ has Gaussian upper bound. By using the same argument, we can conclude.
\end{proof}

\end{appendix}

\end{document}